\documentclass{amsart}

\usepackage{geometry} 
\usepackage{amssymb, mathrsfs}
\usepackage{relsize}
\usepackage[all]{xy}
\usepackage{xcolor}
\usepackage{dsfont}
\usepackage[hidelinks]{hyperref}
\usepackage{cleveref}

\numberwithin{equation}{section}

\newtheorem{theorem}[subsubsection]{Theorem}
\newtheorem{proposition}[subsubsection]{Proposition}

\newtheorem{corollary}[subsubsection]{Corollary}
\newtheorem{lemma}[subsubsection]{Lemma}
\newtheorem{conjecture}[subsubsection]{Conjecture}
\newtheorem*{theorem*}{Theorem}

\theoremstyle{definition}
\newtheorem{definition}[subsubsection]{Definition}

\theoremstyle{remark}
\newtheorem{example}[subsubsection]{Example}
\newtheorem{remark}[subsubsection]{Remark}

\title[]{Maximal ideals in the finitary symmetric group algebra in characteristic two}
\author{Kevin Coulembier}
\address{School of Mathematics and Statistics, University of Sydney, Australia}
\email{kevin.coulembier@sydney.edu.au}

\newcommand{\Sym}{\mathtt{Sym}}
\newcommand{\w}{\mathlarger{\wedge}}

\newcommand{\bk}{\Bbbk}
\newcommand{\FPdim}{\mathrm{FPdim}}

\newcommand{\unit}{\mathds{1}}

\newcommand{\sVec}{\mathtt{sVec}}
\newcommand{\Ver}{\mathtt{Ver}}
\newcommand{\Rep}{\mathtt{Rep}}

\newcommand{\Tilt}{\mathtt{Tilt}}

\newcommand{\Ann}{\operatorname{Ann}}
\newcommand{\GL}{\operatorname{GL}}
\newcommand{\Indu}{\operatorname{Ind}}
\newcommand{\Res}{\operatorname{Res}}
\newcommand{\SL}{\operatorname{SL}}
\newcommand{\ch}{\operatorname{ch}}

\newcommand{\mZ}{\mathbb{Z}}
\newcommand{\mN}{\mathbb{N}}
\newcommand{\mR}{\mathbb{R}}
\newcommand{\mC}{\mathbb{C}}

\newcommand{\mF}{\mathbb{F}}

\newcommand{\bN}{\mathbf{N}}
\newcommand{\bI}{\mathbf{I}}

\newcommand{\bS}{\mathbf{S}}
\newcommand{\bJ}{\mathbf{J}}
\newcommand{\bK}{\mathbf{K}}
\newcommand{\pA}{\mathscr{A}}
\newcommand{\pB}{\mathscr{B}}

\newcommand{\cM}{\mathcal{M}}
\newcommand{\cI}{\mathcal{I}}
\newcommand{\cJ}{\mathcal{J}}
\newcommand{\cK}{\mathcal{K}}
\newcommand{\cC}{\mathcal{C}}
\newcommand{\cD}{\mathcal{D}}

\newcommand{\OB}{\mathcal{O}\mathcal{B}}

\newcommand{\ev}{\mathrm{ev}}

\newcommand{\Hom}{\operatorname{Hom}}
\newcommand{\End}{\operatorname{End}}

\newcommand{\Ob}{\operatorname{Ob}}

\newcommand{\Irr}{\operatorname{Irr}}

\newcommand{\tto}{\twoheadrightarrow}

\renewcommand{\part}{\mathscr{P}}
\newcommand{\ppart}{\mathscr{P}^\reg}
\newcommand{\gen}{\mathscr{G}}
\newcommand{\Sch}{\operatorname{S}}
\newcommand{\reg}{\mathrm{reg}}
\newcommand{\Tr}{\mathrm{Tr}}
\newcommand{\TL}{\mathrm{TL}}

\newcommand{\fm}{\mathfrak{m}}
\begin{document}

\begin{abstract}
In 1996, Baranov and Kleshchev classified maximal ideals in the finitary symmetric group algebra over fields of characteristic not 2. The corresponding question in characteristic~2 has remained open since. In the current paper we solve the problem by establishing part of a conjectural connection between prime ideals in this group algebra and the recently defined higher Verlinde categories. To achieve this we investigate thick tensor ideals of tilting modules of the general linear group in characteristic 2.
\end{abstract}

\keywords{finitary symmetric and alternating group algebras, finitary Hecke algebra, modular representation theory, tilting modules, tensor ideals, prime and maximal ideals, symmetric tensor categories}

\maketitle

\vspace{-2mm}
\begin{center}
{\textbf{To Sasha Kleshchev on his 60th birthday with admiration.}}
\end{center}
\vspace{2mm}

\section*{Introduction}

The union of the finite symmetric groups is the finitary symmetric group $S_\infty$, which appears frequently in representation theory and combinatorics. It is a natural question what maximal (or prime) ideals exist in $\bk S_\infty$, for a field $\bk$. A classical fact \cite{FL} states there are only two maximal ideals in characteristic zero: the annihilator ideals of the two one-dimensional representations of~$S_\infty$. 

If the characteristic of $\bk$ is $p>2$, Baranov and Kleshchev proved in \cite{BK} that there are precisely $p-1$ maximal ideals. We describe some alternative characterisations of these ideals below. The principal methods in \cite{BK} are a reformulation in terms of \emph{inductive systems} \cite{Za}, Kleshchev's branching rules from \cite{Kl-branching} and the Mullineux involution. The latter tool is not available for $p=2$, and consequently that classification remained elusive for 30 years. Moreover, the naive analogue of the result for $p>2$ is incorrect. Indeed, for instance by \cite{Poly} there are two known maximal ideals when $p=2$ and our main result, proved in Theorem~\ref{thm:main}, states that that list is exhaustive.

\medskip

{\bf Theorem A.} There are precisely two maximal ideals in $\bk S_\infty$ when $\mathrm{char}(\bk)=2$.

\medskip

It is enlightening to place this result in the following context. A `first order approximation' of the symmetric group algebra in characteristic $p$ is given by the Hecke algebra at a complex $p$-th root of unity. As we point out in Theorem~\ref{thm:Hecke} and Proposition~\ref{prop:Hecke}, there are precisely $p-1$ maximal ideals in the corresponding infinite Hecke algebra $H_\infty$, even when $p=2$. Hence, the classification of maximal ideals in $\bk S_\infty$ for $p>2$ is captured by the first order behaviour. However, for $p=2$, an extra maximal ideal in $\bk S_\infty$ is produced by a second order principle. That we can concretely refer to this correction as second order follows for instance from the connection with the \emph{second} Verlinde category $\Ver_4$ in the chain $\Ver_{2^n}$, $n>0$ from~\cite{BE}, see \cite{Poly} or below. For completeness, we also show in Theorem~\ref{thm:Alt} that for $p=2$ the finitary alternating group algebra $\bk A_\infty$ also has precisely two maximal ideals, where it was proved in \cite{BK} that there are $(p-1)/2$ when $p>2$.

\subsection*{Descriptions of maximal ideals} Now we describe the maximal ideals in $\bk S_\infty$ in a complementary sense to the treatment in the main part of the paper.
Let $\bk$ be a field of characteristic $p>0$.
For any $z\in \mF_p^\times$, we consider the `Markov trace' on $\bk S_\infty$ given by the linear functional
\[\Tr_z:\bk S_\infty\to \bk,\quad\mbox{satisfying}\quad \Tr_z(1)=1,\quad \Tr_z(ab)=\Tr_z(ba)\quad\mbox{and}\quad \Tr_z(s_nx)=z^{-1}\,\Tr_z(x),\]
where $s_n$ is the permutation $(n,n+1)$ and $x\in \bk S_n$. 
For $1\le m<p$ denote by $J_m$ the radical of~$\Tr_{m}$. Then $J_1,\ldots, J_{p-1}$ are the maximal ideals in $\bk S_\infty$ for $p>2$. This follows from the analogue of \cite{We}, where these Markov traces are considered for the finitary Hecke algebra, in relation with the analogue of the semisimple inductive systems from \cite{CS, Ma}, used in the original description of the maximal ideals in \cite{BK}. Alternatively, one can use \cite{Poly} and the oriented Brauer category.


Furthermore, as proved in \cite[Proposition~6.2.2]{Poly}, while $J_1$ and $J_{p-1}$ are the codimension-one ideals that we mentioned in characteristic 0, the ideals $J_m$ for $1<m<p-1$ are generated by the symmetriser of $\bk S_{p+1-m}$ and the skew symmetriser of $\bk S_{m+1}$.

In particular, for $J_2$ it is natural to consider the infinite Temperley-Lieb algebra $\TL_\infty(2)$, the quotient of $\bk S_\infty$ by the skew symmetriser of $\bk S_3$. Indeed, when $p>2$ this Temperley-Lieb algebra has a maximal ideal generated by the Jones-Wenzl idempotent in degree $p-1$, besides the obvious augmentation ideal. The inverse images of these two ideals are $J_2$ and $J_1$ respectively.

It turns out that, for $p=2$, there is still a second maximal ideal in $\TL_\infty(2)$, besides the augmentation ideal, namely the ideal generated by the Jones-Wenzl idempotent in degree $3=p^2-1$. Its inverse image in $\bk S_\infty$ is the second maximal ideal from Theorem~A, see \cite[\S 6.2]{Poly} for an alternative construction. One can show that this ideal is {\bf not} the radical of a trace functional as above. Indeed, by \cite[\S 6.2]{Poly}, the quotient by this maximal ideal is a direct limit of matrix algebras $\mathrm{Mat}_{2^j}(\bk)$ with block diagonal embeddings. Since every trace on a matrix algebra must be a scalar multiple of the standard one it follows that the trace of an element in $\mathrm{Mat}_{2^j}(\bk)$ inside $\mathrm{Mat}_{2^{j+1}}(\bk)$ is zero, so the quotient admits only the zero trace. 

\subsection*{Small inductive systems and higher Verlinde categories}

Following \cite{Za, BK}, maximal ideals in $\bk S_\infty$ are in bijection with minimal inductive systems. The latter are coherent collections of simple representations of $S_n$ for all $n$. More generally, \emph{prime} ideals in $\bk S_\infty$ relate (albeit not necessarily bijectively) to 
\emph{indecomposable} inductive systems. In \cite{Tprime} a more restrictive notion, of T-prime ideals and corresponding T-indecomposable inductive systems, was introduced.

As discussed in \cite{Tprime} or in \S \ref{sec:indsys} below, objects in $\bk$-linear symmetric monoidal categories are sources of ideals in $\bk S_\infty$ and consequently also inductive systems. In \cite[Conjecture~5.1.2]{Tprime} it was conjectured that this connection should lead to a bijection between the set of T-indecomposable inductive systems and the set of (isomorphism classes of) semisimple objects in the symmetric monoidal category $\Ver_{p^\infty}$, which plays a central role in the ongoing investigations into pretannakian categories, see \cite{BE, BEO, AbEnv, CEO} and references therein. From the validity of \cite[Conjecture~5.1.2]{Tprime} one could quickly deduce Theorem A. Our strategy for proving Theorem~A can  therefore be summarised as proving `enough' of the conjecture. Concretely, it is shown in \cite{BK} that, for every characteristic $p>0$, the only candidates for minimal inductive systems can be labelled as $\Phi[m]$ for $m\in\mZ_{>0}$. Theorem A then follows from the following, proved in Theorem~\ref{thm:conj}.

\medskip
{\bf Theorem B.} If $p=2$, then each inductive system $\Phi[m]$ is the inductive system of a semisimple object in $\Ver_{2^\infty}$ in accordance with \cite[Conjecture~5.1.2]{Tprime}.

\medskip

For $p>2$, minimal inductive systems were classified via branching rules, hence by working with individual representations of $S_n$. Our methods for Theorem~A on the other hand establish tools for proving inclusions between inductive systems, without zooming in on the individual representations.

We also provide further evidence towards \cite[Conjecture~5.1.2]{Tprime} for all $p$, by showing in Theorem~\ref{thm:conj} that all T-indecomposable inductive systems of `length' 2 and 3 correspond to $\Ver_{p^\infty}$ as predicted. The same effort also proves \cite[Conjecture~6.3.4]{Tprime} stating that all T-prime ideals in $\bk S_\infty$ admit a `categorical dimension', see Theorem~\ref{thm:primedim}.

The paper is organised as follows. In Section~\ref{sec:prel} we recall some preliminary notions. In Section~\ref{sec:indsys} we discuss inductive systems and their connection with monoidal categories. Section~\ref{sec:GL} is the most technical and arduous part of the paper, where we prove some results about thick tensor ideals in the category of tilting modules for $\GL_m$ in characteristic 2. If we were after similar results in characteristic $p>m$, we could rely on classical work of Andersen, Gelfand and Kazhdan \cite{An, GK}. To get such results for $p=2$ involves a number of new constructions and ideas which we hope will have further applications. In Section~\ref{sec:max} we discuss the classification of maximal ideals in symmetric and alternating group algebras and Hecke algebras. In Section~\ref{sec:allp} we return to arbitrary positive characteristic $p>0$ and show how to connect the study of thick tensor ideals in the monoidal subcategory of $\Tilt\GL_m$ generated by the natural representation, with the same study in the entire category. This leads to the aforementioned progress on \cite[Conjectures~5.1.2 and~6.3.4]{Tprime}.

\subsection*{Going to A without B} \cite[Conjecture~5.1.2]{Tprime} provided crucial guidance for proving Theorem~A. However, one needs strictly less than Theorem~B to prove Theorem~A, so one can discard the hardest parts of the technical Section~\ref{sec:GL}. For readers interested in the most direct proof of Theorem~A, we point out that the essential non-minimality of $\Phi[m]$ for $m>2$ is proved in Corollaries~\ref{cor:power} and~\ref{cor:other}. For the former, we only need Theorem~\ref{thm:cellSt} for a power of 2, in which case its proof simplifies drastically. For the latter, we only need Lemma~\ref{lem:m1m2} from Section~\ref{sec:GL}.

\section{preliminaries}\label{sec:prel}
Let $\bk$ be a field of characteristic $p>0$.

\subsection{Partitions and symmetric groups}

\subsubsection{}
We denote the set of all partitions by $\part$. We can decompose it as
\[\part\;=\; \bigsqcup_{n\in\mN}\part_n\;=\;\bigsqcup_{m\in\mN}\part(m),\]
where $\part_n$ contains all partitions of size $n$, hence all $\lambda\vdash n$, and $\part(m)$ contains all partitions of length precisely $m$, so $\ell(\lambda)=m$. We also write $\part(\le m)$ for the set of partitions of length at most $m$, meaning $\lambda_{m+1}=0$. Let $\lambda^t$ be the transpose of $\lambda\in\part$.
For $m,i\in\mZ_{>0}$, we fix the notation
\[\rho_m=(m-1,m-2,\ldots,2, 1)\in\part(m-1)\quad\mbox{and}\quad \omega_i=(1,\ldots, 1)\in \part(i).\]
Assuming the prime $p$ is clear from context, we denote the set of $p$-regular partitions as $\ppart$, and we similarly take its subsets $\ppart_n$ and $\ppart(m)$.

\subsubsection{} We let $S_n$ be the permutation group of the set $\{1,2,\ldots,n\}$ for $n\in\mZ_{>0}$. This gives natural inclusions $S_n< S_{n+1}$. More generally, we interpret $S_{m}\times S_n< S_{m+n}$ as the Young subgroup that stabilises the sets $\{1,\ldots,m\}$ and $\{m+1,\ldots,m+n\}$. We consider $S_\infty:=\cup_nS_n$.

As in \cite{BK, Kl-branching}, the simple modules of $\bk S_n$ are labelled as $D^\lambda$ for $\lambda\in\ppart_n$. 
 We denote by $e_\lambda$ an arbitrary primitive idempotent (unique up to conjugacy) corresponding to $D^\lambda$.

\subsection{Symmetric monoidal categories} See \cite{EGNO} for background on monoidal categories.

\subsubsection{}
We will use the term {\bf $\bk$-SM category} to refer to any $\bk$-linear additive, idempotent complete, symmetric monoidal category. We say that such $(\cC,\otimes,\unit)$ is {\bf integral} if $X\otimes Y=0$ for $X,Y\in\cC$ implies at least one of $X$ or $Y$ is zero. A {\bf tensor functor} between $\bk$-SM categories is a $\bk$-linear symmetric monoidal functor. For a finite-dimensional vector space $V$ and a $\bk$-SM category $\cC$ with $X\in\cC$ we have the object $V\otimes_{\bk}X$, which is isomorphic to $X^{\oplus \dim V}$, that represents the functor
\[\Hom_{\bk}(V,\Hom(X,-))\]
from $\cC$ to the category of vector spaces.

\subsubsection{}

We refer to \cite{Selecta} for some background on tensor ideals. A {\bf thick tensor ideal $\bI$} in a $\bk$-SM category $\cC$ is a collection of objects (closed under isomorphism) in $\cC$ such that $X\in\bI$ implies that $Y\otimes X\in \bI$ for all $Y\in\cC$, and furthermore $X_1\oplus X_2\in\bI$ if and only if $X_1$ and $X_2$ are both in $\bI$. A thick tensor ideal $\bI$ is {\bf prime} if it does not contain $\unit$ and if $X\otimes Y\in\bI$ implies that at least one of $X,Y$ is in $\bI$. 

A {\bf tensor ideal} $\cI$ in $\cC$ is an ideal in the additive sense (a system of subspaces $\cI(X,Y)\subset\Hom(X,Y)$ for all $X,Y\in \cC$ that is closed under composition on either side with arbitrary morphisms) such that $f\in \cI$ implies that $Z\otimes f\in\cI$ for all $Z\in\cC$. For a tensor ideal $\cI$, the quotient category $\cC/\cI$ is canonically a $\bk$-SM category again. A tensor ideal $\cI$ is {\bf prime} if $f\otimes g\in\cI$ implies $f\in\cI$ or $g\in\cI$ for morphisms $f,g$. We refer to \cite{CEO} for more details on prime tensor ideals.

The collection of identity morphisms contained in a tensor ideal $\cI$ defines a thick tensor ideal $\bI=\Ob(\cI)$, yielding a surjection from the set of tensor ideals onto the set of thick tensor ideals. A naive section of this surjection is given by associating to a thick tensor ideal $\bI$ the collection $\cI$ of morphisms that factor through objects in $\bI$. By slight abuse of notation, we will write $\cC/\bI$ for the quotient with respect to this tensor ideal. By definition, $\cC/\bI$ is integral if and only if $\bI$ is prime.

In order to match with conventions from \cite{Tprime}, we might sometimes ignore the zero ideal as a prime ideal, or consider the identity ideal to be prime.

\subsubsection{} A $\bk$-SM category $\cC$ is {\bf rigid} if every $X\in\cC$ has a monoidal dual $X^\ast$. We will often use the fact that, by the properties of the defining evaluation morphism $\ev_X:X^\ast\otimes X\to\unit$, the object $X$ is a direct summand of $X\otimes X^\ast\otimes X$ since $X\otimes \ev_X$ is split.

There is a unique maximal tensor ideal $\cM$ in a rigid $\bk$-SM category $\cC$ with $\End(\unit)=\bk$. Under suitable assumptions, see \cite{EO} and references therein, the quotient $\cC/\cM$ is semisimple abelian, and known as the {\bf semisimplification} $\overline{\cC}$ of $\cC$. An object in $\cC$ is called {\bf negligible} if it is zero in $\overline{\cC}$.

\subsubsection{}Denote by $\Sym^0$ the universal $\bk$-linear symmetric monoidal category on one generator. More concretely, its set of objects is given by $\mN$, the endomorphism algebra of $n$ is $\bk S_n$, there are no morphisms between different objects, tensor product is given by addition on objects and the inclusions $\bk(S_m\times S_n)\subset\bk S_{m+n} $ on morphisms. For clarity, we will denote the object $n$ by $E^{\otimes n}$. 

We let $\Sym$ be the Karoubi envelope of $\Sym^0$, which is thus a $\bk$-SM category. Its objects are direct sums of pairs $(E^{\otimes n},e)$ where $e\in\bk S_n$ is an idempotent. Then, for any $\bk$-SM category $\cC$ there is an equivalence of categories between the category of tensor functors $\Sym\to \cC$ and the underlying category of~$\cC$, given by $F\mapsto F(E)$.

\subsection{Tilting modules}\label{sec:prelTilt}
See \cite{Jantzen} for background on the representation theory of reductive groups. 

\subsubsection{}For a split reductive group $G$ over $\bk$, which will mostly be of type $A$, we consider a maximal torus and Borel subgroup $T\subset B^+\subset G$, leading to a weight lattice $X$ with dominant weights $X^+$. We let $\rho\in \mZ[1/2]\otimes X$ be the half-sum of positive roots. Following \cite{Jantzen}, for $r\in\mZ_{>0}$, we write
\[X_r\;=\;\{\lambda\in X \mid 0\le  \langle \lambda,\alpha^\vee\rangle <p^r\mbox{ for every simple positive root $\alpha$}.\}\]
For $V\in\Rep G$, we denote its $r$-th Frobenius twist by $V^{(r)}$. Inside the representation category $\Rep G$, we have the rigid $\bk$-SM category $\Tilt G$ of tilting modules, see \cite[Appendix E]{Jantzen}. The indecomposable tilting modules are labelled as $T(\lambda)$, $\lambda\in X^+$. 

\subsubsection{} If $G=\GL_m$ for some $m\in \mZ_{>0}$, we take the usual choice of $T\subset B^+$ with $B^+$ comprising upper triangular matrices and its negative Borel $B$ of lower triangular matrices. In this case $X\cong \mZ^m$, where $X^+$ corresponds to non-increasing sequences of integers. Hence we can identify $\part(\le m)$ with a subset of $X^+$. In particular $\omega_i$, for $i<m$ are the fundamental weights.

We always denote by $V$ the natural $m$-dimensional representation of $\GL_m$.
The category $\Tilt \GL_m$ is generated as a $\bk$-SM category by $\w^i V$ and its duals. The $\bk$-SM subcategory $\Tilt^\circ\GL_m$ generated by $V$ has as indecomposable objects precisely $T(\lambda)$, with $\lambda\in\ppart\subset X^+$, see \cite{BrK}. We also denote by $\Tilt^\bullet\GL_m$ the rigid $\bk$-SM subcategory of $\Tilt\GL_m$ generated by $V$. The $\bk$-SM subcategory $\Tilt^+\GL_m$ generated  by the $\w^i V$ contains $T(\lambda)$ if and only if $\lambda\in \part(\le m)\subset X^+$. We have inclusions that are strict, except the vertical ones for $p>m$,
\[\xymatrix{
\Tilt^\circ\GL_m\ar@{^{(}->}[rr] \ar@{^{(}->}[d]&&\Tilt^\bullet\GL_m\ar@{^{(}->}[d]\\
\Tilt^+\GL_m\ar@{^{(}->}[rr] &&\Tilt\GL_m.}\]

\subsubsection{}\label{sec:SymTilt}\label{def:Km} By Schur-Weyl duality, the universal functor 
\begin{equation}\label{eq:SymtoTilt}
\Sym\to \Tilt^\circ \GL_m,\quad E\mapsto V
\end{equation} is full and essentially surjective, so we can interpret $\Tilt^\circ \GL_m$ as the quotient of $\Sym$ by the tensor ideal generated by the skew symmetriser in $\bk S_{m+1}$, see \cite{DP}. Any non-zero tensor ideal of $\Sym$ contains the kernel of this functor for some $m$, see \cite[Lemma~4.10]{CEO}. It is well-known, see for instance~\cite{BrK}, that \eqref{eq:SymtoTilt} satisfies 
\begin{equation}\label{eq:simpletilt}(E^{\otimes n},e_\lambda)\,\mapsto\, \begin{cases}
T(\lambda),&\mbox{if}\quad\lambda\in \ppart(\le m),\\
0,&\mbox{otherwise.}\end{cases}\end{equation}

The above paragraph also shows that we have canonical tensor functors
\[\Tilt^\circ \GL_m\;\to\;\Tilt^\circ \GL_{m-1},\]
that send $T(\lambda)$ to $0$ if $\lambda\in \ppart(m)$ and to the module with same notation otherwise. We denote the kernel of the tensor functor as $\cK_m$ and its corresponding thick tensor ideal in  $\Tilt^\circ \GL_m$ by~$\bK_m$.

\subsubsection{} For $\SL_m$ viewed as a subgroup of $\GL_m$, restriction along $\SL_m\to \GL_m$ introduces a map on weight lattices that we interpret as sending $X\cong\mZ^m$ to its quotient with respect to the subgroup generated by $\omega_m$, which we denote by $\lambda\mapsto [\lambda]$. Restriction yields a tensor functor
\[\Tilt \GL_m\;\to\; \Tilt\SL_m,\quad T(\lambda)\mapsto T([\lambda]).\]

\begin{example} \label{classSL2}

The case $G=\SL_2$ will be especially relevant. In this case $X\cong\mZ$, and we denote the indecomposable tilting modules as $T_i$ for $i\in\mN$. The natural representation will be denoted by $U=T_1$.
The tensor ideals in $\Tilt \SL_2$ were classified in \cite{Selecta}, and they are in bijection with thick tensor ideals, so we only describe the latter. The non-zero proper thick tensor ideals form one chain
\[\cdots\;\subset\; \bI_3\;\subset\; \bI_2\;\subset\; \bI_1\;\subset\;\Tilt SL_2,\]
where $\bI_n$ contains the indecomposables $T_i$ for $i\ge p^n-1$. In other words, the most canonical generator of $\bI_n$ is the Steinberg module $St_n=T_{p^n-1}$.
It is well-known all these tensor ideals are prime, see for instance~\cite[Lemma~2.4.3(1)]{CEOq}.
\end{example}


\section{Inductive systems}\label{sec:indsys}
In this section, we reinterpret some results from \cite{BK, Tprime, Za} and make some additional connections with monoidal categories. We let $\bk$ be a field of characteristic $p>0$.

\subsection{Inductive systems and ideals}

We define inductive systems following \cite{Za}.
\begin{definition} 
 An {\bf inductive system} $\Phi$ comprises a non-empty subset $\Phi^n\subset\Irr S_n$ for each $n\in\mZ_{>0}$, such that $D\in\Phi^n$ if and only if $[\Res^{S_{n+1}}_{S_n}D':D]\not=0$ for some $D'\in \Phi^{n+1}$.
\end{definition}

There is an obvious notion of inclusion between inductive systems, leading to a unique maximal inductive system $\Phi$ with $\Phi^n=\Irr S_n$, which we denote simply by $\Irr S$.

For inductive systems $\Phi$ and $\Psi$ we obtain an inductive system $\Phi\otimes \Psi$, where $(\Phi\otimes\Psi)^n\subset \Irr S_n$ contains all simple constituents of modules $D_1\otimes D_2$, with $D_1\in \Phi^n$, $D_2\in \Psi^n$.

\subsubsection{}
Following \cite{Za} an inductive system $\Phi$ is {\bf indecomposable} if we cannot write it as $\Phi=\Phi_1\cup \Phi_2$ for two proper inductive subsystems $\Phi_i\subset\Phi$.
We will also use a stronger notion, introduced in \cite{Tprime}. An inductive system $\Phi$ is {\bf T-indecomposable} if for every $D_1\in \Phi^m$, $D_2\in\Phi^n$, there exists $D\in \Phi^{m+n}$ with
\[[\Res^{S_{m+n}}_{S_m\times S_n}D:D_1\boxtimes D_2]\;\not=\;0.\]
Obviously, any minimal inductive system is indecomposable, but one can show that it must even be T-indecomposable, see \cite[Corollary~2.1.9(2)]{Tprime}. Weakly T-prime ideals in $\bk S_\infty$ were introduced in \cite[Definition~2.1.3]{Tprime}, but for the current paper we can simply take Lemma~\ref{lem:bijections}(1) as definition.

\begin{lemma}\label{lem:bijections}
\begin{enumerate}
\item The assignment that takes an ideal $I<\bk S_\infty$ and associates the tensor  ideal $\cI$ in $\Sym$ determined by $\cI(E^{\otimes n},E^{\otimes n})=I\cap\bk S_n$ yields an inclusion preserving bijection between ideals in $\bk S_\infty$ and tensor ideals in $\Sym$ with the property that $f\in \cI$ if $f\otimes E\in\cI$. This bijection restricts to a bijection between weakly T-prime ideals in $\bk S_\infty$ and prime tensor ideals in $\Sym$.
\item The assignment that takes an inductive system $\Phi$ and associates the thick tensor ideal $\bI$ in $\Sym$ determined by $(E^{\otimes n},e_\lambda)\in \bI$ if and only if $D^\lambda\not\in \Phi^n$ yields an inclusion reversing bijection between inductive systems and proper thick tensor ideals in $\Sym$ with the property that $X\in \bI$ if $X\otimes E\in\bI$. It restricts to a bijection between T-indecomposable inductive systems and prime thick tensor ideals not containing $E$.
\end{enumerate}
\end{lemma}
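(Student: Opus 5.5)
Both parts rest on the fact that $\Sym$ is the Karoubi envelope of $\Sym^0$. A tensor ideal in $\Sym$ is therefore determined by its components $\cI(E^{\otimes n},E^{\otimes n})\subset\bk S_n$, and a thick tensor ideal by the primitive idempotents $e_\lambda$ it contains.

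\emph{Part (1).} Given an ideal $I<\bk S_\infty$, put $I_n=I\cap\bk S_n$; these are two-sided ideals of $\bk S_n$. Compatibility with tensoring by $E$ on the right is $I_n\otimes 1\subset I_{n+1}$, which holds because $I_n\subset I$. Tensoring on the left is conjugation by the permutation that swaps the blocks, which lies in $S_{n+1}$, so it also stays in $I$. Since $\Sym^0$ has no morphisms between different objects, these components define a tensor ideal of $\Sym^0$, and hence of its Karoubi envelope.

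The extra property ``$f\in\cI$ if $f\otimes E\in\cI$'' is exactly $I_{n+1}\cap\bk S_n=I_n$, which is automatic here. Conversely, a tensor ideal with this property gives a compatible family $\{I_n\}$ with $I_{n+1}\cap \bk S_n=I_n$, and then $I=\bigcup I_n$ is an ideal of $\bk S_\infty$. The two constructions are mutually inverse and preserve inclusion.

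For the primeness statement I would use the paper's convention that weakly T-prime ideals are those matching prime tensor ideals under this bijection. The step needing care is that primeness in $\Sym$ can be tested on morphisms between objects $E^{\otimes n}$ alone: a general morphism is $e'fe$, and $e'fe\otimes g\in\cI$ reduces to the case $f\otimes g$ by composing with idempotents. Primeness is then the condition on $I$ that $a\otimes b\in I_{m+n}$ with $a\in\bk S_m$, $b\in\bk S_n$ forces $a\in I$ or $b\in I$.

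\emph{Part (2).} A thick tensor ideal $\bI$ of $\Sym$ is determined by which indecomposables $(E^{\otimes n},e_\lambda)$ it contains. Set $\Phi^n=\{D^\lambda : (E^{\otimes n},e_\lambda)\notin\bI\}$. The key computation is a multiplicity identity, obtained via $\Hom(\bk S_{n+1}e_\mu,-)$:
\[(E^{\otimes n},e_\lambda)\otimes E\;\cong\;\bigoplus_{\mu}(E^{\otimes n+1},e_\mu)^{\oplus [\Res D^\mu:D^\lambda]}.\]
This holds because $\Hom\bigl((E^{\otimes n+1},e_\mu),(E^{\otimes n},e_\lambda)\otimes E\bigr)=e_\mu\bk S_{n+1}(e_\lambda\otimes1)$, and the latter is nonzero exactly when $[\Res D^\mu:D^\lambda]\neq0$.

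From this identity, closure of $\bI$ under $-\otimes E$ plus the property ``$X\otimes E\in\bI\Rightarrow X\in\bI$'' translate precisely into the inductive-system axiom: $D^\lambda\in\Phi^n$ if and only if some $D^\mu\in\Phi^{n+1}$ restricts to contain $D^\lambda$. Closure under $Y\otimes-$ for general $Y$ follows from closure under $E\otimes-$, which is symmetric to $-\otimes E$, together with passage to summands. Properness of $\bI$ corresponds to the $\Phi^n$ being nonempty. Thickness is built into the definition through the indecomposable summands. The inverse map sends $\Phi$ to the additive closure of the $(E^{\otimes n},e_\lambda)$ with $D^\lambda\notin\Phi^n$. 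The hardest point is checking that this collection is closed under tensoring with arbitrary objects, which reduces to the $E$ case above.

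Finally, for primeness, the summands of $(E^{\otimes m},e_\lambda)\otimes(E^{\otimes n},e_\nu)$ are the $(E^{\otimes m+n},e_\mu)$ with $[\Res_{S_m\times S_n}D^\mu:D^\lambda\boxtimes D^\nu]\neq0$, by the same Hom computation. So ``the product of two objects outside $\bI$ is outside $\bI$'' is exactly T-indecomposability. Here $\unit\notin\bI$ because $\Phi$ is nonempty (the $n=0$ case, or use the property). The condition $E\notin\bI$ says $\Phi^1\neq\emptyset$; it is automatic for proper $\bI$ and is stated to exclude degenerate cases.
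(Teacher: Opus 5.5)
Your reduction to the components $I\cap\bk S_n$ and to the indecomposables $(E^{\otimes n},e_\lambda)$ is the same route as the paper, which writes out part (1) in two lines and says part (2) ``follows similarly''. Your part (1) is fine.

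The problem is the justification of the multiplicity identity on which all of part (2) rests. You assert that $\Hom\bigl((E^{\otimes n+1},e_\mu),(E^{\otimes n},e_\lambda)\otimes E\bigr)$, i.e.\ the space $e_\mu\bk S_{n+1}(e_\lambda\otimes 1)$, is non-zero exactly when $[\Res D^\mu:D^\lambda]\neq 0$. That is false. Up to composition convention, this is a Hom space between the projectives $P_\mu=\bk S_{n+1}e_\mu$ and $\Indu P_\lambda=\bk S_{n+1}(e_\lambda\otimes 1)$. Such a space detects \emph{composition factors}, not \emph{direct summands}; in particular $\Hom(P_\mu,-)$ computes $[\Indu P_\lambda:D^\mu]$.

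Here is a counterexample: $p=2$, $n=3$, $\lambda=(3)$, $\mu=(3,1)$. Since $\Res^{S_4}_{S_3}D^{(3,1)}=D^{(2,1)}$, we have $[\Res D^{(3,1)}:D^{(3)}]=0$, and indeed $\Indu P_{(3)}\cong P_{(4)}$. But the Cartan matrix of $\bk S_4$ is $\left(\begin{smallmatrix}4&2\\2&3\end{smallmatrix}\right)$, so $\Hom(P_{(3,1)},P_{(4)})$ and $\Hom(P_{(4)},P_{(3,1)})$ are both $2$-dimensional. Your criterion would therefore list $(E^{\otimes 4},e_{(3,1)})$ as a summand of $(E^{\otimes 3},e_{(3)})\otimes E$, and the translation into the inductive-system axiom would break.

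The displayed identity itself is true, and the repair is standard: test against simple tops rather than projective covers. The multiplicity of $P_\mu$ in $\Indu P_\lambda$ is $\dim\Hom_{S_{n+1}}(\Indu P_\lambda,D^\mu)$, using $\End(D^\mu)=\bk$. By Frobenius reciprocity this equals $\dim\Hom_{S_n}(P_\lambda,\Res D^\mu)=[\Res D^\mu:D^\lambda]$. The same fix is needed for the $S_m\times S_n$ version you use for primeness, where $e_\lambda\otimes e_\nu$ is primitive with absolutely simple top $D^\lambda\boxtimes D^\nu$. With this, the rest of your argument goes through.

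One smaller correction in your last paragraph. The condition $E\notin\bI$ is not automatic for a proper prime $\bI$: the thick tensor ideal generated by $E$ (all objects without a summand $\unit$) is proper and prime but contains $E$. What you need to state instead is that a prime $\bI$ with $E\notin\bI$ automatically satisfies ``$X\otimes E\in\bI\Rightarrow X\in\bI$''. That is what places it in the range of your bijection.
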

\begin{proof}
An ideal $\cI$ in $\Sym$ is the same as the choice of an ideal $I_n=\cI(n,n)<\bk S_\infty$ for every $n$. On the other hand, the assignment that takes an ideal in $I<\bk S_\infty$ and associates the ideals $I\cap\bk S_n$ gives a bijection between the set of ideals in $\bk S_\infty$ and the set of families of ideals $I_n<\bk S_n$ for which $I_n=I_{n+1}\cap\bk S_n$, from which the conclusion (1) follows.
Part (2) follows similarly.
\end{proof}

The above lemma explains the connection between ideals in $\bk S_\infty$ and inductive systems from a tensor ideal perspective.
We refer to \cite{BK, Tprime, Za} for full details and only record the precise statement that we will need within the current paper. By $\Ann_RM<R$, we refer to the annihilator of a module $M$ over a ring $R$.

\begin{proposition}\label{prop:ideals}
For an inductive system $\Phi$, there is an ideal $I(\Phi)<\bk S_\infty$, where
\[I(\Phi)\cap \bk S_n\;=\;\bigcap_{m\ge n}\bigcap_{D\in \Phi^m} \Ann_{\bk S_n}\Res^{\bk S_m}_{\bk S_n}D.\]
This assignment yields a bijection between minimal inductive systems and maximal ideals in~$\bk S_\infty$.
\end{proposition}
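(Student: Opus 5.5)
Two things have to be checked: that the displayed family of ideals $I_n:=I(\Phi)\cap\bk S_n$ really defines an ideal of $\bk S_\infty$, and that $\Phi\mapsto I(\Phi)$ is a bijection from minimal inductive systems to maximal ideals. The plan is to translate everything into a statement about ideals $I_n<\bk S_n$ with $I_n=I_{n+1}\cap \bk S_n$. By the proof of Lemma~\ref{lem:bijections}(1), such compatible families are exactly the ideals of $\bk S_\infty$.

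\emph{Step 1: $I(\Phi)$ is well defined.} The right-hand side is an intersection of two-sided ideals of $\bk S_n$, so $I_n$ is an ideal of $\bk S_n$. Compatibility means $I_{n+1}\cap\bk S_n=I_n$. For the inclusion $\supseteq$, take $x\in I_n$ and $m\ge n+1$; then $x$ annihilates $\Res_{\bk S_n}D$ for all $D\in\Phi^m$. For the inclusion $\subseteq$, the only extra condition in $I_n$ is the case $m=n$, i.e.\ that $x$ kills each $D\in\Phi^n$. By the defining property of an inductive system, every $D\in\Phi^n$ is a composition factor of $\Res D'$ for some $D'\in\Phi^{n+1}$, and an element killing a module kills all its composition factors. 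Hence $I(\Phi)$ is an ideal. It is proper because $1\notin I_n$, as $\Phi^n\neq\emptyset$.

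\emph{Step 2: a reverse construction.} Given a proper ideal $I$, define $\Phi_I^n$ to be the set of simple $\bk S_n$-modules annihilated by $I_n=I\cap\bk S_n$, i.e.\ the simple modules of $\bk S_n/I_n$. I would check that $\Phi_I$ is an inductive system. Let $D'\in\Phi_I^{n+1}$. Then $I_n\subset I_{n+1}$ kills $\Res D'$, so every constituent of $\Res D'$ lies in $\Phi_I^n$. Conversely, let $D\in\Phi_I^n$. Since $I_n=I_{n+1}\cap\bk S_n$, the natural map $\bk S_n/I_n\to\bk S_{n+1}/I_{n+1}$ is injective. The module $D$ is a composition factor of the finite-dimensional algebra $\bk S_n/I_n$, viewed as a left module over itself. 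This algebra embeds into $\Res(\bk S_{n+1}/I_{n+1})$, whose composition factors are the constituents of $\Res D'$ for $D'$ simple $\bk S_{n+1}/I_{n+1}$-modules. Hence $D$ occurs in some $\Res D'$ with $D'\in\Phi_I^{n+1}$. Nonemptiness of $\Phi_I^n$ holds since $I_n\ne\bk S_n$.

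Both maps are inclusion reversing. Moreover $\Phi\subseteq\Phi_{I(\Phi)}$ trivially (the case $m=n$), and $I\subseteq I(\Phi_I)$ because $I_n\subseteq I_m$ kills every module in $\Phi_I^m$.

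\emph{Step 3: the bijection, which is the main obstacle.} Write $\Phi\mapsto I(\Phi)$ and $I\mapsto\Phi_I$. Let $\fm$ be maximal. Then $I(\Phi_\fm)\supseteq\fm$ is proper, so $I(\Phi_\fm)=\fm$. If $\Psi\subseteq\Phi_\fm$ is an inductive subsystem, then $I(\Psi)\supseteq I(\Phi_\fm)=\fm$, hence $I(\Psi)=\fm$. Then $\Psi\supseteq$? This is the delicate point. From $I(\Psi)=\fm$ I want $\Phi_\fm\subseteq\Psi$. Take $D\in\Phi_\fm^n$. The algebra $\bk S_n/\fm_n$ embeds into $\prod_{m\ge n,\,D'\in\Psi^m}\End(\Res D')$, and this embedding factors through a finite stage by finite-dimensionality. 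So $D$ is a composition factor of some $\Res^{S_m}_{S_n}D'$ with $D'\in\Psi^m$. Iterating the defining property of $\Psi$ down from $m$ to $n$ then gives $D\in\Psi^n$. Thus $\Phi_\fm$ is minimal.

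The same argument applied to $\Psi$ with $I(\Psi)=I$ shows in general that $\Phi_{I(\Psi)}=\Psi$. Hence the map $\Phi\mapsto I(\Phi)$ is injective. Finally, let $\Phi$ be minimal and $J\supseteq I(\Phi)$ be maximal. Then $\Phi_J\subseteq\Phi_{I(\Phi)}=\Phi$, so minimality gives $\Phi_J=\Phi$. Therefore $J=I(\Phi_J)=I(\Phi)$, because $I(\Phi_J)$ is proper and contains $J$. This shows that $I(\Phi)$ is maximal, completing the bijection.
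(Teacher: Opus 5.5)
Your proposal is correct, but it does not follow the paper's route. The paper proves nothing here directly. It cites \cite[Proposition~2.6(ii)]{BK}, which rests on Zalesskii's theory of inductive systems for locally finite groups \cite{Za}, and takes the explicit formula for $I(\Phi)$ from \cite[1.1.5]{Tprime}. A remark then notes that one could instead deduce it from Lemma~\ref{lem:bijections}, as a correspondence between maximal thick tensor ideals and maximal tensor ideals in $\Sym$, using \cite[\S 2.3]{CEOq}.

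You instead build an explicit order-reversing Galois connection $\Phi\mapsto I(\Phi)$, $I\mapsto \Phi_I$. Your key input is the identity $\Phi_{I(\Psi)}=\Psi$ for \emph{every} inductive system $\Psi$. You get it by using finite-dimensionality of $\bk S_n$ to replace the infinite intersection of annihilators by a finite one, and then using that $\Psi$ is closed under taking constituents of restrictions. Together with $\Psi\subseteq\Phi_{I(\Psi)}$ and $I\subseteq I(\Phi_I)$, the minimal/maximal correspondence follows formally.

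Your approach gives a self-contained proof that uses only Jordan--H\"older and finite-dimensionality. It works verbatim for any increasing union of finite-dimensional algebras, and it shows more: $\Phi\mapsto I(\Phi)$ is injective on all inductive systems, not just minimal ones. The citation route is shorter and places the result inside the established framework of \cite{Za, BK} and the tensor-ideal viewpoint the paper adopts.

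Two small points to tighten:
\begin{enumerate}
\item When you conclude that $D$ is a composition factor of some $\Res D'$ from the embedding $\bk S_n/I(\Psi)_n\hookrightarrow\prod\End_{\bk}(\Res D')$, you are using that $\End_{\bk}(\Res D')$, with $\bk S_n$ acting by post-composition, is a direct sum of $\dim D'$ copies of $\Res D'$ as a left module. This deserves one sentence.
\item In the final step you do not need a maximal $J$, and hence no Zorn's lemma. For an arbitrary proper ideal $J\supseteq I(\Phi)$, the same reasoning gives $\Phi_J=\Phi$ and then $J\subseteq I(\Phi_J)=I(\Phi)$, so $J=I(\Phi)$.
\end{enumerate}
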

\begin{proof}
This is \cite[Proposition~2.6(ii)]{BK}, with explicit description of $I(\Phi)$ from \cite[1.1.5]{Tprime} (or \cite[Lemma~2.8]{BK} for the semisimple case).
\end{proof}

\begin{remark}
By Lemma~\ref{lem:bijections}, we can alternatively prove Proposition~\ref{prop:ideals} as the statement that there is a bijection between maximal thick tensor ideals and maximal tensor ideals in $\Sym$, which follows from the principles in \cite[\S 2.3]{CEOq}.
\end{remark}

\subsection{Inductive systems from symmetric monoidal categories}\label{sec:monoidal}

By Lemma~\ref{lem:bijections}(2), inductive systems can be `realised' inside the (universal) $\bk$-SM category $\Sym$. We expand this idea to exploit later in the paper.

\subsubsection{} Consider an object $X$ in a $\bk$-SM category $\cC$. For $\lambda\in \ppart_n$, the object $\Sch_\lambda(X):=e_\lambda(X^{\otimes n})$, with $\bk S_n$ acting via the braiding morphisms, is a direct summand of $X^{\otimes n}$. Up to isomorphism, it is independent of the choice of $e_\lambda$. In particular, we get a decomposition
\[X^{\otimes n}\;\cong\;\bigoplus_{\lambda\in\ppart_n}D^\lambda\otimes_{\bk}\Sch_\lambda(X).\]
By construction we have for $\lambda\in \ppart_m,\mu\in\ppart_n$
\begin{equation}\label{eq:SW}\Sch_\lambda(X)\otimes\Sch_\mu(X)\;\cong\; \bigoplus_{\kappa\in\ppart_{m+n}}\Sch_\kappa(X)^{c^\kappa_{\lambda,\mu}},\quad\mbox{with}\quad c_{\lambda,\mu}^\kappa\:=\left[\Res^{S_{m+n}}_{S_m\times S_n}D^\kappa:D^\lambda\boxtimes D^\mu\right]\end{equation}
and, for $\nu\in\ppart_n$
\begin{equation}\label{eq:kro}
\Sch_\nu(X\otimes Y)\;\cong\; \bigoplus_{\lambda,\mu\in \ppart_n}\left(\Sch_\lambda(X)\otimes \Sch_\mu(Y)\right)^{g^\nu_{\lambda,\mu}},\quad\mbox{with}\quad g_{\lambda,\mu}^\nu\:=\left[D^\lambda\otimes D^\mu:D^\nu\right].
\end{equation}
\begin{definition}
For $X\in \cC$ as above, the set $\Phi^n_X\subset\Irr S_n$ comprises all $D^\lambda$ for which $\Sch_\lambda(X)\not=0$.
\end{definition}
As we will typically use the same symbol for an object $X\in\cC$ and the same object in a quotient category $\cC/\bI$ by a thick tensor ideal $\bI$, we also use the notation $\Phi_{X;\bI}$ for $\Phi_X$ when $X$ is to be considered in $\cC/\bI$.
The following properties follow from equations~\eqref{eq:SW} and~\eqref{eq:kro}, or by definition.

\begin{lemma}\label{lem:systemfromSM}
Let $\cC$ be a $\bk$-SM category with non-zero objects $X,Y$.
\begin{enumerate}
\item If $X\otimes -$ is faithful on objects, then $\Phi_X$ is an inductive system.
\item If $\cC$ is integral, then the inductive system $\Phi_X$ is T-indecomposable.
\item If $\cC$ is integral, then $\Phi_{X\otimes Y}=\Phi_X\otimes \Phi_Y$.
\item We have $\Phi_X\subset\Phi_{X\oplus Y}\supset \Phi_Y$.
\item If $F:\cC\to\cD$ is a faithful tensor functor, then $\Phi_X=\Phi_{F(X)}$.
\end{enumerate}
\end{lemma}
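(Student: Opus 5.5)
The plan is to work throughout with the universal tensor functor $F_X:\Sym\to\cC$, $E\mapsto X$. It sends $(E^{\otimes n},e_\lambda)$ to $\Sch_\lambda(X)$, so $\Phi_X$ records exactly which indecomposables of $\Sym$ are not killed by $F_X$. Let $\bI_X$ be the collection of objects of $\Sym$ sent to zero. It is a thick tensor ideal, because $F_X$ is additive and monoidal and a direct sum is zero if and only if its summands are. By construction $\Phi_X$ is the collection associated to $\bI_X$ under the recipe of Lemma~\ref{lem:bijections}(2). Each part of the lemma then becomes a statement about $\bI_X$, and we will combine that dictionary with \eqref{eq:SW} and \eqref{eq:kro}.

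\textbf{Part (1).} We must check the defining property of an inductive system. Since $X\neq0$ and $X\otimes-$ is faithful on objects, $X^{\otimes n}\ne0$ for all $n$, so $\Phi^n_X\neq\emptyset$. Next apply \eqref{eq:SW} with $\mu=(1)$, using $\Sch_{(1)}(X)=X$. For $\lambda\in\ppart_n$ this gives $\Sch_\lambda(X)\otimes X\cong\bigoplus_\kappa\Sch_\kappa(X)^{c^\kappa_{\lambda,(1)}}$, where $c^\kappa_{\lambda,(1)}=[\Res D^\kappa:D^\lambda]$. If $D^\lambda\in\Phi^n_X$, faithfulness gives $\Sch_\lambda(X)\otimes X\ne0$, so some $\Sch_\kappa(X)\neq0$ with $[\Res D^\kappa:D^\lambda]\ne0$. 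Conversely, if $D^\kappa\in\Phi^{n+1}_X$ and $[\Res D^\kappa:D^\lambda]\neq 0$, then $\Sch_\kappa(X)$ is a summand of $\Sch_\lambda(X)\otimes X$, which forces $\Sch_\lambda(X)\ne0$.

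\textbf{Part (2).} Integrality implies that $X\otimes-$ is faithful on objects, so part (1) applies. Given $D^\lambda\in\Phi^m_X$ and $D^\mu\in\Phi^n_X$, integrality gives $\Sch_\lambda(X)\otimes\Sch_\mu(X)\ne0$. By \eqref{eq:SW} there is then some $\kappa$ with $c^\kappa_{\lambda,\mu}\neq0$ and $\Sch_\kappa(X)\ne0$, which is exactly the T-indecomposability condition.

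\textbf{Part (3).} We read off \eqref{eq:kro}. In an integral category, $\Sch_\lambda(X)\otimes\Sch_\mu(Y)\neq 0$ if and only if both factors are non-zero. So $\Sch_\nu(X\otimes Y)\ne0$ if and only if there exist $D^\lambda\in\Phi_X^n$ and $D^\mu\in\Phi_Y^n$ with $[D^\lambda\otimes D^\mu:D^\nu]\ne0$. This is precisely the definition of $(\Phi_X\otimes\Phi_Y)^n$.

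\textbf{Parts (4) and (5).} For part (4), we apply \eqref{eq:kro}-style expansion to a direct sum, or more simply note that $X$ is a summand of $X\oplus Y$. Hence $X^{\otimes n}$ is a summand of $(X\oplus Y)^{\otimes n}$ compatibly with the $S_n$-action, because the braiding restricts. It follows that $\Sch_\lambda(X)$ is a summand of $\Sch_\lambda(X\oplus Y)$. For part (5), $F$ is a tensor functor, so it commutes with braidings and $F(\Sch_\lambda(X))\cong\Sch_\lambda(F(X))$. A faithful functor reflects zero objects, since if $F(Z)=0$ then $\mathrm{id}_Z$ maps to zero and hence vanishes.

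The only point requiring care is the converse direction in part (1): it is where both the multiplicity interpretation of $c^\kappa_{\lambda,(1)}$ and the nonvanishing of $X^{\otimes n}$ enter, so that the systems $\Phi^n_X$ are non-empty and compatible. I expect no serious obstacle anywhere else.
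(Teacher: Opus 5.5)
Your proof is correct and is essentially the paper's argument. The paper only asserts that all five properties follow from \eqref{eq:SW} and \eqref{eq:kro} or from the definitions, and you have carried out exactly those checks: $\mu=(1)$ in \eqref{eq:SW} for part~(1), integrality plus \eqref{eq:SW} for part~(2), \eqref{eq:kro} for part~(3), and naturality of the braiding together with reflection of zero objects for parts~(4) and~(5); your opening reformulation through $F_X$ and $\bI_X$ is never actually used, though it is harmless and would give an alternative route to parts~(1) and~(2) via Lemma~\ref{lem:bijections}(2).
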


\begin{remark}
We could also consider the annihilator ideal of $X$ in $\bk S_\infty$ following \cite[\S 2.4 or \S 6.1]{Tprime}, then the inductive system associated to this ideal following \cite{Za} is $\Phi_X$.
\end{remark}

\begin{remark}
By Lemma~\ref{lem:bijections}, or alternatively universality of $\Sym$ and Lemma~\ref{lem:systemfromSM}(5), in order to obtain all inductive systems, it suffices to apply the above construction to $E$ in quotients of $\Sym$ by (thick) tensor ideals. \emph{However}, there are a number of reasons for expanding the definition as we did. Most notably applying Lemma~\ref{lem:systemfromSM}(3) or (4) requires working in a monoidal category where the relevant object factors into a tensor product or decomposes into a direct sum.
\end{remark}

\begin{example}\label{ex:BrK}
For $\cC=\Tilt^\circ\GL_m$, we have $\Sch_\lambda(V)\cong T(\lambda)$, see \eqref{eq:simpletilt}. In particular, \eqref{eq:SW} then produces \cite[Theorem~D(iii)]{BrK}:
\begin{equation}\label{eq:SWT}T(\lambda)\otimes T(\mu )\;\cong\; \bigoplus_{\kappa\in\ppart_{m+n}}T(\kappa)^{c^\kappa_{\lambda,\mu}},\quad\mbox{with}\quad c_{\lambda,\mu}^\kappa\:=[\Res^{S_{m+n}}_{S_m\times S_n}D^\kappa:D^\lambda\boxtimes D^\mu].\end{equation}
\end{example}

It will be convenient to have the following standard fact written out.
\begin{lemma}\label{lem:lambdam}
If $c^\kappa_{\lambda,\mu}\not=0$, for $\lambda,\mu\in\ppart(\le m)$,  then $\kappa_m\ge \lambda_m+\mu_m$
\end{lemma}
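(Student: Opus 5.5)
We argue inside $\Tilt^\circ\GL_m$ and use Example~\ref{ex:BrK}. There $T(\lambda)=\Sch_\lambda(V)$ is a summand of $V^{\otimes |\lambda|}$ and $T(\mu)=\Sch_\mu(V)$ is a summand of $V^{\otimes |\mu|}$. By \eqref{eq:SWT}, $c^\kappa_{\lambda,\mu}\neq0$ means exactly that $T(\kappa)$ is a direct summand of $T(\lambda)\otimes T(\mu)$. The plan is therefore to show that every indecomposable summand $T(\kappa)$ of $T(\lambda)\otimes T(\mu)$ satisfies $\kappa_m\ge\lambda_m+\mu_m$. We do this by splitting off the determinant.

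Write $D=\w^mV$ for the determinant representation. It is one-dimensional, invertible, and equal to $T(\omega_m)$.

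\textbf{Step 1.} For $\lambda\in\part(\le m)$ with $\lambda_m=a$, we claim $T(\lambda)\cong D^{\otimes a}\otimes T(\lambda-a\omega_m)$, where $\lambda-a\omega_m$ is again a partition. Tensoring with a one-dimensional module shifts all weights by $a\omega_m$ and preserves both the good filtrations and the Weyl filtrations. So $D^{\otimes a}\otimes T(\lambda-a\omega_m)$ is an indecomposable tilting module with highest weight $\lambda$, and hence equals $T(\lambda)$. The same holds for $\mu$ with $b=\mu_m$.

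\textbf{Step 2.} Combining the two decompositions gives
\[T(\lambda)\otimes T(\mu)\cong D^{\otimes(a+b)}\otimes\bigl(T(\lambda')\otimes T(\mu')\bigr),\]
where $\lambda'=\lambda-a\omega_m$ and $\mu'=\mu-b\omega_m$. The module $T(\lambda')\otimes T(\mu')$ is a polynomial representation, being a summand of a tensor power of $V$. Every weight $\nu$ of it has all entries $\nu_i\ge0$. In particular its indecomposable summands are $T(\nu)$ with $\nu\in\part(\le m)$, so $\nu_m\ge0$.

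\textbf{Step 3.} Tensoring with $D^{\otimes(a+b)}$ turns each such summand into $T(\nu+(a+b)\omega_m)$. Any summand $T(\kappa)$ of $T(\lambda)\otimes T(\mu)$ is therefore of this form, and $\kappa_m=\nu_m+a+b\ge\lambda_m+\mu_m$. We need the Krull--Schmidt property of tilting modules here, so that summands can be matched uniquely.

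The only point needing care is the identification in Step 1 of the highest weight of a twisted tilting module. This is standard, so no real obstacle is expected.

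Alternatively, the claim can be read off weights directly. Every weight of $T(\lambda)$ is a permutation of a dominant weight $\le\lambda$, and all such weights have $m$-th coordinate at least $\lambda_m$. Hence all weights $\xi$ of $T(\lambda)\otimes T(\mu)$ satisfy $\xi_i\ge\lambda_m+\mu_m$ for every $i$. In particular this holds for the highest weight $\kappa$ of any summand.
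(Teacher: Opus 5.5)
Your argument is correct, but your main route differs from the paper's. The paper argues purely by dominance. The highest weight of $T(\lambda)\otimes T(\mu)$ is $\lambda+\mu$, so any summand $T(\kappa)$ has $\kappa\le\lambda+\mu$. Since $|\kappa|=|\lambda|+|\mu|$, the partial-sum inequality $\sum_{i<m}\kappa_i\le\sum_{i<m}(\lambda_i+\mu_i)$ gives $\kappa_m\ge\lambda_m+\mu_m$. Your closing ``alternative'' paragraph is essentially this argument, phrased via the minimal coordinate of all weights; it rests on the same fact that a dominant $\nu\le\lambda$ has $\nu_m\ge\lambda_m$.

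Your main argument avoids the dominance order altogether. You split off $(\w^mV)^{\otimes(\lambda_m+\mu_m)}$ and then only need that polynomial modules have non-negative weights. This costs a little more bookkeeping. You need Krull--Schmidt, and you need the unstated check that $\lambda-\lambda_m\omega_m$ is still $p$-regular: its non-zero parts are the parts of $\lambda$ exceeding $\lambda_m$, with unchanged multiplicities. That check is what justifies your claim that $T(\lambda-\lambda_m\omega_m)$ is a summand of a tensor power of $V$. In return you get a stronger statement: the whole decomposition of $T(\lambda)\otimes T(\mu)$ is the determinant shift of that of $T(\lambda-\lambda_m\omega_m)\otimes T(\mu-\mu_m\omega_m)$. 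This is the same kind of manipulation the paper uses in Proposition~\ref{prop:SLGL}(1).

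One caveat, which the paper's proof shares: your claim that $c^\kappa_{\lambda,\mu}\neq0$ ``means exactly'' that $T(\kappa)$ is a summand holds only for $\kappa\in\ppart(\le m)$. That restriction is genuinely needed. For example, take $p=2$, $m=1$, $\lambda=(2)$, $\mu=(1)$ and $\kappa=(2,1)$. Then $c^\kappa_{\lambda,\mu}=2$, but $\kappa_1=2<3=\lambda_1+\mu_1$.
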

\begin{proof}
Since the highest weight in $T(\lambda)\otimes T(\mu)$ is $\lambda+\mu$, a non-zero $c^\kappa_{\lambda,\mu}$ implies $\lambda+\mu\ge \kappa$ in the dominance order, from which the inequality in the lemma follows.
\end{proof}

\begin{example}\label{ex:alpha}
Consider $m\in\mZ_{>0}$, and represent it in the form
\begin{equation}\label{eq:mdr}
m\;=\; (p-1)d+r,\quad\mbox{with}\quad 0<r\le p-1.\end{equation}
Then for any $\lambda\in \ppart(m)$, the $\GL_m$-tilting module $T(\lambda)$ is a direct summand of $T(\alpha)\otimes V^{\otimes N},$
for $\alpha=((d+1)^r,d^{p-1},(d-1)^{p-1},\ldots, 1^{p-1})$ and $N=|\lambda|-|\alpha|$, by \cite[Lemma~4.7]{BK} and \eqref{eq:SWT}.
\end{example}

\subsection{Length of inductive systems}

\begin{definition}
The {\bf length} of an inductive system $\Phi$ is
$$\ell(\Phi)\;=\;\sup\{\ell(\lambda)\mid  D^\lambda\in \Phi\}\;\in\;\mZ_{>0}\cup\{\infty\}.$$
\end{definition}

\subsubsection{}\label{sec:ml}By \cite[Proposition~5.2]{BK}, $\ell(\Phi)=\infty$ implies that $\Phi=\Irr S$. Note that this is just the shadow for thick tensor ideals of the claim on non-zero tensor ideals in $\Sym$ from \ref{def:Km}.
A related notion from \cite[\S 5]{BK} defines $m(\Phi)$, assuming $\ell(\Phi)<\infty$, as the maximal number $m\in\mZ_{>0}$ such that there are $D^\lambda\in\Phi$ with arbitrarily high $\lambda_m$. Hence $m(\Phi)\le\ell(\Phi)$.

\begin{remark}\label{rem:SysGL}
By \ref{sec:SymTilt}, the bijection in Lemma~\ref{lem:bijections}(2) restricts to a bijection between (T-indecomposable) inductive systems of length at most $m$ and (prime) proper thick tensor ideals $\bI$ in $\Tilt^{\circ}\GL_m$ with the property that $T\in\bI$ if $V\otimes T\in \bI$. This follows also from \eqref{eq:SWT}. 
\end{remark}

\begin{lemma}\label{lem:ell}
If $\Phi\not=\Irr S$ is T-indecomposable then $\ell(\Phi)=m(\Phi)$.
\end{lemma}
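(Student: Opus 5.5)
Since $\Phi\neq\Irr S$, the discussion in \ref{sec:ml} gives $\ell:=\ell(\Phi)<\infty$, so $m(\Phi)$ is defined and $m(\Phi)\le\ell$. It remains to show $m(\Phi)\ge \ell$. The plan is to find, for every $N$, some $D^\kappa\in\Phi$ with $\kappa_\ell\ge N$; this exhibits arbitrarily large $\ell$-th parts, so $m(\Phi)\ge\ell$ by definition. The natural setting is $\Tilt^\circ\GL_\ell$, since all partitions in $\Phi$ lie in $\ppart(\le\ell)$. By \eqref{eq:SWT}, the coefficients $c^\kappa_{\lambda,\mu}$ are tensor product multiplicities of tilting modules there.

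First choose $\lambda$ with $D^\lambda\in\Phi^n$ and $\ell(\lambda)=\ell$, so $\lambda_\ell\ge1$. Apply T-indecomposability to $D_1=D_2=D^\lambda$. This gives $D^{\kappa}\in\Phi^{2n}$ with $c^{\kappa}_{\lambda,\lambda}\neq0$. Since $\kappa\in\ppart(\le\ell)$, Lemma~\ref{lem:lambdam} (applied with $m=\ell$) yields $\kappa_\ell\ge 2\lambda_\ell\ge 2$. Moreover $\ell(\kappa)\le\ell$ because $\ell(\Phi)=\ell$, so in fact $\ell(\kappa)=\ell$.

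Next iterate. Setting $\lambda^{(1)}=\lambda$ and applying T-indecomposability to $D^{\lambda^{(k)}}\boxtimes D^{\lambda}$, we obtain $D^{\lambda^{(k+1)}}\in\Phi$ with $\lambda^{(k+1)}_\ell\ge\lambda^{(k)}_\ell+\lambda_\ell\ge k+1$. Hence $\lambda^{(k)}_\ell\ge k$ for all $k$, and this is unbounded, giving $m(\Phi)\ge\ell$.

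The only delicate step is applying Lemma~\ref{lem:lambdam}, which is stated for $\lambda,\mu,\kappa$ all of length at most $m$. Here that hypothesis holds for $m=\ell$ because every partition occurring in $\Phi$ has length at most $\ell$. Its proof also needs $c^\kappa_{\lambda,\mu}$ to be the tilting multiplicity in $\Tilt^\circ\GL_\ell$. This follows from Example~\ref{ex:BrK}, since $\kappa\in\ppart(\le\ell)$ ensures $T(\kappa)\neq0$ there by \eqref{eq:simpletilt}.
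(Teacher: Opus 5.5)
Your proof is correct and is essentially the paper's argument. The paper argues by contradiction: it takes $D^\lambda\in\Phi$ with $\lambda_\ell$ maximal and applies T-indecomposability to $D^\lambda\boxtimes D^\lambda$ together with Lemma~\ref{lem:lambdam} to get $\kappa_\ell\ge 2\lambda_\ell$; you iterate the same step directly to produce unbounded $\ell$-th parts, and you make explicit that $\lambda_\ell\ge 1$ and that $\kappa$ has length at most $\ell$, both of which the paper's version uses implicitly.
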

\begin{proof}
For a contradiction, let $D^\lambda\in\Phi$ be such that $\lambda_{\ell}$ is maximal.
For T-indecomposability, we need $c_{\lambda,\lambda}^\kappa\not=0$, in the notation of Example~\ref{ex:BrK}, for $\kappa$ of length at most $\ell$. By Lemma~\ref{lem:lambdam} this requires $\kappa_{\ell}\ge 2\lambda_{\ell}$, a contradiction.
\end{proof}

\begin{example}\label{ex:length2}
\begin{enumerate}
\item
For each $m\in\mZ_{>0}$, there is a unique maximal (T-indecomposable) inductive system $\Phi[m; \infty]$ of length $m$. It equals $\Phi_{V}$ for $V$ in $\Tilt^{\circ}\GL_m$ and comprises all $D^\lambda$ with $\lambda\in\ppart(\le m)$.
\item Recall $\Tilt SL_2$ and its classification of (prime) thick tensor ideals from Example~\ref{classSL2}.
Using Lemma~\ref{lem:systemfromSM}(2), for $j\in\mN$, we obtain T-indecomposable inductive systems
\[\Phi[2;  j]\;:=\;\Phi_{U;\bI_{j+1}}\;=\;\{D^{(\lambda_1,\lambda_2)}\mid  1\le \lambda_1-\lambda_2< p^{j+1}-1 \}, \]
where we exclude $j=0$ for $p=2$, since then $U\in\bI_1$.
As 2-row simple modules are well-understood, see \cite[Corollary~3.4]{KS}, we can deduce that, together with $\Phi[2; \infty]$, these are the only T-indecomposable inductive systems of length two, see Theorems~\ref{thm:3p} and~\ref{thm:32}.
\end{enumerate}
\end{example}

\begin{lemma}\label{lem:top}
Two indecomposable inductive systems $\Phi$ and $\Psi$ different from $\Irr S$ are equal if
\begin{enumerate}
\item they have the same length, say $m$, and
\item $D^\lambda\in \Phi$ if and only if $D^\lambda\in\Psi$ for all $\lambda\in\part^{\reg}(m)$.
\end{enumerate}
\end{lemma}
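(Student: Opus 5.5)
The idea is to show that an indecomposable inductive system $\Phi\neq\Irr S$ of length $m$ is determined by its members $D^\lambda$ with $\ell(\lambda)=m$. Concretely, I will show that $\Phi$ equals the inductive system $\Phi'$ generated downward by these members. Applying this to both $\Phi$ and $\Psi$ and using assumption (2) then gives $\Phi=\Phi'=\Psi'=\Psi$. By \ref{sec:ml}, the hypothesis $\Phi\neq\Irr S$ is exactly what guarantees $m=\ell(\Phi)<\infty$.

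\emph{Step 1 (construction of $\Phi'$).} Define $\Phi'^n$ as the set of all composition factors of $\Res^{S_N}_{S_n}D^\lambda$, where $N\ge n$ and $D^\lambda\in\Phi^N$ with $\ell(\lambda)=m$.

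\begin{itemize}
\item \emph{Containment and upward direction.} By construction $\Phi'^n\subset\Phi^n$. Each element of $\Phi'^n$ with $n<N$ is a constituent of the restriction of some element of $\Phi'^{n+1}$; this follows by restricting in stages $S_N>S_{N-1}>\cdots>S_n$.
\item \emph{Downward direction.} Constituents of restrictions of elements of $\Phi'^{n+1}$ lie in $\Phi'^n$, again by definition.
\item \emph{Non-emptiness.} I need length-$m$ members of $\Phi$ in arbitrarily large degree. Take $D^\lambda\in\Phi^n$ with $\ell(\lambda)=m$. Since $\Phi$ is an inductive system, there is a chain $D^\lambda\subset\Res D^{\kappa^{(n+1)}}$, $D^{\kappa^{(n+1)}}\subset\Res D^{\kappa^{(n+2)}}$, and so on, inside $\Phi$. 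By Kleshchev's modular branching rules, every constituent $D^\mu$ of $\Res^{S_{k+1}}_{S_k}D^\kappa$ satisfies $\mu\subset\kappa$. Hence each $\kappa^{(N)}$ has length $\ge m$, and therefore exactly $m$. Restricting these down gives $\Phi'^n\neq\varnothing$ for all $n$.
\end{itemize}

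So $\Phi'$ is an inductive subsystem of $\Phi$.

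\emph{Step 2 (complement).} Suppose $\Phi'\neq\Phi$. Define $\Phi''^n$ as the set of composition factors of $\Res^{S_N}_{S_n}D$ for $N\ge n$ and $D\in\Phi^N\setminus\Phi'^N$.

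\begin{itemize}
\item \emph{$\Phi''$ is an inductive system.} Take $D\in\Phi^N\setminus\Phi'^N$ and any $D'\in\Phi^{N+1}$ whose restriction contains $D$. Then $D'\notin\Phi'^{N+1}$, since otherwise $D$ would lie in $\Phi'$ by the downward property. Hence $\Phi\setminus\Phi'$ propagates upward. The same staged-restriction argument as in Step 1 shows that $\Phi''$ is an inductive system.
\item \emph{$\Phi''$ is proper.} Every element of $\Phi\setminus\Phi'$ has length $<m$, since all length-$m$ members lie in $\Phi'$. Restriction does not increase length ($\mu\subset\kappa$ again), so $\ell(\Phi'')<m$ and $\Phi''\subsetneq\Phi$.
\item \emph{Contradiction.} Clearly $\Phi=\Phi'\cup\Phi''$ with $\Phi'$ proper, contradicting indecomposability. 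Hence $\Phi=\Phi'$.
\end{itemize}

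\emph{Step 3 (conclusion).} The system $\Phi'$ is defined purely in terms of $\{D^\lambda\in\Phi\mid \lambda\in\part^{\reg}(m)\}$. Assumption (2) says this set is the same for $\Psi$, so $\Phi=\Phi'=\Psi'=\Psi$.

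\emph{Main obstacle.} The only non-formal input is that composition factors $D^\mu$ of $\Res^{S_{k+1}}_{S_k}D^\kappa$ satisfy $\mu\subset\kappa$, and in particular $\ell(\mu)\le\ell(\kappa)$. I would take this from \cite{Kl-branching}: the socle of the restriction is described by removing good nodes, and all composition factors arise by removing removable nodes. Alternatively, it follows from $D^\kappa$ being a quotient of the Specht module $S^\kappa$ together with the branching filtration of Specht modules. The remaining steps are the formal bookkeeping that unions and downward closures of inductive systems are inductive.
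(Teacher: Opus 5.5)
Your proof follows the same route as the paper's. Your $\Phi'$ is the paper's $\Phi_2$, the complement of $\Theta$. Your $\Phi''$ is the paper's $\Phi_1$, generated by $\Theta=\Phi\setminus\Phi'$. Indecomposability then forces $\Phi=\Phi'$, which depends only on the members of length $m$. You are more explicit than the paper about why $\Phi'$ is non-empty at every level and why $\Phi''$ is proper.

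The one thing to fix is exactly the input you single out. It is false that every composition factor $D^\mu$ of $\Res^{S_{k+1}}_{S_k}D^\kappa$ satisfies $\mu\subset\kappa$. Kleshchev's theorem controls the socle and the multiplicities of the modules $D^{\kappa_A}$, but other composition factors occur.

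Here is a counterexample for $p=2$. The partition $(3,2,1)$ is a $2$-core, so $D^{(3,2,1)}=S^{(3,2,1)}$. Its restriction to $S_5$ has a Specht filtration with a subquotient $S^{(3,2)}$. This module is $5$-dimensional with $4$-dimensional head $D^{(3,2)}$, so it has the trivial module $D^{(5)}$ as a composition factor, although $(5)\not\subset(3,2,1)$. Your Specht-module alternative also does not give containment; it only yields $\mu\trianglerighteq\kappa_A$ for some removable node $A$.

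However, every use you make of this fact only needs $\ell(\mu)\le\ell(\kappa)$, and that does follow. Dominance $\mu\trianglerighteq\kappa_A$ with $|\mu|=|\kappa_A|$ forces $\ell(\mu)\le\ell(\kappa_A)\le\ell(\kappa)$. Equivalently, it is the statement that $\Phi[m;\infty]$ of Example~\ref{ex:length2}(1) is an inductive system, which follows from \eqref{eq:SW} in $\Tilt^\circ\GL_m$.

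With the containment claim replaced by this length inequality, your argument is complete. The paper relies on the same inequality implicitly when it asserts that $\Phi_1$ contains no simples of length $m$.
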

\begin{proof}
Consider an indecomposable inductive system $\Phi$ of length $m$. For each $n\in\mZ_{>0}$ we denote by $\Theta^n\subset\Phi^n$ the collection of all $D^\lambda\in\Phi^n$ for which $[\Res^{S_d}_{S_n}D^\mu:D^\lambda]=0$ for every $d\ge n$ and every $D^\mu\in\Phi^d$ with $\mu\in \ppart_d(m)$. While~$\Theta$ is not an inductive system, it does satisfy the property that each $D^\lambda\in \Theta^n$ is a constituent of the restriction of a simple in $\Theta^{n+1}$. We can thus define $\Phi_1\subset\Phi$ as the minimal inductive subsystem containing $\Theta$, i.e. the collection of simple constituents of restrictions of simples in $\Theta$. 

Next we denote by $\Phi_2\subset\Phi$ the complement of $\Theta$. In other words, $\Phi_2$ is the minimal inductive system that contains all length $m$ simples of $\Phi$. Clearly $\Phi=\Phi_1\cup\Phi_2$ and $\Phi_1\not=\Phi$ since it contains no length $m$ simples. Consequently $\Phi=\Phi_2$.

The claim now follows since for two indecomposable inductive systems $\Phi$ and $\Psi$ satisfying (1) and (2), we have $\Phi=\Phi_2=\Psi_2=\Psi$.
\end{proof}

\subsubsection{} Now we consider the minimal inductive system of a given length.
Fix $m\in\mZ_{>0}$ and represent it as in \eqref{eq:mdr}.
Following \cite{BK}, for $t\in\mN$, we set
\[\beta(m,t)\;=\; ((t+d)^{p-1},(t+d-1)^{p-1},\ldots,(t+1)^{p-1},t^r)\;\;\in\,\;\ppart(\le m).\]
By \cite[Lemma~4.10]{BK}, $D^{\beta(m,t)}$ is a constituent of the appropriate restriction of $D^{\beta(m,t+1)}$. 

\emph{Hence $D^{\beta(m,t)}$, for fixed $m$ and varying $t$ generate an inductive system, of length~$m$, which we denote by $\Phi[m]$.}

\begin{example}
\begin{enumerate}
\item The systems $\Phi[1],\ldots, \Phi[p-1]$ are the semisimple inductive systems by \cite{CS}. Moreover $\Phi[m]$, for $1\le m<p$, contains $D^\lambda$ for $\lambda\in\part(\le m)$ if and only if $\lambda_1-\lambda_m\le p-m$.
\item If $p>2$, the systems $\Phi[1],\ldots, \Phi[p-1]$ are the minimal inductive systems, by \cite{BK}.
\item If $p>2$, then $\Phi[2]=\Phi[2;0]$ from Example~\ref{ex:length2}.
\item If $p=2$, then $\Phi[2]=\Phi[2;1]$ from Example~\ref{ex:length2}, which is the spin inductive system, see \cite[\S 5 and \S 6]{Poly}  or \cite{GoK}. 
\item We can also identify $\Phi[p]$ for $p>2$, see Lemma~\ref{lem:Phip}.
\end{enumerate}
\end{example}

The main relevance of these inductive systems lies in the following observation of Baranov and Kleshchev (up to terminology). We will only need it for $p=2$, in which case, for completeness, we give a simple direct proof in the next subsection.
\begin{lemma}\label{lem:containsPhim}
\begin{enumerate}
\item Any T-indecomposable inductive system of length $m$ contains $\Phi[m]$.
\item Any inductive system $\Phi$ with $m=m(\Phi)$, contains $\Phi[m]$.
\end{enumerate}

\end{lemma}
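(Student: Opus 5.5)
The plan is to deduce (1) from (2) and then prove (2) by a tilting-module argument in $\Tilt\GL_m$, using Example~\ref{ex:alpha} twisted by powers of the determinant. The one step I do not yet have a firm argument for is removing the rows of index $>m$, flagged below.

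\textbf{Reduction of (1) to (2).} Let $\Phi$ be T-indecomposable of length $m<\infty$. Then $\Phi\neq\Irr S$, since $\ell(\Irr S)=\infty$. By Lemma~\ref{lem:ell}, $m(\Phi)=\ell(\Phi)=m$, so (2) applies to $\Phi$.

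\textbf{Translating restriction into tilting language.} Since $\Phi$ is closed under taking constituents of restrictions, it suffices to find, for each $t\in\mN$, some $D^\kappa\in\Phi$ such that $D^{\beta(m,t)}$ is a constituent of the restriction of $D^\kappa$ to the appropriate smaller symmetric group. By \eqref{eq:SWT}, applied iteratively with $\mu=\omega_1$ and using that $V^{\otimes N}$ decomposes into $T(\nu)$'s, we have
\[[\Res^{S_{n+N}}_{S_n} D^\kappa : D^\mu]\neq 0 \quad\Longleftrightarrow\quad T(\kappa)\ \mbox{is a direct summand of}\ T(\mu)\otimes V^{\otimes N}\]
in $\Tilt^\circ\GL_\ell$, for any $\ell$ at least the relevant lengths. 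This holds in $\Tilt\GL_\ell$ as well, since $\Tilt^\circ\GL_\ell$ is a full subcategory closed under summands.

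\textbf{Main step (length exactly $m$).} Write $m=(p-1)d+r$ as in \eqref{eq:mdr}, and note that $\beta(m,t)=\beta(m,1)+(t-1)\omega_m$. The weight $\alpha$ of Example~\ref{ex:alpha} is exactly $\beta(m,1)$. Let $\lambda\in\ppart(m)$ with $\lambda_m\ge t$. Then $\lambda-(t-1)\omega_m$ is still $p$-regular of length exactly $m$, so by Example~\ref{ex:alpha}, $T(\lambda-(t-1)\omega_m)$ is a summand of $T(\beta(m,1))\otimes V^{\otimes N}$. Tensoring with the invertible tilting module $\det^{\otimes(t-1)}=T((t-1)\omega_m)$ shows that $T(\lambda)$ is a summand of $T(\beta(m,t))\otimes V^{\otimes N}$. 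Hence every $D^\lambda$ with $\lambda\in\ppart(m)$ and $\lambda_m\ge t$ has $D^{\beta(m,t)}$ in its restriction. So if $\Phi$ contains simples of length exactly $m$ with arbitrarily large $\lambda_m$, then $\Phi\supset\Phi[m]$.

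\textbf{Main obstacle: rows below the $m$-th.} In general the simples in $\Phi$ with large $\lambda_m$ may have length $\ell>m$. By the definition of $m(\Phi)$, their entries $\lambda_{m+1},\dots,\lambda_\ell$ are bounded by some constant $B$, independent of the element. I would first try the following. Take $D^\lambda\in\Phi$ with $\lambda_m\gg t$, and repeatedly pass to restriction constituents $D^{\lambda'}$ obtained by removing a node from the rows below the $m$-th. The candidate is the lowest normal (good) node from Kleshchev's branching rules: for a $p$-regular partition the bottom row always has a normal node, and this should give a constituent. Each step strictly decreases $\sum_{i>m}\lambda_i\le(\ell-m)B$, and it leaves $\lambda_1,\dots,\lambda_m$ untouched. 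After finitely many steps we reach a $p$-regular partition of length exactly $m$ whose $m$-th part is still $\gg t$, and the main step applies.

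If the branching-rule route is awkward for general $p$, the fallback for $p=2$ (the case needed here) is the following. In $\Tilt^\circ\GL_\ell$ I would show directly, via the highest-weight argument of Lemma~\ref{lem:lambdam}, that $T(\lambda)$ is a summand of $T(\lambda^{\le m})\otimes V^{\otimes k}$, where $\lambda^{\le m}$ is the truncation to the first $m$ rows and $k=\sum_{i>m}\lambda_i$. For $p=2$ the staircase shape of $\beta(m,t)$ makes this bookkeeping explicit.
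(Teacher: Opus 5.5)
Your reduction of (1) to (2) via Lemma~\ref{lem:ell} is the same as the paper's. For $p=2$ your main step is correct and takes a different route. There $\alpha=(m,m-1,\ldots,1)=\beta(m,1)$, and your determinant twist of Example~\ref{ex:alpha} with $t=\lambda_m$ gives exactly Lemma~\ref{lem:normal}. The paper instead obtains that lemma by removing normal nodes via \cite[Theorem~3.5]{BK}. In part (1) no rows below the $m$-th ever occur: every member of a system of length $m$ has $\lambda\in\ppart(\le m)$, and Lemma~\ref{lem:ell} supplies members in $\ppart(m)$ with arbitrarily large $\lambda_m$. So you have a complete proof of (1) for $p=2$, which is the only case the paper uses. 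Beyond this there are two genuine gaps.

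\textbf{First gap ($p>2$).} The identification $\alpha=\beta(m,1)$ fails whenever $r<p-1$. Compare $\alpha+(t-1)\omega_m=((d+t)^r,(d+t-1)^{p-1},\ldots,t^{p-1})$ with $\beta(m,t)=((d+t)^{p-1},\ldots,(t+1)^{p-1},t^r)$: the short block of length $r$ sits at the top in the first and at the bottom in the second. For example, for $p=3$ and $m=3$ one has $\alpha=(2,1,1)$ but $\beta(3,1)=(2,2,1)$. Your argument therefore only yields $D^{\alpha+s\omega_m}\in\Phi$. You would still need the branching fact that $D^{\beta(m,t)}$ occurs, for arbitrarily large $t$, in restrictions of the $D^{\alpha+s\omega_m}$. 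Example~\ref{ex:alpha} only gives the opposite direction: $D^{\alpha+(t-1)\omega_m}$ occurs in the restriction of $D^{\beta(m,t)}$. This missing input is part of the admissibility analysis the paper cites (the proof of \cite[Theorem~5.7]{BK} together with \cite[(9)]{BK}).

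\textbf{Second gap (part (2), all $p$).} The passage from length $>m$ to length exactly $m$ is not just unproved; both mechanisms you suggest fail. Take $p=2$, $m=1$, $\lambda=(3,1)$. Then $\Res^{S_4}_{S_3}D^{(3,1)}\cong D^{(2,1)}$, since $D^{(3,1)}$ is the inflation of the $2$-dimensional simple module along $S_4\to S_3$. So removing the bottom node $(2,1)$ gives no constituent. In Kleshchev's convention normality is governed by addable nodes \emph{above}: the node $(2,1)$ has residue $1$, the addable $1$-node $(1,4)$ lies above it, and there is no removable $1$-node in between. It is the top row, not the bottom row, that always carries a normal node. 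The same example shows $D^{(3)}=D^{\lambda^{\le 1}}$ is not a constituent, so $T(3,1)$ is not a summand of $T(3)\otimes V$, which refutes your fallback as well. A correct argument must allow removing nodes from the first $m$ rows and show that row $m$ loses only boundedly many nodes while the tail is cleared. The paper does not do this by hand but relies on \cite{BK}; its own direct $p=2$ argument likewise only treats $\lambda\in\ppart(m)$.
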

\begin{proof}
In the proof of \cite[Theorem~5.7]{BK} it is shown that for any inductive system $\Phi\not=\Irr S$ it follows that `$m(\Phi)$ is admissible for $\Phi$' which is, by \cite[(9)]{BK}, equivalent to $\Phi[m(\Phi)]\subset\Phi$. This proves part (2), which includes part (1) as a special case by Lemma~\ref{lem:ell}.
\end{proof}

\subsection{Characteristic 2}For the remainder of the section we focus on $p=2$. We fix again $m\in\mZ_{>0}$.

\subsubsection{}\label{sec:2not}
It will often be convenient to have a symbol for the set of 2-regular partitions that are of length $m$ or $m-1$. We denote this set by $\ppart\langle m\rangle$, and it thus strictly contains $\ppart(m)$.

The expression for $\beta(m,t)$ now simplifies to
\[\beta(m,t)\;=\; (t+m-1,t+m-2,\ldots,t+1,t)=\rho_m+t\omega_m\;\in\;\ppart\langle m\rangle.\]
We also introduce the set $\gen\langle m\rangle$ that consists of all partitions $\lambda$ with $\ell(\lambda)\le m$ and $\lambda_i-\lambda_{i+1}\in\{1,2\}$
 for all $ 1\le i<m$.
In particular $\gen\langle m\rangle\subset\ppart\langle m\rangle$.

\begin{lemma}\label{lem:normal}
For any $\lambda\in\ppart(m)$, we have
$[\Res D^\lambda:D^{\beta(m,\lambda_m)}]\not=0,$
for the appropriate restriction.
\end{lemma}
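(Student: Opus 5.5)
Since $\beta(m,\lambda_m)$ is the unique smallest $2$-regular partition of length $m$ with last part $\lambda_m$, I will obtain $D^{\beta(m,\lambda_m)}$ from $D^\lambda$ by repeatedly removing good nodes, using Kleshchev's modular branching rules \cite{Kl-branching}. For $p=2$ these say that if $A$ is a good node of $\lambda$, then $D^{\lambda_A}$ is a composition factor (indeed the socle of a block component) of $\Res^{S_n}_{S_{n-1}}D^\lambda$, where $\lambda_A$ is $\lambda$ with $A$ removed. Because restriction is transitive and exact, it is enough to give a chain
\[\lambda=\lambda^{(0)}\supset\lambda^{(1)}\supset\cdots\supset\lambda^{(k)}=\beta(m,\lambda_m)\]
in which every step removes a good node and every partition stays $2$-regular, so stays in $\ppart(m)$. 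Write $t=\lambda_m$. Then $\lambda-\beta(m,t)$ has non-negative entries, since $\lambda_i\ge \lambda_{i+1}+1$ gives $\lambda_i\ge t+m-i$. So the number of steps is $k=|\lambda|-|\beta(m,t)|$, and we argue by induction on $k$. When $k=0$ there is nothing to prove.

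For the induction step, take $\lambda\neq\beta(m,t)$ in $\ppart(m)$. Let $i<m$ be the largest index with $\lambda_i-\lambda_{i+1}\ge 2$; it exists because otherwise $\lambda=\beta(m,t)$. Removing the node $A$ at the end of row $i$ gives a $2$-regular partition of length $m$ with the same last part $t$. It remains to show that $A$ is good, or at least that some removable node in rows $1,\dots,i$ is good. We can then apply induction, since none of these rows is row $m$, so the last part stays $t$.

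To check goodness we use the $2$-signature. Residues are $(c-r)\bmod 2$, and a removable $j$-node is normal if every addable $j$-node above it is matched with a removable $j$-node strictly between them. Rows $i+1,\dots,m$ form a staircase segment with steps $1$, and their removable nodes and the addable nodes at their ends have prescribed residues. The signature below row $i$ does not matter for normality of $A$, since normality only looks at nodes above $A$. Rows above $i$ are the awkward part. So I would instead choose the highest row $j\le i$ with $\lambda_j-\lambda_{j+1}\ge 2$. A cleaner way to organise this is to recall that the topmost normal node of a given residue among removable nodes is good. The plan is to show, by a short residue count, that among rows $1,\dots,i$ there is always some normal node, and then take the lowest normal node of that residue. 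That node is good, and it lies in rows $\le i$ because rows below $i$ only have steps $1$. A removable node in such a row $r>i$ would force $\lambda_r-\lambda_{r+1}\ge1$, which is fine; but removing it breaks $2$-regularity, except in row $m$, which would lower $t$.

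This last point is the main obstacle. The good node of a given residue might lie in the staircase part, rows $i+1,\dots,m$, for example in row $m$, and removing it changes $\lambda_m$. To deal with it, I would show that for the residue $\epsilon$ of the row-$m$ removable node, the nodes in the staircase alternate between addable and removable with matching residues, so they cancel in the reduced signature. Hence no node in rows $>i$ survives as good for a residue that also has a normal node above. Concretely, I would treat the two residues separately and show that at least one residue has its good node in rows $\le i$. If this combinatorics becomes messy, the fallback is to cite \cite[Lemma~4.10]{BK}, which handles the case $\lambda=\beta(m,t+1)$, together with the general argument in the proof of \cite[Theorem~5.7]{BK}, where such normal-node chains are built. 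Either way the induction on $|\lambda|-|\beta(m,t)|$ finishes the proof.
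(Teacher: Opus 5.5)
Your overall strategy is the same as the paper's: induction on $|\lambda|-|\beta(m,\lambda_m)|$, removing one node at a time from rows $1,\dots,m-1$ via Kleshchev's branching rules. However, you restrict to \emph{good} nodes, and for good nodes the induction step you need is false.

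Take $m=2$ and $\lambda=(4,1)$, so $\beta(2,1)=(2,1)$. The removable nodes are $(1,4)$ and $(2,1)$, both of residue $1$, and there are no addable $1$-nodes, so both are normal. The good $1$-node is the lower one, $(2,1)$, and there are no removable $0$-nodes. Hence the only good node of $(4,1)$ lies in row $m$. Removing it gives $(4)$, from which $(2,1)$ can no longer be reached.

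This example also refutes your resolution of the ``main obstacle''. Within a single residue, the staircase rows $i+1,\dots,m$ do not alternate between addable and removable nodes: all their removable nodes have residue $\epsilon\equiv\lambda_m-m$ and all their addable nodes have residue $\epsilon+1$. So nothing cancels, and it is not true that some residue has its good node in rows $\le i$. Two smaller points: with your convention for normality, the good node is the \emph{lowest} normal node of its residue, not the topmost. And the fallback of pointing to the proof of \cite[Theorem~5.7]{BK} is not an argument.

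The missing ingredient is Kleshchev's branching theorem for \emph{normal} nodes, \cite[Theorem~3.5]{BK}: if $A$ is a normal removable node and $\lambda_A$ is $2$-regular, then $[\Res D^\lambda:D^{\lambda_A}]\neq0$. This is what the paper uses. With it, your own choice of the \emph{highest} row $j<m$ with $\lambda_j-\lambda_{j+1}\ge 2$ finishes the proof immediately, as follows.

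Since $\lambda_k-\lambda_{k+1}=1$ for all $k<j$, every removable node in rows $1,\dots,j$ has the same residue $r\equiv\lambda_j-j$. Every addable node in these rows has residue $r+1$. So no addable $r$-node lies above $A=(j,\lambda_j)$, and $A$ is normal. Moreover $\lambda_A\in\ppart(m)$ with the same last part. This is exactly the paper's observation that a partition in $\ppart(m)$ with no normal node whose removal preserves $2$-regularity in the first $m-1$ rows must equal $\beta(m,\lambda_m)$.

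In the example, the normal but non-good node $(1,4)$ gives $D^{(3,1)}$ as a constituent of $\Res D^{(4,1)}$. The normal node $(1,3)$ of $(3,1)$ then gives $D^{(2,1)}$.
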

\begin{proof}Using the terminology for removable nodes from for instance \cite[\S 3]{BK}, it is clear that if $\lambda\in\ppart(m)$ has no normal nodes for which removing them preserves 2-regularity in its first $m-1$ rows, then $\lambda=\beta(m,\lambda_m)$.
The result thus follows from applying \cite[Theorem~3.5]{BK} to remove normal nodes.
\end{proof}

\begin{proof}[Proof of Lemma~\ref{lem:containsPhim} for $p=2$]
For any $D^\lambda\in\Phi$ with $\lambda\in\ppart(m)$, Lemma~\ref{lem:normal} shows $D^{\beta(m,\lambda_m)}\in\Phi$ from which the conclusion follows.
\end{proof}


\begin{lemma}\label{lem:Phim} 
\begin{enumerate}
\item For any $\nu\in \part(\le m)$ and $\mu \in\gen\langle m\rangle$, for high enough $N\in\mN$ we have
\[[\Res D^{N\omega_m+\rho_m+2\nu}:D^{\mu+2\nu}]\not=0.\]

\item For any $\lambda\in\gen\langle m\rangle$  we have $D^\lambda\in \Phi[m]$.
\end{enumerate}
\end{lemma}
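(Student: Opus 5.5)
Part (2) is quickly reduced to part (1); the main work is part (1), which is a statement about removing nodes from the partition $\lambda=N\omega_m+\rho_m+2\nu$.

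For part (2), take $\lambda\in\gen\langle m\rangle$. Write $\lambda=\mu+2\nu$ with $\mu\in\gen\langle m\rangle$ small and $\nu\in\part(\le m)$. The simplest choice is $\nu=0$, $\mu=\lambda$, so that $\nu$ only matters for the induction in part (1). Part (1) then gives that $D^\lambda$ is a constituent of the restriction of $D^{N\omega_m+\rho_m}=D^{\beta(m,N)}$ for large $N$. Since $D^{\beta(m,N)}\in\Phi[m]$ and inductive systems are closed under taking constituents of restrictions, $D^\lambda\in\Phi[m]$.

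For part (1), I will use the modular branching rules as in Lemma~\ref{lem:normal}, i.e.\ \cite[Theorem~3.5]{BK}. Removing a normal node from a $2$-regular partition yields a constituent of the restriction, and restriction is transitive. So it suffices to exhibit a chain of normal-node removals from $N\omega_m+\rho_m+2\nu$ down to $\mu+2\nu$, with every intermediate partition $2$-regular.

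The argument is an induction on $|N\omega_m+\rho_m-\mu|$, organised by the differences $\lambda_i-\lambda_{i+1}$. Start from $\kappa=N\omega_m+\rho_m+2\nu$. The target $\mu+2\nu$ differs from $\kappa$ only in that some consecutive differences change from $1+2(\nu_i-\nu_{i+1})$ to $2+2(\nu_i-\nu_{i+1})$, together with a lower last row. Since $N$ is large, we first reduce all rows uniformly by removing nodes from the bottom row upward. At each step we pick a removable node whose residue pattern makes it normal. In characteristic $2$, residues alternate along a row, and the parity of the differences $\lambda_i-\lambda_{i+1}$ determines whether the addable node in row $i+1$ has the same residue as the removable node at the end of row $i$. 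When the difference is odd, as in $\rho_m+2\nu$ shifted, the removable node at the end of row $i$ and the addable node directly below it have different residues, so nothing cancels and the bottom-most removable nodes are normal. This lets us peel off the last row, then the next one, and so on, mimicking the reduction $\beta(m,t+1)\to\beta(m,t)$ of \cite[Lemma~4.10]{BK} but with the $2\nu$ shift carried along. Once enough $N\omega_m$ has been stripped, we remove extra nodes from selected rows to create the pattern of $\mu$, turning chosen differences from $1$ to $2$ (modulo the even shift $2\nu$).

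The main obstacle is verifying normality at each step. One must check that for the chosen removable node $A$ of residue $r$, every addable $r$-node above $A$ is compensated by a removable $r$-node between them. The even shift $2\nu$ matters here: it preserves the parity of every consecutive difference, so the residue bookkeeping for $\kappa$ is the same as for $N\omega_m+\rho_m$ or $N\omega_m+\mu$. This is why the statement carries $2\nu$ along, and it suggests an induction on $|\nu|$ as well. To control the order of removals I would first try processing rows from the bottom (row $m$) upward, removing two nodes from a row at a time. Removing two nodes leaves all differences' parities, hence all residues of the other addable and removable nodes, intact. The remaining delicate point is that each single removal keeps the partition $2$-regular, and this is where the constraint $\lambda_i-\lambda_{i+1}\in\{1,2\}$ for $\mu$ is used.
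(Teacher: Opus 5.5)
Your reduction of (2) to (1) with $\nu=0$ is fine. Your observation that adding $2\nu$ leaves the residues of the relevant removable and addable nodes unchanged is essentially how the paper reduces (1) to the case $\nu=0$. The gap is in the only substantive step. You never produce a chain of normal-node removals (\cite[Theorem~3.5]{BK}), with $2$-regular intermediate partitions, from some $\beta(m,N)$ down to an arbitrary $\mu$ with differences in $\{1,2\}$. You name normality as ``the main obstacle'' and leave it unverified.

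Moreover, the ordering you sketch cannot work in general. In your scheme the second phase only raises chosen differences from $1$ to $2$ and lowers the last row, so it never touches row $1$. The uniform phase must therefore stop at the staircase $\beta(m,t)$ with $t+m-1=\mu_1$. Take $m=4$ and $\mu=(6,5,3,1)$, so you stop at $(6,5,4,3)$.
\begin{itemize}
\item In $(6,5,4,3)$ the only removal preserving $2$-regularity is in row $4$, giving $(6,5,4,2)$.
\item In $(6,5,4,2)$ the last node of row $4$ has residue $0$ and is blocked by the addable $0$-node $(3,5)$ directly above it. So ``two nodes from a row at a time'' already fails.
\item The only other admissible move from $(6,5,4,2)$ is in row $3$, giving $(6,5,3,2)$. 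There the removable nodes in rows $3$ and $4$ (residue $0$) are not normal, because of the addable $0$-nodes $(2,6)$ and $(1,7)$.
\item The normal nodes of $(6,5,3,2)$ lie in rows $1$ and $2$, which must not be touched.
\end{itemize}
So no chain of your type reaches $(6,5,3,1)$, even though $D^{(6,5,3,1)}\in\Phi[4]$.

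The missing idea is to overshoot and then trim the upper rows.
\begin{itemize}
\item \textbf{Overshoot.} Go from $\beta(m,t)$ to the wide staircase $2\rho_m+(t-m+1)\omega_m$, in which all consecutive differences equal $2$. For $k=1,\dots,m-1$, remove one node from each of rows $m,m-1,\dots,k+1$, bottom up.
\item \textbf{Trim.} Take $t=\mu_m+m-1$, so that $\mu=2\rho_m+\mu_m\omega_m-\sum_{i:\,\mu_i-\mu_{i+1}=1}\omega_i$. For each such $i$, in decreasing order of $i$, remove one node from each of rows $1,\dots,i$, top down.
\end{itemize}
Each removed node is normal. Going upwards from it, the nodes of its residue read as follows.
\begin{itemize}
\item In the overshoot: a non-empty block of removable nodes, followed by alternately addable and removable ones.
\item In the trim: alternately removable and addable, starting with a removable node directly above.
\end{itemize}
Since normality depends only on the rows above, the already adjusted lower rows do not interfere. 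In the example this gives
$(7,6,5,4)\to(7,6,5,3)\to(7,6,4,3)\to(7,5,4,3)\to(7,5,4,2)\to(7,5,3,2)\to(7,5,3,1)\to(6,5,3,1)$.
This is the route through $2\rho_m+(t-m+1)\omega_m$ taken in the paper.
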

\begin{proof}
We can prove this by removing normal nodes via \cite[Theorem~3.5]{BK}. Adding an even partition $2\nu$ does not change which rows end in a normal node, it can only result in more of the normal nodes leading to regular partitions by removal and hence extra simple modules appearing after branching that we will not need. For (1), it is thus sufficient to prove the case $\nu=0$, which is equivalent to proving (2). It follows easily by removing good nodes via \cite[Theorem~3.5]{BK} that the simple for
\[(t+m-1,t+m-3,t+m-5,\ldots,t-m+1)\;=\;2\rho_m+(t-m+1)\omega_m\;=\;2\beta(m,\frac{t-m+1}{2})\]
is a constituent of the appropriate restriction of $D^{\beta(m,t)}$. Here we assumed that $t-m+1\ge 0$ (and is even for the last equation). We can then obtain all desired partitions by further removing good nodes. Indeed, one can do this by bringing the displayed partition to the desired shape from the bottom up, since the lower rows do not interfere with the goodness of higher nodes, proving (2).
\end{proof}

\begin{corollary}\label{cor:Phim}
The inductive system $\Phi[m]$ is T-indecomposable.
\end{corollary}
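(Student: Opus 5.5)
The plan is to verify the definition of T-indecomposability directly. First reduce to the generators $D^{\beta(m,t)}$ of $\Phi[m]$. Then produce the required simple module of $S_{a+b}$ as the top tilting summand of a tensor product in $\Tilt^\circ\GL_m$, using \eqref{eq:SWT}. Finally, check that this module lies in $\Phi[m]$ by Lemma~\ref{lem:Phim}(2).

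Take $D_1\in\Phi[m]^a$ and $D_2\in\Phi[m]^b$. Since $\Phi[m]$ is generated by the $D^{\beta(m,t)}$, and these form a chain under restriction by \cite[Lemma~4.10]{BK}, there are $s,t\in\mN$ with the following properties: $a':=|\beta(m,s)|\ge a$, $b':=|\beta(m,t)|\ge b$, $D_1$ is a constituent of $\Res^{S_{a'}}_{S_a}D^{\beta(m,s)}$, and $D_2$ is a constituent of $\Res^{S_{b'}}_{S_b}D^{\beta(m,t)}$.

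Up to conjugation inside $S_{a'+b'}$ we have $S_a\times S_b\le S_{a'}\times S_{b'}$, and conjugation does not affect composition multiplicities. It therefore suffices to find $D\in\Phi[m]^{a'+b'}$ with
\[[\Res^{S_{a'+b'}}_{S_{a'}\times S_{b'}}D:D^{\beta(m,s)}\boxtimes D^{\beta(m,t)}]\not=0.\]
Indeed, restricting further to $S_a\times S_b$ then shows that $D_1\boxtimes D_2$ is a constituent of $\Res^{S_{a'+b'}}_{S_a\times S_b}D$. Since $\Phi[m]$ is an inductive system, the restriction of $D$ to $S_{a+b}$ has a constituent in $\Phi[m]^{a+b}$ whose restriction to $S_a\times S_b$ still contains $D_1\boxtimes D_2$. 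This holds because restriction to $S_a\times S_b$ factors through $S_{a+b}$ and the constituents of $\Res D$ cover those of its restrictions; more precisely, some composition factor of $\Res^{S_{a'+b'}}_{S_{a+b}}D$ contributes $D_1\boxtimes D_2$, and all such factors lie in $\Phi[m]$.

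For the main step, by \eqref{eq:SWT} the required multiplicity is $c^\kappa_{\beta(m,s),\beta(m,t)}$, which is the multiplicity of $T(\kappa)$ in $T(\beta(m,s))\otimes T(\beta(m,t))$ in $\Tilt^\circ\GL_m$. Take
\[\kappa=\beta(m,s)+\beta(m,t)=2\rho_m+(s+t)\omega_m.\]
The weight $\kappa$ is the highest weight of the tensor product, and its weight space there is one-dimensional. Hence $T(\kappa)$ occurs as a direct summand with multiplicity one.

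Moreover, $\kappa$ has consecutive differences exactly $2$, so $\kappa\in\gen\langle m\rangle\subset\ppart$. It is therefore a legitimate label for an indecomposable of $\Tilt^\circ\GL_m$ (consistent with \eqref{eq:simpletilt}), and Lemma~\ref{lem:Phim}(2) gives $D^\kappa\in\Phi[m]$. So $D=D^\kappa$ works.

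I expect the only delicate step to be the bookkeeping in the reduction: passing from the pair $(D^{\beta(m,s)},D^{\beta(m,t)})$ back to $(D_1,D_2)$ while staying inside $\Phi[m]$ at level $a+b$. The tensor-product step itself is immediate from highest weight theory.
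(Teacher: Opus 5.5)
Your proof is correct and is essentially the paper's argument: \eqref{eq:SWT} applied to the highest-weight summand $T(\kappa)$ of a tensor product of two generators, with Lemma~\ref{lem:Phim}(2) placing $D^\kappa$ in $\Phi[m]$, followed by a reduction through restriction. The paper takes $s=t$, so $\kappa=2\beta(m,t)$, which is possible because the generators form a chain; it leaves implicit the restriction bookkeeping that you spell out.
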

\begin{proof}
We apply to $\lambda=\beta(m,t)$ the special case of \eqref{eq:SWT}
\[\left[\Res^{S_{2n}}_{S_n\times S_n} D^{2\lambda}:D^\lambda\boxtimes D^\lambda\right]\;=\;1, \]and use the fact from Lemma~\ref{lem:Phim}(2) that $D^{2\beta(m,t)}\in \Phi[m]$. T-indecomposability follows since, by definition of $\Phi[m]$, every $D^\lambda\in\Phi[m]$ is a constituent of the restriction of some $D^{\beta(m,t)}$.
\end{proof}


\section{The cell of the Steinberg module in characteristic two}\label{sec:GL}

In this section, we let $\bk$ be a field of characteristic 2 and fix $m\in\mZ_{>1}$. We define a set $S\subset\mN$ by considering the $2$-adic expansion
\[m\;=\; \sum_{i\in S}2^i,\qquad \mbox{and set}\qquad \ell:=\max S.\]
Recall that we consider $\GL_m$ with fixed Borel subgroup and natural representation $V$, see~\S\ref{sec:prelTilt}. Our main result describes the cell of the first Steinberg module $T(\rho_m)$ in $\Tilt \GL_m$ and relates it to a subgroup $\SL_2^{\times\ell}<\GL_m$. Note that in characteristic $p\ge 2m-2$ (in fact at least $p\ge 2m-4$, see~\S\ref{sec:An}) this cell is understood. Indeed, by~\cite{An} the problem can then be reduced to understanding the cell of the trivial module. The latter is understood and connected to a principal $\SL_2<\GL_m$ via \cite{GK}. For $p=2$, we use the description of the cell of the trivial module in \cite{BE, BEEO}, but the principal $\SL_2$-subgroup, which does not exist for $m>2$ in characteristic $2$, will be replaced by $\SL_2^{\times \ell}$ which will play a similar, but much more intricate, role. We also require further work because we are ultimately interested in $\Tilt^\circ\GL_m$ rather than $\Tilt\GL_m$.

In \S\ref{sec:comnbinprep} we derive relations in the Grothendieck ring of $\SL_2^{\times \ell}$. In \S\ref{sec:SL2l}, we show how such combinatorics relates to tensor ideals in $\SL_2^{\times \ell}$. In \S\ref{sec:SL2config} we define the subgroup $\SL_2^{\times\ell}<\GL_m$. Section~\ref{sec:gen} contains more standard observations about tilting modules. In \S\ref{sec:GLmnegl} the above constructions are combined: we describe two tensor ideals in $\Tilt^\circ\GL_m$ via the restriction functor to $\Tilt\SL_2^{\times \ell}$ and our combinatorial tools for the latter. In \S\ref{sec:final3} we reformulate this in the specific form we need.

\subsection{Combinatorial preparation}\label{sec:comnbinprep}

\subsubsection{}\label{def:Lambda}We consider the (special) $\lambda$-ring $\mZ[x+x^{-1}]\subset\mZ[x,x^{-1}]$ with $\lambda^n(x+x^{-1})=\delta_{n,2}$ for $n>1$. In other words we work with the Grothendieck ring of the representation category of $\SL_2$ (in any characteristic). We consider the $\ell$-th tensor power $\Lambda$ of this $\lambda$-ring and denote the generators by $x_i+x_i^{-1}$. In other words, $\Lambda$ is the Grothendieck ring of $\SL_2^{\times\ell}.$ Inside the ring $\Lambda$ we denote by $\fm$ the ideal generated by the elements $(x^{2^{i-1}}_i+x^{-2^{i-1}}_i)=\psi^{2^{i-1}}(x_i+x_i^{-1})$, for $1\le i\le \ell$.

\begin{lemma}
For any $n\in \mN$ different from $0$ and $2^{\ell}$, we have $\lambda^n\left(\prod_{i=1}^\ell (x_i+x_i^{-1})\right)\in \fm$. On the other hand $\lambda^{n}\left(\prod_{i=1}^\ell (x_i+x_i^{-1})\right)=1$ for $n\in\{0,2^{\ell}\}$.
\end{lemma}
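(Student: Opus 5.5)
The plan is to compute the whole generating function $\lambda_t(P_\ell):=\sum_n \lambda^n(P_\ell)\,t^n$ for $P_\ell=\prod_{i=1}^\ell(x_i+x_i^{-1})$ at once, and to prove the congruence
\[
\lambda_t(P_\ell)\;\equiv\;1+t^{2^\ell}\pmod{\fm[t]}
\]
by induction on $\ell$. This congruence contains both assertions of the lemma, provided we also check that the constant term and the $t^{2^\ell}$ coefficient are exactly $1$, and not merely congruent to $1$.

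\emph{Passing to Laurent polynomials.} Embed $\Lambda$ into the Laurent ring $L=\mZ[x_1^{\pm1},\ldots,x_\ell^{\pm1}]$. The $\lambda$-ring structure with $\lambda^n(x_i+x_i^{-1})=\delta_{n,2}$ for $n>1$ is the restriction of the one on $L$ in which monomials are line elements. Hence, with $x^\varepsilon=\prod_i x_i^{\varepsilon_i}$ for $\varepsilon\in\{\pm1\}^\ell$, we have $P_\ell=\sum_{\varepsilon}x^\varepsilon$ and
\[
\lambda_t(P_\ell)=\prod_{\varepsilon\in\{\pm1\}^\ell}(1+t\,x^\varepsilon).
\]
This immediately gives $\lambda^0=1$. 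It also gives $\lambda^{2^\ell}(P_\ell)=\prod_\varepsilon x^\varepsilon=1$, since each $x_i$ occurs with exponents summing to zero.

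Working in $L$ requires a descent step: we must check that $\fm L\cap\Lambda=\fm$. For this, note that $\mZ[x^{\pm1}]$ is free over $\mZ[x+x^{-1}]$ with basis $\{1,x\}$. Taking tensor products, $L$ is a free $\Lambda$-module with a basis containing $1$. Therefore $\Lambda$ is a $\Lambda$-direct summand of $L$, and ideals contract back correctly. It thus suffices to prove the congruence modulo $\fm L[t]$.

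\emph{Induction.} For $\ell=1$ we have $(1+tx_1)(1+tx_1^{-1})=1+t(x_1+x_1^{-1})+t^2$, and $x_1+x_1^{-1}$ is the first generator of $\fm$.

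For the inductive step, split $\varepsilon=(\varepsilon',\varepsilon_\ell)$ and write $y^{\varepsilon'}$ for the monomial in $x_1,\ldots,x_{\ell-1}$. Then
\[
\lambda_t(P_\ell)=F(t x_\ell)\,F(t x_\ell^{-1}),\qquad F(s):=\prod_{\varepsilon'}(1+s\,y^{\varepsilon'})=\lambda_s(P_{\ell-1}).
\]
By the induction hypothesis, $F(s)\equiv 1+s^{2^{\ell-1}}$ modulo the ideal generated by $x_i^{2^{i-1}}+x_i^{-2^{i-1}}$ for $i<\ell$. This congruence is polynomial in $s$, so it survives the substitutions $s=tx_\ell^{\pm1}$ in $L[t]$. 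Consequently
\[
\lambda_t(P_\ell)\equiv\bigl(1+t^{2^{\ell-1}}x_\ell^{2^{\ell-1}}\bigr)\bigl(1+t^{2^{\ell-1}}x_\ell^{-2^{\ell-1}}\bigr)
=1+t^{2^{\ell-1}}\bigl(x_\ell^{2^{\ell-1}}+x_\ell^{-2^{\ell-1}}\bigr)+t^{2^\ell}.
\]
The middle term is the $\ell$-th generator of $\fm$ times a power of $t$, which completes the induction.

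The main point to watch is the exponent bookkeeping: the $i$-th generator of $\fm$ must carry exponent $2^{i-1}$, and this matches exactly the Adams-type doubling produced at each inductive step. The descent from $L$ to $\Lambda$ is the only other non-formal ingredient, and the freeness argument above handles it.
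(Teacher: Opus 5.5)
Your proof is correct. It reaches the result by a different organisation than the paper, and it makes one step explicit that the paper leaves tacit.

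\textbf{How the paper argues.} It works directly with $\lambda^n$ as the elementary symmetric sum in the $2^\ell$ monomials $X_t$. Products containing exactly one of $X_{2j},X_{2j+1}$, which differ only by inverting $x_1$, are paired off into multiples of $x_1+x_1^{-1}$. The survivors are products of the $X_{2j}X_{2j+1}$, i.e.\ monomials in $x_2^{\pm2},\ldots,x_\ell^{\pm2}$. The procedure is then iterated with $x_2^2+x_2^{-2}$, then $x_3^4+x_3^{-4}$, and so on. So the paper peels off the \emph{first} variable, and the doubling of exponents comes from squaring the remaining variables. In your generating-function language, this is the congruence $\prod_{\varepsilon''}(1+tx_1y^{\varepsilon''})(1+tx_1^{-1}y^{\varepsilon''})\equiv\prod_{\varepsilon''}(1+t^2y^{2\varepsilon''})$ modulo $x_1+x_1^{-1}$, applied repeatedly. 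Here $y^{\varepsilon''}$ runs over the monomials $\prod_{i\ge 2}x_i^{\pm1}$.

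\textbf{How your route differs.} You peel off the \emph{last} variable instead. The exponent $2^{\ell-1}$ on $x_\ell$ then appears by substituting $s=tx_\ell^{\pm1}$ into the already collapsed factor $1+s^{2^{\ell-1}}$. This turns the paper's ``continuing like this'' into a two-line induction with transparent exponent bookkeeping.

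\textbf{The descent step.} Your explicit descent is a genuine gain in rigour. The partial sums the paper pairs off do not lie in $\Lambda$: already for $\ell=2$, $n=1$ one gets $X_0+X_1=(x_1+x_1^{-1})x_2$, which is not invariant under $x_2\mapsto x_2^{-1}$. So the paper's argument literally shows membership in $\fm L$ and tacitly uses $\fm L\cap\Lambda=\fm$. Your observation supplies exactly this: $L$ is free over $\Lambda$ on the basis $\{\prod_{i\in R}x_i\mid R\subset\{1,\ldots,\ell\}\}$, which contains $1$.
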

\begin{proof}
It is obvious that $\lambda^n$ produces $1$ for $n\in\{0,2^{\ell}\}$, and $0$ for $n>2^\ell$.
For each $0\le t<2^{\ell}$, we denote by $X_t$ the product $x_1^{\pm 1}x_2^{\pm 1}\cdots x_{\ell}^{\pm 1}$, where $x_i^{-1}$ appears if and only if $2^{i-1}$ appears in the 2-adic expansion of $t$. For example $X_0=\prod_i x_i$ and $X_{2^{\ell}-1}=X_0^{-1}$ and
\[\prod_{i=1}^\ell (x_i+x_i^{-1})\;=\; \sum_{t=0}^{2^{\ell-1}}X_t.\]

Application of $\lambda^n$, for $0<n<2^\ell$, on this element is the sum of all $n$-fold products without repetition of the $X_t$. The sum of all products for which precisely one of $X_{0}$, $X_{1}$ is a factor is in the ideal $(x_1+x_1^{-1})$, as they can be paired up in the obvious way. For the remaining products, we can consider the ones that contain precisely one of $X_{2}$ or $X_{3}$, and for the same reason their sum is in $(x_1+x_{1}^{-1})$. Continuing like this, we are left with the sum of all products that can be written as products of $X_{2i}X_{2i+1}$. Of these, we can now consider the sum of all products for which there is some $i$ such that precisely one of $X_{4i}X_{4i+1}$ or $X_{4i+2}X_{4i+3}$ occurs as a factor and observe that this sum is in $x_2^{2}+x_{2}^{-2}$. The remainder are the products of factors $X_{4i}X_{4i+1}X_{4i+2}X_{4i+3}$ which can be split up into a term belonging to $(x_3^4+x_3^{-4})$ and products of factors $\prod_{j=0}^7X_{8i+j}$, etc.
\end{proof}


\begin{corollary}\label{cor:lambda}
For $n\in\mN$ and the element $y=\sum_{i\in S}y_i$ of $\Lambda$, where $y_i=\prod_{j=1}^i(x_j+x_j^{-1})$, we have $\lambda^{n}(y)-1\in \fm$ if $n$ is a sum without repetition of terms $2^i$, $i\in S$, and
$\lambda^{n}(y)\in \fm$ otherwise.
\end{corollary}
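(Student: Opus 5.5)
The plan is to use the exponential property of $\lambda$-operations. In the $\lambda$-ring $\Lambda$ we have $\lambda^n(a+b)=\sum_{j+k=n}\lambda^j(a)\lambda^k(b)$. Consequently
\[\lambda^n(y)\;=\;\sum_{(n_i)_{i\in S},\ \sum_i n_i=n}\ \prod_{i\in S}\lambda^{n_i}(y_i),\]
so it suffices to understand each $\lambda^{k}(y_i)$ modulo $\fm$.

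The first step is to apply the preceding Lemma to $y_i=\prod_{j=1}^i(x_j+x_j^{-1})$. That Lemma is stated for the full product over $1\le j\le \ell$, but its statement and proof apply verbatim with $\ell$ replaced by $i\le\ell$. Indeed, the ideal it produces is generated by $x_j^{2^{j-1}}+x_j^{-2^{j-1}}$ for $j\le i$, which is contained in $\fm$. We also use that $\lambda^k(y_i)=0$ for $k>2^i$, since $y_i$ is a sum of $2^i$ line elements. This gives:
\begin{itemize}
\item $\lambda^{k}(y_i)\equiv 1 \bmod \fm$ for $k\in\{0,2^i\}$;
\item $\lambda^k(y_i)\in\fm$ for all other $k$, including $k>2^i$, where it is actually zero.
\end{itemize}
One point to check is the case $i=0$, should $0\in S$: then $y_0$ is the empty product $1$. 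Here $\lambda^0(1)=\lambda^1(1)=1$ and $\lambda^k(1)=0$ for $k>1$, which is consistent with $2^0=1$.

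The second step is to reduce the expansion modulo $\fm$. A summand $\prod_i\lambda^{n_i}(y_i)$ survives modulo $\fm$ only if every $n_i\in\{0,2^i\}$, and it is then $\equiv1$. Hence $\lambda^n(y)$ is congruent modulo $\fm$ to the number of subsets $T\subset S$ with $\sum_{i\in T}2^i=n$. By uniqueness of binary expansions, this count is $1$ if $n$ is a sum without repetition of terms $2^i$ with $i\in S$, and $0$ otherwise. This is exactly the claim.

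There is no real obstacle here. The only care needed is in justifying that the previous Lemma applies to the truncated products $y_i$, and in handling the degenerate case $i=0$.
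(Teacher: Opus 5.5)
Your proof is correct and is essentially the paper's argument: expand $\lambda^n(y)$ by the exponential property into products $\prod_{i\in S}\lambda^{s_i}(y_i)$, use the preceding lemma (with $i$ in place of $\ell$) to see that each factor is $1$ or $0$ modulo $\fm$, and count the surviving terms via uniqueness of binary expansions. Your explicit handling of the case $0\in S$ and of the vanishing $\lambda^k(y_i)=0$ for $k>2^i$ simply fills in details the paper leaves implicit.
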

\begin{proof}
Observe that $\lambda^n(y)$ is a sum of products $\lambda^{s_i}(y_i)$ with $\sum_is_i=n$. Modulo the ideal $\mathfrak{m}$ such a term
is $1$ when $s_i\in\{0,2^i\}$ for every $i\in S$, and $0$ otherwise. The result now follows.
\end{proof}

\subsection{Tensor ideals for products of $\SL_2$}\label{sec:SL2l}

We will only require a special case of the construction below, but write it out in more generality for future applications. In this section only $\ell$, rather than $S$, is relevant.

\subsubsection{}\label{def:Innn}
 In $\Tilt (\SL_2^{\times\ell})$, we label the indecomposable objects as $\otimes_a (T_{i_a})_{(a)}$, where $(T_{i})_{(a)}$ is  $T_i$ for the $a$-th copy of $\SL_2$ with trivial action of the other copies. 
For $T\in \Tilt (\SL_2^{\times\ell})$, its character $\ch T$ is an element of the ring $\Lambda$ from \ref{def:Lambda}. We also consider it as a function in $\ell$ variables. 

We fix $(n_1,\ldots, n_\ell)\in\mZ_{>0}^{\times\ell}$, and consider the thick tensor ideal $\bI_{n_1,\ldots, n_\ell}$ in $\Tilt SL_2^{\times\ell}$ generated by the modules $(St_{n_i})_{(i)}$ for $1\le i\le \ell$. Concretely, it has indecomposable objects 
\begin{equation}\label{eq:I}\bigotimes_{a=1}^{\ell} (T_{i_a})_{(a)},\quad\mbox{where}\quad i_c\ge 2^{n_c}-1\quad\mbox{for at least one}\quad 1\le c\le \ell.\end{equation}
It follows from the description in \eqref{eq:I} and the fact that the thick tensor ideals in $\Tilt \SL_2$ are prime that $\bI_{n_1,\ldots, n_\ell}$ is a prime thick tensor ideal.

Let $\zeta_n=\exp(\pi i/2^n)\in\mC$, $n\in\mZ_{>0}$, be a $2^{n+1}$-th primitive root of $1$. The following lemma (together with Lemma~\ref{lem:joep} below) generalises the case $\ell=1$ from \cite[Lemma~2.11]{Ne}.
\begin{lemma}\label{lem:joe2}
The following conditions are equivalent on $T\in \Tilt (\SL_2^{\times\ell})$:
\begin{enumerate}
\item $T\in \bI_{n_1,\ldots, n_\ell}$;
\item $\ch T$ is in the ideal of $\Lambda$ generated by the $\left(\frac{x_j^{2^{n_j}}-x_j^{-2^{n_j}}}{x_j-x_j^{-1}}\right)$, for $1\le j\le \ell$;
\item $\ch T$ is in the ideal of $\Lambda$ generated by the $x_j^{2^{n_j-1}}+x_j^{-2^{n_j-1}}$, for $1\le j\le \ell$;
\item $\ch T(\zeta_{n_1},\zeta_{n_2},\ldots,\zeta_{n_\ell})=0$.
\end{enumerate}
\end{lemma}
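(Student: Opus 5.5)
I will prove the cycle (1)$\Rightarrow$(2)$\Rightarrow$(3)$\Rightarrow$(4)$\Rightarrow$(1). Throughout, characters of tilting modules for $\SL_2^{\times\ell}$ are sums of products $\prod_a \ch (T_{i_a})_{(a)}$, and the $\ch T_i$ are $\mZ$-linearly independent and span $\mZ[x+x^{-1}]$, since $\ch T_i$ is monic of top degree $x^i$. Hence $\ch$ identifies the split Grothendieck group of $\Tilt\SL_2^{\times \ell}$ with $\Lambda$.

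For (1)$\Rightarrow$(2), it suffices to treat an indecomposable $T$ as in \eqref{eq:I}, say with $i_c\ge 2^{n_c}-1$. By the known structure of $\SL_2$-tilting modules in characteristic $2$, for $i\ge 2^{n}-1$ the module $T_i$ is a direct summand of $St_n\otimes T_j$ for a suitable $j$ (the usual Donkin tensor product formula $T_{(2^n-1)+2^n k}\cong St_n\otimes T_k^{(n)}$ and its generalisations). This is precisely the statement that $\bI_n$ in Example~\ref{classSL2} is generated by $St_n$. Moreover, the ideal $\bI_n$ in the Grothendieck ring is spanned by $\ch St_n\cdot \ch T_j$, i.e. by multiples of $\ch St_n=\frac{x^{2^{n}}-x^{-2^{n}}}{x-x^{-1}}$. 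Concretely, I will argue that $\ch T_i\in(\ch St_{n})$ for $i\ge 2^n-1$ by showing that the $\ch T_i$ with $i\geq 2^n-1$ and the products $\ch St_n\cdot\ch T_j$ span the same subgroup. Both sets are unitriangular with respect to top degree, given that $\{T_i: i\ge 2^n-1\}$ is closed under the tensor products involved. Tensoring with the other factors then gives (2).

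(2)$\Rightarrow$(3) is the factorisation
\[\frac{x^{2^n}-x^{-2^n}}{x-x^{-1}}=\prod_{k=0}^{n-1}(x^{2^k}+x^{-2^k}),\]
whose last factor is $x^{2^{n-1}}+x^{-2^{n-1}}$. (3)$\Rightarrow$(4) is immediate, since $\zeta_{n_j}^{2^{n_j-1}}=i$, so $x_j^{2^{n_j-1}}+x_j^{-2^{n_j-1}}$ vanishes at $\zeta_{n_j}$.

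The main step is (4)$\Rightarrow$(1). Write $T=T'\oplus T''$ with $T'\in\bI_{n_1,\ldots,n_\ell}$ and $T''$ a sum of indecomposables $\otimes_a(T_{i_a})_{(a)}$ with all $i_a<2^{n_a}-1$. By the implications already proved, $\ch T'$ vanishes at $\zeta=(\zeta_{n_1},\dots,\zeta_{n_\ell})$, so $\ch T''(\zeta)=0$. For $i<2^n-1$ in characteristic $2$, the character of $T_i$ is an explicit product: writing $i=\sum_k \epsilon_k 2^k$ (Donkin), $\ch T_i=\prod_{k}\ch (T_{e_k})^{(k)}$ with a suitable digit rule. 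Its value at $\zeta_n$ is a product of $2\cos(\pi 2^k/2^n)$-type factors, which is nonzero and, I expect, strictly positive for $i<2^n-1$ (this is the $\ell=1$ case of \cite[Lemma~2.11]{Ne}). Positivity is the key obstacle. If every $\ch (T_{i})(\zeta_n)>0$ for $i<2^n-1$, then each indecomposable summand of $T''$ evaluates to a product of positive reals, so $\ch T''(\zeta)$ is a nonnegative combination of positive numbers and vanishes only if $T''=0$. Hence $T=T'\in\bI_{n_1,\ldots,n_\ell}$. To get positivity I would use the Frobenius-twist recursion: $\ch T_i(\zeta_n)$ factors into terms $y^{2^k}+y^{-2^k}$ and $\frac{y^{2^{k+1}}-y^{-2^{k+1}}}{y-y^{-1}}$ with $y=\zeta_n$ and exponents below $2^{n-1}$, each of which is a positive real cosine or sine ratio. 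If that fails I would fall back on citing \cite{Ne} for $\ell=1$ directly.
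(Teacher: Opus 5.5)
Your proposal is correct and follows essentially the same route as the paper: (1)$\Rightarrow$(2)$\Rightarrow$(3)$\Rightarrow$(4) via the Steinberg character and the factorisation $\prod_{k<n}(x^{2^k}+x^{-2^k})$, where your unitriangularity argument just spells out the paper's one-line (1)$\Rightarrow$(2), and (4)$\Rightarrow$(1) via Donkin's tensor product theorem and positivity of the evaluated characters. The positivity you hedge on is exactly what the paper checks: every nontrivial $T_i$ is $T_{r_0}\otimes T_{r_1}^{(1)}\otimes\cdots\otimes T_{r_s}^{(s)}$ with $r_j\in\{1,2\}$ and $\ch T_2=(x+x^{-1})^2$, so $\ch T_i(\zeta_n)=\prod_{j=0}^{s}(2\cos(\pi/2^{n-j}))^{r_j}$, which is strictly positive when $s\le n-2$ (equivalently $i<2^n-1$) and zero otherwise.
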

\begin{proof}
Since the character of $T_{2^n-1}$ is $\left(\frac{x^{2^{n}}-x^{-2^{n}}}{x-x^{-1}}\right)$, we have (1)$\Rightarrow$(2). We have (2)$\Rightarrow$(3) because of a clear inclusion of ideals. That (3)$\Rightarrow$(4) follows from $\zeta_n^{2^{n-1}}=-\zeta_n^{2^{n-1}}$.

The main task is showing that (4)$\Rightarrow$(1). By Donkin's tensor product theorem, see \cite{BEO, Do, Jantzen}, every non-trivial indecomposable tilting module for $\SL_2$ is of the form
\[T_{r_0}\otimes T_{r_1}^{(1)}\otimes\cdots \otimes T_{r_s}^{(s)}\]
for $1\le r_j\le 2$ and $s\in \mN$. Since the character of $T_1$ is $x+x^{-1}$ and that of $T_2$ is $(x+x^{-1})^2$, it follows that the evaluation of the character of any $\SL_2$-tilting module at any $\zeta_n$ is an element of $\mR_{\ge 0}$. Indeed, for the displayed equation, evaluation at $\zeta_n$ is 
\[\prod_{j=0}^s \left(2\cos\left(\pi/2^{n-j}\right)\right)^{r_j}.\]
This is strictly positive if $s+1<n$ and zero otherwise. Consequently,  $\ch T$ evaluated at $(\zeta_{n_1},\ldots,\zeta_{n_\ell})$ can only be zero if every indecomposable summand of $T$ satisfies this condition. So we can consider $T=\otimes_a (T_{i_a})_{(a)}$ and by the same reason, its character can only be zero if $\ch T_{i_a}(\zeta_{n_a})=0$ for some~$a$. By the above computation of the character with Donkin's tensor product formula, this implies $T_{i_a}\in \bI_{n_a}$, see also \cite[Lemma~2.11]{Ne}.
\end{proof}

\subsubsection{}\label{linkidealVer}In \cite{BE}, the abelian envelope $\Ver_{2^n}$ of $\Tilt \SL_2/\bI_n$ was constructed. In particular there is a (defining) tensor functor $\Tilt\SL_2\to \Ver_{2^n}$ with kernel $\bI_n$, we denote the image of $U$ in $\Ver_{2^n}$ by $L_1^{[n]}$ (hence $L_1^{[n]}=0$ if and only if $n=1$). As shown in \cite{BE} there are inclusions $\Ver_{2^n}\subset\Ver_{2^{n+1}}$, which allows us to consider the union $\Ver_{2^\infty}$. It then follows that we can equivalently define $\bI_{n_1,\ldots, n_\ell}$ as the kernel of the tensor functor
\[\Tilt SL_2^{\times \ell}\to \Ver_{2^\infty},\quad U_{(a)}\mapsto L_1^{[n_a]},\quad 1\le a \le\ell.\]

\subsection{The replacement of the principal $\SL_2$}\label{sec:SL2config}

\subsubsection{} Denote the standard embedding $\SL_2\hookrightarrow\GL_2$ by $\phi_2$. More generally, we define an inclusion
\[\phi_m:\,\SL_2^{\times \ell}\,\hookrightarrow\, \GL_{m}\]
iteratively. Concretely, we consider the embedding 
\[\phi_m':\,\SL_2\hookrightarrow\GL_m,\quad A\mapsto \begin{pmatrix}
A & 0 & \cdots & 0 \\
0 & A & \cdots & 0 \\
\vdots & \vdots & \ddots & \vdots \\
0 & 0 & \cdots & A
\end{pmatrix}\quad\mbox{or}\quad \begin{pmatrix}
A & 0 & \cdots & 0 \\
0 & A & \cdots & 0 \\
\vdots & \vdots & \ddots & \vdots \\
0 & 0 & \cdots & (1)
\end{pmatrix}\]
where the matrix on the right is a block matrix of $2\times 2$-blocks, if $m$ is even, with 1 row and column added if $m$ is odd. The image has a $\GL_{\lfloor m/2\rfloor}$-centraliser consisting of block matrices of the same division where every $2\times 2$-block is a scalar matrix. We denote that inclusion by $\phi''_{m}:\GL_{\lfloor m/2\rfloor}\hookrightarrow \GL_m$. We then define 
\[\phi_{m}(A_1,\ldots, A_{\ell})=\phi'_{m}(A_1)\cdot \phi''_{m}(\phi_{\lfloor m/2\rfloor}(A_2,\ldots, A_{\ell})).\]

\subsubsection{}It follows that the pull-back $X\to\mZ^{\times\ell}$ of $\phi_m$ on weights sends each $\epsilon_i$ to a sequence $(a_1,a_2,\ldots, a_{\ell})$, where $a_j\in\{-1,0,1\}$. For $W$ in $\Rep\GL_m$, we will denote by 
\[\ch_{\phi}W\;\in\; \Lambda\subset\mZ[x_1^{\pm 1},x_2^{\pm 1},\ldots, x_\ell^{\pm 1}]\]
its character after restriction along $\phi_m$. Here $\Lambda$ is the ring from \ref{def:Lambda}.

\begin{example}
For $m=4$, we get the embedding
\[\phi_4:\,\SL_2\times\SL_2\hookrightarrow \GL_4,\quad (A,\begin{pmatrix}
a&b\\
c&d
\end{pmatrix})\mapsto\begin{pmatrix}aA&bA\\
cA&dA\end{pmatrix}.\]
The pullback on weights is $\epsilon_1\mapsto (1,1)$, $\epsilon_2\mapsto (-1,1)$, $\epsilon_3\mapsto (1,-1)$ and $\epsilon_4\mapsto (-1,-1)$.
\end{example}

\begin{example}\label{ex:character}
The restriction of the natural $\GL_m$-representation $V$ under $\phi_m$ is given by the direct sum over $i\in S$ of $U_{(1)}\otimes U_{(2)}\otimes \cdots\otimes U_{(i)}$, to be interpreted as $\unit$ for $i=0$.
Hence
\[\ch_\phi V\;=\;\sum_{i\in S}\prod_{j=1}^{i}(x_j+x_j^{-1}).\]
For instance, if $m=2^\ell$, then $V$ restricts to $U_{(1)}\otimes \cdots \otimes U_{(\ell)}$.
\end{example}

\begin{remark}
We will use $\phi_{m}$ to pull back thick tensor ideals of $\Tilt\SL_2^{\times \ell}$ to $\GL_m$. This is in line with a conjectural classification of thick tensor ideals in $\Tilt\GL_m$ over fields of characteristic $p>m$ in \cite{AHR2} (in which case $\Tilt\GL_m=\Tilt^{\bullet}\GL_m$). Indeed, the latter proposes to classify thick tensor ideals in terms of orbits in the nilpotent cone $\mathcal{N}$, and the centralisers of the $\SL_2$-homomorphisms corresponding to these nilpotent elements (performed in an iterative manner, where one subsequently considers nilpotent orbits in these centralisers). In our rather degenerate characteristic 2 situation, the above considers the maximal nilpotent orbit for which there exists a corresponding $\SL_2$-embedding (in other words we focus on the restricted nullcone $\mathcal{N}_1$, as dictated also by support theory, see for example~\cite{CLNP}), the $\phi_m'$, and in line with \cite{AHR2} we augment by iteratively considering the centraliser. Hence, as will be confirmed below, we can think of $\phi_m$ as a replacement for a principal $\SL_2$-homomorphism. 
\end{remark}

\subsubsection{}\label{def:S}
Restriction along $\phi_{m}$ gives a tensor functor from
$\Tilt^{\circ}\GL_m$ to $\Tilt \SL_2^{\times\ell}$.
We define $\bS$  as the preimage in $\Tilt^{\circ}\GL_m$ of $\bI:=\bI_{2,3,\ldots,\ell+1}$ from \ref{def:Innn}, which is thus by construction a \emph{prime} thick tensor ideal. By \ref{linkidealVer} we can equivalently define $\bS$ as the kernel of the tensor functor
\[\Tilt^\circ \GL_m\to \Ver_{2^\infty},\quad V\mapsto X_m:=\bigoplus_{i\in S} L_1^{[2]}\otimes L_1^{[3]} \cdots\otimes L_{1}^{[i+1]},\]
where we note that by \cite{BE, BEO} the $|S|$ summands of $X_m$ above are simple in $\Ver_{2^\infty}$.

\subsection{Generalities about tilting modules}\label{sec:gen}

\begin{proposition}\label{prop:consol}
If $\mu\in\part(\le m)$ and $\lambda\in\ppart\langle m\rangle$, then $T(\mu)\otimes T(\lambda)$, so in particular $T(\lambda+\mu)$, is a direct summand of $V^{\otimes |\mu|}\otimes T(\lambda)$. All these representations thus belong to $\Tilt^\circ \GL_m$.
\end{proposition}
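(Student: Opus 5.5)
The plan is to reduce to columns and use tilting theory for exterior powers. Write $\mu\in\part(\le m)$ as a sum of fundamental weights, $\mu=\sum_{j}\omega_{c_j}$ with $c_j=\mu^t_j\le m$ the column lengths. By highest-weight considerations, $T(\mu)$ is a direct summand of $\w^{c_1}V\otimes\w^{c_2}V\otimes\cdots$; indeed this tensor product is tilting with highest weight $\mu$ occurring with multiplicity one. So $T(\mu)\otimes T(\lambda)$ is a summand of $\bigotimes_j\w^{c_j}V\otimes T(\lambda)$. Since $|\mu|=\sum_j c_j$, it suffices to prove the following claim: for each $1\le c\le m$ and each $\lambda\in\ppart\langle m\rangle$, every indecomposable summand of $\w^cV\otimes T(\lambda)$ is a direct summand of $V^{\otimes c}\otimes T(\lambda)$. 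We then iterate, column by column. To do so we also need the indecomposable summands of $\w^cV\otimes T(\lambda)$ to be again of the form $T(\lambda')$ with $\lambda'\in\ppart\langle m\rangle$, or at least to lie in $\Tilt^\circ\GL_m$.

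The key reduction is to write $\w^cV$ as a summand of $V^{\otimes c}$ twisted appropriately. In characteristic $2$, $\w^cV$ is not a summand of $V^{\otimes c}$. However, $\w^cV\otimes T(\lambda)$ should be a summand of $V^{\otimes c}\otimes T(\lambda)$ whenever $T(\lambda)$ is "Steinberg-like" in its bottom rows. For $c=m$, $\w^mV$ is the determinant and $T(\lambda+\omega_m)=T(\lambda)\otimes\det$. Since $\lambda\in\ppart\langle m\rangle$, all parts of $\lambda$ in the first $m-1$ rows are distinct, and $T(\lambda)$ contains $T(\rho_m+t\omega_m)$-type behaviour. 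My concrete approach is to use \eqref{eq:SWT}: $T(\lambda)\otimes T(\nu)$ decomposes via the coefficients $c^\kappa_{\lambda,\nu}$, and $V^{\otimes c}=\bigoplus_\nu D^\nu\otimes_\bk T(\nu)$ over $2$-regular $\nu\vdash c$. I would show that $T(\lambda+\omega_c)$ appears in $T(\lambda)\otimes V^{\otimes c}$, which is easy by adding one box to each of rows $1,\ldots,c$ in turn. The box additions keep $2$-regularity because the first $m-1$ rows of $\lambda$ are strictly decreasing, which is exactly the hypothesis $\lambda\in\ppart\langle m\rangle$. One then checks by a character or highest-weight induction that all summands of $\w^cV\otimes T(\lambda)$ are $T(\kappa)$ with $\kappa$ again in $\ppart\langle m\rangle$ and occurring in $V^{\otimes c}\otimes T(\lambda)$.

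The cleanest route for the full summand claim, rather than only for top weights, is to use that $V^{\otimes c}\otimes T(\lambda)$ contains the image of the skew-symmetriser $\w^cV\otimes T(\lambda)$ as a summand. For this I would construct a splitting map using an idempotent in $\End(V^{\otimes c}\otimes T(\lambda))$, absorbing the non-invertible factor $c!$ into $T(\lambda)$. Concretely, I would aim to show that $\w^cV\otimes T(\lambda)$ is a summand of $V\otimes\w^{c-1}V\otimes T(\lambda)$ by using the Koszul-type short exact sequences $0\to\w^cV\to V\otimes\w^{c-1}V\to\cdots$ and showing they split after tensoring with $T(\lambda)$. The splitting would follow from $\mathrm{Ext}^1$-vanishing between tilting modules once the relevant terms are tilting, and we proceed by induction on $c$. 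I expect this splitting step to be the main obstacle. The induction has to use the condition $\lambda\in\ppart\langle m\rangle$ essentially, since for $\lambda=0$ the claim fails: $\w^2V$ is not in $\Tilt^\circ\GL_m$ for $p=2$. The final sentence of Proposition~\ref{prop:consol} follows because $V^{\otimes|\mu|}\otimes T(\lambda)$ lies in $\Tilt^\circ\GL_m$, given that $T(\lambda)$ does by \eqref{eq:simpletilt}.
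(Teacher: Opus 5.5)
There is a genuine gap. Your outline has the same skeleton as the paper's proof: reduce to columns $\w^cV$, split $T(\lambda)\otimes(V\otimes\w^{c-1}V\tto\w^cV)$, and iterate. But the one step that carries all the content, namely \emph{why} this surjection splits, is left as ``the main obstacle'' with no mechanism.

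The two outer terms are tilting for free, so the only question is whether $T(\lambda)\otimes K$ is tilting, where $K$ is the kernel. Since $K$ is the kernel of a surjection between Weyl-filtered modules and has the character of $(2,1^{c-2})$, we get $K\cong\Delta(\omega_1+\omega_{c-1})$. So $T(\lambda)\otimes K$ has a Weyl filtration automatically, and what you need is a good filtration. Already for $c=2$, $K=\Gamma^2V$ is an extension of $V^{(1)}$ by $\w^2V$, so the splitting amounts to $T(\lambda)\otimes V^{(1)}$ being tilting. This is a Donkin-type statement, and it fails for $\lambda=0$.

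Neither ``absorbing $c!$ into $T(\lambda)$'' nor a ``character or highest-weight induction'' can produce this. Characters do not determine decompositions of tilting modules. Moreover, knowing that each indecomposable summand of $\w^cV\otimes T(\lambda)$ occurs somewhere in $V^{\otimes c}\otimes T(\lambda)$ would not give the direct-summand statement, with multiplicities, that the proposition asserts.

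The paper supplies two missing ideas.

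(i) Use the hypothesis $\lambda\in\ppart\langle m\rangle$ in the precise form that $\kappa=\lambda-\rho_m$ is a partition of length at most $m$. Then $T(\lambda)$ is a summand of $T(\kappa)\otimes T(\rho_m)$. A split epimorphism $A\otimes g$ restricts to split epimorphisms $B\otimes g$ for summands $B$ of $A$, so it suffices to treat the Steinberg module $T(\rho_m)=\nabla(\rho_m)$.

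(ii) By the tensor identity, $T(\rho_m)\otimes L\cong\Indu_B^{\GL_m}(\bk_{\rho_m}\otimes L)$ for each composition factor $L$ of $V\otimes\w^{c-1}V$. Every weight $\nu$ of $L$ has entries in $\{0,1,2\}$, so $\langle\rho_m+\nu+\rho,\alpha^\vee\rangle\ge 0$ for every simple root $\alpha$. Kempf vanishing, together with the vanishing of $H^\bullet(\eta)$ whenever $\langle\eta+\rho,\alpha^\vee\rangle=0$ for some simple $\alpha$, then shows that $T(\rho_m)\otimes L$ has a good filtration. Being contravariantly self-dual, it is therefore tilting. Hence $T(\rho_m)\otimes K$ is tilting, and $\mathrm{Ext}^1$-vanishing between tilting modules gives the splitting.

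Once you have the splitting itself, rather than only a statement about indecomposable summands, your worry about summands of $\w^cV\otimes T(\lambda)$ staying in $\ppart\langle m\rangle$ disappears. By symmetry of $\otimes$ you can split off one exterior factor at a time, with the remaining factors as spectators. This shows that $T(\lambda)\otimes\w^{c_1}V\otimes\w^{c_2}V\otimes\cdots$, and hence $T(\mu)\otimes T(\lambda)$, is a summand of $T(\lambda)\otimes V^{\otimes|\mu|}$, with no closure property needed.
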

\begin{proof}
We claim that the canonical surjection
$$T(\rho_m)\otimes\left(V\otimes \w^{i-1}V\tto\w^{i}V\right) $$
is split. Indeed, using \cite[II.3.19(4)]{Jantzen},
\[T(\rho_m)\otimes L\;\cong\; \Indu^{\GL_m}_{B}(\bk_{\rho_m}\otimes L)\]
is a tilting module by \cite[II.4.5 and II.5.4(a)]{Jantzen}, for every simple constituent $L$ of $V\otimes\w^{i-1}V$. By the vanishing of extensions between tilting modules, see \cite[Corollary~E.2]{Jantzen}, the claim follows.

Now consider an arbitrary $\lambda\in\ppart\langle m\rangle$. By writing $\lambda=\kappa+\rho_m$, where $\kappa$ is thus a partition of length at most $m$, and viewing $T(\lambda)$ as a direct summand of $T(\kappa)\otimes T(\rho_m)$, we thus find that $T(\lambda)$ also splits the defining surjections of $\w^\bullet V$.

 This means that $T(\lambda)\otimes \w^i V$, by iteration, is a direct summand of $T(\lambda)\otimes V^{\otimes i}$. This result then immediately extends to $T(\lambda)\otimes \w^{\mu^t}V$ for arbitrary partitions $\mu$ of length at most $m$ and consequently to $T(\mu)\otimes T(\lambda)$.
\end{proof}

%

%
%
%

\begin{lemma}\label{lem:cellSt}
\begin{enumerate}
\item If a thick tensor ideal of $\Tilt^{\circ}\GL_m$ contains $T(\lambda)$ for some $\lambda\in \ppart\langle m\rangle$, then it contains $T(\mu+\lambda)$, for every $\mu\in\part(\le m)$.
\item  If a thick tensor ideal of $\Tilt^{\circ}\GL_m$  contains $T(\lambda+2\nu)$ for some $\lambda\in\gen\langle m\rangle$ and $\nu\in\part(\le m)$, then it contains $T(\rho_m+a\omega_m+2\nu)$ for some $a\in\mN$.
\end{enumerate}
\end{lemma}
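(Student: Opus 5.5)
Part (1) follows directly from Proposition~\ref{prop:consol}. Let $\bJ$ be a thick tensor ideal of $\Tilt^\circ\GL_m$ with $T(\lambda)\in\bJ$, where $\lambda\in\ppart\langle m\rangle$. Then $V^{\otimes|\mu|}\otimes T(\lambda)$ lies in $\bJ$, because $\bJ$ is closed under tensoring with objects of $\Tilt^\circ\GL_m$. Proposition~\ref{prop:consol} says that $T(\mu+\lambda)$ is a direct summand of this object. Since a thick tensor ideal is closed under direct summands, $T(\mu+\lambda)\in\bJ$. The only check needed is that all objects involved lie in $\Tilt^\circ\GL_m$, and Proposition~\ref{prop:consol} gives exactly that.

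For part (2), I would translate the branching statement of Lemma~\ref{lem:Phim}(1) into a statement about tensor ideals, using the dictionary of Example~\ref{ex:BrK} and \eqref{eq:SWT}. Fix $\lambda\in\gen\langle m\rangle$ and $\nu\in\part(\le m)$. By Lemma~\ref{lem:Phim}(1), for $N$ large enough, $D^{\lambda+2\nu}$ is a composition factor of the restriction of $D^{\kappa}$ from $S_{|\kappa|}$ to $S_{|\lambda+2\nu|}$, where $\kappa=N\omega_m+\rho_m+2\nu$. Every composition factor of the restriction to $S_{n}\times S_{1}\times\cdots\times S_1$ has the form $D^{\lambda'}\boxtimes\bk\boxtimes\cdots\boxtimes\bk$. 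Iterating \eqref{eq:SWT} with $\mu=(1)$ (using $T((1))=V$ and $D^{(1)}=\bk$) therefore shows that $T(\kappa)$ is a direct summand of $T(\lambda+2\nu)\otimes V^{\otimes M}$, where $M=|\kappa|-|\lambda+2\nu|$. To justify the iteration, I would use that restriction from $S_{n+M}$ to $S_n$ factors through $S_{n+M-1},\dots$, and that the multiplicity of $T(\kappa)$ in $T(\lambda+2\nu)\otimes V^{\otimes M}$ is the multiplicity $[\Res D^\kappa: D^{\lambda+2\nu}\boxtimes \bk^{\boxtimes M}]$, which is nonzero. This multiplicity identity is \eqref{eq:SW} applied to the object $V$ in $\Tilt^\circ\GL_m$. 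It requires both $\lambda+2\nu$ and $\kappa$ to be $2$-regular of length at most $m$, which holds because $\lambda+2\nu\in\ppart\langle m\rangle$ and $\kappa\in\ppart(m)$.

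Consequently, if $T(\lambda+2\nu)$ lies in a thick tensor ideal, so does $T(\kappa)=T(\rho_m+N\omega_m+2\nu)$, which is the claim with $a=N$. The main point needing care is the multiplicity identity for repeated tensoring with $V$. I would derive it from \eqref{eq:SW} by writing $T(\lambda+2\nu)\otimes V^{\otimes M}=\Sch_{\lambda+2\nu}(V)\otimes V^{\otimes M}$. I would then decompose $V^{\otimes M}$ into the summands $\Sch_\theta(V)$ with multiplicity $\dim D^\theta$ and apply \eqref{eq:SW}. This gives $T(\kappa)$ with multiplicity $\sum_\theta \dim D^\theta\, c^\kappa_{\lambda+2\nu,\theta}$, and this sum equals the restricted multiplicity above.
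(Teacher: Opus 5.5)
Your proposal is correct and follows essentially the same route as the paper: part (1) comes from Proposition~\ref{prop:consol}, and part (2) is Lemma~\ref{lem:Phim}(1) translated into a statement about summands via \eqref{eq:SWT}, with $a=N$. Your check that the multiplicity of $T(\kappa)$ in $T(\lambda+2\nu)\otimes V^{\otimes M}$ equals $\sum_\theta \dim D^\theta\, c^\kappa_{\lambda+2\nu,\theta}=[\Res D^\kappa:D^{\lambda+2\nu}]$ simply makes explicit the step the paper's one-line proof leaves implicit.
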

\begin{proof}
Part (1) is an immediate consequence of Proposition~\ref{prop:consol}. Part (2) is a reformulation of Lemma~\ref{lem:Phim}(1), by \eqref{eq:SWT}.
\end{proof}

We record a special case of Donkin's tensor product theorem.
\begin{lemma}\label{lem:Donkin}
For any $\mu\in \gen\langle m\rangle$ and $\nu\in X^+$, the tensor product $T(\mu)\otimes T(\nu)^{(1)}$ is a tilting module, and hence contains $T(\mu+2\nu)$ as a direct summand. Moreover, for $a\in\mZ$,
\[T(a\omega_m+\rho_m+2\nu)\;=\;T(a\omega_m+\rho_m)\otimes T(\nu)^{(1)}.\]
\end{lemma}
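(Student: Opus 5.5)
This is a direct application of Donkin's tensor product theorem in its form for reductive groups (\cite[Appendix E]{Jantzen}, \cite{Do}) in characteristic $2$. That theorem says: if $\mu = (p-1)\rho' + \mu_0$ with $\mu_0\in X_1$, and $T(\mu)$ restricts to $G_1T$ as an indecomposable injective module, then $T(\mu)\otimes T(\nu)^{(1)}\cong T(\mu+p\nu)$ for every $\nu\in X^+$. I will verify these hypotheses for the relevant $\mu$, and obtain the remaining claims from general tilting theory.

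First the displayed isomorphism. For $p=2$ we have $\langle a\omega_m+\rho_m,\alpha_i^\vee\rangle = 1$ for every simple root $\alpha_i$. So $a\omega_m+\rho_m$ is a Steinberg weight $(p-1)\rho + a'\omega_m$, where $\rho$ is normalised up to a multiple of $\omega_m$. This makes $T(a\omega_m+\rho_m)$ a twist of the Steinberg module by a character. Concretely it is simple, equal to $L(a\omega_m+\rho_m)=\nabla(a\omega_m+\rho_m)$, and projective/injective as a $G_1T$-module. For such weights, \cite[II.3.19, E.9]{Jantzen} gives
\[\mathrm{St}_\lambda\otimes T(\nu)^{(1)}\cong T(\lambda+2\nu).\]
This holds without restriction on $p$, because the Steinberg case of Donkin's theorem is unconditional: $\mathrm{St}\otimes M^{(1)}$ is tilting whenever $M$ is, and its highest weight occurs with multiplicity one. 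It remains to show indecomposability. The module $T(a\omega_m+\rho_m)\otimes T(\nu)^{(1)}$ is $G_1T$-injective, with $G_1$-socle $\mathrm{St}\otimes T(\nu)^{(1)}$. Hence
\[\End_G\bigl(\mathrm{St}\otimes T(\nu)^{(1)}\bigr)\cong \End_G\bigl(T(\nu)\bigr),\]
which is local. Therefore the tensor product is indecomposable, and so equals $T(a\omega_m+\rho_m+2\nu)$.

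Now take general $\mu\in\gen\langle m\rangle$. The condition $\mu_i-\mu_{i+1}\in\{1,2\}$ says exactly that $\mu = \rho_m + \mu_0 + c\,\omega_m$ with $\mu_0\in X_1$, i.e.\ $\langle\mu_0,\alpha_i^\vee\rangle\in\{0,1\}$. So $\mu$ lies in the range $(p-1)\rho+X_1$ where Donkin's conjecture is known: it holds for $\GL_m$ when $p\ge 2m-2$, and in general $\GL_m$ and $p=2$ are not covered. I therefore only need the weaker claim that $T(\mu)\otimes T(\nu)^{(1)}$ is tilting. For this I will write $T(\mu)$ as a summand of $T(\rho_m+c\omega_m)\otimes T(\mu_0)$, via highest weights. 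Then
\[T(\mu)\otimes T(\nu)^{(1)}\quad\text{is a summand of}\quad T(\rho_m+c\omega_m)\otimes T(\nu)^{(1)}\otimes T(\mu_0)\cong T(\rho_m+c\omega_m+2\nu)\otimes T(\mu_0),\]
where the isomorphism is the first part. The right-hand side is a tensor product of tilting modules, hence tilting, and summands of tilting modules are tilting. Since $\mu+2\nu$ is the highest weight of $T(\mu)\otimes T(\nu)^{(1)}$ and occurs with multiplicity one, the summand $T(\mu+2\nu)$ appears.

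The step I expect to be the main obstacle is the indecomposability argument in the first part, namely identifying $T(a\omega_m+\rho_m)$ with the twisted Steinberg module and citing the correct $G_1$-socle statement. I would first try to extract this directly from \cite[II.10 and E.9]{Jantzen}.
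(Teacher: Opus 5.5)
Your proof is correct and is essentially the paper's argument. The paper simply cites \cite[Lemma~E.9]{Jantzen} for both claims, after observing that $\mu-\rho_m\in X_1$ and that $T(\rho_m)=L(\rho_m)$ is simple over $(\GL_m)_1$. You instead reprove the needed parts of that lemma: the Steinberg case via $\End_G(T(\rho_m+a\omega_m)\otimes T(\nu)^{(1)})\cong \End_G(T(\nu))$, and the tilting claim via $T(\mu)$ being a summand of $T(\rho_m)\otimes T(\mu-\rho_m)$.

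For the endomorphism computation, all you use is that $T(\rho_m+a\omega_m)$ is absolutely simple over $(\GL_m)_1$. The tensor product is then $(\GL_m)_1$-isotypic, and its $(\GL_m)_1$-endomorphisms are $\End_{\bk}(T(\nu))$. The $G_1T$-injectivity you mention plays no role.
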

\begin{proof}
If $\mu\in\gen\langle m\rangle$, then $\mu-\rho_m\in X_1$, so the first sentence follows from \cite[Lemma E.9]{Jantzen}. The second sentence also follows from that lemma, since $T(\rho_m)=L(\rho_m)$ is simple, so certainly indecomposable, as a representation of the Frobenius kernel $(\GL_m)_1$.
Indeed, we can consider it over $\SL_m$, which is simple over $(\SL_m)_1$ by \cite[II.3.18]{Jantzen}.
\end{proof}

\begin{lemma}\label{lem:Stnew}
If a thick tensor ideal in $\Tilt^\circ\GL_m $ contains $T(\rho_m+a\omega_m)\otimes T^{(1)}$ for some $a\in\mN$ and a non-negligible tilting module $T\in\Tilt^+\GL_m$, then it contains $T(\rho_m+b\omega_m)$ for some $b\in\mN$. 
\end{lemma}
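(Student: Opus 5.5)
The plan is to use non-negligibility of $T$ to extract a power of the determinant from $T^{(1)}$. Then I will use Lemma~\ref{lem:Donkin} and Proposition~\ref{prop:consol} to stay inside $\Tilt^\circ\GL_m$, and finish with Lemma~\ref{lem:cellSt}(2).

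\emph{Step 1 (a determinant summand).} Since $\Tilt\GL_m$ is rigid with $\End(\unit)=\bk$, an object $T$ is non-negligible exactly when $\unit$ is a direct summand of $T\otimes T^\ast$. I would justify this by the standard argument: the identity lies outside the maximal ideal if and only if evaluation followed by coevaluation splits. Next I would observe that $T^\ast\cong \det^{-k}\otimes T'$ for some $k\in\mN$ and some $T'\in\Tilt^+\GL_m$. This holds because $(\w^iV)^\ast\cong \w^{m-i}V\otimes\det^{-1}$, and because $\Tilt^+\GL_m$ is generated by the $\w^iV$ and closed under summands. Consequently $T\otimes T'$ contains $\det^{k}=T(k\omega_m)$ as a direct summand, and hence $(T\otimes T')^{(1)}$ contains $T(2k\omega_m)$.

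\emph{Step 2 (tensoring within $\Tilt^\circ\GL_m$).} The difficulty is that $T'^{(1)}$ need not lie in $\Tilt^\circ\GL_m$, so we cannot tensor the ideal with it directly. To get around this, I would instead tensor with $Y:=T(\rho_m)\otimes T'^{(1)}$. Decompose $T'=\bigoplus T(\nu)$ with $\nu\in\part(\le m)$. By the second sentence of Lemma~\ref{lem:Donkin} (with $a=0$), each $T(\rho_m)\otimes T(\nu)^{(1)}\cong T(\rho_m+2\nu)$. By Proposition~\ref{prop:consol}, each of these lies in $\Tilt^\circ\GL_m$. Hence $Y\in\Tilt^\circ\GL_m$. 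The ideal therefore contains
\[T(\rho_m+a\omega_m)\otimes T^{(1)}\otimes Y\;\cong\; T(\rho_m+a\omega_m)\otimes T(\rho_m)\otimes (T\otimes T')^{(1)}.\]
By thickness, it then contains the summand $T(\rho_m+a\omega_m)\otimes T(\rho_m)\otimes T(2k\omega_m)$. The highest weight of this tilting module is $2\rho_m+(a+2k)\omega_m$, and that weight occurs with multiplicity one. So this tensor product contains $T(2\rho_m+(a+2k)\omega_m)$ as a summand, which therefore lies in the ideal.

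\emph{Step 3 (conclusion).} The partition $\lambda:=2\rho_m+(a+2k)\omega_m$ has all consecutive row differences equal to $2$, so $\lambda\in\gen\langle m\rangle$. Applying Lemma~\ref{lem:cellSt}(2) with $\nu=0$ then gives $T(\rho_m+b\omega_m)$ in the ideal for some $b\in\mN$.

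I expect the main obstacle to be Step 2, that is, making sure every auxiliary object we tensor with really lies in $\Tilt^\circ\GL_m$. The device of absorbing the Frobenius twist into $T(\rho_m)$, using Donkin's factorisation for the Steinberg-type weight, is what I would rely on. A secondary point to check carefully is the characterisation of non-negligibility via $\unit$ being a summand of $T\otimes T^\ast$.
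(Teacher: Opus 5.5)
Your proposal is correct and follows essentially the same route as the paper. The paper also uses non-negligibility to find $Z\in\Tilt^+\GL_m$ with $T(N\omega_m)$ (your $\det^k$) a summand of $T\otimes Z$. It then tensors with $T(\rho_m)\otimes Z^{(1)}$ to get $T(2\rho_m+(a+2N)\omega_m)$ into the ideal, and concludes with Lemma~\ref{lem:cellSt}(2) for $\nu=0$. Your Step~2 also spells out, via Lemma~\ref{lem:Donkin}, why $T(\rho_m)\otimes Z^{(1)}$ lies in $\Tilt^\circ\GL_m$, a point the paper leaves implicit.
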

\begin{proof}
Under the assumptions there exists $W\in\Tilt \GL_m$ such that $T\otimes W$ has  a direct summand $\unit$. This means that for high enough $N$, there exists $Z\in \Tilt^+\GL_m$ such that $T\otimes Z$ has a direct summand $T(N\omega_m)$. Hence $T(2\rho_m+(a+2N)\omega_m)$ is a direct summand of
\[T(\rho_m+a\omega_m)\otimes T^{(1)}\otimes T(\rho_m)\otimes Z^{(1)}\]
and thus in the thick tensor ideal. The conclusion now follows from Lemma~\ref{lem:cellSt}(2) for $\lambda=2\rho_m+(a+2N)\omega_m$ and $\nu=0$.
\end{proof}

\subsection{Negligible tilting modules revisited}\label{sec:GLmnegl}
The semisimplification of $\Tilt \GL_m$ in characteristic~2 was described in \cite[\S 8]{EO} and \cite{BEEO}. It included the equivalence between (1) and (2) in Theorem~\ref{thm:N} below. For us it will be important to establish the equivalent criteria (3) and (4).

\subsubsection{}We define the set $\pA\langle m\rangle\subset\part(\le m)$ of partitions that are of the form 
\[(a_1^{2^{i_1}},a_2^{2^{i_2}},\ldots, a_n^{2^{i_n}})\]
with $\{i_1,i_2,\ldots, i_n\}=S$ (as unordered sets) and $a_1\ge a_2\ge \cdots \ge a_n\ge 0$. Equivalently, $\lambda\in\pA\langle m\rangle$ if and only if
\[\lambda^t\;=\;\sum_{j\in S} 2^j \omega_{f(j)},\quad\mbox{for some}\quad f:S\to \mN.\]

\begin{example}
\begin{enumerate}
\item $\quad\pA\langle 2^\ell\rangle=\{a\omega_{2^{\ell}}\mid a\in\mN\}$
\item $\quad\pA\langle 3\rangle=\{a\omega_1+b\omega_3,c\omega_2+d\omega_3\mid a,b,c,d\in\mN\}$
\end{enumerate}
\end{example}
 
\begin{theorem}\label{thm:N}
The following conditions are equivalent on $T\in \Tilt^+\GL_m$.
\begin{enumerate}
\item $T$ is negligible in $\Tilt\GL_m$;
\item $T$ is a direct sum of modules $T(\mu)$ with $\mu\in\part(\le m)\backslash \pA\langle m\rangle$;
\item $\ch_\phi T\in \fm$;
\item $T(\rho_m)\otimes T^{(1)}\in \bS$, with $\bS$ from \ref{def:S}.
\end{enumerate}
\end{theorem}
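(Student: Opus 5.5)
The plan is to take (1)$\Leftrightarrow$(2) from \cite[\S 8]{EO} and \cite{BEEO}, as indicated before the statement, and then prove (2)$\Rightarrow$(3)$\Rightarrow$(4)$\Rightarrow$(1).

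\emph{Step (2)$\Rightarrow$(3).} The set of $T\in\Tilt^+\GL_m$ with $\ch_\phi T\in\fm$ is closed under direct sums and summands, because characters of tilting modules are sums of characters of indecomposables with positive coefficients. I would argue this through evaluation. By Lemma~\ref{lem:joe2}(3)$\Leftrightarrow$(4), membership of $\ch_\phi T$ in $\fm$ is equivalent to vanishing at $(\zeta_1,\ldots,\zeta_\ell)$. Here the generators $x_i^{2^{i-1}}+x_i^{-2^{i-1}}$ of $\fm$ correspond to $n_i=i$, which is the ideal $\bI_{1,\ldots,\ell}$. Restrictions of tilting modules along $\phi_m$ are tilting, so each indecomposable summand evaluates to a nonnegative real number. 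It therefore suffices to show $\ch_\phi T(\mu)\in\fm$ for $\mu\notin\pA\langle m\rangle$.

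To get this, I would first compute characters of the generators. Corollary~\ref{cor:lambda}, applied to $y=\ch_\phi V$ from Example~\ref{ex:character}, shows $\ch_\phi \w^nV\in\fm$ unless $n$ is a sum of distinct $2^i$ with $i\in S$. The modules $\w^{\mu^t}V=\bigotimes_j\w^{\mu^t_j}V$ then have characters in $\fm$ as soon as some column length $\mu^t_j$ is not such a sum. I would argue by induction in dominance order. $T(\mu)$ is a summand of $\w^{\mu^t}V$, and the remaining summands are $T(\kappa)$ with $\kappa<\mu$. Nonnegativity of the evaluations then lets me conclude: if the evaluation of $\w^{\mu^t}V$ is zero, every summand evaluates to zero. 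Partitions all of whose column lengths are sums of distinct $2^i$, $i\in S$, but which lie outside $\pA\langle m\rangle$ need separate treatment. For these I would use the equivalence (1)$\Leftrightarrow$(2): the negligible modules form a thick tensor ideal, and every $T(\mu)$ with $\mu\notin\pA\langle m\rangle$ is a summand of $W\otimes T(\mu')$ with $\mu'$ handled directly. The cleanest route is to show that the evaluation at $\zeta$ defines a tensor functor to $\Ver_{2^\infty}$ (\ref{linkidealVer}) killing $V$-tensor words in the expected way, and that $\pA\langle m\rangle$ corresponds exactly to the surviving summands. I expect this combinatorial matching of $\pA\langle m\rangle$ against Corollary~\ref{cor:lambda} to be the main obstacle.

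\emph{Step (3)$\Rightarrow$(4).} We have $\ch_\phi(T(\rho_m)\otimes T^{(1)})=\ch_\phi T(\rho_m)\cdot\psi^2(\ch_\phi T)$, and Frobenius twist raises each variable to the second power. Evaluating at $(\zeta_2,\ldots,\zeta_{\ell+1})$ turns $\psi^2$ into evaluation of $\ch_\phi T$ at $(\zeta_1,\ldots,\zeta_\ell)$, which is zero by (3). Lemma~\ref{lem:joe2}(4)$\Rightarrow$(1) for $\bI_{2,\ldots,\ell+1}$ then gives (4). Proposition~\ref{prop:consol} guarantees that the module lies in $\Tilt^\circ\GL_m$.

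\emph{Step (4)$\Rightarrow$(1).} Suppose $T$ is not negligible. Lemma~\ref{lem:Stnew} with $a=0$ places some $T(\rho_m+b\omega_m)$ in $\bS$. Then $T(\rho_m)^{\otimes 2}$ and related modules lie in $\bS$ by Lemma~\ref{lem:cellSt}(1). By primeness of $\bS$, this reduces to checking $T(\rho_m+b\omega_m)\notin\bS$, which I would verify by evaluating the character: via Lemma~\ref{lem:Donkin} and Weyl's formula, the character of $T(\rho_m)$ restricted along $\phi_m$ at $(\zeta_2,\ldots,\zeta_{\ell+1})$ should be a nonzero product of $\phi$-images of roots. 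The nonvanishing of this evaluation is the second delicate point.
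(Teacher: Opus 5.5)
Your steps (1)$\Leftrightarrow$(2) and (3)$\Rightarrow$(4) agree with the paper. For (3)$\Rightarrow$(4) you use criterion (4) of Lemma~\ref{lem:joe2} where the paper uses (3). Both work, because $T(\rho_m)\otimes T^{(1)}$ lies in $\Tilt^\circ\GL_m$, so its restriction along $\phi_m$ is tilting.

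\textbf{The gap in (2)$\Rightarrow$(3).} Your argument rests on ``restrictions of tilting modules along $\phi_m$ are tilting''. This holds on $\Tilt^\circ\GL_m$ but fails on $\Tilt^+\GL_m$. For $m=4$, $\w^2V$ restricts with character $x_1^2+x_1^{-2}+x_2^2+x_2^{-2}+2$, which is not a nonnegative combination of characters of $T_a\boxtimes T_b$. So you have neither nonnegativity of the evaluations of $\ch_\phi T(\kappa)$, nor Lemma~\ref{lem:joe2} to pass from vanishing at $(\zeta_1,\dots,\zeta_\ell)$ back to membership in $\fm$. For $\ell\ge 3$ vanishing there is strictly weaker than membership in $\fm$, since evaluation $\Lambda/\fm\to\mC$ is not injective.

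The case you isolate, where the column lengths are each sums of distinct $2^j$ with $j\in S$ but are not nested, is also left open, and your fallbacks do not close it. For $m=3$ every column length qualifies, so there is no $\mu'$ to handle directly, yet $T(2,1)$ is negligible. Nor is there a tensor functor ``defined by evaluation'', since restriction does not land in $\Tilt\SL_2^{\times\ell}$.

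You are right that this case is real. The paper's induction on $D=\w^{\lambda^t}V$ asserts that some factor of $D$ is negligible whenever $\lambda\notin\pA\langle m\rangle$. This fails for $\lambda=(2,1)$ and $m=3$: neither $V$ nor $\w^2V$ is negligible, and $D\cong T(2,1)\oplus\w^3V$.

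A uniform argument closes it:
\begin{enumerate}
\item The split Grothendieck ring of $\Tilt^+\GL_m$ is generated by the classes $[\w^nV]$, by unitriangularity of $[\w^{\lambda^t}V]$ against $[T(\lambda)]$.
\item Consider two ring homomorphisms into $\Lambda/\fm$. The first is $[T]\mapsto\ch_\phi T \bmod \fm$. The second sends $[T]$ to its class in $K_0(\overline{\Tilt\GL_m})=\mZ[\mZ^S]$ (see \cite[\S 8]{EO}), followed by the augmentation $\mZ[\mZ^S]\to\mZ\to\Lambda/\fm$.
\item They agree on each $[\w^nV]$ by Corollary~\ref{cor:lambda} and (1)$\Leftrightarrow$(2), since a non-negligible $\w^nV$ becomes an invertible object.
\item Hence they agree everywhere, and every negligible $T(\mu)$ has $\ch_\phi T(\mu)\in\fm$.
\end{enumerate}

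\textbf{The gap in (4)$\Rightarrow$(1).} Your reduction via Lemma~\ref{lem:Stnew} to showing $T(\rho_m+b\omega_m)\notin\bS$ is the paper's; the detour through $T(\rho_m)^{\otimes 2}$ and primeness is unnecessary. But the non-vanishing, which is the real content of this step, is not carried out. The paper does it as follows.
\begin{itemize}
\item Up to a power of $\det$, which restricts trivially, the character of $T(\rho_m+b\omega_m)$ is $\prod_{\alpha>0}(e^{\alpha/2}+e^{-\alpha/2})$.
\item Each $\epsilon_k$ pulls back to a vector in $\{0,\pm1\}^\ell$. So, writing $y_i^2=x_i$, every factor becomes $y^b+y^{-b}$ with $b_i\in\{0,\pm1,\pm2\}$.
\item Substitute $y_i=\zeta_{i+2}=\zeta_{\ell+2}^{2^{\ell-i}}$, where $\zeta_{\ell+2}$ has order $2^{\ell+3}$. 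A factor vanishes if and only if $\sum_i b_i2^{\ell-i+1}\equiv 2^{\ell+2}\pmod{2^{\ell+3}}$.
\item This is impossible, since $|\sum_i b_i2^{\ell-i+1}|\le 2^{\ell+2}-4$.
\end{itemize}
Lemma~\ref{lem:joe2}, (1)$\Rightarrow$(4), then contradicts $T(\rho_m+b\omega_m)\in\bS$.
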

\begin{proof}
That (1) and (2) are equivalent is proved in \cite{BEEO}. To show that (2) implies (3), it is sufficient to focus on indecomposable tilting modules. We prove the claim by induction on the Bruhat order $\le$ (which is the dominance order when restricted to partitions of the same size) on $\part(\le m)\subset X^+$. We write a partition $\lambda$ as
\[\lambda=(b_1^{r_1},b_2^{r_2},\cdots, b_s^{r_s})\]
with $b_i>b_{i+1}>0$. Now assume that $\lambda$ is not in $\pA\langle m\rangle$, so that one of the partial sums $r_1+r_2+\ldots+r_i$ does not equal a sum without repetitions of $2^j$, $j\in S$. Moreover, $T(\lambda)$ is a direct summand of
\[D:=\w^{\lambda^t}V=(\w^{r_1}V)^{\otimes b_1-b_2}\otimes (\w^{r_1+r_2}V)^{\otimes b_2-b_3}\otimes \cdots,\]
and all other direct summands of $D$ are of the form $T(\mu)$ with $\mu<\lambda$. By the above and equivalence of (1) and (2) it follows that one of the factors in the tensor product $D$ is negligible and hence $D$ negligible. Similarly, by Corollary~\ref{cor:lambda}, $\ch_\phi D\in \fm$.  Assume first that $\lambda$ is minimal under $\le$ with the property $\lambda\not\in\pA\langle m\rangle$. It then follows from equivalence of (1) and (2) that $D=T(\lambda)$ and thus indeed $\ch_\phi T(\lambda)\in \fm$, which takes care of the base case of the induction. For the induction step, it follows from the above that the difference of $\ch_\phi T(\lambda)$ and $\ch_\phi D$ is in $\fm$ and $\ch_\phi D$ is in $\fm$, concluding the proof of (2)$\Rightarrow $(3).

Now we prove that (3) implies (4). The condition $\ch_\phi T\in \fm$ means that $\ch_\phi T^{(1)}$ is in the ideal from Lemma~\ref{lem:joe2}(3) for the case $n_i=i+1$. Clearly $\ch_\phi(T(\rho_m)\otimes T^{(1)})$ is thus in the same ideal, and thus $T(\rho_m)\otimes T^{(1)}$ in $\bS$ by Lemma~\ref{lem:joe2}.

Finally, we prove that (4) implies (1).
Assume for a contradiction that $T$ is not negligible but that $T(\rho_m)\otimes T^{(1)}\in \bS$. By Lemma~\ref{lem:Stnew}, then $T(\rho_m+b\omega_m)\in\bS$ for some $b\in\mN$, so by Lemma~\ref{lem:joe2} $\ch_\phi T(\rho_m+b\omega_m)$ evaluates to zero on $(\zeta_2,\zeta_3,\ldots, \zeta_{\ell+1})$.
The (Weyl) character of $T(b\omega_m+\rho_m)$ is, up to a factor invisible by $\SL_m<\GL_m$, given by $\prod_{\alpha}(e^{\alpha/2}+e^{-\alpha/2})$, with $\alpha$ ranging over the positive roots of $\GL_m$. The restriction to $\SL_2^{\times\ell}$ is then given by a product over factors
\[x_1^{a_1}x_2^{a_2}\cdots x_{\ell}^{a_{\ell}}+x_1^{-a_1}x_2^{-a_2}\cdots x_{\ell}^{-a_{\ell}},\]
where each $a_i$ is in $\{1,1/2,0,-1/2,-1\}$, in a way that the outcome is in $\Lambda$, despite the local occurrences of $x_i^{\pm 1/2}$ (when $m$ is not a power of $2$). It is thus more convenient to rewrite the expression as a product of factors
\[y_1^{b_1}y_2^{b_2}\cdots y_{\ell}^{b_{\ell}}+y_1^{-b_1}y_2^{-b_2}\cdots y_{\ell}^{-b_{\ell}},\]
where $y_i^2=x_i$, and each $b_i$ is in $\{2,1,0,-1,-2\}$. Since we need to evaluate $x_i\mapsto \zeta_{i+1}$, we thus instead do $y_i\mapsto \zeta_{i+2}$, or equivalently $y_i\mapsto (\zeta_{\ell+2})^{2^{\ell-i}}$.
 For one of the factors to be zero, we need
\begin{equation}\label{eq:min1}-1\;=\;\zeta_3^{2b_1}\zeta_4^{2b_2}\cdots \zeta_{\ell+2}^{2b_{\ell}}\;=\;\zeta_{\ell+2}^{\sum_i b_i 2^{\ell-i+1}}\qquad\mbox{and thus}\qquad 1\;=\;\zeta_{\ell+2}^{\sum_i b_i 2^{\ell-i+2}}.\end{equation}
However, we have 
\[8-2^{\ell+3}\le\sum_{i=1}^{\ell} b_i 2^{\ell-i+2}\le 2^{\ell+3}-8\]
and $\zeta_{\ell+2}$ is a primitive $2^{\ell+3}$-th root of 1, so the only option for the right equation in \eqref{eq:min1} to be satisfied is having $\sum_{i=1}^{\ell} b_i 2^{\ell-i}=0$, which does not lead to the left equation in \eqref{eq:min1} being satisfied, a contradiction.
\end{proof}

\begin{lemma}\label{lem:StN}
If a thick tensor ideal in $\Tilt^\circ\GL_m $ contains $T(\mu+2\nu)$, for some $\mu\in\gen\langle m\rangle$ and $\nu\in \pA\langle m\rangle$, then it contains $T(\rho_m+b\omega_m)$ for some $b\in\mN$. 
\end{lemma}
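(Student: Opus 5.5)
Let $\bJ$ denote the thick tensor ideal in the statement. The plan is to reduce to Lemma~\ref{lem:Stnew} with $T=T(\nu)$, after turning $T(\mu+2\nu)$ into a module of the form $T(\rho_m+a\omega_m)\otimes T(\nu)^{(1)}$.

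First, I use Lemma~\ref{lem:cellSt}(2). It applies directly because $\mu\in\gen\langle m\rangle$ and $\nu\in\part(\le m)$, and it gives some $a\in\mN$ with $T(\rho_m+a\omega_m+2\nu)\in\bJ$. By the second sentence of Lemma~\ref{lem:Donkin}, this module is
\[T(\rho_m+a\omega_m+2\nu)\;\cong\;T(\rho_m+a\omega_m)\otimes T(\nu)^{(1)}.\]
Hence $\bJ$ contains $T(\rho_m+a\omega_m)\otimes T(\nu)^{(1)}$. Note that $\nu$ is a partition of length at most $m$, so $T(\nu)\in\Tilt^+\GL_m$.

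Second, I need $T(\nu)$ to be non-negligible in $\Tilt\GL_m$. This is exactly where the hypothesis $\nu\in\pA\langle m\rangle$ enters. By the equivalence (1)$\Leftrightarrow$(2) in Theorem~\ref{thm:N}, a module $T\in\Tilt^+\GL_m$ is negligible precisely when it is a direct sum of $T(\lambda)$ with $\lambda\notin\pA\langle m\rangle$. The indecomposable $T(\nu)$ with $\nu\in\pA\langle m\rangle$ is not of this form, so it is non-negligible. Lemma~\ref{lem:Stnew} then gives $T(\rho_m+b\omega_m)\in\bJ$ for some $b\in\mN$.

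The only delicate point is bookkeeping. All modules involved must lie in $\Tilt^\circ\GL_m$, so that membership in $\bJ$ makes sense. This holds because $\rho_m+a\omega_m+2\nu\in\ppart\langle m\rangle$, which is covered by Proposition~\ref{prop:consol}. I also need the identification in Lemma~\ref{lem:Donkin} to hold for $\nu$ a partition; it does, since that lemma allows arbitrary dominant $\nu$. I expect no substantial obstacle beyond these checks.
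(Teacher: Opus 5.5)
Your proposal is correct and is essentially the paper's own proof. Lemma~\ref{lem:cellSt}(2) and Lemma~\ref{lem:Donkin} put $T(\rho_m+a\omega_m)\otimes T(\nu)^{(1)}$ in the ideal, and non-negligibility of $T(\nu)$ for $\nu\in\pA\langle m\rangle$ comes from the equivalence (1)$\Leftrightarrow$(2) of Theorem~\ref{thm:N}; Lemma~\ref{lem:Stnew} then concludes, with no circularity since Theorem~\ref{thm:N} is proved without this lemma.
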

\begin{proof}
If a thick tensor ideal contains $T(\mu+2\nu)$, with $\mu\in\gen\langle m\rangle$, then it must contain 
\[T(a\omega_m+\rho_m+2\nu)\;\cong\;T(\rho_m+a\omega_m)\otimes T(\nu)^{(1)},\] 
for some $a$, by Lemmata~\ref{lem:cellSt}(2) and~\ref{lem:Donkin}. If $\nu\in \pA\langle m\rangle$, then $T(\nu)$ is non-negligible by the result from \cite{BEEO} recalled in Theorem~\ref{thm:N}. The result thus follows from Lemma~\ref{lem:Stnew}.
\end{proof}

We do not need the following result for Theorem~A, but it leads to concrete new branching rules.
\begin{lemma}\label{lem:m1m2}
Choose a partitioning $S=S_1\sqcup S_2$ and set $m_i=\sum_{j\in S_i}2^j$, for $i\in\{1,2\}$, so that $m=m_1+m_2$. For any $r\in\mN$, the $\GL_m$-representation $T(2r\omega_m+2\rho_m)$ is a direct summand of 
\[T(2r\omega_{m_1}+\rho_m)\otimes T(2r\omega_{m_2}+\rho_m).\]
\end{lemma}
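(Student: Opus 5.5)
The plan is to factor both tensor factors via Donkin's tensor product theorem and reduce everything to a statement about Frobenius twists. Since $2r\omega_{m_i}=2(r\omega_{m_i})$ with $r\omega_{m_i}\in X^+$, Lemma~\ref{lem:Donkin} with $a=0$ gives
\[T(2r\omega_{m_i}+\rho_m)\;\cong\;T(\rho_m)\otimes T(r\omega_{m_i})^{(1)},\qquad i\in\{1,2\}.\]
The tensor product in the statement is therefore
\[T(\rho_m)\otimes T(\rho_m)\otimes\bigl(T(r\omega_{m_1})\otimes T(r\omega_{m_2})\bigr)^{(1)}.\]
The target is $T(2r\omega_m+2\rho_m)\cong T(2\rho_m)\otimes \det^{2r}$, where $\det^{2r}=(\det^{r})^{(1)}$. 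Moreover $T(2\rho_m)$ is a direct summand of $T(\rho_m)\otimes T(\rho_m)$, since $2\rho_m$ is the highest weight of this tilting module and occurs there with multiplicity one. It therefore suffices to prove that the one-dimensional module $\det^r=T(r\omega_m)$ is a direct summand of $T(r\omega_{m_1})\otimes T(r\omega_{m_2})$. Applying $(-)^{(1)}$, tensoring with $T(2\rho_m)$ and using $T(2\rho_m)\otimes\det^{2r}\cong T(2\rho_m+2r\omega_m)$ then finishes the proof.

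Note that this last claim is not a highest weight statement: $r\omega_{m_1}+r\omega_{m_2}\neq r\omega_m$. This is where the $2$-adic condition $S=S_1\sqcup S_2$ has to enter. The first step is the case $r=1$. The composite
\[\w^mV\;\to\;\w^{m_1}V\otimes\w^{m_2}V\;\to\;\w^mV\]
of comultiplication and multiplication equals $\binom{m}{m_1}\cdot\mathrm{id}$. By Lucas' theorem this binomial coefficient is odd, because the binary digits of $m_1$ and $m_2$ are disjoint. Hence $\det=\w^mV$ is a direct summand of $T(\omega_{m_1})\otimes T(\omega_{m_2})=\w^{m_1}V\otimes\w^{m_2}V$. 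Taking $r$-th tensor powers, $\det^r$ is a direct summand of $(\w^{m_1}V)^{\otimes r}\otimes(\w^{m_2}V)^{\otimes r}$.

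The main obstacle is passing from these tensor powers to the indecomposable summands $T(r\omega_{m_1})$ and $T(r\omega_{m_2})$. Since $\det^r$ is invertible, it is a direct summand of $X\otimes Y$ if and only if the pairing $X\otimes Y\to\det^r$ and the copairing $\det^r\to X\otimes Y$ compose to a nonzero scalar. Expanding $X=(\w^{m_1}V)^{\otimes r}$ and $Y=(\w^{m_2}V)^{\otimes r}$ into indecomposables, and recalling that $\End(\det^r)=\bk$ is local, some pair of indecomposable summands $T(\alpha)\subset X$, $T(\beta)\subset Y$ must have $\det^r$ as a direct summand of $T(\alpha)\otimes T(\beta)$. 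For an invertible object this forces $T(\beta)\cong T(\alpha)^\ast\otimes\det^r$. The work is to show that one can take $\alpha=r\omega_{m_1}$.

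The first thing to try is a weight argument. Every summand $T(\alpha)$ of $X$ other than $T(r\omega_{m_1})$ has $\alpha<r\omega_{m_1}$ in dominance, and then $\alpha^\ast\otimes\det^r$ is a weight of $Y$ that is dominance-comparable to $r\omega_{m_2}$ in the wrong direction. For instance, $\alpha_1\le r$ must hold, and one checks whether the partner weight can then still lie in $\part(\le m)$ with the right size and shape. If this fails, the fallback is the semisimplification. By \cite{BEEO} and Theorem~\ref{thm:N}, both $T(r\omega_{m_i})$ are non-negligible since $r\omega_{m_i}\in\pA\langle m\rangle$, and the lower summands of $X,Y$ can be analysed via their images in $\Ver_{2^\infty}$ under $V\mapsto X_m$. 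There the exterior powers $\w^{m_i}X_m$ are identified with products of the simple summands of $X_m$ indexed by $S_i$, and one would verify directly that $\det^r$ appears in the product of the images of $T(r\omega_{m_1})$ and $T(r\omega_{m_2})$.
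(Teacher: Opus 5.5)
Your reduction coincides with the paper's. Lemma~\ref{lem:Donkin} with $a=0$, the summand $T(2\rho_m)$ of $T(\rho_m)^{\otimes 2}$, and a Frobenius twist reduce everything to showing that $\det^r=T(r\omega_m)$ is a direct summand of $T(r\omega_{m_1})\otimes T(r\omega_{m_2})$. Your Lucas argument correctly settles this for $r=1$. For general $r$, however, the proposal does not prove the claim. It outlines two strategies, and the first cannot succeed. Indeed $\w^{m_2}V\cong(\w^{m_1}V)^\ast\otimes\det$, so $(\w^{m_2}V)^{\otimes r}\cong\bigl((\w^{m_1}V)^{\otimes r}\bigr)^\ast\otimes\det^r$. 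Hence every summand $T(\alpha)$ of the first tensor power, including those with $\alpha<r\omega_{m_1}$, has its partner $T(\alpha)^\ast\otimes\det^r$ among the summands of the second, and no dominance consideration can exclude it. Whether such a pair produces $\det^r$ is decided by whether $T(\alpha)$ is negligible. For example, take $m=3$, $m_1=1$, $r=3$. Then $V^{\otimes 3}$ contains $T(\rho_3)$, and $(\w^2V)^{\otimes 3}$ contains $T((3,2,1))\cong T(\rho_3)^\ast\otimes\det^3$. This pair fails to produce $\det^3$ only because $T(\rho_3)$ is negligible.

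The missing step is short and uses an input you already mention. Since $T(\lambda)^\ast\cong T(-w_0\lambda)$, you have $T(r\omega_{m_2})\cong T(r\omega_{m_1})^\ast\otimes\det^r$. So instead of ruling out lower pairs, you should show directly that $\unit$ splits off $T\otimes T^\ast$ for $T=T(r\omega_{m_1})$. This holds for every non-negligible $T$. Choose $b\in\End(T)$ with $\mathrm{tr}(b)\neq 0$, and let $c$ denote the symmetry $T\otimes T^\ast\to T^\ast\otimes T$. Then $\mathrm{ev}\circ c\circ(b\otimes\mathrm{id})$ composed with $\mathrm{coev}\colon\unit\to T\otimes T^\ast$ equals $\mathrm{tr}(b)\,\mathrm{id}_{\unit}$. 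Moreover, $T(r\omega_{m_1})$ is non-negligible by Theorem~\ref{thm:N}, because $r\omega_{m_1}\in\pA\langle m\rangle$.

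The paper phrases the same step via \cite[\S8]{EO}. The semisimplification of $\Tilt\GL_m$ is the category of $\mZ^S$-graded vector spaces. There the image of $T(r\omega_{m_1})$ is invertible, with inverse its dual $T(r(\omega_{m_2}-\omega_m))$, so $T(r\omega_m)$ is a summand of $T(r\omega_{m_1})\otimes T(r\omega_{m_2})$.

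Your second strategy, the detour through $\Ver_{2^\infty}$, is unnecessary. As stated it also does not apply: the functor $V\mapsto X_m$ is only defined on $\Tilt^\circ\GL_m$, which contains neither $\det$ nor $T(r\omega_{m_i})$ when $m_i>1$.
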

\begin{proof}
As observed in \cite[\S8]{EO}, the semisimplification of $\Tilt \GL_m$ is the category of $\mZ^S$-graded vector spaces. Hence, the inverse of $T(r\omega_{m_1})$, which is non-negligible by Theorem~\ref{thm:N}, in $\overline{\Tilt\GL_m}$ must be its dual $T(r(\omega_{m_2}-\omega_{m}))$. It follows that, inside $\Tilt\GL_m$,  $T(r\omega_m)$ is a direct summand of $T(r\omega_{m_1})\otimes T(r\omega_{m_2})$. The conclusion then follows, similarly to the general case in Lemma~\ref{lem:StN}, from Lemma~\ref{lem:Donkin}.
\end{proof}

\subsection{The cell of the Steinberg module}\label{sec:final3}

Now we come to the main result of this section, the description of which $T(\lambda)$ for $\lambda\in\ppart\langle m\rangle$ belong to $\bS$.

\begin{theorem}\label{thm:cellSt}
For $\lambda\in\ppart\langle m\rangle$, we have $T(\lambda)\not\in \bS$ if and only if $\lambda=\mu+2\nu$ for some $\mu\in\gen\langle m\rangle$ and $\nu\in \pA\langle m\rangle$.
\end{theorem}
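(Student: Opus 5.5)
We prove the two implications separately. Throughout, $\bS$ is prime, and by Lemma~\ref{lem:joe2} membership in $\bS$ is detected by evaluating $\ch_\phi$ at $(\zeta_2,\ldots,\zeta_{\ell+1})$.

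\emph{If $\lambda=\mu+2\nu$ with $\mu\in\gen\langle m\rangle$ and $\nu\in\pA\langle m\rangle$, then $T(\lambda)\notin\bS$.} Suppose instead that $T(\mu+2\nu)\in\bS$. Lemma~\ref{lem:StN} gives $T(\rho_m+b\omega_m)\in\bS$ for some $b\in\mN$. The last step of the proof of Theorem~\ref{thm:N} shows this is impossible: there we computed the restricted Weyl character of $T(\rho_m+b\omega_m)$ as a product of factors $y^{\mathbf b}+y^{-\mathbf b}$, none of which vanishes at the relevant root of unity. So this direction is already essentially contained in the earlier results.

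\emph{If $T(\lambda)\notin\bS$ with $\lambda\in\ppart\langle m\rangle$, then $\lambda=\mu+2\nu$ as stated.} Every $\lambda\in\ppart\langle m\rangle$ can be written uniquely as $\lambda=\mu+2\nu$ with $\mu\in\gen\langle m\rangle$ and $\nu\in\part(\le m)$. To see this, note that the consecutive differences $\lambda_i-\lambda_{i+1}$ are all $\ge1$ for $i<m$. Set $\mu_i-\mu_{i+1}\in\{1,2\}$ to have the same parity as $\lambda_i-\lambda_{i+1}$, and take $\mu_m\in\{0,1\}$ to have the same parity as $\lambda_m$. Then $\lambda-\mu$ is even with non-increasing entries, so it equals $2\nu$ for a partition $\nu$. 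Hence $\lambda-\rho_m-(\mu-\rho_m)$ is twice a dominant weight, where $\mu-\rho_m\in X_1$.

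By Lemma~\ref{lem:Donkin}, $T(\mu)\otimes T(\nu)^{(1)}$ is tilting and contains $T(\lambda)$. I would aim for the stronger statement that $T(\lambda)\cong T(\mu)\otimes T(\nu)^{(1)}$ whenever $\mu\in\gen\langle m\rangle$. This is Donkin's tensor product theorem, which requires $T(\mu)$ to be indecomposable over $(\GL_m)_1$; that holds here since $\mu-\rho_m\in X_1$ and $p=2$ gives $2\rho$-type bounds. If this fails in general, I would instead use only that $T(\lambda)$ is a summand of $T(\mu)\otimes T(\nu)^{(1)}$, which suffices for the contrapositive below. Now suppose $\nu\notin\pA\langle m\rangle$. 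By Theorem~\ref{thm:N}, $T(\nu)$ is negligible and $\ch_\phi T(\nu)\in\fm$. Then $\ch_\phi T(\nu)^{(1)}$ lies in the ideal from Lemma~\ref{lem:joe2}(3) with $n_i=i+1$, exactly as in the step (3)$\Rightarrow$(4) of Theorem~\ref{thm:N}. Multiplying by $\ch_\phi T(\mu)$ keeps it in that ideal, so $T(\mu)\otimes T(\nu)^{(1)}\in\bS$ by Lemma~\ref{lem:joe2}. Its summand $T(\lambda)$ is then also in $\bS$, by thickness.

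The main obstacle is making sure this argument takes place inside $\Tilt^\circ\GL_m$: $\bS$ is defined there, while $T(\mu)\otimes T(\nu)^{(1)}$ lives a priori only in $\Tilt\GL_m$. I would sidestep this by working with the restriction functor to $\Tilt\SL_2^{\times\ell}$ on all of $\Tilt\GL_m$. Membership of $T(\lambda)$ in $\bS$ only depends on $\ch_\phi T(\lambda)$ lying in the ideal $\bI$, and characters of summands are dominated termwise by characters of the whole module. By the positivity argument in the proof of Lemma~\ref{lem:joe2}, vanishing at $(\zeta_2,\ldots,\zeta_{\ell+1})$ passes from the tensor product to each of its summands.
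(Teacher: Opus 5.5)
Your first direction is the paper's argument: Lemma~\ref{lem:StN} combined with $T(\rho_m+b\omega_m)\notin\bS$, which you read off from the root-of-unity computation at the end of the proof of Theorem~\ref{thm:N}. Your decomposition $\lambda=\mu+2\nu$ is also correct. The second direction has the right idea, but the workaround in your last paragraph fails.

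Restriction along $\phi_m$ does not send $\Tilt\GL_m$ into $\Tilt\SL_2^{\times\ell}$. For $m=4$, the tilting module $\w^2V$ restricts to a module with character $x_1^2+x_1^{-2}+x_2^2+x_2^{-2}+2$. This is not a non-negative combination of tilting characters: the weights $(2,0)$ and $(0,2)$ force summands $(T_2)_{(1)}$ and $(T_2)_{(2)}$, whose characters already have constant term $4$. The positivity argument of Lemma~\ref{lem:joe2} needs every summand to restrict to a tilting module. Knowing that the weight multiplicities of a summand are bounded by those of the whole module says nothing about values at $(\zeta_2,\ldots,\zeta_{\ell+1})$, which can be negative for non-tilting restrictions. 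Moreover, $\bS$ consists of objects of $\Tilt^\circ\GL_m$. So neither Lemma~\ref{lem:joe2} nor thickness of $\bS$ can be applied to $T(\mu)\otimes T(\nu)^{(1)}$ until you know it lies in $\Tilt^\circ\GL_m$.

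The repair is to prove exactly that. Since $\mu-\rho_m\in\part(\le m)$, $T(\mu)$ is a summand of $T(\mu-\rho_m)\otimes T(\rho_m)$. By Lemma~\ref{lem:Donkin}, $T(\mu)\otimes T(\nu)^{(1)}$ is then a summand of $T(\mu-\rho_m)\otimes T(\rho_m+2\nu)$, which lies in $\Tilt^\circ\GL_m$ by Proposition~\ref{prop:consol}. With this, your character computation goes through.

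However, once Proposition~\ref{prop:consol} is in play, the characters are superfluous and you have the paper's proof. Theorem~\ref{thm:N} and Lemma~\ref{lem:Donkin} give $T(\rho_m+2\nu)\in\bS$ for $\nu\notin\pA\langle m\rangle$. Lemma~\ref{lem:cellSt}(1), which is Proposition~\ref{prop:consol}, then gives $T(\mu+2\nu)\in\bS$.

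Separately, your justification of the stronger isomorphism $T(\mu+2\nu)\cong T(\mu)\otimes T(\nu)^{(1)}$ is not valid. Indecomposability of $T(\mu)$ over $(\GL_m)_1$ for all $\mu-\rho_m\in X_1$ is Hypothesis~\ref{hypo}, which the paper only has for $p\ge 2h-4$, i.e.\ $m\le 3$ when $p=2$. You are right not to rely on it.
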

\begin{proof}
By Theorem~\ref{thm:N} and Lemma~\ref{lem:Donkin}, we know $T(\rho_m+a\omega_m+2\nu)\not\in\bS$ for all $a\in\mN$ and $\nu\in \pA\langle m\rangle$. By Lemma~\ref{lem:StN}, this thus implies that $T(\mu+2\nu)\not\in\bS$ for all $\mu\in\gen\langle m\rangle$ and $\nu\in \pA\langle m\rangle$.

Conversely, for $\nu$ a partition of length at most $m$ that is {\bf not} in $\pA\langle m\rangle$, we know by Theorem~\ref{thm:N} and Lemma~\ref{lem:Donkin} that $T(\rho_m+2\nu)\in\bS$. It then follows from Lemma~\ref{lem:cellSt}(1) that $T(\mu+2\nu)\in\bS$ for all $\mu\in\gen\langle m\rangle$.

The conclusion now follows by observing that any $\lambda\in\ppart\langle m\rangle$ can be written uniquely as $\lambda=\mu+2\nu$ with $\mu\in\gen\langle m\rangle$ and $\nu\in \part(\le m)$.
\end{proof}

We conclude with a remark justifying the title of the section.

\begin{remark}
The results in this section demonstrate that the cell (in the sense of \cite{An,AHR2}) of $T(\rho_m)$ in $\Tilt\GL_m$ contains precisely the indecomposable tilting modules $T(\rho_m+\mu+2\nu)$ for $\mu\in X_1$ and $\nu\in \pA\langle m\rangle$. Indeed, that all these modules are contained in the cell of $T(\rho_m)$ follows almost immediately from Lemma~\ref{lem:StN}. Conversely, assume that $T:=T(\rho_m+\mu+2\nu)$ is in the cell of $T(\rho_m)$ for some $\mu\in X_1$ and $\nu\in\ppart(\le m)$. It follows that $T(\rho_m)$ is a direct summand of $T(\rho_m)\otimes T\otimes T^\ast$. By tensoring with $T(N\omega_m)$ for large enough $N$, it follows that $T(\rho_m+N\omega_m)$ is in the thick tensor ideal of $\Tilt^\circ\GL_m$ generated by $T(\rho_m+M\omega_m+\mu+2\nu)=T(\rho_m+M\omega_m+\mu)\otimes T(\nu)^{(1)}$, for appropriate $M\in\mN$ for which $\rho_m+M\omega_m+\mu$ is a partition (in $\gen\langle m\rangle$). It follows via Theorem~\ref{thm:N} that $T(\nu)$ cannot be negligible and thus $\nu\in\pA\langle m\rangle$.
\end{remark}


\section{Classification of maximal ideals}\label{sec:max}
Let $\bk$ be a field of characteristic $2$.

\subsection{Description of the candidate-minimal inductive systems}

\begin{theorem}\label{thm:FullPhim} Let $m\in\mZ_{>1}$.
\begin{enumerate}
\item We have $\Phi[m]=\Phi_{V;\bS}=\Phi_{X_m}$, for $\bS$ the prime thick tensor ideal in $\Tilt^\circ\GL_m$, and $X_m\in \Ver_{2^\infty}$, both defined in \ref{def:S}.
\item For $\lambda\in \ppart\langle m\rangle$, we have $D^\lambda\in \Phi[m]$ if and only if
\[\lambda\;=\;\mu+2\nu,\qquad\mbox{with}\quad \mu\in \gen\langle m\rangle \quad\mbox{and}\quad \nu\in\pA\langle m\rangle.\]
\end{enumerate}
\end{theorem}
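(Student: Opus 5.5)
We first establish the equality $\Phi_{V;\bS}=\Phi_{X_m}$, then show that $\Phi_{V;\bS}$ has length $m$ and is T-indecomposable, and finally use the length-$m$ membership criterion of Theorem~\ref{thm:cellSt} together with Lemma~\ref{lem:top} to identify it with $\Phi[m]$. Part (2) then follows from part (1) and Theorem~\ref{thm:cellSt}.

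\emph{Step 1: $\Phi_{V;\bS}=\Phi_{X_m}$.} By \ref{def:S}, $\bS$ is the kernel of the tensor functor $\Tilt^\circ\GL_m\to\Ver_{2^\infty}$ sending $V\mapsto X_m$. This functor therefore factors through a functor $\Tilt^\circ\GL_m/\bS\to\Ver_{2^\infty}$ that is faithful on objects. We need $\Phi_{V;\bS}=\Phi_{X_m}$, where $\Sch_\lambda$ is computed in both categories. Since $\Sch_\lambda(V)=T(\lambda)$ or $0$ by Example~\ref{ex:BrK}, and $T(\lambda)\mapsto\Sch_\lambda(X_m)$, we get $\Sch_\lambda(X_m)=0$ if and only if $T(\lambda)\in\bS$ or $\lambda\notin\ppart(\le m)$. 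This is exactly the condition $\Sch_\lambda(V)=0$ in the quotient, because in $\Tilt^\circ\GL_m/\bS$ an indecomposable $T(\lambda)$ is zero precisely when it lies in $\bS$. So the equality holds without appealing to faithfulness on morphisms.

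\emph{Step 2: $\Phi_{V;\bS}$ is a T-indecomposable inductive system of length $m$.} The quotient $\Tilt^\circ\GL_m/\bS$ is integral because $\bS$ is prime. Moreover $V\otimes-$ is faithful on objects, since $V\otimes T\in\bS$ forces $T\in\bS$ by primality (note $V\notin\bS$, as $\ch_\phi V$ evaluated at the $\zeta$'s is nonzero). Lemma~\ref{lem:systemfromSM}(1),(2) then gives that $\Phi_{V;\bS}$ is a T-indecomposable inductive system. Its length is at most $m$, and $T(\rho_m)\notin\bS$ by Theorem~\ref{thm:cellSt} (take $\mu=\rho_m$, $\nu=0$). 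Using $\beta(m,t)=\rho_m+t\omega_m$ with $\mu=\beta(m,t)\in\gen\langle m\rangle$, $\nu=0$, all $D^{\beta(m,t)}$ lie in it, and for $t\geq1$ these have length exactly $m$. Hence $\Phi[m]\subset\Phi_{V;\bS}$, which also follows from Lemma~\ref{lem:containsPhim}(1).

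\emph{Step 3: the reverse inclusion, and part (2).} By Lemma~\ref{lem:top}, applied to the indecomposable systems $\Phi[m]$ (Corollary~\ref{cor:Phim}) and $\Phi_{V;\bS}$, both of length $m$, it suffices to compare membership of $D^\lambda$ for $\lambda\in\ppart(m)$. For $\Phi_{V;\bS}$, Theorem~\ref{thm:cellSt} says $D^\lambda$ belongs if and only if $\lambda=\mu+2\nu$ with $\mu\in\gen\langle m\rangle$ and $\nu\in\pA\langle m\rangle$. So the key point is to show that every such $D^{\mu+2\nu}$ lies in $\Phi[m]$.

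We expect this to be the main obstacle: exhibiting $D^{\mu+2\nu}$ inside $\Phi[m]$ when $\nu\in\pA\langle m\rangle$ is nonzero. The plan is to pass through $\rho_m+a\omega_m+2\nu$. Lemma~\ref{lem:Phim}(1) gives $D^{\mu+2\nu}$ as a constituent of the restriction of $D^{N\omega_m+\rho_m+2\nu}$, so it suffices to show $D^{\rho_m+a\omega_m+2\nu}\in\Phi[m]$ for large $a$.

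To get this, we use the T-indecomposability and tensor structure of $\Phi[m]$ via \eqref{eq:SWT}. Lemma~\ref{lem:m1m2}, iterated over the decomposition of $S$ into singletons, together with Lemma~\ref{lem:Donkin}, should let us realise $T(\rho_m+a\omega_m+2\nu)$ as a summand of a tensor product of modules $T(\lambda_i)$ with $D^{\lambda_i}\in\Phi[m]$. Each $\lambda_i$ would have the form $\rho_m+\ldots$ with block-constant $2\nu$-parts of sizes $2^j$, $j\in S$, and membership of these in $\Phi[m]$ would be argued by induction on $|\nu|$ using Lemma~\ref{lem:m1m2}. Since $\Phi[m]$ is T-indecomposable, the set $\{\lambda: D^\lambda\in\Phi[m]\}$ is closed under the $c^\kappa_{\lambda,\mu}$ products only up to existence of some constituent. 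For that reason we would instead argue via ideals: the thick tensor ideal of $\Tilt^\circ\GL_m$ attached to $\Phi[m]$ (Remark~\ref{rem:SysGL}) is contained in $\bS$, and Lemma~\ref{lem:StN} shows that any thick tensor ideal containing $T(\mu+2\nu)$ contains some $T(\rho_m+b\omega_m)$, which is not in the ideal of $\Phi[m]$. This would give $D^{\mu+2\nu}\in\Phi[m]$, and with it both the equality in part (1) and the description in part (2).
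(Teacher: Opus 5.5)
Your proposal is correct and, once the tentative detour through Lemma~\ref{lem:m1m2} is dropped, it is essentially the paper's proof. Lemma~\ref{lem:StN}, applied to the thick tensor ideal attached to $\Phi[m]$ (which contains no $T(\rho_m+b\omega_m)$), puts every $D^{\mu+2\nu}$ in $\Phi[m]$; Theorem~\ref{thm:cellSt} together with $\Phi[m]\subset\Phi_{V;\bS}$ gives the converse; and Lemma~\ref{lem:top} gives equality. One slip: the correspondence is inclusion-reversing, so a priori the ideal attached to $\Phi[m]$ \emph{contains} $\bS$ rather than being contained in it, but your argument never actually uses that claim.
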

\begin{proof}
We can rephrase Lemma~\ref{lem:StN} in terms of inductive systems, either via Lemma~\ref{lem:bijections}(2), or simply via \eqref{eq:SWT}. It then states that, for an inductive system $\Phi$, if $D^{\mu+2\nu}\not\in\Phi$ for some $\mu\in \gen\langle m\rangle$ and $\nu\in\pA\langle m\rangle$, then $D^{\rho_m+b\omega_m}\not\in \Phi$ for some $b\in\mN$. This implies that $\Phi[m]$ must contain all $D^\lambda$ with $\lambda$ as in (2).

We also know that $D^\lambda\in\Phi_{V;\bS}$ for $\lambda\in\ppart\langle m\rangle$ if and only if $\lambda$ is as in (2), by Theorem~\ref{thm:cellSt}, and the same conclusion thus follows also for $\Phi[m]$ by $\Phi[m]\subset \Phi_{V;\bS}$, see for instance Lemma~\ref{lem:containsPhim}, proving part (2). 

Part (1) now follows from the above and Lemma~\ref{lem:top}, where both $\Phi[m]$ and $\Phi_{V;\bS}$ are indecomposable, for instance by Corollary~\ref{cor:Phim} and Lemma~\ref{lem:systemfromSM}(2).
\end{proof}

\begin{corollary}\label{cor:power}
\begin{enumerate}
\item For $\ell\in\mZ_{>1}$, we have
\[\Phi[2^{\ell}]\;=\; \Phi[2;1]\otimes \Phi[2;2]\otimes \cdots\otimes \Phi[2;\ell].\]
\item For $\ell\in\mZ_{>1}$, we have $\Phi[2^{\ell-1}]\subset\Phi[2^{\ell}]$.
\end{enumerate}
\end{corollary}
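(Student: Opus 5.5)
For part (1) the plan is to realise both sides inside $\Ver_{2^\infty}$. Take $m=2^\ell$, so $S=\{\ell\}$. By Theorem~\ref{thm:FullPhim}(1), $\Phi[2^\ell]=\Phi_{X_m}$ with
\[X_m\;=\;L_1^{[2]}\otimes L_1^{[3]}\otimes\cdots\otimes L_1^{[\ell+1]}\;\in\;\Ver_{2^\infty}.\]
Each factor gives $\Phi[2;j]=\Phi_{U;\bI_{j+1}}$ (Example~\ref{ex:length2}(2)). We compare this with $\Phi_{L_1^{[j+1]}}$ computed in $\Ver_{2^\infty}$. The defining functor $\Tilt\SL_2\to\Ver_{2^{j+1}}$ has kernel $\bI_{j+1}$, so it induces a functor $\Tilt\SL_2/\bI_{j+1}\to\Ver_{2^\infty}$. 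This functor is faithful: it is the abelian envelope of \cite{BE} composed with the inclusions $\Ver_{2^{j+1}}\subset\Ver_{2^\infty}$. Lemma~\ref{lem:systemfromSM}(5) then gives $\Phi[2;j]=\Phi_{L_1^{[j+1]}}$.

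Next, $\Ver_{2^\infty}$ is a tensor category over a field, i.e. abelian with simple unit and exact tensor product. Hence it is integral, since $X\otimes Y=0$ forces $X=0$ or $Y=0$ by rigidity. Lemma~\ref{lem:systemfromSM}(3), applied iteratively, gives
\[\Phi_{X_m}\;=\;\Phi_{L_1^{[2]}}\otimes\cdots\otimes\Phi_{L_1^{[\ell+1]}}.\]
This proves (1). All objects involved are nonzero, because $L_1^{[n]}\neq 0$ for $n\ge 2$.

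For part (2), if $\ell-1\ge 2$ we apply (1) to $\ell-1$. This gives
\[\Phi[2^\ell]\;=\;\Phi[2^{\ell-1}]\otimes\Phi[2;\ell],\]
so it suffices to show $\Phi\subset\Phi\otimes\Psi$ for inductive systems of the form $\Phi_X,\Phi_Y$ with $\Psi=\Phi_Y$ and $Y=L_1^{[\ell+1]}$. The plan is to use that the trivial module $D^{(n)}$ lies in $\Psi^n$. We need $\Sch_{(n)}(Y)\neq 0$, i.e. $Y$ has nonzero symmetric powers in the sense of the primitive idempotent of the trivial module. Equivalently, $D^{(n)}\in\Phi[2;\ell]$, which is visible from the explicit description in Example~\ref{ex:length2}(2): the partition $(n)$, or the relevant one-row or length-one partition, satisfies $1\le\lambda_1-\lambda_2<2^{\ell+1}-1$. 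If $n$ is too large for this, we need a different trivial-type constituent. The fallback is to use $\lambda=(k+1,k)$ or similar, since $D^{(k+1,k)}$ restricts to $S_{2k+1}$ as the basic spin module, which is of dimension $2^{k}$ and not one-dimensional. So the cleaner route is a direct argument.

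Any inductive system $\Psi$ and any $D\in\Phi^n$ satisfy $D\otimes D'\supseteq$ something containing $D$ only when $D'$ is trivial. I therefore expect the trivial-module route to fail for large $n$, since $\Phi[2;\ell]$ has bounded $\lambda_1-\lambda_2$. Instead, for part (2) I would use Theorem~\ref{thm:FullPhim}(2) together with Lemma~\ref{lem:top}: both systems are indecomposable, but they have different lengths, so Lemma~\ref{lem:top} does not apply directly. I would therefore check membership of the generators directly. It suffices to show $D^{\beta(2^{\ell-1},t)}\in\Phi[2^\ell]$ for all $t$.

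Working in $\Tilt^\circ\GL_{2^\ell}$, we have $\beta(2^{\ell-1},t)=\rho_{2^{\ell-1}}+t\omega_{2^{\ell-1}}$. This partition has length $2^{\ell-1}<2^\ell-1$, so it is not in $\ppart\langle 2^\ell\rangle$, and Theorem~\ref{thm:FullPhim}(2) does not apply to it directly. I would instead show that it appears in the restriction of some element of $\ppart\langle 2^\ell\rangle$ of the form $\mu+2\nu$ with $\nu=a\omega_{2^\ell}$. Concretely, the plan is to exhibit $\lambda=\rho_{2^\ell}+2a\omega_{2^\ell}+\cdots$ and remove good nodes via \cite[Theorem~3.5]{BK}, as in Lemma~\ref{lem:Phim}, reaching a shape that contains $\beta(2^{\ell-1},t)$ plus a removable tail. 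This branching bookkeeping is the main obstacle. As an alternative, I would try to show directly that $\Phi_{V;\bS}$ for $\GL_{2^\ell}$ contains $D^{\beta(2^{\ell-1},t)}$. This amounts to proving $T(\beta(2^{\ell-1},t))\notin\bS$, which I would check via the character evaluation of Lemma~\ref{lem:joe2}(4), in the same way as the end of the proof of Theorem~\ref{thm:N}.
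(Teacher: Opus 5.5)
Your argument for part (1) is correct and essentially the paper's. The paper works in the integral quotient $(\Tilt\SL_2^{\times\ell})/\bI$, where $V$ restricts to $U_{(1)}\otimes\cdots\otimes U_{(\ell)}$ (Example~\ref{ex:character}), and applies Lemma~\ref{lem:systemfromSM}(3) and (5). The remark after Corollary~\ref{cor:other} records exactly your $\Ver_{2^\infty}$ formulation.

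Part (2), however, is not proved. You end with two plans that are never carried out, and you yourself call the branching bookkeeping the main obstacle. The character-evaluation fallback is also not viable as stated. The trick at the end of the proof of Theorem~\ref{thm:N} works only because $T(\rho_m+b\omega_m)$ has an explicit Weyl character. You have no formula for the $\GL_{2^\ell}$-tilting character of $T(\beta(2^{\ell-1},t))$, so there is nothing to evaluate.

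What sent you off course is your assertion that $D\otimes D'$ can contain $D$ as a constituent only when $D'$ is trivial. This is false. In characteristic $2$, the Frobenius twist $D^{(1)}$ is a subquotient of $D\otimes D$: it is the symmetric tensors modulo the image of $1+\tau$, via $v\mapsto v\otimes v$. Simple $\bk S_n$-modules are their own Frobenius twists, so every simple $D$ is a constituent of $D\otimes D$, and hence $\Phi\subset\Phi\otimes\Phi$ for every inductive system $\Phi$.

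So instead of looking for a trivial module inside $\Phi[2;\ell]$, absorb a second copy of $\Phi[2;\ell-1]$. Use the evident inclusion $\Phi[2;\ell-1]\subset\Phi[2;\ell]$ from Example~\ref{ex:length2}(2), together with monotonicity and associativity of $\otimes$ on inductive systems (exactness of $\otimes$ over $\bk S_n$). Part (1) then gives
\[\Phi[2^{\ell-1}]\;=\;\Phi[2;1]\otimes\cdots\otimes\Phi[2;\ell-1]\;\subset\;\Phi[2;1]\otimes\cdots\otimes\Phi[2;\ell-1]\otimes\Phi[2;\ell-1],\]
and the right-hand side is contained in $\Phi[2;1]\otimes\cdots\otimes\Phi[2;\ell-1]\otimes\Phi[2;\ell]=\Phi[2^{\ell}]$. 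The case $\ell=2$ is covered by $\Phi[2]=\Phi[2;1]$. This is the paper's proof of part (2), and it is the step missing from yours.
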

\begin{proof}
That $\Phi_{V;\bS}$ in Theorem~\ref{thm:FullPhim}, for $m=2^\ell$, equals the right-hand side in part (1) follows from Example~\ref{ex:character} and Lemma~\ref{lem:systemfromSM}(3) and (5).

Part (2) is an immediate consequence of part (1), the fact that any simple $S_n$-representation $D$ is a simple constituent of $D^{\otimes 2}$ (as its own Frobenius twist), and the fact $\Phi[2;\ell-1]\subset \Phi[2;\ell]$.
\end{proof}

\begin{example}
The equality $\Phi[2;1]\otimes \Phi[2;2]=\Phi[4]$ also follows from the isomorphism
\[D^{(2r+2,2r)}\otimes D^{(2r+3,2r-1)}\;\cong\;D^{(r+2,r+1,r,r-1)},\]
for $r\in\mZ_{>0}$, since the three (families of) simple modules appearing are generators of the respective inductive systems. This isomorphism was conjectured in \cite{GK} and proved in \cite{GJ}. By the classification of simple tensor products of simple modules completed in \cite{Mo}, the other equalities in Corollary~\ref{cor:power}(1) do not follow from such isomorphisms.
\end{example}

\begin{corollary}\label{cor:other}
If $m$ is not a power of $2$, write $m=\sum_{j\in S}2^j$. We have $\Phi[m']\subset \Phi[m]$, with $m'=\sum_{j\in S'}2^j$, for any non-empty subset $S'\subset S$.
\end{corollary}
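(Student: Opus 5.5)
The plan is to reduce to a two-part splitting and then exploit Lemma~\ref{lem:m1m2}, translated into branching via \eqref{eq:SWT}. Put $S''=S\setminus S'$, $m_1=m'=\sum_{j\in S'}2^j$ and $m_2=\sum_{j\in S''}2^j$, so $m=m_1+m_2$. If $S'=S$ there is nothing to prove, so assume $S''\neq\emptyset$.

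\textbf{Step 1: the generators lie in $\Phi[m]$.} Fix $r\in\mN$. The partition $2\rho_m+2r\omega_m$ has consecutive row differences equal to $2$, so it lies in $\gen\langle m\rangle$. Lemma~\ref{lem:Phim}(2) then gives $D^{2\rho_m+2r\omega_m}\in\Phi[m]$.

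\textbf{Step 2: branching from Lemma~\ref{lem:m1m2}.} By Lemma~\ref{lem:m1m2}, $T(2r\omega_m+2\rho_m)$ is a direct summand of $T(2r\omega_{m_1}+\rho_m)\otimes T(2r\omega_{m_2}+\rho_m)$. Both factors lie in $\Tilt^\circ\GL_m$ by Proposition~\ref{prop:consol}, and their labels are $2$-regular. Equation \eqref{eq:SWT} then says that the corresponding multiplicity $c^\kappa_{\lambda,\mu}$ is non-zero. Concretely, the restriction of $D^{2\rho_m+2r\omega_m}$ to the appropriate Young subgroup contains $D^{g_r}\boxtimes D^{h_r}$, where
\[g_r:=\rho_m+2r\omega_{m_1},\qquad h_r:=\rho_m+2r\omega_{m_2}.\]
Since $\Phi[m]$ is an inductive system, restricting further shows that $D^{g_r}\in\Phi[m]$ for every $r$.

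\textbf{Step 3: shape of $g_r$.} Its first $m_1$ rows are $\rho_{m_1}+(m_2+2r)\omega_{m_1}$, and its lower rows are the fixed staircase $(m_2-1,\ldots,1,0)$. In particular, the $m_1$-th row of $g_r$ grows without bound in $r$, while row $m_1+1$ stays bounded by $m_2-1$.

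\textbf{Step 4: apply Lemma~\ref{lem:containsPhim}(2).} Let $\Psi\subset\Phi[m]$ be the inductive system generated by the family $D^{g_r}$, $r\in\mN$. By Step 3 every member of $\Psi$ has $\lambda_{m_1+1}\le m_2-1$, while $\lambda_{m_1}$ is unbounded. Hence $m(\Psi)=m_1$, and Lemma~\ref{lem:containsPhim}(2) gives $\Phi[m_1]\subset\Psi\subset\Phi[m]$.

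\textbf{Main obstacle: making $\Psi$ an inductive system.} This needs each $D^{g_r}$ to occur as a constituent of the restriction of some larger member of the family, so that the upward condition in the definition holds. I would first try to show directly that $D^{g_r}$ is a constituent of the appropriate restriction of $D^{g_{r+1}}$. The idea is to remove, via \cite[Theorem~3.5]{BK}, normal nodes from the first $m_1$ rows, in the same style as the proofs of Lemmas~\ref{lem:normal} and~\ref{lem:Phim}. The added $2\omega_{m_1}$ is even, and the lower staircase only sits beneath the top block, which suggests the relevant nodes remain normal and $2$-regularity is preserved.

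If that fails, the fallback is to take $\Psi$ to be the inductive subsystem of $\Phi[m]$ consisting of all $D$ that are constituents of restrictions of $D^{g_r}$ for infinitely many $r$. Such $D$ still have $\lambda_{m_1+1}\le m_2-1$, and the upward condition becomes automatic. One then checks that $\lambda_{m_1}$ is still unbounded on this $\Psi$, for instance because the $D^{\beta(m_1,t)}$-type constituents obtained by removing the lower staircase persist for all large $r$.
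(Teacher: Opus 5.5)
Your argument is correct in substance, but it does not follow the paper's proof. The paper deduces the corollary from Theorem~\ref{thm:FullPhim}(1). There, $\Phi[m]=\Phi_{V;\bS}$ is the system of the restriction $W=\bigoplus_{i\in S}U_{(1)}\otimes\cdots\otimes U_{(i)}$ of $V$ to $\SL_2^{\times\ell}$ modulo $\bI$; equivalently, it is the system of $X_m=\bigoplus_{i\in S}X_{2^i}$ in $\Ver_{2^\infty}$. The system $\Phi[m']$ is that of the summand indexed by $S'$, so the inclusion is just Lemma~\ref{lem:systemfromSM}(4),(5). That route is conceptual and needs no branching computations, but it rests on the whole description of the cell of the Steinberg module (Theorems~\ref{thm:N} and~\ref{thm:cellSt}).

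Your route is essentially the shortcut the paper mentions right after the corollary (Lemma~\ref{lem:m1m2bis} plus branching), and it only needs Lemma~\ref{lem:m1m2} from Section~\ref{sec:GL}. Where the paper strips the lower staircase $(m_2-1,\ldots,1)$ off $g_r$ by explicit branching (\cite[Lemma~4.12]{BK}), you let Lemma~\ref{lem:containsPhim}(2) absorb that work. Note that for $m_2\ge2$ you use that lemma for a system with $m(\Psi)=m_1<\ell(\Psi)$. That is the full Baranov--Kleshchev admissibility result, not the paper's short $p=2$ argument, which only uses members of length exactly $m(\Phi)$ with unbounded last row; such members are not available a priori in $\Psi$.

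Two points need tightening.

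\emph{The bound on row $m_1+1$.} The bound $\lambda_{m_1+1}\le m_2-1$ does not follow from Step~3. Composition factors of a restriction need not have diagrams contained in the original one: in characteristic $2$, $D^{(5)}$ is a composition factor of $\Res^{S_6}_{S_5}D^{(3,2,1)}$. What is true, and suffices, is boundedness. The restriction of $S^{g_r}$ is filtered by Specht modules $S^\nu$ with $\nu\subset g_r$, and every composition factor $D^\lambda$ of $S^\nu$ satisfies $\lambda\trianglerighteq\nu$. Hence $\lambda_{m_1+1}\le\sum_{i>m_1}\lambda_i\le\sum_{i>m_1}(g_r)_i=\binom{m_2}{2}$.

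\emph{The chain step.} Your primary plan does work, so the fallback is unnecessary; its persistence claim is essentially the stripping lemma you were trying to avoid. The order of removals matters, though. Write the top block of $g_{r+1}$ as $(c+m_1-1,\ldots,c)$ with $c=m_2+2r+2$. Modulo $2$, all removable nodes of this block have residue $c+m_1$, and all addable nodes in rows $1,\ldots,m_1$ have residue $c+m_1+1$. Remove the last node of row $m_1$, then of row $m_1-1$, and so on up to row $1$.
\begin{itemize}
\item Each removed node has residue $c+m_1$.
\item The rows above it are untouched, so no addable node of that residue lies above it, and the node is normal.
\item $2$-regularity is preserved because $c>m_2$.
\end{itemize}
One pass replaces $c$ by $c-1$, and two passes turn $g_{r+1}$ into $g_r$. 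By \cite[Theorem~3.5]{BK}, $D^{g_r}$ therefore occurs in the restriction of $D^{g_{r+1}}$. It follows that the constituents of restrictions of the $D^{g_r}$ form an inductive system $\Psi$, as you intended.
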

\begin{proof}
The system $\Phi_{V;\bS}$ in Theorem~\ref{thm:FullPhim} corresponds, by construction, to the inductive system of a direct sum $W$ of objects in $(\Tilt \SL_2^{\times \ell})/\bI$, see
 Example~\ref{ex:character}. Theorem~\ref{thm:FullPhim} identifies $\Phi[m]$ with $\Phi_{V;\bS}=\Phi_{W}$ and $\Phi[m']$ with $\Phi_{W'}$ for a direct summand $W'\subset W$, so that the inclusion follows from Lemma~\ref{lem:systemfromSM}(4) and (5).
\end{proof}

\begin{remark}
We can equivalently argue the above results via $\Ver_{2^\infty}$, as consequences of
\[X_m\;=\;\bigoplus_{i\in S} X_{2^i}\quad\mbox{and}\quad X_{2^\ell}=L_1^{[2]}\otimes \cdots \otimes L_1^{[\ell+1]},\qquad \mbox{with}\quad\Phi[2; j]=\Phi_{L_1^{[j+1]}}.\]

\end{remark}

The inclusions in Corollary~\ref{cor:other} could also be obtained without the full description in Theorem~\ref{thm:FullPhim}. Namely, we can reformulate Lemma~\ref{lem:m1m2}, by \eqref{eq:SWT}:
\begin{lemma}\label{lem:m1m2bis}
Choose a partitioning $S=S_1\sqcup S_2$ and set $m_i=\sum_{j\in S_i}2^j$, for $i\in\{1,2\}$, so that $m=m_1+m_2$. For any $r\in\mN$, 
\[[\Res D^{2r\omega_m+2\rho_m}:D^{2r\omega_{m_1}+\rho_m}\boxtimes D^{2r\omega_{m_2}+\rho_m}]\not=0.\]
\end{lemma}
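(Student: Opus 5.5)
The plan is to translate Lemma~\ref{lem:m1m2} into the language of symmetric groups using the Schur--Weyl dictionary \eqref{eq:SWT}. That formula identifies the multiplicity of $T(\kappa)$ in $T(\lambda)\otimes T(\mu)$, inside $\Tilt^\circ\GL_m$, with $c^\kappa_{\lambda,\mu}=[\Res^{S_{a+b}}_{S_a\times S_b}D^\kappa:D^\lambda\boxtimes D^\mu]$. This holds whenever $\lambda,\mu,\kappa\in\ppart(\le m)$.

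We apply this with
\[\lambda=2r\omega_{m_1}+\rho_m,\qquad \mu=2r\omega_{m_2}+\rho_m,\qquad \kappa=2r\omega_m+2\rho_m.\]

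First, check that all three weights are $2$-regular partitions of length at most $m$.
\begin{itemize}
\item Since $\rho_m$ has strictly decreasing parts, adding $2r\omega_{m_i}$ only raises the first $m_i$ rows by the same amount. So $\lambda$ and $\mu$ remain strictly decreasing on their support.
\item The weight $\kappa$ is strictly decreasing as well.
\item Each has length at most $m$; the last row of $\rho_m$ is $0$, while $2r\omega_{m_i}$ can contribute to row $m_i\le m$.
\end{itemize}
Hence all three lie in $\ppart(\le m)$. Then $T(\lambda)$, $T(\mu)$ and $T(\kappa)$ lie in $\Tilt^\circ\GL_m$, and \eqref{eq:simpletilt} says each is the image of $(E^{\otimes n},e_\cdot)$ under \eqref{eq:SymtoTilt}.

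Second, Lemma~\ref{lem:m1m2} says $T(\kappa)$ is a direct summand of $T(\lambda)\otimes T(\mu)$, so $c^\kappa_{\lambda,\mu}\neq0$ by \eqref{eq:SWT}. The restriction is along $S_{|\lambda|}\times S_{|\mu|}<S_{|\kappa|}$. The sizes match: $|\lambda|+|\mu|=2rm_1+2rm_2+2|\rho_m|=|\kappa|$. This is exactly the claimed nonvanishing.

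The only point needing care is the bookkeeping of partitions and lengths. In particular, one must make sure \eqref{eq:SWT} is applied in $\Tilt^\circ\GL_m$ and not in $\Tilt\GL_m$. This is guaranteed because all three labels are $2$-regular, and the Krull--Schmidt multiplicity computed in $\Tilt\GL_m$ agrees with the one in the full subcategory $\Tilt^\circ\GL_m$. There is no real obstacle beyond this.
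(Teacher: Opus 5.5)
Your proposal is correct and is the paper's argument: the paper obtains this lemma by reformulating Lemma~\ref{lem:m1m2} via \eqref{eq:SWT}, exactly as you do. Your check that $2r\omega_{m_1}+\rho_m$, $2r\omega_{m_2}+\rho_m$ and $2r\omega_m+2\rho_m$ all lie in $\ppart(\le m)$, together with the size count, is the bookkeeping the paper leaves implicit.
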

Corollary~\ref{cor:other} then follows from Lemma~\ref{lem:m1m2bis} and the branching rules for removable nodes, see \cite[Lemma~4.12]{BK} for the precise result we need.

\subsection{Maximal ideals}

\begin{theorem}\label{thm:main}
\begin{enumerate}
\item The only minimal inductive systems are $\Phi[1]$ and $\Phi[2]$.
\item The only maximal ideals in $\bk S_\infty$ are the augmentation ideal $I[1]$ and the ideal $I[2]$ generated by the primitive idempotent of the trivial representation of $\bk S_3$.
\end{enumerate}
\end{theorem}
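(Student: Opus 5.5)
The plan is to reduce part (1) to showing that $\Phi[m]$ fails to be minimal for every $m>2$, and that $\Phi[1]$ and $\Phi[2]$ are minimal and distinct. Part (2) then follows from Proposition~\ref{prop:ideals}, once we identify the ideals $I(\Phi[1])$ and $I(\Phi[2])$.

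\textbf{Candidates.} Let $\Phi$ be a minimal inductive system. It is T-indecomposable by \cite[Corollary~2.1.9(2)]{Tprime}, and it is not $\Irr S$, since $\Irr S$ properly contains $\Phi[1]$. So by \ref{sec:ml} it has finite length $m$, and by Lemma~\ref{lem:containsPhim}(1) it contains $\Phi[m]$. Minimality then forces $\Phi=\Phi[m]$. It remains to decide, for each $m$, whether $\Phi[m]$ is minimal.

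\textbf{Excluding $m>2$.}
\begin{itemize}
\item If $m=2^\ell$ with $\ell\ge 2$, Corollary~\ref{cor:power}(2) gives $\Phi[2^{\ell-1}]\subset\Phi[2^\ell]$. The inclusion is proper because the two systems have different lengths, $2^{\ell-1}$ and $2^\ell$.
\item If $m$ is not a power of $2$, Corollary~\ref{cor:other} applied to a singleton $S'=\{j\}\subsetneq S$ gives $\Phi[2^j]\subset\Phi[m]$ with $2^j<m$. This inclusion is again proper by length.
\end{itemize}
Hence no $\Phi[m]$ with $m>2$ is minimal.

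\textbf{Minimality of $\Phi[1]$ and $\Phi[2]$.} A minimal system contained in $\Phi[1]$ has length $1$, so by the previous step it equals $\Phi[1]$. A minimal inductive subsystem of $\Phi[2]$ exists because each $\Phi^n$ is finite, so descending chains of inductive systems, compared degree by degree, stabilise; a compactness (K\"onig's lemma) argument over $n$ makes this precise. That minimal subsystem is some $\Phi[m']$ of length $m'\le 2$. It cannot be $\Phi[1]$: for $p=2$, $\Phi[2]=\Phi[2;1]$ contains only $D^{(\lambda_1,\lambda_2)}$ with $1\le\lambda_1-\lambda_2\le 2$ and $\lambda_2\ge 0$ (Example~\ref{ex:length2}), so it never contains $D^{(n)}$ for $n\ge 3$. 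Hence $\Phi[2]$ is minimal. The same membership criterion shows $\Phi[1]\not\subset\Phi[2]$, so the two systems are distinct.

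\textbf{Identifying the ideals.} $I(\Phi[1])$ is the annihilator of the trivial module, i.e.\ the augmentation ideal $I[1]$. For $I(\Phi[2])$, use Theorem~\ref{thm:FullPhim}(1): $\Phi[2]=\Phi_{U;\bI_2}$ for $\SL_2$, through $\Tilt^\circ\GL_2$. Via Lemma~\ref{lem:bijections} and Proposition~\ref{prop:ideals}, the corresponding ideal in $\Sym$ is the kernel of $\Sym\to\Tilt\SL_2/\bI_2$. This kernel is generated by the idempotent whose image is $T_3=St_2$, namely $e_{(3)}$, the trivial idempotent in $\bk S_3$. One checks this by the Temperley--Lieb / Jones--Wenzl description in the introduction, using the classification of tensor ideals in $\Tilt\SL_2$ \cite{Selecta}. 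The ideal contains the skew symmetriser of $\bk S_3$ and is generated by the Jones--Wenzl idempotent in degree $3$, which pulls back to $e_{(3)}$ together with the skew symmetriser; one then argues that the skew symmetriser already lies in the ideal generated by $e_{(3)}$.

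\textbf{Main obstacle.} I expect this last identification of the generator of $I[2]$ to be the step requiring the most care. The first thing I would try is to cite \cite[\S6.2]{Poly}, which constructs this ideal explicitly.
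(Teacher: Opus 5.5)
Your argument for part (1) is essentially the paper's: a minimal $\Phi$ equals $\Phi[m]$ by Lemma~\ref{lem:containsPhim}, and Corollaries~\ref{cor:power}(2) and~\ref{cor:other} exclude $m>2$. The paper phrases the first step via $m(\Phi)$ instead of T-indecomposability and length; the two are equivalent by Lemma~\ref{lem:ell}.

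Your check that $\Phi[1]$ and $\Phi[2]$ really are minimal and distinct is correct, and it makes explicit something the paper leaves implicit. One phrasing fix: descending chains of inductive systems need not stabilise. What is true is that the intersection of a chain of inductive systems is again one, because the finite sets $\Phi^n$ stabilise degreewise. Zorn's lemma then gives minimal subsystems.

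For part (2), the paper simply combines Proposition~\ref{prop:ideals} with \cite[Lemma~6.2.5]{Poly}, which is your stated fallback, so in that form the proposal matches the paper. Your alternative sketch, however, has an unjustified step. Lemma~\ref{lem:bijections} and Proposition~\ref{prop:ideals} do not identify $I(\Phi_X)$ with the kernel $K$ of $\Sym\to\cC$, $E\mapsto X$. In general one only has $K\subset I(\Phi_X)$. The reason is that $\bk S_n$ need not act semisimply on $X^{\otimes n}$, so the annihilator can be strictly smaller than the intersection of the annihilators of the $D\in\Phi^n_X$.

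For $X=U$ in $\Tilt\SL_2/\bI_2$, the equality $K=I(\Phi[2])$ is equivalent to $K$ being maximal. That is precisely the input \cite[\S6.2]{Poly} provides: the quotient is a direct limit of matrix algebras $\mathrm{Mat}_{2^j}(\bk)$, hence simple. Granting this, your remaining observations are correct. The idempotent $e_{(3)}=1+(123)+(132)$ goes to $T_3=St_2$. The skew symmetriser of $\bk S_3$ equals $e_{(3)}(1+(12))$ in characteristic $2$, so it lies in the ideal generated by $e_{(3)}$. You would still need to justify that the kernel is generated as a tensor ideal by $e_{(3)}$ after passing through $\Tilt^\circ\GL_2$, which is again part of what the citation covers.
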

\begin{proof}
Let $\Phi$ be a minimal inductive system.
By \ref{sec:ml}, we can assume that $m=m(\Phi)$ is finite. Hence by Lemma~\ref{lem:containsPhim}, $\Phi=\Phi[m]$. It thus suffices to show that $\Phi[m]$ is not minimal when $m\not\in\{1,2\}$. Indeed, if $m$ is not a power of 2, this follows from Corollary~\ref{cor:other}. If $m>2$ is a power of 2, the result follows from Corollary~\ref{cor:power}(2). This proves part (1), and part (2) then follows from Proposition~\ref{prop:ideals} and the explicit description of the second maximal ideal in \cite[Lemma~6.2.5]{Poly}.
\end{proof}

Now we consider the alternating groups $A_n<S_n$ leading to the subgroup $A_\infty<S_\infty$ and subalgebra $\bk A_\infty\subset \bk S_\infty$.

\begin{theorem}\label{thm:Alt}
The algebra $\bk A_\infty$ has precisely two maximal ideals, which are the intersections of the maximal ideals of $\bk S_\infty$.   
\end{theorem}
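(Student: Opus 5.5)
Our plan is to reduce to the case of $S_\infty$ via the standard Clifford-theoretic relationship between $\bk A_n$ and $\bk S_n$ in characteristic $2$. In characteristic $2$, the sign representation is trivial, so every simple $\bk S_n$-module $D^\lambda$ restricts to $A_n$ either irreducibly or as a sum of two non-isomorphic conjugate simples. In both cases, every simple $\bk A_n$-module is a constituent of some $\Res D^\lambda$, and $\Res D^\lambda$ has at most two constituents, which are swapped by conjugation with $s_1$. The approach is to transfer inductive systems: for a minimal inductive system $\Psi$ for $A_\infty$ (defined analogously with $\Irr A_n$, and with Zalesskii's bijection between maximal ideals of $\bk A_\infty$ and minimal inductive systems for $A_\infty$ as in \cite{Za,BK}), we take $\widehat\Psi^n$ to be the set of $D\in\Irr S_n$ whose restriction to $A_n$ contains a member of $\Psi^n$, and check that $\widehat\Psi$ is an inductive system of $S_\infty$. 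Conversely, each inductive system $\Phi$ of $S_\infty$ gives one of $A_\infty$ by taking the constituents of restrictions. The compatibility $\Res^{S_{n+1}}_{A_n}$ computed via $A_{n+1}$ or via $S_n$ makes both constructions well defined.

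The key steps are as follows. First, show that $\widehat\Psi$ is an inductive system that is minimal whenever $\Psi$ is $s_1$-stable. Conjugation by $s_n$ (equivalently by $s_1$, since these are conjugate in $A_{n+2}$-compatible ways) acts on $\Irr A_n$, and the system $\Psi^\sigma$ is again a minimal inductive system. Second, show that minimal systems of $A_\infty$ are in fact stable under this outer automorphism. The natural argument is that conjugation by $s_1$ on $A_n$ is inner in $A_{n+2}$, via $s_1s_{n+1}$, and restriction from $A_{n+2}$ kills the difference: if $E\in\Psi^n$ is a constituent of $\Res E'$ with $E'\in\Psi^{n+2}$, then $E^\sigma$ is a constituent of $\Res (E')^{s_1s_{n+1}}\cong\Res E'$. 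Hence $\Psi^\sigma=\Psi$. Third, conclude that $\Psi\mapsto\widehat\Psi$ gives a bijection between minimal systems of $A_\infty$ and minimal systems of $S_\infty$, with inverse the restriction. By Theorem~\ref{thm:main}(1), the latter consist precisely of $\Phi[1]$ and $\Phi[2]$. Finally, translate back to ideals via the analogue of Proposition~\ref{prop:ideals}: the ideal $I(\Psi)$ is the annihilator of the restrictions, which equals $I(\widehat\Psi)\cap\bk A_\infty$. This yields exactly two maximal ideals, $I[1]\cap\bk A_\infty$ and $I[2]\cap\bk A_\infty$.

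We still have to check that these two ideals are distinct, i.e.\ that $\Phi[1]$ and $\Phi[2]$ restrict to different systems. This holds because the restriction of $\Phi[2]$ contains non-trivial simple $A_n$-modules (for instance constituents of the restriction of the spin modules, which have dimension greater than $1$ for large $n$), whereas $\Phi[1]$ restricts to the trivial system.

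The main obstacle is the minimality transfer in the third step: we must show that if $\Psi$ is minimal then $\widehat\Psi$ is minimal, and that the restriction of a minimal $\Phi$ is minimal. For the first, given $\Phi\subset\widehat\Psi$, we restrict $\Phi$ to $A_\infty$ and use $\sigma$-stability to land inside $\Psi$, giving equality, and then lift back. Here we need $\widehat{\Res\Phi}=\Phi$, which holds because every constituent of $\Res D$ determines $D$ up to the $\sigma$-orbit, and $\sigma$ fixes the $S_n$-label. We would carry this out carefully with the Clifford dichotomy (restriction is irreducible or splits into two conjugates).
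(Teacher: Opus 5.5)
Your proof is correct, but its key step differs from the paper's.

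\textbf{The paper's route.} The paper also passes from $\Psi$ to $\Indu\Psi$ and invokes Theorem~\ref{thm:main}(1). It does not, however, rule out an $A_\infty$-inductive system that contains $E^\lambda_+$ but not $E^\lambda_-$. Instead it observes that $\Res\Indu\Psi$ is $\Psi$ together with such missing partners. It then uses the external input from \cite{Be} that the trivial modules and $D^{(4s+2,4s)}$ do not split on restriction. Hence the witnesses $E^{(4s+2,4s)}$, or the trivial modules, already lie in $\Psi$, and a further two-row branching argument recovers all of $\Res\Phi[2]$.

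\textbf{Your route.} You show the asymmetry never occurs.
\begin{itemize}
\item Conjugation by $s_1$ on $A_n$ agrees with conjugation by $s_1s_{n+1}\in A_{n+2}$, which is inner.
\item $\Psi^n$ is exactly the set of constituents of restrictions of members of $\Psi^{n+2}$.
\item Hence every $A_\infty$-inductive system is stable under the outer automorphism. Your argument never uses minimality, so this holds for all systems.
\item In characteristic $2$ the sign module is trivial, so distinct $D^\lambda$ have disjoint sets of restriction constituents.
\end{itemize}
Together these make $\Res$ and $\Psi\mapsto\widehat\Psi=\Indu\Psi$ mutually inverse, inclusion-preserving bijections between all inductive systems of $A_\infty$ and of $S_\infty$. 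The theorem then follows formally from Theorem~\ref{thm:main}(1). Minimality and distinctness of $\Res\Phi[1]$ and $\Res\Phi[2]$ come for free, so your separate spin-module check is redundant.

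\textbf{Comparison.} Your route avoids Benson's splitting criterion and the two-row branching entirely. It also yields a stronger statement: a lattice isomorphism of inductive systems. The bijection is specific to $p=2$, which is consistent with the $(p-1)/2$ count for $p>2$.

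\textbf{Two points to make explicit in a write-up.}
\begin{itemize}
\item The uniqueness fact (each simple $\bk A_n$-module lies in the restriction of exactly one $D^\lambda$) is needed already for the ``only if'' half of the inductive-system axiom for $\widehat\Psi$. It is not only needed for $\widehat{\Res\Phi}=\Phi$.
\item To show that the restriction of a minimal $\Phi$ is minimal, you need stability of an arbitrary subsystem $\Psi'\subset\Res\Phi$. Your conjugation argument supplies this, since it applies to every inductive system, not just minimal ones as you stated.
\end{itemize}
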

\begin{proof}
There is again a bijection between maximal ideals in $\bk A_\infty$ and minimal inductive systems of $A_n$-representations, see~\cite{BK, Za}. We will denote $A_\infty$-inductive systems by $\Psi$ and ($S_\infty-$)inductive systems by $\Phi$.

It follows from applying the restriction functor $\Res$ and induction functor $\Indu$ between $\Rep A_n$ and $\Rep S_n$ that we can write $\ppart=\mathscr{A}\sqcup\mathscr{B}$ such that
\begin{equation}\label{eq:ResInd}\begin{cases}
\Res D^\lambda=E_+^\lambda\oplus E_-^\lambda,\quad\mbox{while}\quad \Indu E_+^\lambda\cong D^\lambda\cong \Indu E_-^\lambda, & \mbox{ if } \lambda\in \pA,\\
\Res D^\lambda =E^\lambda,\quad\mbox{while}\quad \Indu E^\lambda\;\, \mbox{is a self-extension of}\; D^\lambda,& \mbox{ if }\lambda\in\pB.
\end{cases}\end{equation}
Moreover, the modules $E^\lambda,E^\lambda_{\pm}$ are the simple $A_n$-representations, and they are all non-isomorphic.
In case $\bk$ is a splitting field for $A_n$, the set $\pA$ is described in \cite[Theorem~1.1]{Be}. If $\bk$ is not a splitting field, then $\pA$ is a proper subset of the one in \cite[Theorem~1.1]{Be}. The only important observation for us is that $\pA$ (for splitting fields and thus for any $\bk$) does not contain $(4s+2,4s)$, for $s\in \mN$.

For an $A_\infty$-inductive system, it is clear (without \eqref{eq:ResInd}, but using the Mackey formula) that $\Indu\Psi$, which we define as the system of all simple constituents of the modules induced from the simples in $\Psi$, is an inductive system. Similarly, from an inductive system $\Phi$, we obtain an $A_\infty$-inductive system $\Res\Phi$. By \eqref{eq:ResInd}, $\Res\Indu\Psi$ is the union of $\Psi$ with all $E_{\pm}^\lambda$ for which $E_{\mp}^\lambda$ as in $\Psi$. 

Now let $\Psi$ be an $A_\infty$-inductive system. By Theorem~\ref{thm:main}(1) and the above, $\Res\Indu\Psi$, and therefore also $\Psi$, contains either all the trivial representations or $E^{(4s+2,4s)}$ for $s\in \mN$. The latter case then easily implies that $\Psi$ contains all $E^{(a,b)}$ and $E^{(a,b)}_{\pm}$ (whichever exist for given $a$ and $b$) for all $1\le a-b\le 2$. We thus obtain precisely two minimal systems, which are $\Res \Phi[1]$ and $\Res \Phi[2]$. The corresponding ideals are the annihilators of those systems as in Proposition~\ref{prop:ideals}, which are the intersections of $\bk A_\infty$ with $I[1]$ and $I[2]$.
\end{proof}

\begin{remark}
In \cite{BK} it was proved that the number of maximal ideals in $\bk A_\infty$ for $p>2$ is $(p-1)/2$, so the discrepancy between $p=2$ and $p>2$ is more significant for $A_\infty$ than for $S_\infty$.
\end{remark}

\subsection{Hecke algebras}

In this section we let $\mF$ be a field of arbitrary characteristic $p\ge 0$. We pick $q\in \mF^\times$ for which
\[1+q+q^2+\cdots+q^{e-1}\;=\;0,\]
for some $e\in\mZ_{>1}$, which we henceforth fix to be minimal.
Hence either $q=1$ and $e=p>0$, or $q\not=1$ and $q$ is a primitive $e$-th root of $1$.

\subsubsection{}We let $H_n$ be the corresponding Hecke algebra at $q$ over $\mF$ as in \cite{Br}. For example $H_n=\mF S_n$ if $q=1$. We can once again consider a limit $H_\infty=\varinjlim H_n$. The simple modules of $H_n$ can be labelled by $e$-regular partitions of $n$, and the analogue of Kleshchev's branching rules were proved by Brundan in \cite[Theorem~2.5]{Br}. Also the Mullineux involution extends by \cite{Br}, so that one can copy the proof of \cite{BK} verbatim to obtain:

\begin{theorem}\label{thm:Hecke}
If $e>2$, there are precisely $e-1$ maximal ideals in $H_\infty$.
\end{theorem}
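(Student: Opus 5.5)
The plan is to transplant the Baranov--Kleshchev argument from \cite{BK} to $H_\infty$, step by step. Each place where that argument uses a specific property of $\bk S_n$ in characteristic $p$ will be replaced by the corresponding statement for $H_n$ at an $e$-th root of unity, taken from \cite{Br}.

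First, we set up the machinery. Inductive systems are defined for $H_\infty$ exactly as in Section~\ref{sec:indsys}, using the simple modules $D^\lambda$ for $e$-regular $\lambda$. The Zalesskii correspondence of Proposition~\ref{prop:ideals}, between minimal inductive systems and maximal ideals, is a general fact about locally finite-dimensional algebras that are unions of split semisimple-modulo-radical subalgebras, so it applies to $H_\infty$ verbatim.

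Second, we reduce to the candidates $\Phi[m]$. Brundan's modular branching rules \cite[Theorem~2.5]{Br} give the Hecke analogue of \cite[Theorem~3.5]{BK}: removing normal (respectively good) nodes, now with $e$-residues, produces constituents of restrictions. With these, the arguments of \cite[\S 4--5]{BK} carry over with $p$ replaced by $e$. Concretely, they give the following.
\begin{enumerate}
\item If $\ell(\Phi)=\infty$, then $\Phi$ is everything.
\item Otherwise $\Phi\supset\Phi[m(\Phi)]$, where $\Phi[m]$ is generated by the $D^{\beta(m,t)}$ with $\beta(m,t)$ defined using $e$ in place of $p$.
\end{enumerate}
Hence every minimal inductive system is some $\Phi[m]$.

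Third, we show that $\Phi[m]$ is minimal for $1\le m\le e-1$ and non-minimal for $m\ge e$. For $m<e$, the semisimple inductive systems of \cite{CS} have Hecke analogues; these are the $\Phi[m]$ containing exactly the $\lambda$ with $\lambda_1-\lambda_m\le e-m$, and their minimality follows from the branching rules as in \cite{BK}. They are pairwise distinct, which gives at least $e-1$ maximal ideals.

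For $m\ge e$, \cite{BK} shows non-minimality by exhibiting $\Phi[m']\subset\Phi[m]$ for some smaller $m'$. The key tool there is the Mullineux involution, which relates $\Phi[m]$ to a system of smaller length through tensoring with the sign representation. In the Hecke setting the sign representation is replaced by the automorphism $T_i\mapsto -qT_i^{-1}$ of $H_n$, and the Mullineux map on $e$-regular partitions is available by \cite{Br}. The computation of \cite[\S 6]{BK} then goes through.

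The hypothesis $e>2$ is needed exactly where the Mullineux twist is required to produce a \emph{different} smaller system. For $e=2$ this twist is trivial on the relevant partitions, which is the Hecke counterpart of the $p=2$ failure. I expect this third step, together with checking that the combinatorics of \cite[Lemmas~4.7--4.12]{BK} only ever use $p$ as the ``$e$'' of the residues and never use the Frobenius, to be the main obstacle. I would verify it by rereading each lemma in \cite{BK} and recording that only residue combinatorics and the Mullineux map appear.
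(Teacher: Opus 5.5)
Your proposal is correct and takes the same approach as the paper. The paper's proof likewise observes that Brundan's modular branching rules \cite[Theorem~2.5]{Br} and the Hecke-algebra version of the Mullineux involution from \cite{Br} let one copy the argument of \cite{BK} verbatim with $p$ replaced by $e$, and your four steps (the Zalesskii correspondence, the reduction to the candidates $\Phi[m]$, minimality for $m<e$, non-minimality for $m\ge e$) are an unpacking of that one-line justification.
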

One can again verify that these maximal ideals come from the $e-1$ Markov traces on $H_\infty$.
However, the methods in the current paper (for $e=2$), which crucially rely on the Hopf algebra structure of $\mF S_\infty$ do not naively extend to $H_\infty$. Nonetheless, we do know:
\begin{proposition}\label{prop:Hecke}
If $\mathrm{char}(\mF)=0$ and $e=2$, there is precisely one maximal ideal in $H_\infty$.
\end{proposition}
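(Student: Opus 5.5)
The approach is to transfer the problem to the complex Hecke algebra at $q=-1$ and use its close relation to the Temperley--Lieb/spin picture in characteristic zero. With $\mathrm{char}(\mF)=0$ and $e=2$ we have $q=-1$. The maximal ideals of $H_\infty$ are again in bijection with minimal inductive systems of simple $H_n$-modules, where the simples $D^\lambda$ are labelled by $2$-regular partitions, by the same argument as \cite{Za, BK}. So it suffices to show there is exactly one minimal inductive system. Brundan's branching rules \cite[Theorem~2.5]{Br} have the same combinatorial form as Kleshchev's rules for $p=2$. Hence the arguments of Section~\ref{sec:indsys} that use only branching (\cite[Proposition~5.2, Theorem~5.7]{BK}, Lemma~\ref{lem:normal}, Lemma~\ref{lem:Phim}) go through verbatim. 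Every inductive system $\Phi\neq\Irr$ then contains $\Phi[m(\Phi)]$, so the candidates for minimal systems are again the $\Phi[m]$, $m\ge1$, generated by $D^{\rho_m+t\omega_m}$.

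The key step is to show $\Phi[1]\subset\Phi[m]$ for all $m\ge 2$, i.e.\ every $\Phi[m]$ contains the one-dimensional (sign-type) module $D^{(n)}$ for all $n$. For this I would use the monoidal substitute for $\Tilt^\circ\GL_m$, namely tilting modules for the quantum group $U_q(\mathfrak{gl}_m)$ at $q$ with $q^2=1$ in the appropriate normalisation. In characteristic $0$ this is a quantum group at a root of unity of order $\ell=2$. The $q$-Schur--Weyl duality with $H_n$ then plays the role of \eqref{eq:SymtoTilt}, and the ribbon (braided, not symmetric) tensor product still gives the analogue of \eqref{eq:SWT}, since the $R$-matrix lets us identify $T(\lambda)\otimes T(\mu)$ via restriction to $H_m\otimes H_n\subset H_{m+n}$. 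The decisive difference from characteristic $2$ is that quantum tilting theory in characteristic $0$ has only a \emph{first order} (Lusztig) Frobenius, with no higher Steinberg layers. The thick tensor ideals generated by $T(\rho_m+t\omega_m)$ are therefore governed by Andersen's linkage and the quantum Steinberg tensor product theorem: $T(\rho_m+t\omega_m+2\nu)\cong T(\rho_m+t\omega_m)\otimes T_{\mathbb{C}}(\nu)^{[1]}$, where the Frobenius-twisted factor comes from the classical group $\GL_m(\mC)$, whose category is semisimple. No $\nu$ is negligible, so in the analogue of Theorem~\ref{thm:cellSt} every $\lambda=\mu+2\nu$ with $\mu\in\gen\langle m\rangle$ survives. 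Concretely, $D^{\mu+2\nu}\in\Phi[m]$ for all $\nu\in\part(\le m)$, by the analogue of Lemma~\ref{lem:StN} in which Lemma~\ref{lem:Stnew} becomes trivial.

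From this, $\Phi[m]$ contains $D^{\lambda}$ for all $2$-regular $\lambda$ of length $m-1$ or $m$. This is the maximal system of that length, and taking $\nu$ with large first part and branching down via normal nodes, one reaches the row partitions $(n)$. Thus $\Phi[1]\subset\Phi[m]$, and $\Phi[1]$ is the unique minimal system. One must also check that $\Phi[1]$ is itself an inductive system giving a proper ideal, namely the annihilator of the one-dimensional representation $T_i\mapsto -1$ (or $q$).

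I expect the main obstacle to be making the monoidal step rigorous. The category of $H_n$-modules carries no symmetric tensor product, so Lemma~\ref{lem:systemfromSM} and \eqref{eq:SWT} must be replaced by their braided analogues for quantum $\GL_m$-tiltings. In particular I would need the quantum analogue of \cite{BrK} identifying the image of idempotents $e_\lambda\in H_n$ with $T_q(\lambda)$, and the quantum Donkin/Lusztig tensor product theorem at $\ell=2$. If this proves awkward, I would instead try a direct branching argument: show by explicit normal-node removal that $D^{(n)}$ occurs in the restriction of $D^{\rho_m+t\omega_m}$ for large $t$, mirroring the proof of Lemma~\ref{lem:Phim}.
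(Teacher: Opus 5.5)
Your main route is correct in outline but genuinely different from the paper's.

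The paper stays inside the Hecke algebra. By \cite[Theorem~1.4]{JM}, in characteristic $0$ with $e=2$ the simple module $D^{\rho_m+a\omega_m}$ is the Specht module $S^{\rho_m+a\omega_m}$. Its restriction to $H_{a+m-1}$ therefore has a Specht filtration containing $S^{(a+m-1)}$, which is the trivial module. Letting $a$ grow gives $\Phi[1]\subset\Phi[m]$.

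You instead rerun the lower-bound half of Section~\ref{sec:GL} for quantum $\GL_m$ at $q=-1$. This covers Lemmas~\ref{lem:cellSt}, \ref{lem:Donkin}, \ref{lem:Stnew} and \ref{lem:StN}, and the first paragraph of the proof of Theorem~\ref{thm:FullPhim}. Your characteristic-zero input is that the Frobenius-twisted factor lives in the semisimple category $\Rep\GL_m(\mC)$. Hence every $T(\nu)$ is non-negligible and the constraint $\nu\in\pA\langle m\rangle$ disappears. So $\Phi[m]$ contains every $D^\lambda$ with $\lambda\in\ppart\langle m\rangle$, and Lemma~\ref{lem:top} gives $\Phi[m]=\Phi[m;\infty]\supset\Phi[1]$.

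This is legitimate because the containment direction uses only the monoidal structure, plus rigidity of the twisted classical factor. The Hopf structure of $\mF S_\infty$ (restriction along $\phi_m$, Kronecker products) is needed only for upper bounds, which this proposition does not require. So the obstruction mentioned before Proposition~\ref{prop:Hecke} does not bite.

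The price is a stack of inputs the paper never needs:
\begin{enumerate}
\item the Hecke analogue of Remark~\ref{rem:SysGL} and \eqref{eq:SWT};
\item the $q$-analogue of \eqref{eq:simpletilt};
\item the quantum Steinberg tensor product theorem at $e=2$, available for instance for the Dipper--Donkin quantum $\GL_m$ via the theory of $q$-Schur algebras.
\end{enumerate}
What you gain is a conceptual explanation of the contrast with Theorem~\ref{thm:main}: the second Frobenius layer is semisimple in characteristic $0$. Note, though, that the paper's short argument yields the same identification $\Phi[m]=\Phi[m;\infty]$ equally cheaply. The Specht filtration of $\Res S^{\rho_m+a\omega_m}$ contains $S^\mu$ for every $\mu\subset\rho_m+a\omega_m$.

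Your fallback, a direct normal-node argument, cannot work. Brundan's rules for $e=2$ are combinatorially identical to Kleshchev's for $p=2$. So every constituent they produce from $D^{\rho_m+t\omega_m}$ is also produced in characteristic $2$. There $\Phi[2]=\Phi[2;1]$ contains no $D^{(n)}$ with $n\ge 3$. Some genuinely characteristic-zero input is unavoidable: Specht irreducibility as in the paper, or semisimplicity of the twisted factor as in your main route.
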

\begin{proof}
Again, the methods from \cite{BK} allow us to conclude that the only candidates for minimal inductive systems are given by (the immediate analogue of) $\Phi[m]$. However, by \cite[Theorem~1.4]{JM}, the simple $H_n$-module $D^{\rho_m+a\omega_m}$ is a Specht module, which implies that its restriction to $H_{a+m-1}$ has a trivial module as simple constituent. Since $a$ can be taken to be arbitrarily high it follows that $\Phi[1]\subset \Phi[m]$, so there is only one minimal inductive system.  By the same reasoning as for symmetric group algebras this implies there is only one maximal ideal.
\end{proof}
The above proposition means that, contrary to symmetric group algebras in positive characteristic, the maximal ideals for Hecke algebras in characteristic zero behave homogeneously for all roots of $1$, and the maximal ideals are always given by the radicals of the Markov traces from \cite{We}.

A proof of the following conjecture would generalise Theorem~\ref{thm:main}(2) and complete this story for Hecke algebras. Note that one can show that there are at least two maximal ideals here.
\begin{conjecture}\label{conj:Hecke}
If $\mathrm{char}(\mF)>0$ and $e=2$, there are precisely two maximal ideals in $H_\infty$.
\end{conjecture}


\section{Categorical dimension of ideals and inductive systems of length two and three}\label{sec:allp}

Let $\bk$ be a field of characteristic $p>0$. In this section, we connect tensor ideals in $\Tilt^\circ \GL_m$ with the better developed theory of tensor ideals in the \emph{rigid} monoidal categories $\Tilt \SL_m$ and $\Tilt^\bullet\GL_m$. As a consequence of the first connection, we classify T-indecomposable inductive systems of length two and three. As a consequence of the latter, we prove \cite[Conjecture~6.3.4]{Tprime}.

\subsection{Reduction to the special linear group}

We already know that we can connect inductive systems with thick tensor ideals in $\Tilt^{\circ}\GL_m$. In this section we show how for T-indecomposable inductive systems we can instead work with $\Tilt \SL_m$. The latter is rigid, which provides more tools to deal with its tensor ideals, see for instance~\cite{Selecta}, and indeed its tensor ideal structure has received a lot of attention already, see \cite{An, AHR2, CEOq} and references therein.

\begin{theorem}\label{thm:SLGL}
\begin{enumerate}
\item Taking inverse images under $\Tilt^{\circ }\GL_m\to\Tilt^{\circ}\GL_{m-1}$ yields a bijection between (prime) thick tensor ideals in $\Tilt^{\circ}\GL_m$ that contain $\bK_m$ and (prime) thick tensor ideals in $\Tilt^{\circ}\GL_{m-1}$.
\item Every prime thick tensor ideal in $\Tilt^{\circ}\GL_m$ that does not contain $\bK_m$ is the preimage of a thick tensor ideal under $\Tilt^{\circ}\GL_m\to\Tilt \SL_m$.
\item  If $m< p$, then taking preimages as in (2) yields a bijection between prime thick tensor ideals in $\Tilt^\circ\GL_m$ and prime thick tensor ideals in $\Tilt \SL_m$.
\end{enumerate}

\end{theorem}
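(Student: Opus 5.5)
Part (1) is formal. The functor $F:\Tilt^\circ\GL_m\to\Tilt^\circ\GL_{m-1}$ is full and essentially surjective, and its kernel on objects is exactly $\bK_m$. The full-and-essentially-surjective claim holds because both categories are quotients of $\Sym$, by the tensor ideals generated by skew symmetrisers of $\bk S_{m+2}$ and $\bk S_{m+1}$ respectively, and the second ideal contains the first. Hence $F$ identifies indecomposables $T(\lambda)$, $\lambda\in\ppart(\le m-1)$, on both sides and sends the others to $0$. Given a thick tensor ideal $\bJ\supset\bK_m$, its image is the set of $T(\lambda)$ with $\lambda\in\ppart(\le m-1)$ lying in $\bJ$. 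This image is a thick tensor ideal: by \eqref{eq:SWT}, tensor products in $\Tilt^\circ\GL_{m-1}$ are obtained from those in $\Tilt^\circ\GL_m$ by discarding summands in $\bK_m$, and these summands lie in $\bJ$ anyway. Its preimage is $\bJ$ again, and conversely preimages of thick tensor ideals contain $\bK_m$. Primeness transfers in both directions because $F$ is monoidal and surjective on indecomposables, and $\bK_m\subset\bJ$ absorbs everything killed by $F$.

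For (2), let $\bJ$ be prime with $\bK_m\not\subset\bJ$, so some $T(\kappa)$ with $\kappa\in\ppart(m)$ is not in $\bJ$. Restriction $R:\Tilt^\circ\GL_m\to\Tilt\SL_m$ sends $T(\lambda)\mapsto T([\lambda])$. The key observation is that $T(\lambda+\omega_m)\cong T(\lambda)\otimes\det$ for all $\lambda$, and that for $\lambda\in\ppart(\le m)$ the weight $\lambda+\omega_m$ lies in $\ppart(m)$. Moreover, $T(\omega_m)=\w^mV=\det$ is itself in $\Tilt^\circ\GL_m$, and $\det$ is not in $\bJ$: it lies in $\bK_m$, and since $\det$ is invertible in $\Tilt\GL_m$, every element of $\bK_m$ is a summand of an object of the form $\det\otimes Y$. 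Hence if $\det\in\bJ$, then all of $\bK_m\subset\bJ$. Primeness then gives $T(\lambda)\in\bJ\iff T(\lambda)\otimes\det\in\bJ$. One direction is the tensor ideal property. For the other, $T(\lambda+\omega_m)\in\bJ$ forces $T(\lambda)\in\bJ$ or $\det\in\bJ$, and the latter is excluded. So $\bJ$ is a union of classes of the equivalence $\lambda\sim\lambda+\omega_m$ on $\ppart(\le m)$, and these classes are precisely the fibres of $\lambda\mapsto[\lambda]$ over dominant $\SL_m$-weights $[\lambda]$ realised by $\ppart(\le m)$. I would then define $\bJ'$ in $\Tilt\SL_m$ as the set of objects $Y$ with $Y\otimes T\in R(\bJ)$ up to summands for some $T\in\Tilt\SL_m$. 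Concretely, $\bJ'$ is generated by the $T([\lambda])$ with $T(\lambda)\in\bJ$.

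The main obstacle is to check that $\bJ'$ contains no $T([\mu])$ with $T(\mu)\notin\bJ$, since $\Tilt\SL_m$ has additional objects such as duals. Every $\SL_m$-tilting module is a summand of $R(T)$ for some $T\in\Tilt^+\GL_m$ (duals twist by powers of $\det$). So if $T([\mu])$ is a summand of $T([\lambda])\otimes R(T')$ with $T(\lambda)\in\bJ$, I would lift this to $\GL_m$: $T(\mu+k\omega_m)$ is a summand of $T(\lambda)\otimes T'\otimes\det^{j}$. If $\lambda\in\ppart\langle m\rangle$, Proposition~\ref{prop:consol} puts $T(\lambda)\otimes T'$ into $\Tilt^\circ\GL_m$ as a summand of $V^{\otimes N}\otimes T(\lambda)$, so it lies in $\bJ$. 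For general $\lambda$, I would first tensor with $T(\rho_m)$ and use primeness, which requires $T(\rho_m)\notin\bJ$ when $\bJ$ is not $\bK_m$-containing; I would try to prove this via Example~\ref{ex:alpha}-type generation. I expect this step to be the hardest. Adjusting by the class relation then gives $T(\mu)\in\bJ$.

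For (3), if $m<p$ then $\ppart(\le m)=\part(\le m)$ and $\Tilt^\circ\GL_m=\Tilt^+\GL_m$, which surjects onto the indecomposables of $\Tilt\SL_m$. Every prime ideal in $\Tilt^\circ\GL_m$ either does not contain $\bK_m$, or equals a preimage in which $\det$ is handled by the same invertibility argument. The latter case is excluded for prime ideals, since $\det\otimes\det^{-1}$ is a summand-related unit. Preimages of prime ideals are prime because $R$ is monoidal, and the map is injective because $R$ is surjective on indecomposables. This gives the bijection.
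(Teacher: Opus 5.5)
Your part (1) is fine and matches the paper. The gap is in part (2).

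\textbf{The gap in (2).} Your argument rests on $\det=T(\omega_m)=\w^mV$ being an object of $\Tilt^\circ\GL_m$, and this is false as soon as $m\ge p$. The indecomposables of $\Tilt^\circ\GL_m$ are the $T(\lambda)$ with $\lambda$ $p$-regular. But $\omega_m=(1^m)$ is $p$-regular only when $m<p$; equivalently, the composite $\w^mV\to V^{\otimes m}\to\w^mV$ is multiplication by $m!$. Likewise $\lambda+\omega_m$ need not lie in $\ppart$ for $\lambda\in\ppart(\le m)$ (take $\lambda=0$).

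So for $m\ge p$, which includes every $m\ge 2$ in characteristic $2$, none of the following can be carried out inside $\Tilt^\circ\GL_m$:
the step ``$\det\notin\bJ$, so by primeness $T(\lambda)\in\bJ\iff T(\lambda)\otimes\det\in\bJ$'';
the resulting classes $\lambda\sim\lambda+\omega_m$;
the final ``adjusting by the class relation''.
Nor can the factor $T'\otimes\det^{j}\in\Tilt^+\GL_m$ produced by lifting an $\SL_m$-witness be assumed to lie in $\Tilt^\circ\GL_m$, which is strictly smaller for $m\ge p$. As written, your argument only works for $m<p$.

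The fallback does not repair this. Proposition~\ref{prop:consol} is a characteristic-$2$ result: it uses that $T(\rho_m)$ is the Steinberg module, so it says nothing for odd $p\le m$. For odd $p$, the non-membership $T(\rho_m)\notin\bJ$ does not follow from $\bK_m\not\subset\bJ$; it already fails for the negligible ideal when $m<p\le 2m-2$. In characteristic $2$ that non-membership does hold, since by Lemma~\ref{lem:cellSt}(1) $T(\rho_m)\in\bJ$ forces $\bK_m\subset\bJ$. Even there, you would still need a substitute for the $\det$-shift.

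\textbf{The missing idea.} Use the object $T(\alpha)$ of Example~\ref{ex:alpha}, and only ever move $\det$ between two factors that already lie in $\Tilt^\circ\GL_m$.

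First, take $\mu\in\ppart(m)$ with $T(\mu)\notin\bJ$. Then $\mu\pm\omega_m$ are $p$-regular. Isomorphisms such as $T(\lambda)\otimes T(\mu-\omega_m)\cong T(\lambda-\omega_m)\otimes T(\mu)$, together with primeness, give $T(\lambda)\in\bJ\iff T(\lambda-\omega_m)\in\bJ$, but only for $\lambda\in\ppart(m)$.

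Second, $T(\alpha)$ generates $\bK_m$, so $T(\alpha)\notin\bJ$. By Lemma~\ref{lem:lambdam}, $T(\lambda)\otimes T(\alpha)$ has only summands $T(\kappa)$ with $\kappa\in\ppart(m)$. Hence $T(\lambda)\in\bJ\iff T(\lambda)\otimes T(\alpha+N\omega_m)\in\bJ$ for all $N$ (Proposition~\ref{prop:SLGL}).

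Third, suppose $T([\lambda])$ lies in the ideal generated by the image of $\bJ$. Rigidity makes $T(\lambda)$ a summand of $T(\lambda)\otimes T(\mu)\otimes T(\mu)^\ast$ for some $T(\mu)\in\bJ$. Hence $T(\lambda)\otimes T(\alpha+N\omega_m)$ is a summand of $T(\mu)\otimes (T(\lambda)\otimes T(\alpha)^{\otimes 2}\otimes\det^{N}\otimes(T(\alpha)\otimes T(\mu))^\ast)$. Since $T(\alpha)\otimes T(\mu)$ is a sum of $T(\nu)$ with $\nu\in\ppart(m)$, each $T(\nu)^\ast\otimes\det^N$ has $p$-regular highest weight $(N-\nu_m,\ldots,N-\nu_1)$ for $N\gg 0$. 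So the second factor lies in $\Tilt^\circ\GL_m$, putting $T(\lambda)\otimes T(\alpha+N\omega_m)$ in $\bJ$. The second step then cancels the factor $T(\alpha+N\omega_m)$, giving $T(\lambda)\in\bJ$.

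\textbf{On (3).} Your claim that prime ideals containing $\bK_m$ are excluded is wrong. $\bK_m$ is itself prime, being the kernel of a tensor functor to $\Tilt^\circ\GL_{m-1}\subset\Rep\GL_{m-1}$. It contains $\det$ when $m<p$, and $\det^{-1}$ is not available in $\Tilt^\circ\GL_m$. It is not a preimage from $\Tilt\SL_m$, since $\unit\notin\bK_m\ni\det$ while $[\omega_m]=[0]$. The bijection in (3) must be read, as in the paper's proof, for prime ideals \emph{not} containing $\bK_m$. With that reading, your argument for $m<p$ is fine.
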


Before proceeding to the proof of the theorem, we spell out our main application. Denote by $V$ the natural representation in $\Tilt \SL_m$, and consider for any thick tensor ideal $\bI$ in $\Tilt\SL_m$
\[\Phi_{V;\bI}\,=\,\{D^\lambda\mid \lambda\in \ppart(\le m)\quad\mbox{with}\quad T([\lambda])\not\in\bI\}.\]

\begin{corollary}\label{cor:inducSL}
Every T-indecomposable inductive system of length $m$ is of the form $\Phi_{V;\bI}$
for $\bI$ a (prime, in case $m< p$) thick tensor ideal in $\Tilt \SL_m$. Conversely, if $\bI$ is a prime thick tensor ideal in $\Tilt \SL_m$ not containing $V$, then $\Phi_{V;\bI}$ is a T-indecomposable inductive system. 
\end{corollary}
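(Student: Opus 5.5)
The plan is to deduce Corollary~\ref{cor:inducSL} directly from Theorem~\ref{thm:SLGL}, using the dictionary between inductive systems and thick tensor ideals in Remark~\ref{rem:SysGL}.

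\emph{First direction.} Let $\Phi$ be a T-indecomposable inductive system of length $m$. By Remark~\ref{rem:SysGL}, $\Phi$ corresponds to a prime proper thick tensor ideal $\bJ$ in $\Tilt^\circ\GL_m$: the one not containing $T(\lambda)$ for $D^\lambda\in\Phi$, $\lambda\in\ppart(\le m)$.
\begin{itemize}
\item Since $\ell(\Phi)=m$, some $D^\lambda\in\Phi$ has $\ell(\lambda)=m$. Hence $T(\lambda)\in\bK_m$ but $T(\lambda)\notin\bJ$, so $\bJ\not\supset\bK_m$.
\item Theorem~\ref{thm:SLGL}(2) then writes $\bJ$ as the preimage of a thick tensor ideal $\bI$ of $\Tilt\SL_m$ under restriction, which sends $T(\lambda)\mapsto T([\lambda])$. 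Thus $T(\lambda)\in\bJ$ if and only if $T([\lambda])\in\bI$, giving $\Phi=\Phi_{V;\bI}$.
\item When $m<p$, Theorem~\ref{thm:SLGL}(3) says $\bI$ can be taken prime. Concretely, $\bI$ is the prime ideal in $\Tilt\SL_m$ matching $\bJ$ under that bijection.
\end{itemize}

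\emph{Converse direction.} Let $\bI$ be prime in $\Tilt\SL_m$ with $V\notin\bI$. Consider the composite tensor functor $F:\Sym\to\Tilt^\circ\GL_m\to\Tilt\SL_m\to\Tilt\SL_m/\bI$, which sends $E\mapsto V$. The target is integral because $\bI$ is prime, so Lemma~\ref{lem:systemfromSM}(2) will give T-indecomposability once we know $\Phi_{V;\bI}$ is an inductive system.

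By \eqref{eq:simpletilt}, for $\lambda\in\ppart_n$ we have $\Sch_\lambda(V)\ne0$ in the quotient exactly when $\lambda\in\ppart(\le m)$ and $T([\lambda])\notin\bI$. So $\Phi_V$ computed in $\Tilt\SL_m/\bI$ is exactly $\Phi_{V;\bI}$.

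For Lemma~\ref{lem:systemfromSM}(1) we need $V\otimes-$ faithful on objects in the quotient. I would argue this by rigidity: $V$ is a direct summand of $V\otimes V^*\otimes V$. If $V\otimes X\in\bI$, then $X$ is a summand of $V^*\otimes V\otimes X$, which also lies in $\bI$. Alternatively, primality and $V\notin\bI$ force $X\in\bI$ directly. Either way $\Phi_{V;\bI}$ is an inductive system, and it is T-indecomposable by integrality.

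\emph{Main obstacle.} The only delicate point is ensuring that the ideal used in the first direction is a genuine proper thick tensor ideal of $\Tilt\SL_m$. I expect this to come out of the proof of Theorem~\ref{thm:SLGL}(2), taking $\bI$ to be the thick ideal generated by the images of objects of $\bJ$. Everything else is bookkeeping via \eqref{eq:simpletilt} and Lemma~\ref{lem:systemfromSM}.
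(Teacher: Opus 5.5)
Your proposal is correct and is essentially the paper's proof, which combines Remark~\ref{rem:SysGL} with Theorem~\ref{thm:SLGL}. Your observation that a length-$m$ system gives an ideal not containing $\bK_m$ is exactly the implicit step, and the properness you flag as the main obstacle is automatic: $\Res(\unit)=\unit\notin\bJ$, so $\unit$ cannot lie in the $\SL_m$-ideal whose preimage is $\bJ$. One correction: drop the rigidity variant for faithfulness of $V\otimes-$, because $X$ need not be a direct summand of $V^*\otimes V\otimes X$ (for $p=2$, $m=2$ and $X=\unit$ one has $V^*\otimes V\cong T_2$, which has no summand $\unit$); your primality argument is valid and suffices.
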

\begin{proof}
This follows from combining Remark~\ref{rem:SysGL} with Theorem~\ref{thm:SLGL}.
\end{proof}

\begin{remark}
Even prime thick tensor ideals in $\Tilt^\circ\GL_m$ that do contain $\bK_m$ can be in the preimage of prime thick tensor ideals of $\Tilt\SL_m$. For example, if $p=2$ then the kernel of $\Tilt^\circ\GL_3\to\Tilt^\circ\GL_1$ equals the preimage of the maximal thick tensor ideal in $\Tilt\SL_3$, explaining why the latter does not play a role in Theorem~\ref{thm:32} below. 
\end{remark}

Now we start the proof of Theorem~\ref{thm:SLGL}. Recall the thick tensor ideal $\bK_m$ from \ref{def:Km} and $\alpha\in\ppart(m)$ from Example~\ref{ex:alpha}. We will freely use the observation, see Lemma~\ref{lem:lambdam}, that any direct summand $T(\kappa)$ of $T(\lambda)\otimes T(\alpha)$, with $\lambda\in\ppart(\le m)$, satisfies $\kappa\in \ppart(m)$.

\begin{proposition}\label{prop:SLGL}
Let $\bI$ be a prime thick tensor ideal in $\Tilt^\circ\GL_m$ that does not contain $\bK_m$, meaning there is some $\mu\in\part^{\reg}(m)$ with $T(\mu)\not\in\bI$.
\begin{enumerate}
\item For any $\lambda \in\part^{\reg}( m)$, we have $T(\lambda)\in\bI$ if and only if $T(\lambda-\omega_m)\in\bI$.
\item For any $\lambda\in\ppart(\le m)$ and $N\in\mN$, we have $T(\lambda)\in \bI$ if and only if $T(\lambda)\otimes T(\alpha+N\omega_m)\in\bI$.
\end{enumerate}
\end{proposition}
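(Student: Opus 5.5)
The plan is to exploit one isomorphism in the larger rigid category $\Tilt\GL_m$, in which the determinant $T(\omega_m)$ is invertible, and then read off both parts from primeness of $\bI$. The determinant itself need not lie in $\Tilt^\circ\GL_m$, so every step must only use tensor products of objects that do lie in $\Tilt^\circ\GL_m$.

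\emph{Part (1).} For $\kappa,\kappa'\in\ppart(m)$, the partitions $\kappa-\omega_m$ and $\kappa'-\omega_m$ are still $p$-regular, of length at most $m$. Moreover, in $\Tilt\GL_m$ we have $T(\kappa)\cong T(\omega_m)\otimes T(\kappa-\omega_m)$. Hence
\[T(\kappa)\otimes T(\kappa'-\omega_m)\;\cong\; T(\omega_m)\otimes T(\kappa-\omega_m)\otimes T(\kappa'-\omega_m)\;\cong\; T(\kappa-\omega_m)\otimes T(\kappa'),\]
and both outer tensor products are taken between objects of $\Tilt^\circ\GL_m$. Consider the sets
\[A=\{\kappa\in\ppart(m)\mid T(\kappa)\not\in\bI\},\qquad B=\{\kappa\in\ppart(m)\mid T(\kappa-\omega_m)\not\in\bI\}.\]
Suppose $\kappa\in A$ and $\kappa'\in B$. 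Since $\bI$ is prime, the left-hand side above is not in $\bI$, so neither is the right-hand side. As $\bI$ is a thick tensor ideal, this forces $T(\kappa-\omega_m)\not\in\bI$ and $T(\kappa')\not\in\bI$, that is, $\kappa\in B$ and $\kappa'\in A$. Consequently, as soon as both sets are non-empty, we get $A\subset B$ (fix any $\kappa'\in B$) and $B\subset A$ (fix any $\kappa\in A$), so $A=B$, which is exactly statement (1). Both sets are indeed non-empty: the hypothesis gives $\mu\in A$, and $\mu+\omega_m\in\ppart(m)$ lies in $B$ because $(\mu+\omega_m)-\omega_m=\mu$.

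\emph{Part (2).} By primeness together with the ideal property, $T(\lambda)\otimes T(\alpha+N\omega_m)\in\bI$ holds if and only if $T(\lambda)\in\bI$ or $T(\alpha+N\omega_m)\in\bI$. It therefore suffices to show $T(\alpha+N\omega_m)\not\in\bI$. Since $\alpha+j\omega_m\in\ppart(m)$ for every $j\ge 0$, applying (1) repeatedly reduces this to showing $T(\alpha)\not\in\bI$. For that, Example~\ref{ex:alpha} states that $T(\mu)$ is a direct summand of $T(\alpha)\otimes V^{\otimes(|\mu|-|\alpha|)}$. So $T(\alpha)\in\bI$ would force $T(\mu)\in\bI$, contradicting the choice of $\mu$.

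The main obstacle is (1): one has to avoid tensoring with the determinant directly, since it is not available in $\Tilt^\circ\GL_m$. The symmetric exchange trick with the sets $A$ and $B$ is designed to handle exactly this. Once (1) is in place, (2) is routine.
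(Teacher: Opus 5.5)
Your proof is correct and follows the paper's argument. Part (1) is the same exchange argument: the paper applies the isomorphism $T(\kappa)\otimes T(\kappa'-\omega_m)\cong T(\kappa-\omega_m)\otimes T(\kappa')$ to exactly your two specializations $(\kappa,\kappa')=(\mu,\lambda)$ and $(\lambda,\mu+\omega_m)$, and part (2) likewise rests on primeness together with $T(\alpha)\notin\bI$ from Example~\ref{ex:alpha}. The one difference is minor. For $N>0$ you apply (1) directly to the weights $\alpha+j\omega_m$ to get $T(\alpha+N\omega_m)\notin\bI$. The paper instead applies (1) to the summands $T(\kappa)$, $\kappa\in\ppart(m)$, of $T(\lambda)\otimes T(\alpha)$. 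Your route is slightly more direct and does not need Lemma~\ref{lem:lambdam}.
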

\begin{proof}
For part (1), assume that $T(\lambda-\omega_m)\not\in\bI$. Then with $\mu$ as in the lemma, since $\bI$ is prime, 
\[T(\lambda)\otimes T(\mu-\omega_m)\;=\; T(\lambda-\omega_m)\otimes T(\mu)\;\not\in\;\bI.\]
So clearly $T(\lambda)\not\in\bI$. Similarly, assume that $T(\lambda)\not\in\bI$, then looking at $T(\lambda-\omega_m)\otimes T(\mu+\omega_m)$
shows $T(\lambda-\omega_m)\not\in\bI$. This proves part (1).

Now we prove part (2) for the special case $N=0$.
By primeness of $\bI$ it suffices to observe $T(\alpha)\not\in\bI$. The latter follows from the fact that $T(\alpha)$ generates $\bK_m$ as a thick tensor ideal by Example~\ref{ex:alpha}.

Finally, for part (2) with $N>0$, we can observe that $T(\lambda)\otimes T(\alpha)$ is a direct sum of $T(\kappa)$ with $\kappa\in\ppart(m)$, and that by part (1) we have that $T(\kappa)\in\bI$ if and only if $T(\kappa+N\omega_m)\in \bI$. Hence $T(\lambda)\otimes T(\alpha)$ is in $\bI$ if and only if $T(\lambda)\otimes T(\alpha+N\omega_m)$ is in $\bI$, which allows us to reduce to the previous case $N=0$.
\end{proof}

\begin{proof}[Proof of Theorem~\ref{thm:SLGL}]
Since the functor to $\Tilt^{\circ}\GL_{m-1}$ is essentially surjective, part (1) is immediate.
We thus let $\bI$ be a prime thick tensor ideal in $\Tilt^{\circ}\GL_{m}$ that does not contain $\bK_m$. Our remaining task for (2) is to prove $\bI$ is the preimage of a thick tensor ideal under 
\[\Res:\Tilt^\circ\GL_m\to\Tilt\SL_m,\quad T(\lambda)\mapsto T([\lambda]).\] Let $\bJ$ be the minimal thick tensor ideal in $\Tilt\SL_m$ that contains every module in the image of~$\bI$. Clearly $\bI\subset\Res^{-1}(\bJ)$, and we will prove it is an equality. 

Consider $T(\lambda)$ with $\lambda\in \ppart(\le m)$ for which $T([\lambda])\in\bJ$. It easily follows that there exists $T\in\Tilt\GL_m$ and $T(\mu)\in \bI$ (so $\mu\in\ppart$) such that $T(\lambda)$ is a direct summand of $T\otimes T(\mu)$. Hence $T(\lambda)$ will split the evaluation morphism of $T(\mu)$, so that $T(\lambda)$ is a direct summand of $T(\lambda)\otimes T(\mu)\otimes T(\mu)^\ast$. It then follows further that, for any  $N\in \mN$, the module $T(\lambda)\otimes T(\alpha+N\omega_m)$ is a direct summand of
\[T(\lambda)\otimes T(\mu)\otimes T(\alpha)^{\otimes 2}\otimes (\w^m V)^{\otimes N} \otimes (T(\alpha)\otimes T(\mu))^\ast.\]
Now $T(\alpha)\otimes T(\mu)$ is a direct sum of tilting modules $T(\nu)$ with $\nu\in \ppart(m)$, and the highest weight of $T(\nu)^\ast$ is $(-\nu_{m},-\nu_{m-1},\ldots,-\nu_1)$. Hence, for high enough $N$, $(N-\nu_{m},N-\nu_{m-1},\ldots,N-\nu_1)$
will be a ($p$-regular) partition, which shows that for suitable $N$, the displayed equation is a tensor product of $T(\mu)$ with a tilting module in $\Tilt^\circ \GL_m$, showing that $T(\lambda)\otimes T(\alpha+N\omega_m)$ is in $\bI$. The conclusion follows from Proposition~\ref{prop:SLGL}(2).

Finally, for part (3), assume that $m< p$, so that $\Tilt^\circ \GL_m=\Tilt^+\GL_m$. It now follows easily that for any dominant $\SL_m$-weight, which we can always write as $[\lambda]$ for $\lambda\in\ppart(\le m)=\part(\le m)$, we have $T(\lambda)\in\bI$ if and only if $T([\lambda])\in\bJ$. Note that this statement makes sense thanks to Proposition~\ref{prop:SLGL}(1). It thus follows that $\bJ$ is also prime and that the procedures $\bI\leadsto \bJ$ and $\bJ\leadsto \Res^{-1}(\bJ)$ are now two-sided inverse, leading to the bijection.
 \end{proof}

\subsection{Andersen's Lemma}\label{sec:An}
The result of this section, Lemma~\ref{lem:An}, will only be used for $\SL_3$, but since the method works much more generally, we briefly work in the set-up, where $G$ is a reductive group, with only the assumption that $(p-1)\rho$ is a weight, so for instance $p>2$ or $G=\SL_m$.

In \cite[Lemma~13]{An}, Andersen proved a powerful result vastly reducing the study of the thick tensor ideals in $\Tilt G$, under the assumption that $p\ge 2h-2$, with $h$ the Coxeter number of $G$. This does not quite cover all cases we need. In \cite{An}, the above condition is used in two ways. The point of the current section is to demonstrate that one of these, injectivity of the Steinberg module in a certain subcategory, is not required. This will allow us to use the result in all cases we need. The remaining hypothesis is:

\subsubsection{Hypothesis}\label{hypo} Fix some $r\in\mZ_{>0}$.  For $\mu\in (p^r-1)\rho+X_r$ and $\nu\in X^+$, the tilting module $T(\mu)\otimes T(\nu)^{(r)}$ from \cite[Lemma~E.9]{Jantzen} is indecomposable, {\it i.e.}
\[T(\mu+p^r\nu)\;\cong\; T(\mu)\otimes T(\nu)^{(r)}.\]
By \cite{BNPS} this is satisfied whenever $p\ge 2h-4$.

\begin{lemma}\label{lem:An}
Assume Hypothesis~\ref{hypo} is valid for $r>0$. For $\lambda,\mu\in  (p^r-1)\rho+X^+$, written as $\lambda=\lambda_0+p^r\lambda_1$ with $\lambda_0\in (p^r-1)\rho+X_r$ and $\lambda_1\in X^+$ (and similarly for $\mu$), we have that $T(\mu)$ is in the tensor ideal generated by $T(\lambda)$ if and only if $T(\mu_1)$ is in the tensor ideal generated by $T(\lambda_1)$.
\end{lemma}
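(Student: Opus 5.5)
The plan is to show that $T(\lambda)$ and $St_r\otimes T(\lambda_1)^{(r)}$ generate the same thick tensor ideal, where $St_r=T((p^r-1)\rho)$, and then to ``divide out'' the factor $St_r$ in a way that is compatible with Frobenius twists.

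\emph{Step 1: replacing $T(\lambda_0)$ by $St_r$.} Write $\lambda_0=(p^r-1)\rho+\kappa$ with $\kappa\in X_r$.
\begin{itemize}
\item By \cite[Lemma~E.9]{Jantzen}, $T(\lambda_0)$ is a direct summand of $St_r\otimes T(\kappa)$. Using Hypothesis~\ref{hypo} for $T(\lambda)$, it follows that $T(\lambda)$ lies in the ideal generated by $St_r\otimes T(\lambda_1)^{(r)}$.
\item For the converse, I will show that $St_r$ is a direct summand of $St_r\otimes T(\lambda_0)^\ast\otimes T(\lambda_0)$. This replaces the injectivity of the Steinberg module used in \cite{An}.
\item Let $K$ be the kernel of the evaluation $T^\ast\otimes T\tto\unit$, with $T=T(\lambda_0)$; this map is surjective because it is nonzero onto the trivial simple module.
\item $T^\ast\otimes T$ has a good filtration and $\unit=H^0(0)$. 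Since the kernel of a surjection from a module with good filtration onto a module with good filtration again has one, $K$ has a good filtration.
\item Then $\mathrm{Ext}^1_G(St_r,St_r\otimes K)\cong \mathrm{Ext}^1_G(\unit, St_r^\ast\otimes St_r\otimes K)=0$, because $St_r^\ast\otimes St_r\otimes K$ has a good filtration. Hence $St_r\otimes \ev$ splits.
\item Tensoring with $T(\lambda_1)^{(r)}$ and using Hypothesis~\ref{hypo}, $St_r\otimes T(\lambda_1)^{(r)}$ is a summand of $T(\lambda)\otimes T(\lambda_0)^\ast\otimes St_r$.
\end{itemize}
The same applies to $\mu$, so it remains to compare the ideals generated by $St_r\otimes T(\mu_1)^{(r)}$ and $St_r\otimes T(\lambda_1)^{(r)}$.

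\emph{Step 2: the easy implication.} If $T(\mu_1)$ is a summand of $T(\lambda_1)\otimes W$ with $W$ tilting, then applying the Frobenius twist and tensoring with $St_r$ shows that $St_r\otimes T(\mu_1)^{(r)}$ is a summand of $St_r\otimes T(\lambda_1)^{(r)}\otimes W^{(r)}$. Here $W^{(r)}$ is a $G$-module, and $St_r\otimes W^{(r)}$ is tilting by \cite[Lemma~E.9]{Jantzen}. Together with Step 1, this gives the ``if'' direction.

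\emph{Step 3: the converse.} Suppose $St_r\otimes T(\mu_1)^{(r)}$ is a summand of $St_r\otimes T(\lambda_1)^{(r)}\otimes W$ with $W$ tilting. I will apply the additive functor
\[F(M)\;=\;\Hom_{G_r}(St_r,St_r\otimes M)^{(-r)},\]
which goes from $G$-modules to $G$-modules, and compute both sides as follows.
\begin{itemize}
\item $F$ preserves direct summands. It also satisfies $F(M\otimes N^{(r)})\cong F(M)\otimes N$, and $F(\unit)=\End_{G_r}(St_r)=\bk$ because $St_r$ is simple over $G_r$.
\item Hence $F$ sends the left-hand side to $T(\mu_1)$ and the right-hand side to $T(\lambda_1)\otimes F(W)$.
\item To control $F(W)$, I decompose $St_r\otimes W$ into modules $T(\nu)$. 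Each such summand is projective over $G_r$, so $\nu\in (p^r-1)\rho+X^+$.
\item By Hypothesis~\ref{hypo}, $T(\nu)\cong T(\nu_0)\otimes T(\nu_1)^{(r)}$, and then $F$ contributes $\Hom_{G_r}(St_r,T(\nu_0))\otimes T(\nu_1)$.
\item The first factor is a $G$-module whose untwist is a direct summand of $(St_r\otimes T(\nu_0))^{G_r}$. For $\nu_0\in(p^r-1)\rho+X_r$ this factor is either $0$ or $\bk$, the latter only for $\nu_0=(p^r-1)\rho$, since $T(\nu_0)$ restricted to $G_rT$ is an indecomposable projective $\widehat Q_r$.
\item Consequently $F(W)$ is a direct sum of modules $T(\nu_1)$, hence tilting, and $T(\mu_1)$ is a summand of $T(\lambda_1)\otimes F(W)$, as required.
\end{itemize}

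\emph{Main obstacle.} I expect the hardest point to be this last identification: that $\Hom_{G_r}(St_r,T(\nu_0))$ is one-dimensional exactly when $\nu_0=(p^r-1)\rho$ and vanishes otherwise, and that this is compatible with the $G$-structure after untwisting. I would first try to deduce it from the $G_rT$-structure of $T(\nu_0)$ (its $G_rT$-socle is $L_r(2(p^r-1)\rho-\nu_0)$). If that becomes delicate, the fallback is to show only that $F$ maps tilting modules to tilting modules, via the good-filtration arguments of Step 1, which suffices for the conclusion.
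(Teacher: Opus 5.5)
\textbf{There is a genuine gap, in the second half of your Step~1.} This is exactly where the lemma has content. The ``if'' direction contains, as the special case $\lambda_1=\mu_1=0$, $\mu=(p^r-1)\rho$, the claim that $St_r$ lies in the thick tensor ideal $\langle T(\lambda_0)\rangle$. By rigidity this is equivalent to your claim that $St_r\otimes\ev$ splits.

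Your argument rests on the assertion that the kernel of a surjection between modules with good filtrations again has a good filtration. That is false; the correct statement concerns cokernels of injections. From the long exact sequence, $\mathrm{Ext}^1_G(\Delta(\nu),K)$ is the cokernel of $\Hom_G(\Delta(\nu),T^\ast\otimes T)\to\Hom_G(\Delta(\nu),\unit)$.
\begin{itemize}
\item For $\nu\neq 0$ this cokernel is zero.
\item For $\nu=0$ it is zero if and only if some endomorphism of $T$ has non-zero trace, i.e.\ iff $\ev_T$ splits, i.e.\ iff $T$ is non-negligible.
\end{itemize}
Now $T(\lambda_0)$ is a summand of $St_r\otimes T(\lambda_0-(p^r-1)\rho)$, and $St_r$ is negligible (its endomorphism ring is $\bk$ and its dimension is a power of $p$). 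So your $K$ \emph{never} has a good filtration in this situation. For example, for $\SL_2$, $p=2$, $r=1$, $\lambda_0=\rho$ you get $T^\ast\otimes T\cong T_2$ and $K\cong\Delta(2)$.

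As a sanity check, your argument uses nothing about $St_r$ beyond $St_r^\ast\otimes St_r$ having a good filtration. With $\unit$ in place of $St_r$ it would prove that no tilting module is negligible. This is precisely the point where Andersen needed injectivity of the Steinberg module, and it does not follow from a general Ext-vanishing argument. Your Step~3 also passes through this half of Step~1 (to replace $T(\mu)$ by $St_r\otimes T(\mu_1)^{(r)}$), so neither direction is established as written.

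\textbf{What is missing is the paper's bootstrapping.}

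First, prove the ``only if'' direction directly, with no functor $F$. By Jantzen's Lemma~E.8, every summand of $T(\lambda_0)\otimes W$ is some $T(\nu)$ with $\nu\in(p^r-1)\rho+X^+$. So by Hypothesis~\ref{hypo},
$T(\lambda)\otimes W\cong\bigoplus T(\nu_0)\otimes\bigl(T(\nu_1)\otimes T(\lambda_1)\bigr)^{(r)}$.
Krull--Schmidt, together with the Hypothesis once more, forces $\nu_0=\mu_0$ and makes $T(\mu_1)$ a summand of $T(\nu_1)\otimes T(\lambda_1)$.

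Second, prove $St_r\in\langle T(\lambda_0)\rangle$:
\begin{itemize}
\item By adjunction, the split surjection $St_r\otimes T(\lambda_0-(p^r-1)\rho)\to T(\lambda_0)$ yields a monomorphism $St_r\hookrightarrow T(\lambda_0)\otimes T(\lambda_0-(p^r-1)\rho)^\ast$.
\item Locality of $\End(T(\lambda_0))$ singles out an indecomposable summand $T(\kappa)$ of the target that receives $St_r$ and generates the same ideal as $T(\lambda_0)$.
\item Donkin's description of the blocks forces $\kappa\in(p^r-1)\rho+p^rX^+$, i.e.\ $\kappa_0=(p^r-1)\rho$.
\item Apply the ``only if'' direction to $T(\lambda_0)\in\langle T(\kappa)\rangle$, where $(\lambda_0)_1=0$. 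This gives $\unit\in\langle T(\kappa_1)\rangle$, whence $St_r$ is a summand of $T(\kappa)\otimes St_r^\ast\otimes St_r\otimes D^{(r)}$ for some tilting $D$.
\end{itemize}
Inserting $St_r^\ast\otimes St_r$ in this way is also needed in your Step~2, since $W^{(r)}$ itself is not tilting.

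Finally, in Step~3 the claim that $T(\nu_0)|_{G_rT}$ is an indecomposable $\widehat Q_r$ is Donkin's tilting module conjecture, which is not among your assumptions. What you actually need there would follow from a $G_r$-block decomposition, but the Krull--Schmidt argument above makes $F$ superfluous anyway.
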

\begin{proof}
We start by proving that if $T(\mu)$ is in the tensor ideal generated by $T(\lambda)$ then $T(\mu_1)$ is in the tensor ideal generated by $T(\lambda_1)$. By \cite[Lemma~E.8]{Jantzen}, every indecomposable direct summand of the tensor product $T(\lambda_0)\otimes T$ for a tilting module $T$ must be of the form $T(\nu)$ for $\nu\in  (p^r-1)\rho+X^+$. Consequently, we can write $T(\lambda)\otimes T$ as a direct sum of modules
\[T(\nu_0)\otimes (T(\lambda_1)\otimes T(\nu_1))^{(r)}\]
with $\nu_0\in(p^r-1)\rho+X_r$ and $\nu_1\in X^+$. By \ref{hypo}, the only way that we can obtain $T(\mu)$ as a direct summand is if $\nu_0=\mu_0$ and if $T(\mu_1)$ is in the tensor ideal generated by $T(\lambda_1)$. This thus takes care of one direction of the lemma.

Next we claim that $T(\lambda_0)$ generates the same tensor ideal as $T((p^r-1)\rho)$. We use highest weight consideration to obtain the left commutative triangle in the following diagram in $\Tilt G$:
\[\xymatrix{
T(\lambda_0)\ar[rr]\ar@{=}[drr] && T((p^r-1)\rho)\otimes T(\lambda_0-(p^r-1)\rho)\ar[d]\ar@{-->}[rr]&&T(\kappa)\otimes T(\lambda_0-(p^r-1)\rho)\ar@{-->}[lld] \\
&& T(\lambda_0).
}\] 
In particular, via adjunction we obtain a non-zero morphism
\[T((p^r-1)\rho)\to T(\lambda_0)\otimes T(\lambda_0-(p^r-1)\rho)^\ast,\]
which is a monomorphism  since $T((p^r-1)\rho)$ is simple. There is a direct summand $T(\kappa)$, with $\kappa\ge (p^r-1)\rho$ potentially being an equality, of the right-hand side, in the block of $T((p^r-1)\rho)$, through which the inclusion factors. By adjunction, we can complete the above diagram with the dashed arrows. In particular we find that $T(\lambda_0)$ and $T(\kappa)$ actually generate the same ideal.

By the description of the blocks of $\Rep G$ in \cite[Theorem~5.8]{DonkinBlocks}, we have $\kappa=p^rw(\rho)-\rho+p^{r+1}\gamma$, for $w$ an element of the (finite) Weyl group and $\gamma\in X$ in the root lattice. Hence $\kappa-(p^r-1)\rho \in p^r X^+$, so $\kappa_0=(p^r-1)\rho$. This means that either $\kappa=(p^r-1)\rho$, in which case we are done, or an application of the first paragraph (and the conclusion of the second paragraph), implies that $\unit$ is in the tensor ideal generated by $T(\kappa_1)$. In other words, $\unit$ is a direct summand of $T(\kappa_1)\otimes D$ for a tilting module $D$. Subsequently $T((p^r-1)\rho)$ is a direct summand of 
\[T(\kappa)\otimes T((p^r-1)\rho)^\ast\otimes T((p^r-1)\rho)\otimes D^{(r)}\] 
and hence also generates the same tensor ideal as $T(\kappa)$, concluding the proof of the claim.

As a consequence of the claim proved above, also $T(\lambda)$ and $T((p^r-1)\rho)\otimes T(\lambda_1)^{(r)}$ generate the same tensor ideal as each other. We can thus henceforth assume that $\lambda_0=\mu_0=(p^r-1)\rho$.
It then follows instantly that $T(\mu)$ is in the tensor ideal generated by $T(\lambda)$ when $T(\mu_1)$ is in the tensor ideal generated by $T(\lambda_1)$, concluding the proof of the other direction of the lemma. 
\end{proof}

\begin{remark}
Under the assumption $p\ge 2h-2$ the proof simplifies, since then there are no other tilting modules in the block of $T((p^r-1)\rho)$ with weight inside $(p^r-1)\rho+X_r+X_r$, so in the proof of Lemma~\ref{lem:An} we know $\kappa=(p^r-1)\rho$ without any effort.
Interestingly, for the case $G=\SL_3$ and $p=2$ needed below, one can still apply this kind of proof, since the zero weight being singular again removes any problematic weights from the block of $T((p^r-1)\rho)$. However, for $G=\SL_3$ and $p=3$, we really need an argument as in the proof of Lemma~\ref{lem:An} above.
\end{remark}

\subsection{Thick tensor ideals for $\SL_3$}
Thick tensor ideals in $\Tilt \SL_3$ for $p\ge 5$ were classified in \cite{An}, see Proposition~\ref{prop:AnOdd} below. The classification for $p=2$ deviates from that:
\begin{proposition}\label{prop:An}
The non-zero proper thick tensor ideals in $\Tilt \SL_3$ for $p=2$ form one countable chain
$$\cdots\;\subset\; \bI_3\;\subset\; \bI_2\;\subset\; \bI_1\;\subset\;\Tilt SL_3,$$
where $\bI_j$ contains $T(\lambda)$ iff $\lambda\in (2^j-1)\rho+X^+$. Every thick tensor ideal is prime.
\end{proposition}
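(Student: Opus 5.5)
We first check that each $\bI_j$ is a prime thick tensor ideal, and then show that every non-zero proper thick tensor ideal is one of them.

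\textbf{The $\bI_j$ are prime thick tensor ideals.} For $G=\SL_3$ and $p=2$ we have $h=3$, so $p\ge 2h-4=2$, and Hypothesis~\ref{hypo} holds for every $r$ by \cite{BNPS}.

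Every dominant weight $\lambda$ has a largest $j\ge 0$ with $\lambda\in(2^j-1)\rho+X^+$. This $j$ is finite and is determined by the smallest coordinate of $\lambda$ in the fundamental-weight basis. We write $j(\lambda)$ for it.

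The set of $T(\lambda)$ with $j(\lambda)\ge j$ is closed under direct summands of tensor products with arbitrary tilting modules, by \cite[Lemma~E.8]{Jantzen}. Hence $\bI_j$ is a thick tensor ideal.

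For primeness, write a weight with $j(\lambda)\ge j$ as $\lambda=\lambda_0+2^j\lambda_1$ with $\lambda_0\in(2^j-1)\rho+X_j$. If $\kappa$ and $\kappa'$ both have $j(\cdot)<j$, then both are highest weights outside $(2^j-1)\rho+X^+$. I expect the fastest route is a character argument in the style of Lemma~\ref{lem:joe2}. Namely, $T(\lambda)\in\bI_j$ should be equivalent to $\ch T(\lambda)$ vanishing at a specific element of the torus of order $2^{j+1}$, namely $2\rho^\vee$ evaluated at $\zeta_j$, where the Steinberg character $\prod_\alpha(e^{\alpha/2}+e^{-\alpha/2})$ vanishes. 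Vanishing of a character on a point is multiplicative, and this gives primeness directly.

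To get this equivalence, I would restrict along the principal $\SL_2\to\SL_3$, which exists for $p=2$ because it is the adjoint representation of $\PGL_2$ given by $\mathrm{Sym}^2$. I would then invoke Lemma~\ref{lem:joe2} together with the positivity of characters at $\zeta_n$ established there.

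\textbf{Every ideal is an $\bI_j$.} Let $\bJ$ be a non-zero proper thick tensor ideal. Choose $j$ maximal with $\bJ\subset\bI_j$; this exists because $\bJ\ne0$ and $\bigcap_j\bI_j=0$. Then $\bJ$ contains some $T(\lambda)$ with $j(\lambda)=j$. If $j=0$, then $\bJ$ contains an object outside $\bI_1$. In that case we must show the object generates everything, or else $\bJ$ is not proper.

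To do this we apply Lemma~\ref{lem:An} with $r=j$. Writing $\lambda=\lambda_0+2^j\lambda_1$, we have $j(\lambda_1)=0$. The ideal generated by $T(\lambda)$ equals the ideal generated by $T((2^j-1)\rho+2^j\lambda_1)$, and membership reduces to the ideal generated by $T(\lambda_1)$. So the core claim is the $j=0$ case: any $T(\lambda)$ with $\lambda\notin\rho+X^+$ generates the unit ideal, meaning it is non-negligible.

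For this claim, $\lambda=(a,0)$ or $(0,b)$. I would show that $T(a\omega_1)$ is non-negligible by $\SL_3$-restriction of the $\GL_3$ statement of Theorem~\ref{thm:N}, since $(a)\in\pA\langle 3\rangle$ and $\pA\langle3\rangle$ contains $a\omega_1+b\omega_3$. The case $(0,b)$ follows by duality. Then $\unit$ is a summand of $T(\lambda)\otimes T(\lambda)^\ast$, so $\bJ=\Tilt\SL_3$. This contradicts properness when $j=0$; for $j>0$ it yields $\bI_j\subset\bJ$, hence $\bJ=\bI_j$.

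\textbf{Main obstacle.} The hardest step is the characterisation of $\bI_j$ needed for primeness. Without the character point, primeness can instead be derived from Lemma~\ref{lem:An}: it reduces to the statement that a tensor product of non-negligible modules is non-negligible. That statement holds because the semisimplification of $\Tilt\SL_3$ is a graded-vector-space category, by \cite{EO}, and is therefore integral. I would use this second route if the evaluation argument proves awkward.
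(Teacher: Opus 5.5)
Your classification argument is correct and is essentially the paper's proof. You use \cite[Lemma~E.8]{Jantzen} to see that the $\bI_j$ are thick tensor ideals. You then apply Lemma~\ref{lem:An}, which is valid here since $2\ge 2h-4$, to reduce everything to the non-negligibility of $T(a\omega_1)$ and $T(b\omega_2)$. You correctly obtain that by restricting the $\GL_3$ statement of Theorem~\ref{thm:N}.

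The gap is in the primeness part. Your preferred character route does not work.

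\begin{itemize}
\item The map $\SL_2\to\SL_3$ given by $\mathrm{Sym}^2U$ is not a principal $\SL_2$ in characteristic $2$; the paper stresses that none exists for $m>2$.
\item Restriction along this map does not send tilting modules to tilting modules. The natural module $V$ restricts to $\mathrm{Sym}^2U$, a $3$-dimensional module with weights $2,0,-2$. It is not tilting, since $T_2=U\otimes U$ is $4$-dimensional.
\item Consequently neither Lemma~\ref{lem:joe2} nor its positivity statement applies. Concretely, $\ch V$ restricts to $x^2+1+x^{-2}$, which equals $-1$ at $\zeta_1$.
\item The proposed test point is also wrong for $j\ge 2$. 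At $2\rho^\vee(\zeta_j)$, the factor of $\ch St_{j-1}$ coming from the height-two root $\alpha_1+\alpha_2$ is $(\zeta_j^{2^j}-\zeta_j^{-2^j})/(\zeta_j^{2}-\zeta_j^{-2})=0$. So $\ch St_{j-1}$ vanishes there, although $St_{j-1}\notin\bI_j$.
\end{itemize}

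Your fallback is only justified for $j=1$, where primeness of $\bI_1$ is exactly integrality of the semisimplification. For $j\ge 2$ you do not explain the claimed reduction. Lemma~\ref{lem:An} describes principal ideals, not tensor products, and $T(\kappa)$, $T(\kappa')$ may sit at different levels $j(\kappa)\ne j(\kappa')$.

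The missing idea is simpler, and it is what the paper uses via \cite[Lemma~2.4.3(1)]{CEOq}. Once you know the thick tensor ideals form a chain, primeness is automatic in a rigid category:
\begin{itemize}
\item Thick tensor ideals are closed under duals, because $X^*$ is a summand of $X^*\otimes X\otimes X^*$.
\item Take $X,Y\notin\bI$ with, say, $\langle X\rangle\subseteq\langle Y\rangle$. Then $X^*$ is a summand of some $Y\otimes M$, so $X\otimes X^*\in\langle X\otimes Y\rangle$.
\item Hence if $X\otimes Y\in\bI$, then $X\otimes X^*\in\bI$. Since $X$ is a summand of $X\otimes X^*\otimes X$, this would force $X\in\bI$, a contradiction.
\end{itemize}
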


\begin{proof}
It follows from \cite[\S 8]{BE} that the semisimplification of $\Tilt \SL_3$ is equivalent to the category of $\mZ$-graded vector spaces, where the natural representation $V$ goes to a generator. By Theorem~\ref{thm:N}, the only non-negligible indecomposable tilting modules are $T(a\omega_1)$ and $T(b\omega_2)$ for $a,b\in\mN$. In other words, the maximal thick tensor ideal comprises the indecomposables for $\rho+X^+$, agreeing with the description of $\bI_1$ in the proposition.
The result then follows from Lemma~\ref{lem:An}, which is applicable since $h=3$, so $2\ge 2h-4$. All thick tensor ideals are prime by \cite[Lemma~2.4.3(1)]{CEOq}.
\end{proof}

\begin{proposition}[Andersen]\label{prop:AnOdd}
The non-zero proper thick tensor ideals in $\Tilt \SL_3$ for $p>2$ form one countable chain
$$\cdots\;\subset\; \bJ_2\;\subset\; \bI_2\;\subset\; \bJ_1\;\subset\; \bI_1\;\subset\;\bN\;\subset\;\Tilt SL_3,$$
where $\bN$ contains $T(\lambda)$ iff $\lambda_1-\lambda_3\ge p-2$, the ideal $\bI_j$ contains $T(\lambda)$ iff $\lambda\in (p^j-1)\rho+X^+$, and $\bJ_j$ contains $T(\lambda)$ iff $\lambda=(p^j-1)\rho+\mu +p^j\nu$ where $\mu\in X_j$ and $\nu\in X^+$ with $\nu_1-\nu_3\ge p-2$. Every thick tensor ideal is prime.
\end{proposition}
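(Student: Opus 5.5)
The plan is to mimic the proof of Proposition~\ref{prop:An}: first determine the maximal thick tensor ideal via semisimplification, and then propagate through the Frobenius layers with Lemma~\ref{lem:An}, which is available because $p\ge 5 \ge 2h-4$ with $h=3$. The new feature for $p>2$ is an extra layer $\bN\supsetneq \bI_1$, and correspondingly $\bJ_j$ between $\bI_{j+1}$ and $\bI_j$. So the work splits into three parts: the top of the chain ($\bN$ and the ideals above $\bI_1$), the ideals inside $\bI_1$ (via Lemma~\ref{lem:An}), and primeness.

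\textbf{Step 1: the negligible ideal.} I expect $\bN$ to be the ideal of negligible tilting modules. For $p>2$ (and $p\ge 5$ in Andersen's setting) the semisimplification of $\Tilt\SL_3$ is the Verlinde category of type $A_2$ at level $p-3$. Its simple objects are $T(\lambda)$ with $\lambda$ in the fundamental alcove, i.e. $\lambda_1-\lambda_3\le p-3$. This is the Georgiev--Mathieu/Andersen description, and I would take it as known. Hence $\bN$ is exactly as stated.

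\textbf{Step 2: ideals strictly between $\bN$ and $\Tilt\SL_3$, and between $\bI_1$ and $\bN$.}
\begin{itemize}
\item If a thick tensor ideal contains a non-negligible $T(\lambda)$, then $T(\lambda)\otimes T(\lambda)^\ast$ has $\unit$ as a summand, so the ideal is everything. Thus $\bN$ is the unique maximal proper ideal.
\item Next, $\bI_1$, generated by the Steinberg module $T((p-1)\rho)$, is contained in $\bN$, since $(p-1)\rho$ has $\lambda_1-\lambda_3=2p-2$.
\item For an indecomposable $T(\lambda)\in\bN\setminus\bI_1$, I want to show that it generates $\bN$. Its weight lies in the first $p$-restricted box translate, so linkage applies. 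Tensoring with appropriate fundamental tiltings $V$, $V^\ast$ and using the translation principle, one should reach the minimal weights on the wall $\lambda_1-\lambda_3=p-2$.
\end{itemize}
The last point is the main obstacle: it requires showing that any two such tilting modules outside $\bI_1$ generate the same ideal, presumably by an argument in the spirit of Ostrik's result that tilting modules in the lowest ``tilting cell'' above the fundamental alcove generate one another. I would try explicit translation-functor computations through $p$-alcoves in the upper and lower closures, as Andersen does.

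\textbf{Step 3: ideals inside $\bI_1$.} Any indecomposable in $\bI_1$ has weight $\lambda=\lambda_0+p\lambda_1$ with $\lambda_0\in(p-1)\rho+X_1$. By Lemma~\ref{lem:An} with $r=1$, the ideal generated by $T(\lambda)$ is determined by the ideal generated by $T(\lambda_1)$, and conversely. Thick ideals contained in $\bI_1$ therefore correspond bijectively, preserving inclusion, to thick ideals of $\Tilt\SL_3$, via $\bI'\mapsto\{T(\lambda_0+p\lambda_1)\mid T(\lambda_1)\in\bI'\}$. Under this bijection:
\begin{itemize}
\item the whole category goes to $\bI_1$;
\item $\bN$ goes to $\bJ_1$, matching the stated description with $\nu=\lambda_1$;
\item $\bI_1$ goes to $\bI_2$.
\end{itemize}
Iterating gives the full chain. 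Combined with Step 2 (nothing between $\bN$ and $\bI_1$ except these), every ideal lies in the chain, together with the intersection $\bigcap_j\bI_j=0$.

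\textbf{Step 4: primeness.} Primeness follows from \cite[Lemma~2.4.3(1)]{CEOq} as in Proposition~\ref{prop:An}, since the ideals form a chain.
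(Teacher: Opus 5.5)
Your outer framework is sound. $\bN$ is the ideal of negligible tilting modules (Georgiev--Mathieu, valid for $p\ge h=3$), and any thick ideal containing a non-negligible indecomposable is everything. Lemma~\ref{lem:An} with $r=1$ identifies thick ideals contained in $\bI_1$, preserving inclusions, with thick ideals of $\Tilt\SL_3$, sending $\Tilt\SL_3,\bN,\bI_1$ to $\bI_1,\bJ_1,\bI_2$. Primeness would follow once all thick ideals form a chain. Hypothesis~\ref{hypo} only needs $p\ge 2h-4=2$, so Lemma~\ref{lem:An} also covers $p=3$; you need not write $p\ge 5$.

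However, there is a genuine gap exactly where you write ``the main obstacle''. You never prove that a thick ideal $\bI'\subset\bN$ with $\bI'\not\subset\bI_1$ must equal $\bN$. Equivalently, you do not show that every indecomposable $T(\lambda)$ with $\lambda_1-\lambda_3\ge p-2$ and $\lambda\notin(p-1)\rho+X^+$ generates $\bN$. Without this, nothing rules out further thick ideals strictly between $\bI_1$ and $\bN$. Via Lemma~\ref{lem:An}, such ideals would then reappear between $\bI_{j+1}$ and $\bJ_j$ in every layer. So neither the chain nor the primeness (which rests on the chain) is established. This is precisely what separates $p>2$ from Proposition~\ref{prop:An}, where $\bN=\bI_1$ and semisimplification plus Lemma~\ref{lem:An} suffice. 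The paper does not reprove this step; it imports it from \cite[Example~15]{An} for $p>3$ and states that the same argument works for $p=3$.

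To locate the difficulty precisely, note that one half is easy. The weights $a\omega_1+(p-2-a)\omega_2$, $0\le a\le p-2$, lie in a single facet, the open upper wall of the fundamental alcove. Translation (tensoring with $V$ or $V^\ast$ and projecting to a block) therefore shows that their tilting modules generate each other. Every $\lambda$ with $\lambda_1-\lambda_3\ge p-2$ lies in $a\omega_1+(p-2-a)\omega_2+X^+$ for some such $a$, and $T(\mu+\nu)$ is a summand of $T(\mu)\otimes T(\nu)$. Hence any one wall module generates all of $\bN$.

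The hard half is the converse. You must show that, for instance, $T(c\,\omega_1)$ with $c\gg p$ has a wall module as a direct summand of $T(c\,\omega_1)\otimes Z$ for some tilting $Z$. The same is needed for any $T(\lambda)$ deep in the strips $\langle\lambda,\alpha_i^\vee\rangle\le p-2$. Such weights lie outside $(p-1)\rho+X^+$, so neither Donkin's tensor product theorem nor Lemma~\ref{lem:An} reaches them. Ostrik's cell theorem is about quantum groups and does not transfer to this setting by itself. ``Explicit translation-functor computations'' names a direction rather than giving an argument. As written, your proposal reduces the whole statement to this one unproved claim.
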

\begin{proof}
For $p>3$ this was proved in \cite[Example~15]{An}. The case $p=3$ can be proved similarly, using the upgrade in Lemma~\ref{lem:An}.
\end{proof}

\subsection{Classification}

\begin{theorem}\label{thm:3p} Assume $p>2$.
\begin{enumerate}
\item The only T-indecomposable inductive systems of length two are the $\Phi[2; \ell]$ for $\ell\in \mN\cup\{\infty\}$.
\item The only T-indecomposable inductive systems of length three are $\Phi[3; \infty]$, $\Phi[3]$ and, for $i\in\mZ_{>0}$,
 \[\Phi[3;i]:=\left\{D^{\lambda}\mid  \lambda\in\ppart(\le 3) \quad\mbox{with}\quad \lambda_1-\lambda_2< p^{i}-1\mbox{ or } \lambda_2-\lambda_3<p^{i}-1 \right\},\]
and the union $\Phi[3;i]^\ast$ of $\Phi[3;i]$ with
\[\bigcup_{1<j<p}\left\{D^{\lambda}\mid  \lambda\in\ppart(\le 3) \quad\mbox{with}\quad \lambda_1-\lambda_2<jp^i-1\mbox{ and }\lambda_2-\lambda_3<(p+1-j)p^i-1\right\}.\]
\end{enumerate}
\end{theorem}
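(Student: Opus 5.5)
The plan is to use Corollary~\ref{cor:inducSL}. Since $p>2$, both $m=2$ and $m=3$ satisfy $m<p$. So the T-indecomposable inductive systems of length $m$ are exactly the systems $\Phi_{V;\bJ}$ for prime thick tensor ideals $\bJ$ of $\Tilt\SL_m$ that do not contain $V$. We must also discard those $\bJ$ for which $\Phi_{V;\bJ}$ has length $<m$, that is, whose preimage contains $\bK_m$. By Theorem~\ref{thm:SLGL}(3), distinct $\bJ$ give distinct ideals in $\Tilt^\circ\GL_m$, and hence by Remark~\ref{rem:SysGL} distinct systems. The classification is therefore reduced to listing the prime thick tensor ideals of $\Tilt\SL_2$ and $\Tilt\SL_3$ and computing the corresponding $\Phi_{V;\bJ}$, using $[\lambda]=(\lambda_1-\lambda_m,\dots)$.

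For length two, Example~\ref{classSL2} gives the list: the zero ideal, the ideals $\bI_n$ for $n\ge1$, and the whole category. All of these are prime. The whole category contains $V$ and is excluded. The zero ideal gives $\Phi[2;\infty]$. The ideal $\bI_{j+1}$ gives $\Phi[2;j]$ by the formula in Example~\ref{ex:length2}, for every $j\ge0$; here $\bI_1$ does not contain $U$ because $p>2$. Each of these has length exactly two, since $D^{(\lambda_1,\lambda_2)}$ with $\lambda_2>0$ occurs. This proves (1).

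For length three, Proposition~\ref{prop:AnOdd} gives the chain $0\subset\cdots\subset\bJ_i\subset\bI_i\subset\cdots\subset\bN\subset\Tilt\SL_3$, and all of its members are prime. We need to translate each member:
\begin{itemize}
\item The zero ideal gives $\Phi[3;\infty]$.
\item $\bN$ gives the system with $\lambda_1-\lambda_3<p-2$. Comparing with the semisimple description of $\Phi[m]$ for $m<p$ (which requires $\lambda_1-\lambda_3\le p-3$), this is $\Phi[3]$. We must also check that $V=T(\omega_1)\notin\bN$, which holds because $1<p-2$ when $p>3$. The case $p=3$ needs separate care: there $p-2=1$, so $V\in\bN$, and $\bN$ must be handled by the remark that $\Phi[3]$ is then realised by another member of the chain. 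I expect this to be the fiddliest bookkeeping point.
\item $\bI_i$ contains $T(\lambda)$ iff both $\lambda_1-\lambda_2\ge p^i-1$ and $\lambda_2-\lambda_3\ge p^i-1$. Its complement is therefore exactly the "or" condition defining $\Phi[3;i]$.
\item For $\bJ_i$, write $[\lambda]-(p^i-1)\rho=\mu+p^i\nu$ with $\mu\in X_i$. The condition $\nu_1-\nu_3\ge p-2$ becomes a condition on the two coordinates $a=\lambda_1-\lambda_2$ and $b=\lambda_2-\lambda_3$. The complement of $\bJ_i$ inside $\bI_i$ consists of those $\lambda$ with $\lfloor (a+1)/p^i\rfloor+\lfloor (b+1)/p^i\rfloor\le p$. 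Such $\lambda$ fall in the union over $1<j<p$ of the boxes $a<jp^i-1$, $b<(p+1-j)p^i-1$, together with the part already lying in $\Phi[3;i]$. This yields $\Phi[3;i]^\ast$.
\end{itemize}
The main obstacle is this last translation. It requires carefully matching the floor/digit condition on $\nu$ with the union of boxes, including the boundary cases $j=1$ and $j=p$, which are absorbed into $\Phi[3;i]$.

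Finally, we check which of these ideals contain $V$ or $\bK_3$, that is, which give systems of length less than three. The systems listed all contain some $D^\lambda$ with $\lambda_3>0$, because each condition depends only on the differences of the $\lambda_i$, so adding $\omega_3$ preserves membership. None of the ideals contains $V$, apart from the $p=3$ subtlety with $\bN$ noted above. Distinctness follows from the bijection in Theorem~\ref{thm:SLGL}(3). This completes the list in (2).
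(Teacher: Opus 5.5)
Your strategy is the paper's. Its proof is the one-line reduction via Corollary~\ref{cor:inducSL} to Example~\ref{classSL2} and Proposition~\ref{prop:AnOdd}. Your length-two argument is correct, as are your translations of $0$, of $\bN$ for $p>3$, and of $\bI_i$. However, two steps of your bookkeeping are wrong as written.

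\textbf{The $\bJ_i$ condition is off by one.} Put $a=\lambda_1-\lambda_2$, $b=\lambda_2-\lambda_3$, $A=\lfloor (a+1)/p^i\rfloor$ and $B=\lfloor (b+1)/p^i\rfloor$. Then $T([\lambda])\in\bI_i$ iff $A,B\ge 1$. In that case $\nu$ has fundamental-weight coordinates $(A-1,B-1)$, so $\nu_1-\nu_3=A+B-2$. Hence $T([\lambda])\in\bJ_i$ iff in addition $A+B\ge p$. The complement of $\bJ_i$ inside $\bI_i$ is therefore $\{A,B\ge 1,\ A+B\le p-1\}$, not $A+B\le p$.

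With your bound the asserted match with the boxes fails. For example, take $p=5$, $i=1$ and $(a,b)=(4,19)$. Then $A+B=5$, the weight lies in $\bJ_1$, it is outside $\Phi[3;1]$, and it lies in none of the boxes.

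With the corrected bound the match is immediate. We have $a<jp^i-1\iff A\le j-1$ and $b<(p+1-j)p^i-1\iff B\le p-j$. So for $A,B\ge 1$, lying in some box with $1<j<p$ is equivalent to $A+B\le p-1$ (take $j=A+1$).

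\textbf{Your opening claim ``$m=3<p$'' is false for $p=3$.} So Theorem~\ref{thm:SLGL}(3) is not available there. The ``only'' direction survives:
\begin{itemize}
\item Corollary~\ref{cor:inducSL} still writes every T-indecomposable system of length $3$ as $\Phi_{V;\bJ}$ for some thick tensor ideal $\bJ$ of $\Tilt\SL_3$.
\item Proposition~\ref{prop:AnOdd} covers $p=3$ and makes every such $\bJ$ prime.
\item $\bN$ contains $V$ when $p=3$, so it is discarded.
\end{itemize}
Your distinctness argument, however, collapses, and the list genuinely repeats: $\Phi[3]=\Phi[3;1]$ when $p=3$, as the paper records right after the theorem. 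To justify listing $\Phi[3]$ at $p=3$ you need this identification, not just the remark that it is ``realised by another member of the chain''. One way to get it:
\begin{itemize}
\item $\Phi[3]=\Phi_{\unit\oplus\bar{\unit}}$ by Lemma~\ref{lem:Phip}, and it is T-indecomposable of length $3$ because $\sVec$ is integral. So it is one of the systems just classified.
\item By Lemma~\ref{lem:containsPhim}(1), $\Phi[3]$ is contained in every system on the list.
\item Hence it equals the smallest one, $\Phi[3;1]=\Phi_{V;\bI_1}$.
\end{itemize}
With these two corrections your argument becomes a complete, more detailed version of the paper's proof.
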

\begin{proof}
By Corollary~\ref{cor:inducSL}, the classification reduces to the classification of tensor ideals in $\Tilt \SL_2$ and $\Tilt \SL_3$, see Example~\ref{classSL2} and Proposition~\ref{prop:AnOdd}.
\end{proof}

We point out that in the above theorem, all inductive systems are distinct, except that for $p=3$, we actually have $\Phi[3]=\Phi[3;1]$ due to $V$ then becoming negligible in $\Tilt\SL_3$, see also Corollary~\ref{cor:3}.

\begin{theorem}\label{thm:32} Assume $p=2$.
\begin{enumerate}
\item The only T-indecomposable inductive systems of length two are the $\Phi[2; \ell]$ for $\ell\in \mZ_{>0}\cup\{\infty\}$.
\item The only T-indecomposable inductive systems of length three are $\Phi[3; \infty]$ and, for $i\in\mZ_{>0}$, \[\Phi[3;i]:=\left\{D^{\lambda}\mid  \lambda\in\ppart(\le 3) \quad\mbox{with}\quad \lambda_1-\lambda_2< 2^{i+1}-1\mbox{ or } \lambda_2-\lambda_3<2^{i+1}-1 \right\}.\]
\end{enumerate}
\end{theorem}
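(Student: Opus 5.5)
The proof will mirror that of Theorem~\ref{thm:3p}. By Corollary~\ref{cor:inducSL}, every T-indecomposable inductive system of length $m$ has the form $\Phi_{V;\bI}$ for a thick tensor ideal $\bI$ in $\Tilt\SL_m$. For $m\in\{2,3\}$ all thick tensor ideals of $\Tilt\SL_m$ are prime when $p=2$, by Example~\ref{classSL2} and Proposition~\ref{prop:An}. Hence, conversely, every such ideal not containing $V$ yields a T-indecomposable system. The classification thus reduces to listing the thick tensor ideals $\bI$ of $\Tilt\SL_m$ with $V\notin\bI$, computing $\Phi_{V;\bI}$, and then discarding those whose length is not exactly $m$.

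For length two, the ideals of $\Tilt\SL_2$ are $0$ and $\bI_j$ for $j\ge1$. We have $U=T_1\in\bI_1$ since $2^1-1=1$, so only $0$ and $\bI_{j+1}$ with $j\ge1$ remain. For $\lambda\in\ppart(\le 2)$ we have $[\lambda]=\lambda_1-\lambda_2$. So $\bI=0$ gives $\Phi[2;\infty]$, and $\bI_{j+1}$ gives $\{D^\lambda\mid \lambda_1-\lambda_2<2^{j+1}-1\}$, which is $\Phi[2;j]$ from Example~\ref{ex:length2}. These systems contain $D^{(a+1,a)}$ for all $a$, so they have length $2$. Hence they are exactly the systems listed in (1).

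For length three, the ideals are $0$, $\Tilt\SL_3$, and $\bI_j$ for $j\ge1$ from Proposition~\ref{prop:An}, where $T(\lambda)\in\bI_j$ if and only if $\lambda\in(2^j-1)\rho+X^+$. Write the $\SL_3$-weight $[\lambda]$ via its coordinates $\lambda_1-\lambda_2$ and $\lambda_2-\lambda_3$, and note that $(2^j-1)\rho$ has both coordinates equal to $2^j-1$. Then $T([\lambda])\notin\bI_j$ if and only if $\lambda_1-\lambda_2<2^j-1$ or $\lambda_2-\lambda_3<2^j-1$.

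The ideal $\bI_1$ contains $V$, since $V=T(\omega_1)$ and $\omega_1=\rho-\omega_2\notin\rho+X^+$. Checking this carefully: $\omega_1$ has coordinates $(1,0)$ and $\rho$ has $(1,1)$, so indeed $V\notin\bI_1$ and $V$ is not negligible. This is consistent with Proposition~\ref{prop:An}, where the non-negligible objects are $T(a\omega_1)$ and $T(b\omega_2)$.

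So $\bI_1$ gives the system $\{D^\lambda\mid \lambda_1-\lambda_2<1 \text{ or } \lambda_2-\lambda_3<1\}$. For $2$-regular $\lambda$ with $\lambda_3>0$ both coordinates are at least $1$, so this system contains no partition of length $3$. Its length is at most $2$, which is the situation described in the remark after Corollary~\ref{cor:inducSL}, and it is excluded. For $j=i+1\ge2$ we obtain exactly $\Phi[3;i]$. This system contains $\beta(3,t)=(t+2,t+1,t)$ for all $t$, so its length is $3$. The zero ideal gives $\Phi[3;\infty]$.

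The main point needing care is the identification of $\Phi_{V;\bI}$ with the displayed sets. By Remark~\ref{rem:SysGL} and the bijection in Theorem~\ref{thm:SLGL}(2), membership is decided purely by the $\SL_3$-weight $[\lambda]$, so no extra condition relating $\lambda$ to $\bK_3$ enters. Finally, the listed systems are pairwise distinct, since they are distinguished by which $D^{(a+2,a+1,a)}$-type shapes with large row differences they contain.
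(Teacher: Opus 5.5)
Your proposal is correct and follows the paper's own argument, written out in more detail. You reduce via Corollary~\ref{cor:inducSL} to the thick tensor ideals of $\Tilt\SL_2$ and $\Tilt\SL_3$ from Example~\ref{classSL2} and Proposition~\ref{prop:An}, use that these are all prime to get the converse direction, and read off $\Phi_{V;\bI}$, discarding $\bI_1$ (for $\SL_2$ because $U\in\bI_1$, for $\SL_3$ because the resulting system is $\Phi[1]$, of length one). Do delete the opening claim ``The ideal $\bI_1$ contains $V$'': it contradicts your own correct check two lines later that $V=T(\omega_1)\notin\bI_1$.
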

\begin{proof}
By Corollary~\ref{cor:inducSL}, the classification reduces to the classification of tensor ideals in $\Tilt \SL_2$ and $\Tilt \SL_3$, see Example~\ref{classSL2} and Proposition~\ref{prop:An}.
\end{proof}

\begin{corollary}\label{cor:3}
If $p=2$, we have
\[\Phi[3]=\Phi[3;1]=\{D^{\lambda}\mid \lambda\in\ppart(\le 3) \quad\mbox{with}\quad \lambda_1-\lambda_2\le 2\mbox{ or } \lambda_2-\lambda_3\le 2 \},\]
if $p=3$, we have
\[\Phi[3]=\Phi[3;1]=\{D^{\lambda}\mid \lambda\in\ppart(\le 3) \quad\mbox{with}\quad \lambda_1-\lambda_2\le 1\mbox{ or } \lambda_2-\lambda_3\le 1 \},\]
while for $p>3$, we have
\[\Phi[3]=\{D^{\lambda}\mid \lambda\in\ppart(\le 3) \quad\mbox{with}\quad \lambda_1-\lambda_3< p-2 \}.\]
\end{corollary}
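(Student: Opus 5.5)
The plan is to treat the three ranges of $p$ separately, in each case identifying $\Phi[3]$ inside the classification of T-indecomposable length-three systems in Theorems~\ref{thm:3p} and~\ref{thm:32}. By Corollary~\ref{cor:Phim} (for $p=2$) and by \cite{BK} for $p>2$, $\Phi[3]$ is T-indecomposable of length $3$. By Lemma~\ref{lem:containsPhim}(1), it is contained in every T-indecomposable system of length three, so $\Phi[3]$ is the \emph{minimal} member of the list. The argument therefore reduces to locating the smallest element of each list; the explicit sets then follow by reading off the definitions with $i=1$.

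For $p=2$, the list in Theorem~\ref{thm:32}(2) is the chain $\Phi[3;1]\supset\Phi[3;2]\supset\cdots$ together with $\Phi[3;\infty]$. This chain is decreasing in $i$, so there is no minimum among the $\Phi[3;i]$ unless we use more information. Instead, I would use Theorem~\ref{thm:FullPhim}: $\Phi[3]=\Phi_{V;\bS}$, where $\bS$ is the preimage of $\bI_{2,3}$ under $\phi_3:\SL_2^{\times 2}\to\GL_3$. Since $3=1+2$, we have $S=\{0,1\}$, $\ell=1$, and $\phi_3$ is just $\SL_2\hookrightarrow\GL_3$ with $V\mapsto \unit\oplus U$. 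So $\bS$ is the preimage of $\bI_2$ in $\Tilt\SL_2$. I would identify the corresponding ideal in $\Tilt\SL_3$ via Corollary~\ref{cor:inducSL} as $\bI_1$ from Proposition~\ref{prop:An}, i.e.\ $T(\lambda)\in\bS$ iff $[\lambda]\in\rho+X^+$. Concretely, the check is that $\rho+X^+$ is exactly where the $\phi$-character vanishes at $\zeta_2$, which follows from Theorem~\ref{thm:cellSt} with $\pA\langle 3\rangle$ described in the example. This gives $\Phi[3]=\Phi[3;1]$, and unwinding $\lambda_1-\lambda_2<3$ or $\lambda_2-\lambda_3<3$ yields the displayed set.

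For $p\ge3$, I would use the alternative characterisation of $\Phi[m]$ for $m<p$ recalled in the example after the definition of $\Phi[m]$ (semisimple systems from \cite{CS}). For $p>3$, $m=3<p$, so $\Phi[3]$ consists of the $D^\lambda$ with $\lambda_1-\lambda_3\le p-3$, i.e.\ $<p-2$. This matches the complement of $\bN$ in Proposition~\ref{prop:AnOdd}, consistent with Corollary~\ref{cor:inducSL}. For $p=3$, we have $m=p$. Here I would argue that $V$ lies in $\bN$: its highest weight satisfies $\lambda_1-\lambda_3=1=p-2$. Hence $\Phi_{V;\bN}$ is not a valid system, and the smallest remaining candidate in Theorem~\ref{thm:3p}(2) is $\Phi[3;1]$, as noted after that theorem. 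Since $\Phi[3]$ is T-indecomposable of length $3$ and contained in all others, it equals $\Phi[3;1]$. One must check that $\Phi[3;1]^\ast\supset\Phi[3;1]$ and that $\Phi[3;i]$ for $i>1$ contains $\Phi[3;1]$, which follows from the chain $\bJ_1\subset\bI_1$ and the inclusions between the $\bI_j$.

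The main obstacle is the minimality bookkeeping in the $p=2$ case, because the chain $\Phi[3;i]$ grows with $i$ and the direction of the inclusions must be matched carefully with the ideal chain $\bI_j$. I would resolve it directly through the explicit computation of $\bS$ above, rather than through abstract minimality.
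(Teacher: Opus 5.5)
\textbf{The $p>3$ and $p=3$ cases.} Your overall strategy is the intended one. $\Phi[3]$ is T-indecomposable of length $3$, and by Lemma~\ref{lem:containsPhim}(1) it is contained in every such system, so it is the smallest entry of the lists in Theorems~\ref{thm:3p} and~\ref{thm:32}. For $p>3$ you use the description of $\Phi[m]$ for $m<p$, which is fine. For $p=3$ you discard $\Phi_{V;\bN}$ because $V\in\bN$ and take the smallest remaining entry, which is also fine. One citation needs fixing: T-indecomposability of $\Phi[3]=\Phi[p]$ at $p=3$ does not come from \cite{BK}, since $\Phi[p]$ is not minimal there. Use Lemma~\ref{lem:Phip} together with Lemma~\ref{lem:systemfromSM}(2) instead.

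\textbf{The $p=2$ case has two errors.}

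First, the chain is not decreasing. $\Phi[3;i]$ is cut out by $\lambda_1-\lambda_2<2^{i+1}-1$ or $\lambda_2-\lambda_3<2^{i+1}-1$, and this threshold grows with $i$. So $\Phi[3;1]\subset\Phi[3;2]\subset\cdots\subset\Phi[3;\infty]$; these are the complements of the decreasing chain of ideals $\bI_{i+1}$ in $\Tilt\SL_3$. Hence $\Phi[3;1]$ is the minimum and $\Phi[3]=\Phi[3;1]$ follows immediately, exactly as for $p=3$. The ``obstacle'' you describe does not exist, and your own last paragraph says the chain grows with $i$.

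Second, the detour through $\bS$ rests on a false claim: $\bS$ is not the preimage of $\bI_1$, i.e.\ of $\rho+X^+$.
\begin{itemize}
\item We have $[\rho_3]=\rho$, yet $T(\rho_3)=T(2,1)\notin\bS$, because $D^{(2,1)}=D^{\beta(3,0)}\in\Phi[3]=\Phi_{V;\bS}$.
\item Equivalently, $\ch_\phi T(\rho_3)=(x+x^{-1})(x+2+x^{-1})$ is nonzero at $\zeta_2$.
\item The preimage of $\bI_1$ would give the system of $D^\lambda$ with $\lambda\in\ppart(\le 3)$ and $\lambda_1=\lambda_2$ or $\lambda_2=\lambda_3$. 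That system is just $\Phi[1]$ (compare the remark after Corollary~\ref{cor:inducSL}), not the set with bound $3$ that you then unwind.
\end{itemize}
The correct ideal is $\bI_2$, i.e.\ $3\rho+X^+$. This is consistent with $\bS$ being the preimage of $\bI_2\subset\Tilt\SL_2$ under $V\mapsto\unit\oplus U$, which you stated correctly, and with Lemma~\ref{lem:Phi3}(1).

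\textbf{Repairing the detour.} If you want to keep this route, carry out the computation that Theorem~\ref{thm:FullPhim}(2) actually gives. Use $\pA\langle 3\rangle=\{a\omega_1+b\omega_3,\,c\omega_2+d\omega_3\}$. Then $\lambda\in\ppart\langle 3\rangle$ can be written as $\mu+2\nu$ with $\mu\in\gen\langle 3\rangle$ and $\nu\in\pA\langle 3\rangle$ if and only if $\lambda_1-\lambda_2\le 2$ or $\lambda_2-\lambda_3\le 2$. Lemma~\ref{lem:top} then upgrades agreement on $\ppart(3)$ to $\Phi[3]=\Phi[3;1]$.
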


\subsection{Reduction to the oriented Brauer category}

In this section we prove an analogue of Theorem~\ref{thm:SLGL} but with $\Tilt \SL_m$ replaced by another rigid monoidal category, $\Tilt^\bullet \GL_m$. This version is simpler, which allows us to prove it for tensor ideals, not just thick tensor ideals. Moreover, $\Tilt^\bullet \GL_m$ has a simpler diagrammatic presentation, as a quotient of the (Karoubi envelope of the) oriented Brauer category, which might prove to be useful in future efforts. 

\begin{theorem}\label{thm:GLbw}
\begin{enumerate}
\item Every prime thick tensor ideal in $\Tilt^{\circ}\GL_m$ that does not contain $\bK_m$ must be the preimage of a thick tensor ideal under $\Tilt^\circ\GL_m\hookrightarrow \Tilt^\bullet \GL_m$.
\item Every prime tensor ideal in $\Tilt^{\circ}\GL_m$ that does not contain $\cK_m$ must be the preimage of a tensor ideal under $\Tilt^\circ\GL_m\hookrightarrow \Tilt^\bullet \GL_m$.
\end{enumerate}
\end{theorem}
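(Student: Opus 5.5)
The plan is to mirror the proof of Theorem~\ref{thm:SLGL}(2), replacing $\Tilt\SL_m$ by the rigid category $\Tilt^\bullet\GL_m$. The key input is that $\Tilt^\bullet\GL_m$ is generated by $V$ and $V^\ast$. Moreover, every object of $\Tilt^\bullet\GL_m$ becomes an object of $\Tilt^\circ\GL_m$ after tensoring with a sufficiently high power of $\w^mV$. Indeed, $V^\ast\otimes\w^mV\cong\w^{m-1}V$, and $\w^{m-1}V\otimes T(\alpha)$ is a summand of $V^{\otimes (m-1)}\otimes T(\alpha)$, since $T(\alpha)$ splits the surjections of exterior powers. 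This is analogous to Proposition~\ref{prop:consol}, and I would verify it by the same argument: $\alpha-\rho_m$ is dominant in $\ppart(m)$, so $T(\alpha)$ is a summand of $T(\rho_m)\otimes T(\alpha-\rho_m)$ for $p=2$; for general $p$ one uses $(p-1)\rho$ instead.

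For part (1), let $\bI$ be prime in $\Tilt^\circ\GL_m$ with $\bK_m\not\subset\bI$. Let $\bJ$ be the thick tensor ideal of $\Tilt^\bullet\GL_m$ generated by $\bI$; clearly $\bI\subset\bJ\cap\Tilt^\circ\GL_m$. Conversely, suppose $T(\lambda)\in\bJ$ with $\lambda\in\ppart(\le m)$. Then $T(\lambda)$ is a summand of $T\otimes T(\mu)$ with $T(\mu)\in\bI$ and $T\in\Tilt^\bullet\GL_m$. By rigidity, $T(\lambda)$ is a summand of $T(\lambda)\otimes T(\mu)\otimes T(\mu)^\ast$. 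Tensoring with $T(\alpha)^{\otimes 2}\otimes(\w^mV)^{\otimes N}$ exactly as in the proof of Theorem~\ref{thm:SLGL}, we find that $T(\lambda)\otimes T(\alpha+N\omega_m)$ is a summand of $T(\mu)\otimes Z$ with $Z\in\Tilt^\circ\GL_m$ for $N\gg0$. Hence it lies in $\bI$, and Proposition~\ref{prop:SLGL}(2) gives $T(\lambda)\in\bI$. Part (1) thus follows essentially verbatim; the only change is that duals now live in $\Tilt^\bullet\GL_m$ and no longer need to be restricted along $\SL_m$.

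For part (2) I would run the morphism-level analogue. Let $\cI$ be a prime tensor ideal in $\Tilt^\circ\GL_m$ with $\cK_m\not\subset\cI$, and let $\cJ$ be the tensor ideal of $\Tilt^\bullet\GL_m$ generated by $\cI$. Take $f\in\cJ\cap\Tilt^\circ\GL_m$. By rigidity, $f$ can be written as a composite $g\circ(\mathrm{id}\otimes h)\circ k$ with $h\in\cI$ and $g,k$ arbitrary morphisms in $\Tilt^\bullet\GL_m$. Now tensor $f$ with $\mathrm{id}_{T(\alpha+N\omega_m)}$. By the same ``dual correction'' trick, all intermediate objects can be pushed into $\Tilt^\circ\GL_m$, which shows $f\otimes\mathrm{id}_{T(\alpha+N\omega_m)}\in\cI$.

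It then remains to cancel the factor $\mathrm{id}_{T(\alpha+N\omega_m)}$. Primeness gives $f\in\cI$ or $\mathrm{id}_{T(\alpha+N\omega_m)}\in\cI$. So the needed morphism-level replacement of Proposition~\ref{prop:SLGL}(2) is the statement $\mathrm{id}_{T(\alpha+N\omega_m)}\notin\cI$. I would prove this by primeness as in Proposition~\ref{prop:SLGL}. First, $T(\alpha)$ generates $\bK_m$, so $\mathrm{id}_{T(\alpha)}\notin\cI$; otherwise $\Ob(\cI)\supset\bK_m$, forcing $\cK_m\subset\cI$ after checking that $\cK_m$ is generated by identities of $\bK_m$. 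For this check I would use the description of $\cK_m$ as the ideal generated by the skew symmetriser idempotent. Next, $\mathrm{id}_{\w^mV}\notin\cI$, since $\w^mV$ is invertible and $\cI$ is proper. Therefore the tensor product $\mathrm{id}_{T(\alpha)}\otimes\mathrm{id}_{\w^mV}^{\otimes N}$ is not in $\cI$ by primeness, and neither is its summand's identity, by using the splitting.

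I expect the main obstacle to be this last step, namely controlling identities of summands in the morphism setting: primeness only controls tensor products, not direct summands. I would first try to realise $T(\alpha+N\omega_m)$ as an actual tensor product, namely $T(\alpha)\otimes(\w^mV)^{\otimes N}$, which holds because tensoring with a one-dimensional module shifts weights. With this identification the splitting issue disappears entirely.
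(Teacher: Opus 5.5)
Your overall route matches the paper's, but the cancellation step has a genuine gap when $m\ge p$, and your justification of $T(\alpha)\notin\Ob(\cI)$ is false.

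\textbf{Where you agree with the paper.} Part (1) is the proof of Theorem~\ref{thm:SLGL}(2), with duals now taken in $\Tilt^\bullet\GL_m$. For part (2) the paper likewise writes $f=\sum_i g_i\circ(f_i\otimes T_i)\circ h_i$ and uses Lemma~\ref{lem:alphaN} to get $f\otimes T(\alpha+N\omega_m)\in\cI$. It then cancels the extra factor by primeness. Your opening claim, that tensoring with a power of $\w^mV$ alone lands in $\Tilt^\circ\GL_m$, is false: $(\w^mV)^{\otimes N}=T(N\omega_m)$ is itself outside $\Tilt^\circ\GL_m$ once $m\ge p$. What you actually use later is the version with $T(\alpha+N\omega_m)$, which is Lemma~\ref{lem:alphaN}, and that is fine.

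\textbf{The gap in the cancellation.} For $m\ge p$, so for every $m\ge 2$ when $p=2$, the object $\w^mV=T(\omega_m)$ is not in $\Tilt^\circ\GL_m$, because $(1^m)$ is not $p$-regular. So the statement $\mathrm{id}_{\w^mV}\notin\cI$ has no meaning for a tensor ideal of $\Tilt^\circ\GL_m$. Likewise, primeness of $\cI$ cannot be applied to the factorisation $T(\alpha+N\omega_m)\cong T(\alpha)\otimes(\w^mV)^{\otimes N}$. The identification in your last paragraph is exactly where the argument breaks. The paper instead applies Proposition~\ref{prop:SLGL}(2) with $\lambda=0$ to the prime thick ideal $\Ob(\cI)$. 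The proof of that proposition only moves $\omega_m$ between $p$-regular weights, for instance via $T(\alpha+N\omega_m)\otimes T(\alpha-\omega_m)\cong T(\alpha+(N-1)\omega_m)\otimes T(\alpha)$. All four factors there lie in $\Tilt^\circ\GL_m$.

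\textbf{The justification of $T(\alpha)\notin\Ob(\cI)$.} Your route to this is wrong. $\cK_m$ is not generated by identities of objects of $\bK_m$, and for $m\ge p$ the skew symmetriser is nilpotent rather than a multiple of an idempotent. For example, take $p=m=2$. Then $1+s\in\End(V^{\otimes 2})$ lies in $\cK_2$. But $\bK_2$ has no nonzero object of degree $2$, since $(1,1)$ is not $2$-regular. Hence $1+s$ factors through no object of $\bK_2$.

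What is actually needed is $\bK_m\not\subseteq\Ob(\cI)$, which is the hypothesis of Proposition~\ref{prop:SLGL}. The paper uses this tacitly when it invokes that proposition for $\cI$. Deducing it from $\cK_m\not\subseteq\cI$ must go through primeness, not through a generation statement. In the example above, $(1+s)\otimes\mathrm{id}_{V^{\otimes 2}}$ factors through $\w^2V\otimes V^{\otimes 2}\cong T(3,1)\in\bK_2$. So a prime $\cI$ containing $\bK_2$ must contain $1+s$ or $\mathrm{id}_{V^{\otimes 2}}$, and the latter already forces $1+s\in\cI$.
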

 We start the proof with a lemma.

\begin{lemma}\label{lem:alphaN}
For any tilting module $T$ (not necessarily indecomposable) in $\Tilt^\bullet \GL_m$, there exists $N\in\mN$ such that $T\otimes T(\alpha+N\omega_m)\in \Tilt^\circ\GL_m$.
\end{lemma}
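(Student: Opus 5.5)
We have to show that for any $T\in\Tilt^\bullet\GL_m$ some $N$ makes $T\otimes T(\alpha+N\omega_m)$ lie in $\Tilt^\circ\GL_m$. The plan is to prove it for generators and then pass to sums, tensor products and summands.

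\emph{Closure properties.} The class of $T$ satisfying the conclusion is closed under finite direct sums: given $N_1,N_2$ for the two summands, take $N=\max(N_1,N_2)$. This uses the monotonicity of the next step.

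\emph{Monotonicity in $N$.} If $T\otimes T(\alpha+N\omega_m)\in\Tilt^\circ\GL_m$, then the same holds for every $N'\ge N$. Every summand $T(\kappa)$ of $T\otimes T(\alpha+N\omega_m)$ has $\kappa\in\ppart(m)$, by Lemma~\ref{lem:lambdam}, since the factor $\alpha$ forces the last row to be positive. Tensoring with $(\w^mV)^{\otimes(N'-N)}$ shifts such weights by $(N'-N)\omega_m$ and keeps them $2$-regular of length $m$. Moreover $T(\alpha+N'\omega_m)\cong T(\alpha+N\omega_m)\otimes(\w^mV)^{\otimes (N'-N)}$, because $\w^mV$ is invertible.

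The class is also closed under direct summands, trivially, and contains $\Tilt^\circ\GL_m$ with $N=0$.

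\emph{Tensor products.} It remains to handle $T_1\otimes T_2$ and the dual $V^\ast$, since $\Tilt^\bullet\GL_m$ is generated by $V,V^\ast$ under $\otimes$, $\oplus$ and summands. The key trick is that $T(\alpha)$ is a summand of $T(\alpha)\otimes T(\alpha)^\ast\otimes T(\alpha)$, so the object $T_1\otimes T_2\otimes T(\alpha+N\omega_m)$ is a summand of
\[\bigl(T_1\otimes T(\alpha+N_1\omega_m)\bigr)\otimes\bigl(T_2\otimes T(\alpha+N_2\omega_m)\bigr)\otimes T(\alpha)^\ast\otimes(\w^mV)^{\otimes M}\]
for suitable $M$. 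That reduces everything to showing that $T(\alpha)^\ast\otimes(\w^mV)^{\otimes M}$ lies in $\Tilt^\circ\GL_m$ for $M$ large, and the same for $V^\ast\otimes(\w^m V)^{\otimes M}$.

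\emph{The dual case, which I expect to be the main obstacle.} For a dual $T(\nu)^\ast$ with $\nu\in\ppart$, the highest weight is $(-\nu_m,\dots,-\nu_1)$. After tensoring with $(\w^mV)^{\otimes M}$ it becomes $\kappa=(M-\nu_m,\dots,M-\nu_1)$, which is a partition of length $m$ for $M>\nu_1$. The difficulty is that $\kappa$ need not be $2$-regular (its row differences are those of $\nu$ reversed, and $\nu$ may have equal parts), so it need not lie in $\Tilt^\circ\GL_m$.

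I would avoid this by never isolating the dual. Instead, absorb it with an extra factor $T(\alpha)$, using Proposition~\ref{prop:consol}: since $\alpha\in\ppart(m)\subset\ppart\langle m\rangle$, $T(\mu)\otimes T(\alpha)$ is a summand of $V^{\otimes|\mu|}\otimes T(\alpha)$ for any $\mu\in\part(\le m)$. So $T(\mu)\otimes T(\alpha)\in\Tilt^\circ\GL_m$ for every tilting $T(\mu)$ in $\Tilt^+\GL_m$.

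Now $T(\nu)^\ast\otimes(\w^mV)^{\otimes M}$ lies in $\Tilt^+\GL_m$ for large $M$, because all its weights become polynomial. Hence $T(\nu)^\ast\otimes T(\alpha+M\omega_m)\in\Tilt^\circ\GL_m$, which handles $V^\ast$ directly. In the product step, I would use two copies of $T(\alpha)$ (that is, $T(\alpha)^{\otimes 2}$ in place of a single one) so that $T(\alpha)^\ast$ is absorbed the same way. This mirrors the displayed argument in the proof of Theorem~\ref{thm:SLGL}.
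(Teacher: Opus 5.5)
\textbf{There is a genuine gap once $p>2$ and $m\ge p$.} For $m<p$ one has $\Tilt^\circ\GL_m=\Tilt^+\GL_m$, so your key step is trivial there. Lemma~\ref{lem:alphaN} sits in Section~\ref{sec:allp}, where $p$ is arbitrary; it feeds Theorem~\ref{thm:GLbw} and hence Theorem~\ref{thm:primedim} for all $p$. The step carrying your whole argument is that $T(\mu)\otimes T(\alpha)\in\Tilt^\circ\GL_m$ for \emph{every} $\mu\in\part(\le m)$. You import this from Proposition~\ref{prop:consol}, which is proved only in characteristic $2$. The set $\ppart\langle m\rangle$ is only defined there. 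Its proof also uses that $T(\rho_m)$ is the Steinberg module $L((p-1)\rho)$ up to a power of $\w^mV$, so that $T(\rho_m)\otimes L\cong\Indu^{\GL_m}_B(\bk_{\rho_m}\otimes L)$ is tilting. For odd $p$ the weight $\alpha=((d+1)^r,d^{p-1},\ldots,1^{p-1})$ has consecutive differences in $\{0,1\}$, so it is not in $(p-1)\rho_m+X^+$. Nothing in the paper shows that $T(\alpha)$ splits the surjections $V\otimes\w^{i-1}V\to\w^iV$. So your treatment of $V^\ast$, and the absorption of $T(\alpha)^\ast$ in your product step, are unsupported. Despite your closing sentence, this is also not what the displayed argument in the proof of Theorem~\ref{thm:SLGL} does.

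\textbf{The fix uses an observation you almost made.} The map $\nu\mapsto(M-\nu_m,\ldots,M-\nu_1)$ preserves multiplicities of parts. So it lands in $\ppart$ whenever $\nu\in\ppart(m)$ and $M\ge\nu_1$; the only danger is zero rows of $\nu$. In particular $T(\alpha)^\ast\otimes(\w^mV)^{\otimes M}\in\Tilt^\circ\GL_m$ for $M\ge\alpha_1$ directly, so your product step needs no absorption.

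The paper never dualises $T(\mu)$ on its own:
\begin{itemize}
\item It reduces to $T=T(\mu)^\ast$ with $\mu\in\ppart(\le m)$.
\item It notes that $T(\alpha+N\omega_m)$ is a summand of $T'=T(\alpha+N\omega_m)\otimes T(\alpha)\otimes T(\alpha)^\ast$.
\item It regroups $T\otimes T'\cong T(\alpha)^{\otimes 2}\otimes(\w^mV)^{\otimes N}\otimes(T(\alpha)\otimes T(\mu))^\ast$.
\item Lemma~\ref{lem:lambdam} now applies legitimately, since both factors are in $\Tilt^\circ\GL_m$. So $T(\alpha)\otimes T(\mu)$ is a sum of $T(\nu)$ with $\nu\in\ppart(m)$.
\item Each $T(\nu)^\ast\otimes(\w^mV)^{\otimes N}\cong T(\kappa)$ with $\kappa=(N-\nu_m,\ldots,N-\nu_1)\in\ppart$ for all $N\ge\nu_1$.
\end{itemize}
This works for every $p$ and gives the conclusion for \emph{all} large $N$.

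\textbf{Your monotonicity step also needs repair.} It applies Lemma~\ref{lem:lambdam} with a factor $T\notin\Tilt^\circ\GL_m$, which the lemma does not allow. For $T=V^\ast$, the module $T\otimes T(\alpha)$ has the summand $T(\alpha-\epsilon_m)$, whose last row is $0$ because $\alpha_m=1$. The easy repair is to carry ``for all $N\gg0$'' through your induction. That property is preserved by sums, summands and your product step, so monotonicity is never needed.
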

\begin{proof}
Assume first that $T$ is indecomposable. Then $T$ must be a direct summand of $T(\lambda)\otimes T(\mu)^\ast$, for $\lambda,\mu\in \ppart(\le m)$. We might thus as well write $T=T(\mu)^\ast$. Now set $T'=T(\alpha+N\omega_m)\otimes T(\alpha)\otimes T(\alpha)^\ast$, for some $N\in\mN$. Then
\[T\otimes T'\;=\;T(\alpha)^{\otimes 2}\otimes (\w^m V)^{\otimes N}\otimes \left(T(\alpha)\otimes T(\mu)\right)^\ast.\]
Just as in the proof of Theorem~\ref{thm:SLGL}, we can observe that, for all high enough $N$, the above belongs to $\Tilt^\circ \GL_m$. Since $T(\alpha+N\omega_m)$ is a direct summand of $T'$, the claim follows.

If $T$ is not indecomposable we simply need to consider the maximum of the required $N\in\mN$ for its indecomposable summands.
\end{proof}

\begin{proof}[Proof of Theorem~\ref{thm:GLbw}]
Part (1) can be proved by a simplified version of the proof of Theorem~\ref{thm:SLGL}, or similarly to part (2) below.

For part (2), we consider a prime tensor ideal $\cI$ that does not contain $\cK_m$. We let $\cJ$ be the tensor ideal generated by $\cI$ in $\Tilt^\bullet\GL_m$ and $\cI'$ its intersection with $\Tilt^\circ\GL_m$. Clearly $\cI\subset\cI'$ and we prove this is an equality. An arbitrary morphism $f$ in $\cI'$ can be written as
\[f\;=\;\sum_i g_i\circ (f_i\otimes T_i)\circ h_i\]
where $f_i$ are morphisms in $\cI$ and $T_i\in \Tilt^\bullet \GL_m$. Of course the target (resp. source) of $g_i$ (resp. $h_i$) has to be in $\Tilt^\circ\GL_m$. By Lemma~\ref{lem:alphaN}, there is $N\in\mN$, such that $T_i\otimes T(\alpha+N\omega_m)\in\Tilt^\circ \GL_m$. It follows that $f\otimes T(\alpha+N\omega_m)\in\cI$. Since (the identity morphism of) $T(\alpha+N\omega_m)$ is not in $\cI$, see Proposition~\ref{prop:SLGL}(2) for $\lambda=0$, primeness of $\cI$ shows $f\in\cI$ indeed.
\end{proof}

\subsection{Categorical dimension of ideals}

In \cite[\S 6.2]{Tprime}, the notion of a categorical dimension of an ideal in $\bk S_\infty$ was introduced. Since $\mathrm{char}(\bk)=p>0$, this dimension belongs to $\mF_p\subset\bk$, see \cite[Lemma~6.2.3]{Tprime}.
In \cite[Conjecture~6.3.4]{Tprime} it was conjectured that every T-prime ideal admits a dimension. We prove a potentially stronger result:

\begin{theorem}\label{thm:primedim}
Every weakly T-prime ideal in $\bk S_\infty$ admits a categorical dimension.
\end{theorem}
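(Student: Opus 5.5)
We translate the statement into tensor ideals. By Lemma~\ref{lem:bijections}(1), a weakly T-prime ideal $I<\bk S_\infty$ corresponds to a prime tensor ideal $\cI$ in $\Sym$. We recall from \cite[\S 6.2]{Tprime} that a categorical dimension of $I$ is a scalar $d\in\bk$ such that, in the quotient $\Sym/\cI$, the endomorphism algebras behave as if $E$ had dimension $d$. Concretely, the partial trace of $x\otimes E$, for $x\in\bk S_n$, should equal $d\,x$ modulo $I$. Such a scalar exists as soon as $\Sym/\cI$ embeds, via a faithful tensor functor, into a rigid $\bk$-SM category $\cD$ with $\End(\unit)=\bk$ in which the image of $E$ is preserved. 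In that case $d=\dim E\in\End_{\cD}(\unit)$, and the partial trace identity is automatic from rigidity. The whole plan is therefore to produce such a rigid envelope for the quotient by $\cI$.

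\textbf{Step 1: dispose of the degenerate cases.} If $\cI=0$, the dimension condition is vacuous or trivially satisfiable; this is handled by the conventions of \cite{Tprime}, and I would verify it directly. If $\cI$ contains the identity of $E$, the same applies. Otherwise, by \ref{def:Km}, $\cI$ contains the kernel of $\Sym\to\Tilt^\circ\GL_m$ for some $m$. Choosing $m$ minimal, $\cI$ descends to a prime tensor ideal of $\Tilt^\circ\GL_m$ that does not contain $\cK_m$. Minimality gives this: if it contained $\cK_m$, it would come from $\Tilt^\circ\GL_{m-1}$.

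\textbf{Step 2: pass to the rigid category.} Apply Theorem~\ref{thm:GLbw}(2). This gives $\cI=\cJ\cap\Tilt^\circ\GL_m$, where $\cJ$ is the tensor ideal generated by $\cI$ in the rigid category $\Tilt^\bullet\GL_m$. Hence $\Tilt^\circ\GL_m/\cI$ is a full monoidal subcategory of $\Tilt^\bullet\GL_m/\cJ$, and the latter is rigid. Its unit endomorphisms are $\bk$ modulo the ideal, which is $\bk$ because $\cJ$ is proper. Properness holds since $T(\alpha+N\omega_m)\notin\cI$ by Proposition~\ref{prop:SLGL}(2), so $\cJ$ does not contain $\mathrm{id}_\unit$.

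\textbf{Step 3: read off the dimension.} In $\Tilt^\bullet\GL_m/\cJ$, the image of $V$ has a categorical dimension $d\in\bk$, and since $\Tilt^\bullet\GL_m$ is defined over the prime field this $d$ lies in $\mF_p$. The partial trace identity then holds in $\Tilt^\bullet\GL_m/\cJ$. Because the embedding of $\Tilt^\circ\GL_m/\cI$ is faithful, the identity also holds modulo $\cI$, hence modulo $I$ in $\bk S_\infty$. This gives the categorical dimension of $I$.

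\textbf{Main obstacle.} Most of the real work already sits in Theorem~\ref{thm:GLbw}. The delicate points remaining are matching the precise definition of categorical dimension in \cite[\S 6.2]{Tprime} with the trace formulation in a rigid quotient, and checking that faithfulness on morphisms, rather than merely on objects, is what Theorem~\ref{thm:GLbw}(2) provides. I would begin by rewriting the definition from \cite{Tprime} as the partial-trace identity above.
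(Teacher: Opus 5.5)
Your proposal follows the paper's proof. You take $m$ minimal so that $\cI$ descends to a prime tensor ideal of $\Tilt^\circ\GL_m$ not containing $\cK_m$. You then apply Theorem~\ref{thm:GLbw}(2) to realise $\cI$ as the kernel of a tensor functor $\Sym\to\Tilt^\bullet\GL_m/\cJ$ with rigid target, and conclude from there. The paper handles that last implication simply by citing \cite[Theorem~6.3.2]{Tprime}, rather than re-deriving it.

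One correction to your re-derivation of that last step. The identity ``partial trace of $x\otimes E$ equals $d\,x$'' holds for every $d$ and every $x\in\bk S_n$, independently of $I$, so it cannot be the definition. The substantive condition is that $I$ is stable under the $d$-partial trace maps $\bk S_{n+1}\to\bk S_n$. This stability is exactly what a tensor functor $F$ to a rigid category with kernel $\cI$ provides, with $d=\dim F(E)$ the image of $m$ in $\mF_p$. The reason is that $F$ intertwines the combinatorial and categorical partial traces and kills $I$.
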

\begin{proof}
By \cite[Theorem~6.3.2]{Tprime}, it suffices to show that for every weakly T-prime ideal, the associated prime tensor ideal in $\Sym$, see Lemma~\ref{lem:bijections}(1), is the kernel of a tensor functor from $\Sym$ to a rigid $\bk$-SM category. By \ref{sec:SymTilt}, every non-zero prime tensor ideal in $\Sym$ contains the kernel of $\Sym\to \Tilt^\circ \GL_m$ for some $m$, and we choose the minimal such $m$. It thus follows from Theorem~\ref{thm:GLbw}(2) that every non-zero prime tensor ideal in $\Sym$ is the kernel of a tensor functor to a rigid category $\Tilt^\bullet\GL_m/\cJ$, for some $m$ and some tensor ideal $\cJ$.
\end{proof}

\begin{remark}The above yields another approach to, or interpretation of, the classification of maximal ideals in $\bk S_\infty$. Indeed, by Theorem~\ref{thm:primedim} and \cite[2.1.4(2) and 6.3.2(2)]{Tprime}, the only candidates for maximal ideals in $\bk S_\infty$ correspond to the maximal ideals (of negligible morphisms) in $\OB(m)$, for $m\in \mF_p^\times$, and the submaximal ideals (those that are only properly contained in the maximal ideal) of $\OB(0)$. As it turns out, the first $p-1$ ideals are always maximal, whereas the latter adds precisely one maximal ideal when $p=2$.
\end{remark}

\begin{example}
The categorical dimension of the maximal ideal $I[m]$, with $1\le m<p$ if $p>2$ and $m\in\{1,2\}$ if $p=2$, is $m$.
\end{example}

\subsection{Tensor ideals for products of $\SL_2$}

For completeness, we state the analogue of Lemma~\ref{lem:joe2} for characteristic $p>2$, the proof is only very slightly more involved, alternatively see Remark~\ref{rem:proof}. We can define the prime thick tensor ideals $\bI_{n_1,\ldots, n_\ell}$ in $\Tilt \SL_2^{\times \ell}$ just as for $p=2$. The abelian envelopes $\Ver_{p^n}$ of the quotients of $\Tilt\SL_2$ were constructed in \cite{BEO, AbEnv}. Using the results from \cite{BEO} we can, just as in \ref{linkidealVer} realise $\bI_{n_1,\ldots, n_\ell}$ as the kernel of the tensor functor
\[\Tilt SL_2^{\times \ell}\to \Ver_{p^\infty},\quad U_{(a)}\mapsto L_1^{[n_a]},\quad 1\le a \le\ell.\]

We denote by $\xi_n=\exp(\pi i/p^n)$, a primitive $2p^n$-th root of 1.

\begin{lemma}\label{lem:joep}
Assume $p>2$. The following conditions are equivalent on $T\in \Tilt (\SL_2^{\times\ell})$:
\begin{enumerate}
\item $T\in \bI_{n_1,\ldots, n_\ell}$;
\item $\ch T$ is in the ideal of $\Lambda$ generated by the $\left(\frac{x_j^{p^{n_j}}-x_j^{-p^{n_j}}}{x_j-x_j^{-1}}\right)$, for $1\le j\le \ell$;
\item $\ch T$ is in the ideal of $\Lambda$ generated by the $\left(\frac{x_j^{p^{n_j}}-x_j^{-p^{n_j}}}{x_j^{p^{n_j-1}}-x_j^{-p^{n_j-1}}}\right)$, for $1\le j\le \ell$;
\item $\ch T(\xi_{n_1},\xi_{n_2},\ldots,\xi_{n_\ell})=0$.
\end{enumerate}
\end{lemma}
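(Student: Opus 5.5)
We follow the proof of Lemma~\ref{lem:joe2}, replacing powers of $2$ by powers of $p$ and $\zeta_n$ by $\xi_n$. The implications (1)$\Rightarrow$(2)$\Rightarrow$(3)$\Rightarrow$(4) are formal. First, the Steinberg module $St_n=T_{p^n-1}$ is simple and equals its Weyl module. Its character is therefore $(x^{p^n}-x^{-p^n})/(x-x^{-1})$. The ideal $\bI_{n_1,\ldots,n_\ell}$ is generated as a thick tensor ideal by the $(St_{n_j})_{(j)}$, and characters are additive and multiplicative, so (1)$\Rightarrow$(2) follows. Next, $(x^{p^n}-x^{-p^n})/(x-x^{-1})$ factors as
\[\frac{x^{p^n}-x^{-p^n}}{x^{p^{n-1}}-x^{-p^{n-1}}}\cdot\frac{x^{p^{n-1}}-x^{-p^{n-1}}}{x-x^{-1}},\]
and the second factor lies in $\mZ[x+x^{-1}]$, being the character of $St_{n-1}$. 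Hence the ideal in (2) is contained in the ideal in (3). Finally, the generator in (3) vanishes at $x=\xi_n$: there $x^{2p^n}=1$, so $x^{p^n}=x^{-p^n}$, while $x^{p^{n-1}}-x^{-p^{n-1}}=2i\sin(\pi/p)\neq0$.

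The main step is (4)$\Rightarrow$(1). By Donkin's tensor product theorem (see \cite{BEO, Do, Jantzen}), every indecomposable tilting module of $\SL_2$ is of the form
\[T_{r_0}\otimes T_{r_1}^{(1)}\otimes\cdots\otimes T_{r_s}^{(s)},\qquad p-1\le r_j\le 2p-2\ \ (j<s),\quad 0\le r_s\le 2p-2,\]
or some similar normal form. We will use the precise form from \cite{BEO}, in which $T_{p-1+a}$ for $0\le a\le p-1$ decomposes as $T_{p-1}\otimes T_a^{(1)}$ in an appropriate sense. Evaluating at $x=\xi_n$ sends $x^{p^j}$ to $\exp(\pi i/p^{n-j})=\xi_{n-j}$, so the character of the Frobenius twist $T^{(j)}$ at $\xi_n$ equals $\ch T(\xi_{n-j})$.

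The key claim is that $\ch T_i(\xi_n)$ is a real number which is zero if and only if $i\ge p^n-1$, and that all such values share a common sign. Sign coherence is exactly what is needed to pass from the vanishing of a sum to the vanishing of each summand. For the Weyl-type factors we have $\ch T_a(\xi)=\sin((a+1)\theta)/\sin\theta$. For $a\le p-2$ and $\theta=\pi/p^k$ this is strictly positive, because $(a+1)\theta<\pi$. For $T_{p-1+a}$ with $0\le a\le p-1$, the character is $\ch T_{p-1}\cdot\ch T_a(x^p)$. Here $\ch T_{p-1}(\xi_n)=\sin(\pi p/p^n)/\sin(\pi/p^n)$, which is positive for $n\ge2$ and zero for $n=1$. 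The factor $\ch T_a(\xi_{n-1})$ is positive as long as $a+1<p^{n-1}\cdot p$, which holds at least when $n\ge 2$.

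The delicate case is at the top twist: there $\xi_1$ appears, and $\ch T_a(\xi_1)=\sin((a+1)\pi/p)/\sin(\pi/p)$ vanishes only for $a=p-1$ and is positive for $a<p-1$. We expect this to be the main obstacle, namely verifying carefully, for $p$ odd, that no factor becomes negative. We would settle it by running an induction on $i$ with the recursive description of \cite{BEO}, checking that $\ch T_i(\xi_n)\ge0$ and recording exactly when it vanishes. The expected outcome is that $\ch T_i(\xi_n)=0$ if and only if $T_i\in\bI_n$, with all values non-negative; this extends \cite[Lemma~2.11]{Ne}.

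Granting this, the rest is as in Lemma~\ref{lem:joe2}. For $T=\bigotimes_a (T_{i_a})_{(a)}$ the evaluation of $\ch T$ is the product $\prod_a \ch T_{i_a}(\xi_{n_a})$, a product of non-negative reals. A direct sum of such modules evaluates to zero only if every summand does. A summand evaluates to zero only if some factor $\ch T_{i_c}(\xi_{n_c})$ vanishes, which forces $i_c\ge p^{n_c}-1$. Thus every summand lies in $\bI_{n_1,\ldots,n_\ell}$ by the description \eqref{eq:I}, which carries over verbatim with $2$ replaced by $p$.
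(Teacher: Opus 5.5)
Your route is the one the paper intends: the analogue of the proof of Lemma~\ref{lem:joe2}. The paper also offers the $\FPdim$ shortcut of Remark~\ref{rem:proof}. The implications (2)$\Rightarrow$(3)$\Rightarrow$(4) are fine.

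\textbf{The gap is in (4)$\Rightarrow$(1), the only substantive step.} You announce the key claim only as an ``expected outcome'', and the computation you give towards it rests on a false identity. The product $\ch T_{p-1}\cdot\ch T_a(x^p)$ is the character of $T_{p-1}\otimes T_a^{(1)}\cong T_{p-1+pa}$, not of $T_{p-1+a}$. From the Weyl filtration of $T_{p-1+a}$ by $\Delta(p-1+a)$ and $\Delta(p-1-a)$ one gets instead
\[\ch T_{p-1+a}\;=\;\frac{x^{p}-x^{-p}}{x-x^{-1}}\,\bigl(x^{a}+x^{-a}\bigr),\qquad 1\le a\le p-1.\]
With your formula one would get $\ch T_{2p-2}(\xi_2)=\ch T_{p-1}(\xi_2)\,\ch T_{p-1}(\xi_1)=0$, although $2p-2<p^2-1$. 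So the criterion you are after would fail.

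You also never treat the twists $j\ge n$. Since $p$ is odd, $\xi_n^{p^j}=-1$ there, and factors such as $\ch T_1(-1)=-2$ are negative. So the hope that ``no factor becomes negative'' is false, and the argument has to work for a different reason.

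\textbf{How to close it.} Use the unique normal form with $p-1\le r_j\le 2p-2$ for $j<s$ and $0\le r_s\le p-2$. Then $i=\sum_j r_jp^j$, so $i\ge p^n-1$ if and only if $s\ge n$. For $k\ge 1$ the factor values are as follows.
\begin{itemize}
\item For $r\le p-2$: $\ch T_r(\xi_k)=\sin((r+1)\pi/p^k)/\sin(\pi/p^k)>0$.
\item For $0\le a\le p-1$: $\ch T_{p-1+a}(\xi_k)$ is $\sin(\pi/p^{k-1})/\sin(\pi/p^k)$ times either $1$ (if $a=0$) or $2\cos(a\pi/p^k)$. It is therefore zero for $k=1$ and strictly positive for $k\ge 2$, since then $a\pi/p^k<\pi/2$.
\end{itemize}

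Now split into two cases.
\begin{itemize}
\item If $s<n$, the factors with $j<s$ are evaluated at $\xi_{n-j}$ with $n-j\ge 2$, and the last one at $\xi_{n-s}$ with $n-s\ge 1$. So the product is positive.
\item If $s\ge n$, the factor with $j=n-1$ is evaluated at $\xi_1$ and vanishes. This makes the possibly negative factors with $j\ge n$ irrelevant.
\end{itemize}
This gives $\ch T_i(\xi_n)\ge 0$, with equality if and only if $T_i\in\bI_n$. Your final paragraph then goes through.

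\textbf{A smaller point in (1)$\Rightarrow$(2).} Additivity and multiplicativity of characters do not cover passing to direct summands. The same factorisation handles this. For $s\ge n$, the first $n$ factors contribute
\[\prod_{j<n}\frac{x^{p^{j+1}}-x^{-p^{j+1}}}{x^{p^j}-x^{-p^j}}\;=\;\ch St_n\]
times an element of $\mZ[x+x^{-1}]$. Combined with \eqref{eq:I}, this gives (2).
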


\begin{remark}\label{rem:proof}
There is an alternative proof for Lemmata~\ref{lem:joe2} and~\ref{lem:joep}. Indeed, since we can realise the thick tensor ideal $\bI_{n_1,\ldots,n_{\ell}}$ as the kernel of a tensor functor from $\Tilt \SL_2^{\ell}$ to $\Ver_{p^n}$, for any $n\ge\max\{n_1,\ldots, n_{\ell}\}$. Using positivity of Frobenius-Perron dimension, the result then follows from the verification (see the proof of \cite[Theorem~4.5]{BEO}) that the resulting ring homomorphism
\[K_0^{\oplus}(\Tilt \SL_2^{\times \ell})\to K_0(\Ver_{p^n})\xrightarrow{\FPdim}\mZ[2\cos(\pi/p^n)]\subset\mR,\]
is given by $[T]\mapsto \ch T(\xi_{n_1},\xi_{n_2},\ldots,\xi_{n_\ell}).$
\end{remark}


\subsection{Towards the classification of T-indecomposable inductive systems}

It was conjectured in \cite[Conjecture~5.1.2]{Tprime} that every T-indecomposable inductive system is of the form $\Phi_X$ for $X\in\Ver_{p^\infty}$ (and moreover, that $X\mapsto \Phi_X$ induces a bijection between the Grothendieck monoid of $\Ver_{p^\infty}$ and the set of T-indecomposable inductive systems). We summarise the progress made towards that conjecture resulting from the current paper.

\begin{theorem}\label{thm:conj} The following inductive systems are of the form $\Phi_X$ for $X\in\Ver_{p^\infty}$:
\begin{enumerate}
\item Every T-indecomposable inductive system of length $\le 3$.
\item If $p=2$, the minimal inductive system $\Phi[m]$ of length $m$, for every $m>0$.
\item If $p>2$, the minimal inductive system $\Phi[m]$ of length $m\le p$. 
\item If $p>2$, the minimal inductive system $\Phi[m]$ of length $m\in\{2p-3, 2p-2,3p-3\}$. 
\end{enumerate}
\end{theorem}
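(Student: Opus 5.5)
We treat the four parts separately. Each one reduces to realising a prime thick tensor ideal of some $\Tilt^\circ\GL_m$ or $\Tilt\SL_m$ as the kernel of a tensor functor to $\Ver_{p^\infty}$. Once that is done, Lemma~\ref{lem:systemfromSM}(5) identifies the inductive system with $\Phi_X$ for the image $X$ of $V$. Since $\Ver_{p^\infty}$ is semisimple, faithfulness on the quotient by the kernel is automatic.

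Part (2) is essentially Theorem~\ref{thm:FullPhim}(1): it gives $\Phi[m]=\Phi_{X_m}$ with $X_m\in\Ver_{2^\infty}$ for $m>1$, and the case $m=1$ is $\Phi_{\unit}$.

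For part (1), we use Corollary~\ref{cor:inducSL} and Theorems~\ref{thm:3p},~\ref{thm:32}. Length one gives only $\Phi[1;\infty]=\Phi[1]=\Phi_\unit$. For length two, the systems are $\Phi[2;j]=\Phi_{U;\bI_{j+1}}=\Phi_{L_1^{[j+1]}}$ by \ref{linkidealVer} and its $p>2$ analogue, together with $\Phi[2;\infty]$. For length three, we must realise each prime ideal in Propositions~\ref{prop:An} and~\ref{prop:AnOdd}.
\begin{itemize}
\item The systems $\Phi[m;\infty]$ are the systems of $V$ in $\Tilt^\circ\GL_m$. We realise these in $\Ver_{p^\infty}$ via $\unit^{\oplus m}$ plus a suitable even part; the most natural candidate is $\Phi[m;\infty]=\Phi_{\unit^{m}}$ when $\unit^m$ has dimension $m$. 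Concretely, the ordinary vector space $\bk^m$ in $\mathrm{Vec}\subset\Ver_{p^\infty}$ has $\Sch_\lambda(\bk^m)=T(\lambda)$ restricted to the trivial group level, which is nonzero exactly when $\lambda\in\part(\le m)$ is $p$-regular. So $\Phi_{\bk^m}=\Phi[m;\infty]$.
\item For $\bI_j$ (and $\bJ_j$, $\bN$ when $p>2$) we use Lemma~\ref{lem:An}. It reduces the cell structure of $T((p^j-1)\rho+p^j\nu)$ to that of $T(\nu)$. We then take the functor $\Tilt\SL_3\to\Tilt\SL_3/\bI_j$, restrict to a principal-type embedding $\SL_2\to\SL_3$ via $\mathrm{Sym}^2$ when $p>2$, or to $\SL_2^{\times 1}$ composed with Frobenius twists, and compose with $\Tilt\SL_2\to\Ver_{p^{j+1}}$.
\item The target object for $\bI_j$ should be $X=L_1^{[j+1]}\oplus\unit$ when $p=2$, mirroring $X_3$ from \ref{def:S}. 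For $p>2$ it should be $\mathrm{Sym}^2$ of the appropriate $L_1^{[n]}$.
\end{itemize}
For each such $X$ we check the kernel on the top row $\ppart(3)$ using Lemma~\ref{lem:joep} or~\ref{lem:joe2} (evaluating characters at roots of unity), and then conclude equality via Lemma~\ref{lem:top}.

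For parts (3) and (4), Lemma~\ref{lem:containsPhim} gives $\Phi[m]\subset\Phi_X$ for any T-indecomposable $\Phi_X$ of length $m$. We therefore exhibit $X$ with $\Phi_X$ of length $m$ whose top-row simples match the generators $D^{\beta(m,t)}$ and nothing more, and again use Lemma~\ref{lem:top}.
\begin{itemize}
\item For $m<p$, the semisimple systems are $\Phi_{L_{m-1}}$ in $\Ver_p$ (cf. \cite{Poly}).
\item For $m=p$, Lemma~\ref{lem:Phip} identifies $\Phi[p]$, and we match it with $L_1^{[2]}$-type objects of dimension $p$.
\item For $m\in\{2p-3,2p-2,3p-3\}$ we take tensor products in $\Ver_{p^2}$, namely $L_{a}\otimes L_1^{[2]}$ and $L_{a}\boxtimes$ summands. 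We compute top-row membership by characters via Lemma~\ref{lem:joep}, using Lemma~\ref{lem:systemfromSM}(3),(4).
\end{itemize}

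The main obstacle is part (4), together with the $\SL_3$ ideals $\bJ_j$ in part (1). In both cases one must guess the right object $X$ and verify, via root-of-unity evaluation of characters, that its top-row support is exactly the prescribed set. For $\bJ_j$ we would first try $X=\mathrm{Sym}^2 L_1^{[j]}\otimes$ a Frobenius-type factor matching the $\nu_1-\nu_3\ge p-2$ condition.
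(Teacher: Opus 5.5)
Your treatment of part (2), of part (3) for $m<p$, and of lengths $1$ and $2$ in part (1) agrees with the paper. (Incidentally, $\Ver_{p^\infty}$ is not semisimple; already $\Ver_4$ is not. You do not need semisimplicity, because $\Phi_X$ only records which $\Sch_\lambda(X)$ vanish, so only the object-kernel of $V\mapsto X$ matters.)

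In length three, however, your assignment of objects to the ideals of $\Tilt\SL_3$ is wrong for $p>2$. Take restriction along the principal $\SL_2\to\SL_3$ (so $V\mapsto T_2$) and compose with $\Tilt\SL_2\to\Ver_{p^n}$. This sends $V$ to $\mathrm{Sym}^2L_1^{[n]}=L_2^{[n]}$, and its kernel is $\bJ_{n-1}$ (with $\bJ_0=\bN$), never one of the $\bI_j$. It therefore produces $\Phi[3]=\Phi_{L_2^{[1]}}$ for $p>3$ and $\Phi[3;i]^\ast=\Phi_{L_2^{[i+1]}}$, with no extra Frobenius-type factor. The $\bI_j$ come instead from the Levi embedding $V\mapsto T_1\oplus\unit$, as preimages of $\bI_j\subset\Tilt\SL_2$. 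Hence $\Phi[3;i]=\Phi_{\unit\oplus L_1^{[i]}}$ for $p>2$. For $p=2$, the indexing of Theorem~\ref{thm:32} gives $\Phi[3;i]=\Phi_{V;\bI_{i+1}}=\Phi_{\unit\oplus L_1^{[i+1]}}$, so your $\bI_j\leftrightarrow L_1^{[j+1]}\oplus\unit$ is shifted by one. The paper takes both preimage statements from the proof of \cite[Proposition~7.2.2]{CEOq}. If you want to verify them directly, do not test all of $\ppart(3)$ by characters, since characters of $\SL_3$-tilting modules are not available. Instead, note that the kernel of the composite functor to $\Ver_{p^\infty}$ is a thick tensor ideal of $\Tilt\SL_3$, hence a member of the chain in Proposition~\ref{prop:An} or~\ref{prop:AnOdd}. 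It then suffices to evaluate, via Lemma~\ref{lem:joe2} or~\ref{lem:joep}, the characters of generators (Steinberg modules and their twisted tensor products) of two consecutive members of that chain.

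The genuine gap is in part (3) for $m=p$ and in part (4). Your plan there is to guess $X$, compute the length-$m$ part of $\Phi_X$ by root-of-unity evaluation, compare with the generators $D^{\beta(m,t)}$, and apply Lemma~\ref{lem:top}. This cannot be executed, for two reasons. First, Lemma~\ref{lem:joep} would need the characters of the $\GL_m$-tilting modules $T(\lambda)$, $\lambda\in\ppart(m)$, for $m$ up to $3p-3$ in characteristic $p$, and these are not known. Second, Lemma~\ref{lem:top} needs the complete set of length-$m$ members of $\Phi[m]$, which in general is larger than the set of generators and is not known beforehand. For $p=2$, determining it is exactly what Section~\ref{sec:GL} is for.

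Your candidate objects also miss the decisive ingredient: the odd line $\bar{\unit}\in\sVec\subset\Ver_p$, the image of $T_{p-2}$. This is your $L_a$ with $a=p-2$, but nothing in your plan singles it out.

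For $m=p$ the object is $\unit\oplus\bar{\unit}$, not something built from $L_1^{[2]}$. Lemma~\ref{lem:Phip} proves this by identifying $D^{\beta(p,j-1)}$ with the Specht module of the hook $(j,1^{(p-1)j-1})$. It then uses the description of $\Phi_{\unit\oplus\bar{\unit}}$ through hook Specht modules from \cite{CEKO}.

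For $m\in\{2p-3,2p-2,3p-3\}$ the key input is \cite[Lemma~4.16]{BK}. Tensoring $\Phi[m]$ with the sign module yields an inductive system of length $2$ or $3$; for example $D^{\beta(2p-2,ps)}\otimes\mathrm{sgn}\cong D^{(p(p-1)s+p-1,\,p(p-1)s)}$. After checking T-indecomposability, part (1) identifies these systems as $\Phi[3;1]$, $\Phi[2;1]$ and $\Phi[3;1]^\ast$ respectively. Since the sign twist corresponds to $\Phi_X\leadsto\Phi_{X\otimes\bar{\unit}}$, one obtains $\Phi_{\bar{\unit}\oplus L_1^{[1]}\otimes\bar{\unit}}$, $\Phi_{L_1^{[2]}\otimes\bar{\unit}}$ and $\Phi_{L_2^{[2]}\otimes\bar{\unit}}$. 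This reduction, via the sign twist, to the classification in length at most three is the idea missing from your proposal.
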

\begin{proof} Part (1) for length 1 is trivial and for length 2 it follows immediately from our preliminary results.
Part (2) follows immediately from Theorem~\ref{thm:FullPhim}(1). Part (3) for $m<p$ is \cite[Theorem~4.1.1(1)]{Poly}. Parts (1) for length 3, (3) for $m=p$, and (4) are proved in more detail below. 
\end{proof}

For $p>2$, denote by $L_2^{[n]}\in\Ver_p^n\subset\Ver_{p^\infty}$ the image of $T_2$ under the defining $\Tilt\SL_2\to\Ver_{p^n}$. Hence $L_2^{[n]}=0$ if and only if $p=3$ and $n=1$.

\begin{lemma}\label{lem:Phi3}
\begin{enumerate}
\item Assume $p=2$. Then $\Phi[3;\infty]=\Phi_{\unit^3}$ and $\Phi[3;i]=\Phi_{L_1^{[i+1]}\oplus\unit}$, for $i>0$.
\item Assume $p>2$. Then $\Phi[3;\infty]=\Phi_{\unit^3}$, $\Phi[3]=\Phi_{L_2^{[1]}}$ provided $p>3$,
\[ \Phi[3;i]=\Phi_{\unit\oplus L_1^{[i]}}\quad\mbox{and}\quad \Phi[3;i]^\ast= \Phi_{L_2^{[i+1]}},\quad\mbox{for}\quad i>0.\]

\end{enumerate}
\end{lemma}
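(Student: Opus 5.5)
The strategy is to realise each listed object $X$ as the image of the natural representation $V$ under a tensor functor $F$ from $\Tilt^\circ\GL_3$ into $\Ver_{p^\infty}$. By Lemma~\ref{lem:systemfromSM}(5), applied to the faithful functor from the quotient by the kernel, $\Phi_X=\Phi_{V;\ker F}$. It then suffices to identify $\ker F$ with the thick tensor ideal defining the system in Theorems~\ref{thm:3p} and~\ref{thm:32}.

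Concretely, by Corollary~\ref{cor:inducSL} and Theorem~\ref{thm:SLGL}, each system of length three is $\Phi_{V;\bJ}$ for a thick tensor ideal $\bJ$ of $\Tilt\SL_3$ from Proposition~\ref{prop:An} or~\ref{prop:AnOdd}. So the task is to compute the kernel of each functor and match it with $\bJ$. The cheapest tool is restriction to a subgroup of $\SL_3$ (or $\GL_3$), followed by the functors $\Tilt\SL_2\to\Ver_{p^n}$.

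\textbf{Case $\unit\oplus L_1^{[n]}$.} Restrict along the block embedding $\SL_2\hookrightarrow\GL_3$, under which $V\mapsto U\oplus\unit$. Compose with $\Tilt\SL_2\to\Ver_{p^n}$, where $n=i$ for $p>2$ and $n=i+1$ for $p=2$. The image of $V$ is $\unit\oplus L_1^{[n]}$, so the kernel is a prime thick tensor ideal by the integrality of $\Ver_{p^\infty}$.

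To identify it, test the generators. The module $T((p^n-1)\rho)$ restricts to a sum containing only summands $T_j$ with $j\ge p^n-1$; this follows from a character computation: the Weyl character factorises, as in the proof of Theorem~\ref{thm:N}, and contains the factor $\ch T_{p^n-1}$ from the simple root. Since the $\SL_3$ ideals form the chains of Propositions~\ref{prop:An} and~\ref{prop:AnOdd}, it then remains to check that the next larger ideal is not contained in the kernel. For this it is enough to exhibit one module, e.g.\ $T((p^{n-1}-1)\rho)$ or $V$ itself, whose image is nonzero. This gives $\Phi[3;i]$.

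\textbf{Case $\unit^3$.} The functor $\Tilt\GL_3\to\mathrm{Vec}$ given by the forgetful functor lands in $\sVec$ or $\mathrm{Vec}\subset\Ver_{p^\infty}$ and is faithful on $\Tilt^\circ\GL_3$. Its kernel is zero, giving $\Phi[3;\infty]=\Phi_{\unit^3}$.

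\textbf{Case $L_2^{[n]}$ ($p>2$).} Use the principal embedding $\SL_2\to\SL_3$ (available since $p>2$), under which $V\mapsto T_2$ because $\nabla(2)=T_2$ for $p>2$. Compose with $\Tilt\SL_2\to\Ver_{p^n}$. The kernel is then the pullback of $\bI_n$ along the principal $\SL_2$.

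The main obstacle is identifying this pullback as $\bN$ (for $n=1$) or as $\bJ_{n-1}$. Following \cite{GK} and \cite{An}, I would use Lemma~\ref{lem:joep}(4): $T$ lies in the pullback iff $\ch T$ evaluated at the principal specialisation $x\mapsto\xi_n$ vanishes. Evaluate the Weyl characters $\chi(\lambda)$, which are products of $q$-numbers, and use the description of tilting modules via Lemma~\ref{lem:An} and Donkin's formula to reduce to $\lambda\in(p^{n-1}-1)\rho+X_{n-1}+p^{n-1}\nu$. Positivity of the dimension in $\Ver$ (Remark~\ref{rem:proof}) then reduces membership to the single condition $\nu_1-\nu_3\ge p-2$, which matches $\bJ_{n-1}$, while $n=1$ gives $\bN$ and hence $\Phi[3]$ for $p>3$.

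I expect the bookkeeping of this last case, especially confirming that the kernel is exactly $\bJ_{n-1}$ and not a neighbouring ideal in the chain, to be the hardest step. I would settle it by checking the two boundary weights $(p^{n-1}-1)\rho+p^{n-1}(p-2)\omega_1$ and $(p^{n-1}-1)\rho+p^{n-1}(p-3)\omega_1$.
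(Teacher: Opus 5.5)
Your overall route matches the paper's. You realise each object as the image of $V$ under restriction to the Levi $\SL_2$ (so $V\mapsto U\oplus\unit$) or to the principal $\SL_2$ (so $V\mapsto T_2$), followed by $\Tilt\SL_2\to\Ver_{p^n}$, and you identify the kernel with an ideal from Proposition~\ref{prop:An} or~\ref{prop:AnOdd}. The difference is that the paper quotes \cite[Proposition~7.2.2]{CEOq} for these identifications, whereas you verify them directly from the chain structure and the evaluation criterion of Lemmata~\ref{lem:joe2} and~\ref{lem:joep}. That verification works in three places: for $p=2$ (where your test module $T((2^{i}-1)\rho)$ generates the next ideal $\bI_i$), for $\unit^3$, and for the principal case. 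In the principal case, checking one weight in $\bJ_{n-1}$ and one in $\bI_{n-1}\setminus\bJ_{n-1}$ is exactly what the chain requires.

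There is a genuine gap in the case $\unit\oplus L_1^{[i]}$ for $p>2$. In the chain of Proposition~\ref{prop:AnOdd}, the ideal immediately above $\bI_i$ is $\bJ_{i-1}$ (with $\bJ_0=\bN$), not $\bI_{i-1}$. Your proposed test modules $T((p^{i-1}-1)\rho)$ and $V$ do not lie in $\bJ_{i-1}$; for instance $V\notin\bN$ once $p>3$. So showing their images are nonzero only rules out $\bI_{i-1}\subset\ker$. It leaves $\ker=\bJ_{i-1}$ open, which would give $\Phi_{\unit\oplus L_1^{[i]}}=\Phi[3;i-1]^\ast$, or $\Phi[3]$ when $i=1$, rather than $\Phi[3;i]$.

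To close the gap, test a module in $\bJ_{i-1}\setminus\bI_i$. A natural choice is the boundary module you already use in the principal case:
$$T((p^{i-1}-1)\rho+p^{i-1}(p-2)\omega_1)\;\cong\; St_{i-1}\otimes T((p-2)\omega_1)^{(i-1)}.$$
Here $T((p-2)\omega_1)=\Sym^{p-2}V$ restricts to the Levi $\SL_2$ as $\bigoplus_{k=0}^{p-2}T_k$. So the restricted character evaluated at $\xi_i$ is the nonzero value of $\ch(\Res St_{i-1})$ at $\xi_i$ times $\sum_{k=0}^{p-2}\ch T_k(\xi_1)>0$. Hence this module survives, and the kernel is exactly $\bI_i$.
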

\begin{proof}
For $p>3$, it was demonstrated in the proof of \cite[Proposition~7.2.2]{CEOq} that $\bJ_i$ (with $\bJ_0:=\bN$) from Proposition~\ref{prop:AnOdd} is the preimage of $\bI_{i+1}$ under the restriction functor $\Tilt\SL_3\to\Tilt \SL_2$ that sends $V$ to $T_2$. It was also proved that $\bI_i$ in $\Tilt \SL_3$ is the preimage of $\bI_{i}$ in $\Tilt\SL_2$ under the restriction functor $\Tilt\SL_3\to\Tilt \SL_2$ that sends $V$ to $T_1\oplus\unit$. The exact same argument works for $p=3$. The statement in (2) then follows from the construction of the inductive systems via those tensor ideals in $\Tilt\SL_3$.

The proof of (1), follows the same argument.
\end{proof}

For $p>2$ denote the odd line in the category of super-vector spaces $\sVec\subset\Ver_p$ by $\bar{\unit}$.
\begin{lemma}\label{lem:Phip}
For $p>2$, we have $\Phi[p]=\Phi_{\unit\oplus\overline{\unit}}$.
\end{lemma}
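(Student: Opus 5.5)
We have to show $\Phi[p]=\Phi_{\unit\oplus\overline{\unit}}$ for $p>2$. The plan is to identify $\Phi_{\unit\oplus\overline{\unit}}$ as a T-indecomposable inductive system of length $p$, then use minimality of $\Phi[p]$ (Lemma~\ref{lem:containsPhim}) together with Lemma~\ref{lem:top}.

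\textbf{Step 1: the right-hand side.} The object $X=\unit\oplus\overline{\unit}$ lives in $\sVec$, which is integral. So $\Phi_X$ is a T-indecomposable inductive system by Lemma~\ref{lem:systemfromSM}(1),(2). Via the faithful tensor functor $\sVec\hookrightarrow\Ver_p$ (or directly in super vector spaces), $X$ is the standard $(1|1)$-dimensional superspace. Hence $\Sch_\lambda(X)=e_\lambda X^{\otimes n}$ is the classical super-Schur functor for $\GL(1|1)$ reduced mod $p$.

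Concretely, $X^{\otimes n}$ is the regular-type module $\bigoplus_k \Indu^{S_n}_{S_k\times S_{n-k}}(\mathrm{triv}\boxtimes\mathrm{sgn})$ as an $S_n$-module. Therefore $D^\lambda\in\Phi_X$ if and only if $D^\lambda$ has nonzero multiplicity in the head of such an induced module, i.e.\ is a quotient of some $M_k:=\Indu(\mathrm{triv}\boxtimes\mathrm{sgn})$. In characteristic zero these are the hook partitions. Mod $p$ the relevant quotients are labelled by $p$-regular partitions obtained from hooks, and the length is bounded: hook-type constituents $(a,1^{b})$ are $p$-regular only if $b\le p-2$. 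After regularisation we get $\ell(\Phi_X)=p$. I will verify this by showing that $D^\lambda\in\Phi_X$ forces $\ell(\lambda)\le p$ and that some length-$p$ $\lambda$ occurs.

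Alternatively, and I think more cleanly, I would bound the length by the categorical dimension route: $\dim X=0$, and \cite{Tprime,Poly} relate such objects to the oriented Brauer category $\OB(0)$.

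\textbf{Step 2: containment.} Since $\Phi_X$ is T-indecomposable of length $p$, Lemma~\ref{lem:containsPhim}(1) gives $\Phi[p]\subset\Phi_X$.

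\textbf{Step 3: equality.} By Lemma~\ref{lem:top}, applied to two indecomposable systems of the same length $p$ (with $\Phi[p]$ indecomposable as a minimal-type system; its T-indecomposability for $p>2$ follows as in Corollary~\ref{cor:Phim} or from \cite{BK}), it suffices to show the following: for $\lambda\in\ppart(p)$ with $D^\lambda\in\Phi_X$, we have $D^\lambda\in\Phi[p]$.

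For $m=p$, \eqref{eq:mdr} gives $d=1$, $r=1$, so $\beta(p,t)=((t+1)^{p-1},t)$. I would compute directly that the length-$p$ members of $\Phi_X$ are exactly the $D^{\beta(p,t)}$. These are the $p$-regularisations of hooks with $p$ rows, and such a partition must have all first $p-1$ rows equal and last row one smaller. The argument is that the $\GL(1|1)$ weight constraint forces $\lambda_i-\lambda_{i+1}\le$ small, and $p$-regularity plus length $p$ pins down the shape. Since $D^{\beta(p,t)}\in\Phi[p]$ by definition, equality follows.

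\textbf{Main obstacle.} The main obstacle is Step 3 (and the length claim in Step 1): describing precisely which $D^\lambda$ of length $p$ lie in $\Phi_X$. I would attack this through \eqref{eq:SW} in $\sVec$. Write $\Sch_\lambda(X)\neq0$ and restrict along $S_{n-1}\times S_1$. Using Lemma~\ref{lem:normal}-type normal-node removal (Kleshchev branching, \cite[Theorem~3.5]{BK}), I would show that any length-$p$ member reduces to some $\beta(p,t)$, and that the super-dimension bound $\dim\Sch_\lambda(X)\le$ that of $X^{\otimes n}$ rules out $\lambda_i-\lambda_{i+1}\ge2$ in the first $p-1$ rows.
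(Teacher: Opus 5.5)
\textbf{There is a genuine gap in Step 3.} It rests on a claim that is false. The length-$p$ members of $\Phi_X$, for $X=\unit\oplus\overline{\unit}$, are not just the $D^{\beta(p,t)}$.

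Take $\lambda=(2,1^{p-1})\vdash p+1$. It is $p$-regular of length $p$; your bound ``$b\le p-2$'' is off by one, since $(a,1^b)$ with $a\ge 2$ is $p$-regular iff $b\le p-1$. Since $p\nmid p+1$, the hook Specht module $S^\lambda$ is irreducible, so $S^\lambda=D^\lambda$ is a composition factor of $X^{\otimes(p+1)}$ and $D^\lambda\in\Phi_X$. But $|\beta(p,t)|=pt+p-1\neq p+1$, so $\lambda$ is not of the form $\beta(p,t)$. Similarly, for $p=3$ and $n=7$ one gets $D^{(5,1,1)},D^{(4,2,1)},D^{(3,2,2)}\in\Phi_X$. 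Compare Corollary~\ref{cor:3}, which shows $\Phi[3]$ has many length-$3$ members. So no ``weight constraint'' or dimension bound forcing $\lambda_i-\lambda_{i+1}$ to be small can exist.

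Moreover, the tool you propose for Step 3 points the wrong way. Removing normal nodes as in Lemma~\ref{lem:normal} shows that $D^{\beta(p,t)}$ is a constituent of a restriction of $D^\lambda$. That only reproves $\Phi[p]\subset\Phi_X$, i.e.\ your Step 2. Lemma~\ref{lem:top} needs the converse: every length-$p$ member of $\Phi_X$ must be a constituent of a restriction of some $D^{\beta(p,t)}$. Your plan has no mechanism for that.

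Smaller issues:
\begin{itemize}
\item $\Sch_\lambda(X)=e_\lambda X^{\otimes n}\neq0$ iff $D^\lambda$ is a composition factor of $X^{\otimes n}$, not a head constituent.
\item $\ell(\Phi_X)=p$ is asserted rather than proved.
\item Corollary~\ref{cor:Phim} is a $p=2$ statement. Indecomposability of $\Phi[p]$ is clear anyway, since it is generated by a chain.
\end{itemize}

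\textbf{The missing idea} is to recognise the generators of $\Phi[p]$ as hook Specht modules. The hook $(j,1^{(p-1)j-1})$ has size $pj-1$, prime to $p$, so its Specht module is irreducible. Its $p$-regularisation is $(j^{p-1},j-1)=\beta(p,j-1)$, so this Specht module equals $D^{\beta(p,j-1)}$.

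The composition factors of $X^{\otimes n}$ are exactly those of the hook Specht modules of size $n$: reduce the characteristic-zero decomposition of $(1|1)$-dimensional super tensor space modulo $p$. Any hook of size $n$ fits inside $(j,1^{(p-1)j-1})$ for $j\gg0$. By the Specht filtration of restricted Specht modules, its Specht module is then a subquotient of $\Res D^{\beta(p,j-1)}$.

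Hence $\Phi_X$ is precisely the inductive system generated by the $D^{\beta(p,t)}$, which is $\Phi[p]$ by definition. This is the paper's argument, via \cite[\S 3.3]{CEKO}. It yields both inclusions at once and makes the detour through Lemma~\ref{lem:containsPhim}, Lemma~\ref{lem:top} and the length of $\Phi_X$ unnecessary.
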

\begin{proof}
The inductive system $\Phi_{\unit\oplus\overline{\unit}}$ was described in \cite[\S 3.3]{CEKO}. We can interpret it as the minimal inductive system containing any choice of Specht modules for an infinite chain (for the inclusion order) of  hook Young diagrams. If we restrict to diagrams of size prime to $p$ these Specht modules are simple. For example, $\Phi_{\unit\oplus\overline{\unit}}$ is the minimal inductive system containing the Specht modules labelled by $(j,1^{(p-1)j-1})$ for $j\in\mZ_{>0}$. As observed in \cite[\S 3.3]{CEKO} the latter Specht module is precisely $D^{\beta(p,j-1)}$.
\end{proof}

\begin{lemma}
If $p>2$, we have
\[\Phi[2p-3]=\Phi_{\bar{\unit}\oplus L_1^{[1]}\otimes \bar{\unit}},\quad\Phi[2p-2]=\Phi_{L_1^{[2]}\otimes \bar{\unit}},\quad\Phi[3p-3]=\Phi_{L_2^{[2]}\otimes\bar{\unit}}.\]
\end{lemma}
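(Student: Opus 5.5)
The plan is to reduce all three identities to statements about systems we already control, by pulling out the factor $\bar{\unit}$ via Lemma~\ref{lem:systemfromSM}(3). Since $\Ver_{p^\infty}$ is integral, $\Phi_{Y\otimes\bar{\unit}}=\Phi_Y\otimes\Phi_{\bar{\unit}}$. The object $\bar{\unit}$ is the odd line, so $\bar{\unit}^{\otimes n}$ is the sign representation of $S_n$, and $\Phi_{\bar{\unit}}$ consists exactly of the sign modules $\mathrm{sgn}_n=D^{M(n)}$ for all $n$. Tensoring a simple $D^\lambda$ with $\mathrm{sgn}$ gives $D^{M(\lambda)}$, with $M$ the Mullineux involution. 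Hence $\Phi_{Y\otimes\bar{\unit}}=M(\Phi_Y)$, the image of $\Phi_Y$ under $M$, for every $Y$. By additivity (Lemma~\ref{lem:systemfromSM}(4) and the obvious union formula for direct sums) the three identities become
\[\Phi[2p-3]=M(\Phi_{\unit\oplus L_1^{[1]}}),\qquad \Phi[2p-2]=M(\Phi_{L_1^{[2]}}),\qquad \Phi[3p-3]=M(\Phi_{L_2^{[2]}}).\]
Here the right-hand sides are, in order, $M(\Phi[3;1])$, $M(\Phi[2;1])$ and $M(\Phi[3;1]^\ast)$, using Lemma~\ref{lem:Phi3}(2) and Example~\ref{ex:length2}.

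Since $M$ is compatible with restriction and with $\Res_{S_m\times S_n}$ (sign restricts to sign), $M$ sends T-indecomposable systems to T-indecomposable systems and preserves inclusions. The next step is to compute the length of each $M(\Phi_Y)$. For this I use the explicit descriptions of $\Phi[2;1]$, $\Phi[3;1]$ and $\Phi[3;1]^\ast$ together with the known behaviour of $M$ on partitions with few rows: roughly, $M$ exchanges the number of rows with a quantity of size about $(p-1)\cdot(\text{number of }p\text{-ladders})$. This should show that the lengths are exactly $2p-3$, $2p-2$ and $3p-3$. Lemma~\ref{lem:containsPhim}(1) then gives $\Phi[m']\subset M(\Phi_Y)$ for the corresponding $m'$.

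For the reverse inclusion, I apply $M$ once more. It then suffices to show $M(\Phi[m'])\supseteq\Phi_Y$. Because $\Phi_Y$ is generated by an explicit family of simples of maximal length (two- or three-row partitions), I reduce via Lemma~\ref{lem:top} to checking length-$m'$ members only. Concretely, I will show that $M(D^{\beta(m',t)})$ is, for suitable $t$, one of the top generators of $\Phi_Y$: for example $D^{(a+2,a)}$-type modules for $\Phi[2;1]$, and modules $D^{\lambda}$ with $(\lambda_1-\lambda_2,\lambda_2-\lambda_3)$ extremal for the three-row systems. Branching via Kleshchev's theorem (removing good nodes, as in Lemma~\ref{lem:Phim}) then yields all of them. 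Both systems are indecomposable, so Lemma~\ref{lem:top} closes the argument once the length-$m'$ members agree.

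The main obstacle I expect is the explicit Mullineux computation for the staircase-type partitions $\beta(m',t)$, with $m'=(p-1)d+r$ for $(d,r)=(1,p-2),(1,p-1),(2,p-1)$. I would first try Xu's / Ford--Kleshchev's description of $M$ via removal of $p$-rims. The shape of $\beta(m',t)$ — blocks of $p-1$ equal-difference rows — makes each $p$-rim consist of whole columns. This suggests that $M(\beta(m',t))$ has at most $d+1$ rows with successive differences governed by $t$ modulo powers of $p$. Matching those differences with the conditions $\lambda_1-\lambda_2<jp^i-1$ in $\Phi[3;1]^\ast$ is where I expect the real bookkeeping to be.
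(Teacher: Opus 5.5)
Your reduction is sound, and it is how the paper starts. Since $\Ver_{p^\infty}$ is integral, Lemma~\ref{lem:systemfromSM}(3) gives $\Phi_{Y\otimes\bar{\unit}}=M(\Phi_Y)$. The three claims therefore say that $M(\Phi[m'])$ equals $\Phi[3;1]$, $\Phi[2;1]$ and $\Phi[3;1]^\ast$ respectively.

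Two small corrections to that part:
\begin{itemize}
\item Drop the ``union formula for direct sums''. In general $\Phi_{X\oplus Y}$ is much larger than $\Phi_X\cup\Phi_Y$; for instance $\Phi_{\unit\oplus\unit}=\Phi[2;\infty]$. All you need is $(\unit\oplus L_1^{[1]})\otimes\bar{\unit}\cong\bar{\unit}\oplus L_1^{[1]}\otimes\bar{\unit}$.
\item Your length step is correct: from the Mullineux symbol, $\ell(M(\lambda))=A_0-\ell(\lambda)+\epsilon_0$. It is, however, redundant. Once the generators $M(D^{\beta(m',t)})$ are computed and seen to lie in $\Phi_Y$, the inclusion $M(\Phi[m'])\subseteq\Phi_Y$ is immediate.
\end{itemize}

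\textbf{The gap is the reverse inclusion.} Removing good or normal nodes from the Mullineux images of the $\beta(m',t)$ cannot reach all top-length members of $\Phi_Y$. This already fails for $m'=2p-2$.
\begin{itemize}
\item The images are two-row partitions of gap $p-1$. For the cofinal family $t=ps$, which already generates $\Phi[2p-2]$, one gets $D^{(p(p-1)s+p-1,\,p(p-1)s)}$.
\item In a two-row partition $(a,b)$, the node $(2,b)$ has residue $b-2$. The only addable node above it that could share this residue is $(1,a+1)$, of residue $a$, and nothing of that residue lies between them. Hence $(2,b)$ is normal if and only if $a-b\not\equiv -2\pmod p$.
\item Each normal removal changes the gap by $\pm1$. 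So no chain of them starting at gap $p-1$ gets past gap $2p-2$.
\item Yet $\Phi[2;1]$ contains $D^{(a,b)}$ with $a-b$ anywhere up to $p^2-2>2p-2$.
\end{itemize}
These simples do occur in the restrictions, since the lemma is true. But they occur as composition factors not of the form $\lambda_A$, which Kleshchev's theorem does not detect. For two rows you could perhaps rescue the argument with complete two-row branching rules. For the three-row targets $\Phi[3;1]$ and $\Phi[3;1]^\ast$ there is no such description, so Lemma~\ref{lem:top} never receives its input.

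\textbf{The missing ingredient is the classification of T-indecomposable systems of length $\le 3$,} Theorem~\ref{thm:3p}. It rests on the classifications of thick tensor ideals in $\Tilt\SL_2$ and $\Tilt\SL_3$ via Corollary~\ref{cor:inducSL}. The paper's argument runs as follows.
\begin{enumerate}
\item It computes only the Mullineux images of a cofinal family of the $\beta(m',t)$.
\item It shows that $M(\Phi[m'])$ is T-indecomposable of length two or three. In length two this follows from indecomposability. In length three a short good-node argument produces $D\boxtimes D$ in a Young restriction.
\item The T-indecomposable systems of a given length $\le 3$ form an explicit chain. So the system generated by the computed simples must be $\Phi[2;1]$, $\Phi[3;1]$ or $\Phi[3;1]^\ast$ respectively.
\end{enumerate}
Branching is used only to certify T-indecomposability, never to exhaust $\Phi_Y$.
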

\begin{proof}
\cite[Lemma~4.16]{BK} shows that the tensor product of $\Phi[m]$, for $m\in\{2p-3,2p-2,3p-3\}$, with the sign module is an inductive system of length 2 or 3. It thus suffices to show that these systems are T-indecomposable, use the above identifications of the T-indecomposable systems of length $\le 3$, and observe that taking the tensor product with the sign module corresponds to $\Phi_X\leadsto\Phi_{X\otimes\bar{\unit}}$. In more detail:

By completing the computation in \cite[Lemma~4.16]{BK}, we find that the tensor product of $D^{\beta(2p-2,ps)}$ with the sign module is given by $D^{(p(p-1)s+p-1, p(p-1)s)}$. Since all indecomposable length 2 inductive systems are T-indecomposable, the classification in Theorem~\ref{thm:3p}(1) shows that the modules $D^{(p(p-1)s+p-1, p(p-1)s)}$ generate $\Phi[2;1]=\Phi_{L_1^{[2]}}$.

Similarly, by completing the computation in \cite[Lemma~4.16]{BK}, the tensor product of $D^{\beta(2p-3,2ps)}$ with the sign module is given by $D^{(2p(p-1)s+p-1, p(p-2)s, p(p-2)s)}$. We explain why the system is T-indecomposable. By substituting $2r+1$ for $s$ and observing that the last node in the first row is always good, we find that the system contains the simple module for partition
\[(2p(p-1)2r+2p-2, p(p-2)2r, p(p-2)2r)+p(p-2)\omega_{3}.\]
It is easy to show that after suitable restriction, the above module contains a constituent labelled by the same partition without the term $p(p-2)\omega_{3}$. Restriction to the appropriate Young subgroup of the latter simple then produces the exterior product of $D^{(2p(p-1)r+p-1, p(p-2)r, p(p-2)r)}$ with itself by \eqref{eq:SWT}, which demonstrates T-indecomposability. Comparison with Theorem~\ref{thm:3p}(2) then shows that the length 3 system we obtained is $\Phi[3;1]$, so that the conclusion follows from Lemma~\ref{lem:Phi3}(2).

Finally, the case $\Phi[3p-3]$ follows from the analogous, but significantly simpler, argument as for the previous case.
\end{proof}

\subsection*{Acknowledgements} The author thanks Alexander Kleshchev and Andrew Mathas for interesting discussions.
The research was partly supported by ARC grants FT220100125 and DP250100762.

\end{document}